\documentclass[12pt,oneside]{book}
\usepackage[utf8]{inputenc}
\usepackage{amsmath,amssymb,amsthm}
\usepackage{graphicx}
\usepackage{hyperref}
\usepackage[shortlabels]{enumitem}
\usepackage{algorithm}
\usepackage[noend]{algpseudocode}  
\usepackage{booktabs}
\usepackage{longtable}
\usepackage{multirow}
\usepackage{geometry}
\usepackage{titlesec}
\usepackage{tikz}  
\usepackage{tkz-graph} 
\usepackage{tkz-berge} 
\usepackage{titlesec}
\usetikzlibrary{3d}         
\usetikzlibrary{positioning} 
\usetikzlibrary{arrows.meta} 
\usepackage{amsmath}        
\usepackage{amssymb}        

\newtheorem{theorem}{Theorem}[chapter]
\newtheorem{lemma}[theorem]{Lemma}
\newtheorem{proposition}[theorem]{Proposition}
\newtheorem{corollary}[theorem]{Corollary}
\newtheorem{definition}[theorem]{Definition}
\newtheorem{example}[theorem]{Example}
\newtheorem{remark}[theorem]{Remark}

\newtheorem{conjecture}[theorem]{Conjecture}

\newcommand{\Z}{\mathbb{Z}}

\newcommand{\F}{\mathbb{F}}
\newcommand{\Cay}{\mathrm{Cay}}
\providecommand{\Lift}{\mathcal{L}}
\providecommand{\Restr}{\mathcal{R}}
\providecommand{\GFT}{\widehat{\mathcal{G}}}
\providecommand{\Filt}{\mathrm{Filt}}
\providecommand{\supp}{\mathrm{supp}}
\providecommand{\Id}{\mathrm{Id}}
\newcommand{\partintro}[1]{%
	\par\addvspace{1.5\baselineskip}%
	\noindent\emph{#1}\par\addvspace{0.5\baselineskip}%
}

\DeclareMathOperator{\diam}{diam}

\title{Minimal Isometric Embeddings of Graphs into Abelian Groups: \\ Theory, Algorithms, and Applications to Signal Processing over Networks}
\author{Rigobert Fokam}
\date{\today}

\begin{document}
	\usetikzlibrary{positioning}
	
	\frontmatter
	
	\begin{titlepage}
		\begin{center}
			
			\begin{tabular}{@{}p{0.45\textwidth}@{}p{0.45\textwidth}@{}}
				\raggedright
				\includegraphics[height=3.5cm]{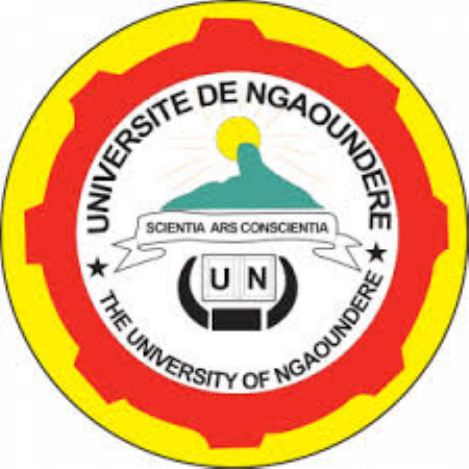} & 
				\raggedleft
				\includegraphics[height=3.5cm]{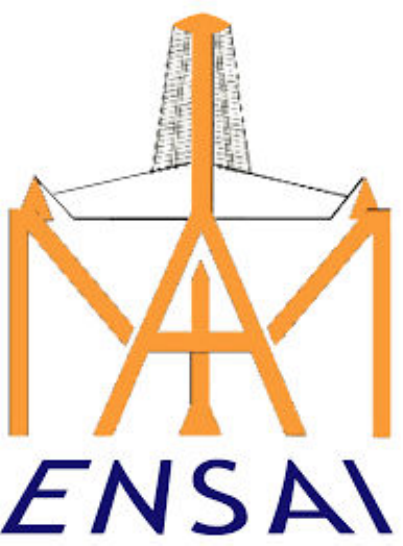} \\
			\end{tabular}
			
			\vspace{0.2cm} 
			
			{\LARGE \textbf{THE UNIVERSITY OF NGAOUNDERE}} \\
			\vspace{0.1cm} 
			{\large \textbf{NATIONAL SCHOOL OF AGRO-INDUSTRIAL SCIENCES}} \\
			
			\vspace{1cm} 
			
			{\Huge \textbf{Ph.D. Dissertation}} \\
			
			\vspace{1cm} 
			
			\begin{minipage}{0.95\textwidth}
				\centering
				{\Huge \textbf{Minimal Isometric Embeddings of Graphs into Abelian Groups: Theory, Algorithms, and Applications to Signal Processing over Networks}}
			\end{minipage}
			
			\vspace{1.5cm} 
			
			\begin{minipage}{0.95\textwidth}
				\centering
				\large
				\textbf{Author:} \\
				{\LARGE \textbf{RIGOBERT FOKAM SOUOP}} \\
				\textbf{Reg. N° 08M057EN}\\
				\vspace{0.3cm} 
				\textbf{Supervisor:} \\
				{\Large \textbf{Prof. LAURENT BITJOKA}}
			\end{minipage}
			
			\vspace{1cm} 
			
			\begin{minipage}{0.8\textwidth}
				\centering
				\normalsize 
				A Dissertation submitted to the \\
				\vspace{0.2cm} 
				\textbf{DEPARTMENT OF ELECTRICAL ENGINEERING, AUTOMATION AND ENERGY} \\
				\vspace{0.1cm} 
				Doctoral Unit of Applied Physics and Engineering \\
				\vspace{0.5cm} 
				In Partial Fulfillment of the Requirements for the Degree \\
				\vspace{0.2cm} 
				\textbf{Doctor of Philosophy} \\
				\textbf{(Electrical Engineering and Computer Science)} \\
				\vspace{0.5cm} 
				\textbf{June 2026}
			\end{minipage}
			
		\end{center}
	\end{titlepage}
	
	\cleardoublepage
	\thispagestyle{empty}
	
	\begin{center}
		{\Large \textbf{APPROVAL PAGE}}
	\end{center}
	
	\vspace{1cm}
	
	This dissertation titled \textbf{"Minimal Isometric Embeddings of Graphs into Abelian Groups: Theory, Algorithms, and Applications to Signal Processing over Networks"} by \textbf{Rigobert Fokam Souop} is approved by the examining committee:
	
	\vspace{2cm}
	
	\begin{flushright}
		\begin{minipage}{0.6\textwidth}
			\centering
			\dotfill \\
			\textbf{Prof. Laurent Bitjoka} \\
			Supervisor \\
			The University of Ngaoundere
		\end{minipage}
	\end{flushright}
	
	\vspace{1.5cm}
	
	\begin{flushright}
		\begin{minipage}{0.6\textwidth}
			\centering
			\dotfill \\
			\textbf{[Name of Committee Member]} \\
			Title \\
			Institution
		\end{minipage}
	\end{flushright}
	
	\vspace{1.5cm}
	
	\begin{flushright}
		\begin{minipage}{0.6\textwidth}
			\centering
			\dotfill \\
			\textbf{[Name of Committee Member]} \\
			Title \\
			Institution
		\end{minipage}
	\end{flushright}
	
	\vspace{1.5cm}
	
	\begin{flushright}
		\begin{minipage}{0.6\textwidth}
			\centering
			\dotfill \\
			\textbf{[Name of Committee Member]} \\
			Title \\
			Institution
		\end{minipage}
	\end{flushright}
	
	\vspace{1.5cm}

	\vspace{2cm}
	
	\begin{center}
		Date of Defense: \dotfill
	\end{center}
	
	\cleardoublepage
	\thispagestyle{empty}
	
	\begin{center}
		\vspace*{5cm}
		{\Large \textit{To my family, teachers, and all those who believe in the power of knowledge.}}
	\end{center}
	
	\vfill
	
	\begin{center}
		{\large \textit{"Mathematics is the language in which God has written the universe."}} \\
		{\large \textit{-- Galileo Galilei}}
	\end{center}
	
	\cleardoublepage
	\chapter*{Acknowledgments}
	\thispagestyle{empty}
	
	\addcontentsline{toc}{chapter}{Acknowledgments}
	
	I would like to express my deepest gratitude to:
	
	\begin{itemize}
		\item \textbf{Prof. Laurent Bitjoka}, my supervisor, for his invaluable guidance, patience, and unwavering support throughout this research journey. His expertise and encouragement were fundamental to the completion of this work.
		
		\item The members of my doctoral committee for their insightful feedback and constructive criticism that greatly improved this dissertation.
		
		\item The faculty and staff of the Department of Electrical Engineering, Automation, and Energy at the University of Ngaoundere for providing an excellent research environment.
		
		\item My colleagues at the Laboratory of Energy, Signal, Imaging and Automation (LESIA) for stimulating discussions and collaborative spirit.
		
		\item Solutum Engineering, to whom I am grateful for stepping in with timely support at a critical juncture of this work.
		
		\item My family for their unconditional love, support, and patience during the long hours of research and writing.
		
		\item All those who, directly or indirectly, contributed to the realization of this work.
	\end{itemize}
	
	\vspace{1cm}
	
	\begin{flushright}
		\textit{Rigobert Fokam Souop} \\
		Ngaoundere, 2026
	\end{flushright}

	\chapter*{Abstract}
	\addcontentsline{toc}{chapter}{Abstract}
	
	This doctoral dissertation develops a framework for embedding arbitrary connected graphs isometrically into Cayley graphs of Abelian groups, with applications to harmonic analysis on networks. The work addresses the challenge of representing irregular graph-structured data within highly symmetric algebraic hosts, on which classical Fourier theory applies verbatim rather than by analogy. The theoretical core is twofold. First, we introduce binary relations $\varphi$, $\Phi$, and $\Psi$ on graph edges that detect \emph{metric parallelism} --- a strict generalization of the Djokovi\'c--Winkler relation beyond bipartite and partial-cube structures --- together with a \emph{transitive prune} operation that converts these relations into candidate same-generator edge partitions. Second, we prove the \emph{Cocycle/Quotient Labeling Theorem}: any edge partition induces a most-generic consistent vertex labeling as a GF(2) quotient of dimension $k = t - \mathrm{rank}(A)$, where $A$ is the cycle--class parity matrix; the labeling is conflict-free by construction and can fail only by shortcuts, never by stretching. Combined with a shortcut-repair loop whose terminal case is the provably isometric spanning-tree embedding, this yields a universal algorithm: every connected graph $G$ embeds isometrically into a Cayley graph of $\mathbb{Z}_2^{k}$ with $k \leq n-1$, and the algorithm was verified exhaustively --- with independent reconstruction of every host --- on all 995 connected graphs with at most seven vertices. We complement the construction with a bounds theory: $k \geq \max(\mathrm{diam}(G), \lceil \log_2 n \rceil)$ in general; the window $[\max(\mathrm{diam}, \lceil\log_2 n\rceil),\, n-1]$ is attained at both ends; stars satisfy $k_{\min}(K_{1,q}) = \lceil\log_2 q\rceil + 1$, exponentially below the naive dimension; odd cycles require $k = n-1$, so the universal upper bound is tight. Against exhaustive exact search, the algorithm attains the true minimum on 29 of the 30 connected graphs with at most five vertices, with the unique exception completely characterized. Chapter 3 generalizes the quotient machinery from GF(2) to $\mathbb{Z}$ via the Smith Normal Form of the cycle--class matrix, producing embeddings into products of cyclic groups (torus skeletons). The primary application domain is harmonic analysis on networks: embedding graphs into Cayley graphs of Abelian groups provides a rigorous foundation for Fourier analysis, convolution, and wavelet transforms on graph signals, preserving translation--modulation duality, convolution theorems, and Plancherel identities that matrix-based graph signal processing lacks. We name this framework \emph{Group Embedding-based Graph Signal Processing} (GE-GSP), built on the group-embedding graph Fourier transform (GE-GFT) and graph wavelet transform (GE-GWT). Experiments cover structured graph families, the standard graph-signal-processing benchmark domains (rings, paths, grids, circulants, sensor graphs, the Zachary Karate Club), and real-world networks, with every embedding certified isometric. Applications extend to network design, error-correcting codes, parallel computing, and quantum information processing.
	
	\textbf{Keywords:} Graph isometric embedding, Cayley graphs, $\varphi$ relation, $\Phi$ relation, $\Psi$ relation, cocycle condition, quotient labeling, transitive prune, torus skeleton, embedding dimension bounds, harmonic analysis, graph signal processing, Fourier transform, wavelet transform, GE-GSP, GE-GFT, GE-GWT.

	\chapter*{Résumé}
	\addcontentsline{toc}{chapter}{Résumé}
	
	Cette thèse de doctorat développe un cadre pour le plongement isométrique de graphes connexes arbitraires dans les graphes de Cayley de groupes abéliens, avec pour application principale l'analyse harmonique des signaux sur graphes. Le travail aborde le défi de représenter des données à support irrégulier dans des hôtes algébriques hautement symétriques, sur lesquels la théorie de Fourier classique s'applique littéralement et non par simple analogie. Le cœur théorique est double. D'une part, nous introduisons les relations binaires $\varphi$, $\Phi$ et $\Psi$ sur les arêtes, qui détectent le \emph{parallélisme métrique} --- une généralisation stricte de la relation de Djokovi\'c--Winkler au-delà des graphes bipartis et des cubes partiels --- ainsi que l'opération de \emph{séparation transitive} qui convertit ces relations en partitions candidates des arêtes par générateur commun. D'autre part, nous démontrons le \emph{théorème d'étiquetage quotient (condition de cocycle)} : toute partition des arêtes induit un étiquetage cohérent le plus générique comme quotient sur GF(2), de dimension $k = t - \mathrm{rang}(A)$ où $A$ est la matrice de parité cycles--classes ; cet étiquetage est sans conflit par construction et ne peut échouer que par raccourci, jamais par étirement. Combiné à une boucle de réparation des raccourcis dont le cas terminal est le plongement d'arbre couvrant, prouvé isométrique, ceci fournit un algorithme universel : tout graphe connexe $G$ se plonge isométriquement dans un graphe de Cayley de $\mathbb{Z}_2^{k}$ avec $k \leq n-1$, vérifié exhaustivement --- avec reconstruction indépendante de chaque hôte --- sur les 995 graphes connexes d'au plus sept sommets. Nous établissons en outre une théorie des bornes : $k \geq \max(\mathrm{diam}(G), \lceil \log_2 n \rceil)$ en général ; la fenêtre $[\max(\mathrm{diam}, \lceil\log_2 n\rceil),\, n-1]$ est atteinte à ses deux extrémités ; les graphes étoiles vérifient $k_{\min}(K_{1,q}) = \lceil\log_2 q\rceil + 1$, exponentiellement en dessous de la dimension naïve ; les cycles impairs exigent $k = n-1$, la borne supérieure universelle est donc optimale. Face à une recherche exacte exhaustive, l'algorithme atteint le minimum vrai sur 29 des 30 graphes connexes d'au plus cinq sommets, l'unique exception étant complètement caractérisée. Le chapitre 3 généralise la notion de quotient de GF(2) à $\mathbb{Z}$ via la forme normale de Smith de la matrice cycles--classes, produisant des plongements dans des produits de groupes cycliques (squelettes de tore). Le domaine d'application principal est l'analyse harmonique sur les réseaux : le plongement dans les graphes de Cayley de groupes abéliens fonde rigoureusement l'analyse de Fourier, la convolution et les transformées en ondelettes des signaux sur graphes, en préservant la dualité translation--modulation, les théorèmes de convolution et les identités de Plancherel qui font défaut aux méthodes matricielles. Nous désignons ce cadre par \emph{Group Embedding-based Graph Signal Processing} (GE-GSP), fondé sur la transformée de Fourier (GE-GFT) et la transformée en ondelettes (GE-GWT) par plongement de groupe. Les expériences couvrent les familles structurées, les domaines de référence du traitement du signal sur graphes (graphs cycliques, graphs chemins, graphs grilles, graphes circulants, graphes de capteurs, graph du club de karaté de Zachary) et des réseaux réels, chaque plongement étant certifié isométrique. Les applications s'étendent à la conception de réseaux, aux codes correcteurs, au calcul parallèle et au traitement quantique de l'information.
	
	\textbf{Mots-clés:} Plongement isométrique de graphes, graphes de Cayley, relation $\varphi$, relation $\Phi$, relation $\Psi$, condition de cocycle, étiquetage quotient, séparation transitive, squelette de tore, bornes de la dimension du plongement, analyse harmonique, traitement du signal sur graphes, transformée de Fourier, transformée en ondelettes, GE-GSP, GE-GFT, GE-GWT.
	
	\tableofcontents
	\listoffigures
	\listoftables
	\listofalgorithms
	
	\chapter*{List of Symbols and Notation}
	\addcontentsline{toc}{chapter}{List of Symbols and Notation}
	\markboth{LIST OF SYMBOLS}{LIST OF SYMBOLS}
	
	\noindent\textbf{Graphs and metrics}
	\begin{longtable}{p{3.1cm}p{11.5cm}}
		$G=(V,E)$ & finite connected undirected graph, $n=|V|$ vertices, $m=|E|$ edges \\
		$d_G(u,v)$ & shortest-path (geodesic) distance between vertices $u,v$ in $G$ \\
		$\diam(G)$ & diameter of $G$, $\max_{u,v} d_G(u,v)$ \\
		$N_i,\ N(i,k)$ & neighborhood of vertex $i$; vertices within $k$ hops of $i$ \\
		$G\square H$ & Cartesian product of graphs $G$ and $H$ \\
		$Q_p$ & $p$-dimensional hypercube, $Q_p = K_2^{\square p}$ \\
		$C_m,\ P_k,\ K_n$ & cycle on $m$ vertices, path on $k$ vertices, complete graph on $n$ \\
		$K_{r,s}$ & complete bipartite graph \\ $\mathrm{CL}_n$ & circular ladder graph \\
		$C_n(d_1,\dots)$ & circulant graph on $\Z_n$ with connection set $\{\pm d_i\}$ \\
		$\mathrm{idim}(G)$ & isometric (hypercube) dimension of a partial cube \\
	\end{longtable}
	
	\noindent\textbf{Groups, characters, embeddings}
	\begin{longtable}{p{3.1cm}p{11.5cm}}
		$\Gamma$ & finite abelian group, the embedding host; $N=|\Gamma|$ \\
		$\Z_2^k$ & elementary abelian $2$-group of rank $k$ (binary host) \\
		$\Z_{N_1}\!\times\!\cdots\!\times\!\Z_{N_d}$ & general abelian host (product of cyclic groups) \\
		$\Cay(\Gamma,S)$ & Cayley graph of $\Gamma$ with symmetric generating set $S$ \\
		$\phi,\ \lambda$ & isometric embedding $V(G)\to\Gamma$; vertex labeling \\
		$\varepsilon$ & excursion ratio $|V(G)|/|\Gamma|$; $\varepsilon=1$ iff onto \\
		$\widehat{\Gamma}$ & dual group (characters) of $\Gamma$, $\widehat\Gamma\cong\Gamma$ \\
		$\chi_k(g)$ & character $\prod_j \omega_{N_j}^{k_j g_j}$, $\omega_{N}=e^{2\pi i/N}$ \\
		$\nu(G)$ & minimum host order over isometric abelian-Cayley embeddings \\
		$k_{\min}(G)$ & minimum binary dimension over isometric $\Z_2^k$ embeddings \\
	\end{longtable}
	
	\noindent\textbf{Relations, partitions, and the quotient core}
	\begin{longtable}{p{3.1cm}p{11.5cm}}
		$\varphi,\ \Phi,\ \Psi$ & metric-parallelism relations on edges (Chapters 2--3) \\
		$\theta$ & Djokovi\'c--Winkler relation \\
		$\mathcal{P}=\{F_1,\dots,F_t\}$ & partition of $E(G)$ into $t$ generator classes \\
		$A$ & cycle--class matrix: parity over $\F_2$ (Ch.\,2), signed over $\Z$ (Ch.\,3) \\
		$\rho=\mathrm{rank}(A)$ & rank of the cycle--class matrix \\
		$k=t-\rho$ & binary quotient dimension (Quotient Labeling Theorem) \\
		$\mathrm{SNF}$ & Smith Normal Form; diagonal invariants $d_1,\dots,d_\rho$ \\
		$\Gamma_{\mathrm{univ}}$ & universal quotient group $\Z^t/\mathrm{rowlattice}(A)$ \\
	\end{longtable}
	\newpage
	\noindent\textbf{Harmonic analysis on graphs (Part II)}
	\begin{longtable}{p{3.1cm}p{11.5cm}}
		$\Lift,\ \Restr$ & lift $\mathbb{C}^{V(G)}\to\mathbb{C}^\Gamma$ (zero-extend); restriction \\
		$\GFT$ & group graph Fourier transform, $\GFT s = \widehat{\Lift s}$ \\
		$\hat f,\ \hat{\tilde s}$ & Fourier transform on $\Gamma$ \\
		$\tau_h,\ T_h$ & host translation by $h$; graph translation $\Restr\,\tau_h\,\Lift$ \\
		$\mu_k$ & modulation by character $\chi_k$ \\
		$\ast$ & group convolution \\
		$\Filt_a$ & graph filtering operator with kernel $a$ \\
		$\psi_{j,h}$ & group wavelet at scale $j$, center $h$; tight-frame atom \\
		$L=D-A$ & graph Laplacian (spectral GSP comparison); $\lambda_\ell,u_\ell$ its eigenpairs \\
		$\supp(\cdot)$ & support of a signal \\
	\end{longtable}
	
	\noindent\textbf{Standard symbols.} $\F_2$ the two-element field; $\Z$ the
	integers; $\mathbb{C}$ the complex numbers; $\Id$ the identity; $\delta_v$
	the indicator (delta) at vertex $v$; $[\,\cdot\,]$ the Iverson bracket;
	$\lceil\cdot\rceil$ the ceiling; $|\cdot|$ cardinality or word length as
	indicated by context.
	
	\mainmatter
	\chapter*{General Introduction}
	\addcontentsline{toc}{chapter}{General Introduction}
	\label{chap:introduction}
	
	\section{Research Context and Motivation}
	The advent of the digital era has precipitated an unprecedented explosion in the volume and variety of data generated globally \cite{Hey2009}. Unlike traditional data that assumes independence or simple Euclidean geometry, modern complex systems—ranging from social interaction networks and biological protein structures to transportation grids and power systems—produce data that is fundamentally relational and irregular \cite{Barabasi2016,Watts2002,Newman2018}.Such networks span social, biological, and infrastructural domains. The structural properties of these systems, such as small-world and community patterns, were first modeled in seminal works \cite{Watts1998,Hagmann2008b} and have since been explored across social and brain networks \cite{Easley2010,Bullmore2009,Huang2018}. Similarly, computational techniques have advanced from early graph algorithms \cite{Lovasz2012,Klein1991} to modern web-scale frameworks capable of handling streaming and dynamic relational data \cite{Leskovec2014,Muthukrishnan2005,DasSarma2010,Bhattacharya2015}. Graphs provide the most natural mathematical abstraction for representing such data, where nodes denote entities and edges encode relationships or interactions \cite{Bondy1976,West2001,Diestel2017}.
	
	However, the irregularity of real-world graphs presents a challenging obstacle to mathematical analysis \cite{Shuman2013,Ortega2018}. Unlike Euclidean spaces or regular grids, graphs lack a canonical coordinate system, a uniform notion of translation, and a natural basis for harmonic analysis \cite{Hammond2011,Sandryhaila2013}. This irregularity complicates the direct application of classical signal processing tools, such as Fourier transforms, wavelets, and convolution operators, which rely heavily on the underlying symmetries of the data domain \cite{Mallat2009,Katznelson2004}.
	
	This dissertation addresses a fundamental question at the intersection of discrete mathematics and signal processing: \textbf{How can we embed arbitrary graphs into highly symmetric algebraic structures to enable rigorous mathematical analysis of network-structured data?} Our central hypothesis is that by isometrically embedding graphs into Cayley graphs of algebraic groups—specifically Abelian groups—we can inherit the rich analytical machinery from group theory and abstract harmonic analysis for application to irregular graph-structured data \cite{Godsil2001,Rudin1962,Folland1995}.
	
	\begin{figure}[htbp]
		\centering
		\begin{tikzpicture}[
			node distance=2cm,
			box/.style={rectangle, draw=black, fill=blue!10, very thick, minimum size=1cm, align=center},
			arrow/.style={->, >=stealth, very thick}
			]
			\node[box] (irregular) {Irregular Graph\\$G=(V,E)$};
			\node[box, right=3cm of irregular, fill=red!10] (cayley) {Cayley Graph\\$\text{Cay}(\Gamma, S)$};
			\node[below=0.5cm of irregular, text width=4cm, align=center, font=\footnotesize] {Chaotic structure\\No harmonic basis};
			\node[below=0.5cm of cayley, text width=4cm, align=center, font=\footnotesize] {Group symmetry\\Fourier basis available};
			
			\draw[arrow] (irregular) -- node[above, align=center] {Isometric \\ Embedding} node[below] {$\phi: G \hookrightarrow \Gamma$} (cayley);
		\end{tikzpicture}
		\caption{Illustration of the embedding concept: An irregular graph $G$ (left) is isometrically embedded into a symmetric Cayley graph $\text{Cayley}(\Gamma, S)$ (right), enabling harmonic analysis.}
		\label{fig:concept}
	\end{figure}
	
	Our core strategy is \emph{representation by symmetry}. We seek to embed a given graph into a highly symmetric host structure that inherently supports harmonic analysis. The ideal hosts for this purpose are the \textbf{Cayley graphs of Abelian groups} \cite{Godsil2001,Biggs1993,Lubotzky1994}. These graphs generalize regular grids and tori and possess a well-defined algebraic Fourier theory based on group characters \cite{Terras1999,Sternberg2004}. 
	
	However, a graph can often be embedded into many such groups. A naive embedding into a very large group, while theoretically possible, would be computationally inefficient and would dilute the analogy to classical signal processing \cite{Graham1985}. Therefore, the critical challenge becomes finding not just \emph{any} embedding, but the \textbf{minimal isometric embedding}—the one into the smallest possible group (or the one with the smallest dimension) that preserves graph distances exactly \cite{Chepoi2000,Imrich2010}. This pursuit of minimality is the golden thread unifying the theoretical, algorithmic, and applied contributions of this thesis.
	
	The motivation for this research is twofold. From a theoretical perspective, we seek to establish deeper connections between graph theory and group theory through the study of isometric embeddings into Cayley graphs, extending classical results by Sabidussi \cite{Sabidussi1958} and others \cite{Babai1996}. From an applied perspective, we aim to develop practical tools for harmonic analysis on networks, addressing limitations in existing graph signal processing (GSP) methods \cite{Shuman2013,Stankovic2019} while maintaining fidelity to the classical signal processing theory developed by Brillouin, Wiener, and others \cite{Brillouin1953,Wiener1949}; spectral thresholding for denoising follows the classical soft-thresholding principle \cite{Donoho1995}.
	
	\section{Related Work and Positioning}
	
	\subsection{Graph Theory and Metric Geometry}
	The mathematical study of graph embeddings has a rich history intersecting with metric geometry and group theory. Graphs can be viewed as metric spaces where the distance is defined by the length of the shortest path \cite{Bollobas1998,Chartrand2006}. 
	
	\subsubsection{Isometric Embeddings and Product Graphs}
	A fundamental problem is determining when a graph $G$ can be embedded isometrically (distance-preservingly) into a product graph, such as the hypercube $\mathbb{Z}_2^n$ or the torus $\mathbb{Z}_k^n$. Graham and Pollak \cite{Graham1985,Graham1978} initiated the study of embeddings into cubes, leading to the concept of \textit{partial cubes}. A graph is a partial cube if it is isometric to a subgraph of a hypercube \cite{Winkler1984}. These graphs are characterized by the Djoković-Winkler relation $\Theta$, which identifies pairs of edges that lie on a common shortest path of a specific type \cite{Djokovic1973}. This relation partitions the edge set into equivalence classes, each corresponding to a dimension in the hypercube \cite{Imrich2010}.
	
	However, restricting embeddings to hypercubes ($\mathbb{Z}_2^n$) limits the application to binary structures. Generalizing to $\mathbb{Z}_k^n$ (toroidal grids) is more challenging but offers richer structural representations \cite{Chepoi2000,Ekim2011}. Recent works in metric graph theory have expanded these concepts to \textit{partial torus graphs}, characterizing graphs that embed into products of cycles \cite{Chepoi2000,Paul2016}. Despite these advances, the problem of finding the minimal dimension for the smallest Abelian group containing an arbitrary graph remains largely open and computationally difficult \cite{Chiba1985,Hell2004}.
	
	\begin{figure}[htbp]
		\centering
		\begin{tikzpicture}[scale=0.8]
			\coordinate (A) at (0,0,0);
			\coordinate (B) at (2,0,0);
			\coordinate (C) at (2,2,0);
			\coordinate (D) at (0,2,0);
			\coordinate (E) at (0,0,2);
			\coordinate (F) at (2,0,2);
			\coordinate (G) at (2,2,2);
			\coordinate (H) at (0,2,2);
			
			\draw[thick] (A)--(B)--(C)--(D)--cycle;
			\draw[thick] (E)--(F)--(G)--(H)--cycle;
			\draw[thick] (A)--(E) (B)--(F) (C)--(G) (D)--(H);
			
			\node at (0,1.5,-3.5) {\small Hypercube ($Q_3$)};
			
			\coordinate (S1) at (0,0,0);
			\coordinate (S2) at (2,0,0);
			\coordinate (S3) at (2,2,0);
			\coordinate (S4) at (2,2,2);
			
			\draw[ultra thick, red] (S1)--(S2)--(S3)--(S4);
			\fill[red] (S1) circle (2pt) node[left] {a};
			\fill[red] (S2) circle (2pt) node[right] {b};
			\fill[red] (S3) circle (2pt) node[right] {c};
			\fill[red] (S4) circle (2pt) node[above] {d};
			
		\end{tikzpicture}
		\caption{Visualization of a partial cube embedding (red) into a host hypercube $Q_3$.}
		\label{fig:partial_cube}
	\end{figure}
	
	\subsubsection{Cayley Graphs and Group Theory}
	Cayley graphs link group theory to graph theory \cite{Cayley1878}. Given a group $\Gamma$ and a generating set $S$ (inverse-closed, excluding the identity), the Cayley graph $\text{Cay}(\Gamma, S)$ has vertices as group elements and edges connecting $g$ to $gs$ for all $s \in S$ \cite{Godsil2001,Biggs1993}. These graphs are always vertex-transitive, meaning the graph looks identical from the perspective of any vertex \cite{Sabidussi1958}. 
	
	Sabidussi's theorem \cite{Sabidussi1958} states that a graph is a Cayley graph if and only if its automorphism group contains a regular subgroup \cite{Seress2003}. This implies that studying embeddings into Cayley graphs is equivalent to studying group representations that respect the graph metric. While extensive literature exists on recognizing Cayley graphs \cite{Li2002,Godsil1983}, the inverse problem—finding a "small" Cayley graph that contains a given arbitrary graph—is less explored.
	
	\subsection{Harmonic Analysis on Graphs}
	The application of signal processing techniques to graph data has emerged as a vibrant field known as Graph Signal Processing (GSP) \cite{Shuman2013,Ortega2018,Stankovic2019}.
	
	\subsubsection{Spectral Graph Theory}
	The dominant paradigm in GSP is spectral graph theory. Chung \cite{Chung1997} and others established the foundations using the graph Laplacian $\mathcal{L}$. The Graph Fourier Transform (GFT) is defined via the eigenvectors of $\mathcal{L}$ \cite{Shuman2013}. This approach draws analogies to the classical Fourier transform, where eigenvectors represent "frequency" components varying slowly over the graph \cite{Zhang2018}. 
	
	However, spectral methods suffer from significant limitations:
	\begin{itemize}
		\item \textbf{Localization:} Eigenvectors of irregular graphs are typically not localized in space, making interpretation difficult \cite{Hammond2011,Coifman2006}.
		\item \textbf{Shift Invariance:} Unlike Euclidean domains, graphs lack a shift operator, leading to the lack of a generalized convolution theorem \cite{Sandryhaila2013}.
		\item \textbf{Instability:} Small perturbations in graph structure can lead to large changes in the eigenvectors (eigengap problem) \cite{VonLuxburg2007}.
	\end{itemize}
	
	\subsubsection{Alternative Approaches}
	To address these limitations, researchers have explored wavelet transforms on graphs. Hammond et al. \cite{Hammond2011} and Shuman et al. \cite{Shuman2011} designed spectral wavelets using the heat kernel and polynomial approximations of the Laplacian. Another approach is the "Aggregation" method, which builds multiscale representations by graph coarsening \cite{Gavish2010}.
	
	Algebraic Signal Processing (ASP), proposed by Puschel and Moura \cite{Puschel2003, Puschel2008}, provides a rigorous algebraic framework by defining signal models based on algebras. This approach perfectly captures classical signal processing but requires the data domain to support the structure of a module over an algebra \cite{Sandryhaila2014}. This brings us back to the group-theoretic view: if the graph is a Cayley graph, the signal space is naturally a module over the group algebra, allowing for a perfect translation of classical harmonic analysis \cite{Moura2015}.
	
	\begin{figure}[htbp]
		\centering
		\begin{tikzpicture}[
			scale=0.9, transform shape,
			cell/.style={rectangle, draw=black, minimum size=1cm},
			operator/.style={circle, draw=red!80, fill=red!20, very thick}
			]
			\node[cell] (sig1) at (0, 4) {$x[n]$};
			\node[operator] (fft1) at (2.5, 4) {DFT};
			\node[cell] (freq1) at (5, 4) {$X[k]$};
			\node[below=0.2cm of sig1] {\small Time};
			\node[below=0.2cm of freq1] {\small Freq};
			\draw[->, thick] (sig1) -- (fft1);
			\draw[->, thick] (fft1) -- (freq1);
			\node[above=0.5cm of fft1] {\textbf{Classical SP} ($\mathbb{Z}$)};
			
			\node[cell, fill=blue!10] (sig2) at (0, 0) {$f(v)$};
			\node[operator, fill=blue!10, dashed] (lap) at (2.5, 0) {$\mathcal{L}$?};
			\node[cell, fill=blue!10] (freq2) at (5, 0) {$\hat{f}$?};
			\node[below=0.2cm of sig2] {\small Vertex Domain};
			\node[below=0.2cm of freq2] {\small Spectral Domain};
			\draw[->, thick] (sig2) -- (lap);
			\draw[->, thick] (lap) -- (freq2);
			\node[above=0.5cm of lap] {\textbf{Graph SP} (Irregular)};
			
			\draw[<->, dashed, thick, blue] (0.5, -0.4) -- node[below] {Embedding} (4.5, -0.4);
		\end{tikzpicture}
		\caption{The gap between Classical Signal Processing (rigid algebra) and Graph Signal Processing (irregular). This thesis proposes an embedding strategy to align the bottom row with the top row's structure.}
		\label{fig:gsp_comparison}
	\end{figure}
	
	\subsection{Gap and Positioning}
	There is a distinct disconnect between the rich theory of graph embeddings and the practical needs of signal processing. Existing embedding techniques often focus on dimensionality reduction (e.g., MDS \cite{Borg2005}, t-SNE \cite{Maaten2008}) which distorts distances, or theoretical characterizations (partial cubes) that do not generalize to arbitrary Abelian groups \cite{Chepoi2000}. Concurrently, GSP methods often abandon group-theoretic rigor to handle irregularity, resulting in "approximate" convolutions and translations \cite{Sandryhaila2013}.
	
	This dissertation bridges this gap by developing \textbf{minimal isometric embeddings} into Abelian groups \cite{Biggs1993}. By mapping a graph $G$ to a group $\Gamma$, we effectively discretize the graph onto a regular structure. This allows us to treat graph signals as functions on a group, $\Gamma \to \mathbb{C}$, thereby enabling the direct application of harmonic analysis \cite{Katznelson2004}.
	
	\section{Research Objectives and methodological approach}
	
	\subsection{Research Objectives}
	The primary objectives of this research are:
	
	\begin{enumerate}
		\item To develop a comprehensive theory of isometric graph embeddings into Cayley graphs of algebraic structures, with particular focus on finitely generated Abelian groups \cite{Babai1996,Alspach1997}.
		
		\item To introduce novel mathematical relations ($\varphi$, $\Phi$, $\Psi$) that characterize when and how graphs can be embedded into Cayley graphs, effectively generalizing the concept of partial cubes to partial tori \cite{Djokovic1973,Winkler1984,Chepoi2000}.
		
		\item To design efficient algorithms for computing compact embeddings, with proven optimality on identified graph classes (partial cubes, even cycles, grids, the Petersen graph, complete graphs of order $2^t$), a measured optimality profile on exhaustively searchable instances, and certified isometry on every output \cite{Cormen2009}.
		
		\item To apply the embedding framework to harmonic analysis on graphs, establishing rigorous foundations for Fourier analysis, convolution, and wavelet transforms on network-structured data without sacrificing the mathematical properties enjoyed by Euclidean signal processing \cite{Katznelson2004,Mallat2009}.
		
		\item To validate the theoretical framework through algorithmic implementations and applications to real-world networks, demonstrating the utility of the approach in denoising, classification, and compression tasks \cite{Leskovec2007,Newman2003}.
	\end{enumerate}
	
	\subsection{Methodological Approach}
	This research employs an integrated methodology combining \cite{Cormen2009,Folland1995}:
	\begin{itemize}
		\item \textbf{Theoretical Mathematics:} Graph theory \cite{Bondy1976}, group theory \cite{Rotman1995}, representation theory \cite{Fulton1991}, metric geometry \cite{Burago2001}
		
		\item \textbf{Algorithm Design:} Graph algorithms \cite{Cormen2009}, computational complexity \cite{Arora2009}, optimization
		
		\item \textbf{Signal Processing:} Harmonic analysis \cite{Katznelson2004}, Fourier theory \cite{Folland1995}, wavelet theory \cite{Mallat2009}
		
		\item \textbf{Experimental Validation:} Algorithm implementation, performance benchmarking, case studies
	\end{itemize}

	\section{Core contributions}
	
	\subsection{Theoretical Contributions to Graph Embedding Theory}
	\begin{itemize}
		\item \textbf{Novel Binary Relations:} We introduce $\varphi$, $\Phi$, and $\Psi$ relations on graph edges that generalize and extend the classical Djoković-Winkler $\Theta$ relation \cite{Djokovic1973,Winkler1984}. These relations detect structural patterns that determine embeddability into Cayley graphs.
		
		\item \textbf{Torus Skeleton Theorem:} We prove that every connected graph contains a spanning partial torus graph (its \emph{torus skeleton}) that encodes its embedding structure into product groups \cite{Chepoi2000,Imrich2010}.
		
		\item \textbf{Compact Embedding Theory with a Bounds Frontier:} We prove that every connected graph embeds isometrically into a Cayley graph of $\mathbb{Z}_2^{k}$ with $k \leq n-1$, that $k \geq \max(\mathrm{diam}(G), \lceil \log_2 n \rceil)$ always, and that both ends of this window are attained (odd cycles require $k = n-1$; hypercubes, even cycles and complete graphs $K_{2^t}$ attain the lower bound). We further determine the exact minimum for stars, $k_{\min}(K_{1,q}) = \lceil \log_2 q\rceil + 1$. For general graphs the framework computes compact embeddings whose dimension is governed by the rank of the cycle--class parity matrix; exact global minimization is conjectured NP-hard and our algorithm is a heuristic with a fully characterized failure mode \cite{Deza1997}.
		
		\item \textbf{Transitive Prune Operation:} We introduce a novel operation for extracting transitive structure from the $\varphi$ and $\Phi$ relations, enabling the identification of candidate same-generator edge classes.
		
		\item \textbf{Cocycle/Quotient Labeling Theorem and a Universal Algorithm:} We prove that any edge partition induces a most-generic consistent vertex labeling as a GF(2) quotient of dimension $k = t - \mathrm{rank}(A)$, where $A$ is the cycle--class parity matrix; the labeling is conflict-free by construction and can only err by shortcuts, never stretches. Combined with a shortcut-repair loop whose terminal case is the provably isometric naive embedding, this yields the first $\varphi$-based embedding algorithm that succeeds on \emph{every} connected graph, verified exhaustively (with independent reconstruction of every host) on all 995 connected graphs with at most seven vertices.
	\end{itemize}
	
	\subsection{Algorithmic Contributions}
	\begin{itemize}
		\item \textbf{Embedding Algorithms:} We introduce novel algorithms running in $O\bigl(n(n+m) + m^2 + R(2^k m + n^2)\bigr)$ time to compute compact embeddings into binary groups (Chapter 2) and arbitrary Abelian groups (Chapter 3). For partial cubes and structured benchmark families, a single round suffices and the polynomial terms dominate; our asymptotic evaluation strictly adheres to standard computational and complexity models \cite{Cormen2009,Arora2009}.
		
		\item \textbf{Complexity Analysis:} We provide a complete worst-case complexity analysis and derive dedicated optimization strategies to ensure efficient practical implementation.
		
		\item \textbf{Experimental Framework:} We implement the proposed algorithms and validate their performance (Chapter 4) on both synthetic graph families and established real-world networks from the SNAP repository \cite{Leskovec2007}.
	\end{itemize}

	\subsection{Applied Contributions to Harmonic Analysis on Graphs}
	\begin{itemize}
		\item \textbf{Fourier Analysis Framework:} By embedding graphs into Cayley graphs of Abelian groups, we establish a rigorous foundation for Fourier analysis on graphs that preserves fundamental properties of classical Fourier analysis \cite{Katznelson2004,Folland1995} (Chapter 5).
		
		\item \textbf{Wavelet Transform Construction:} We extend the framework to multi-scale analysis through wavelet transforms on graphs, addressing limitations of existing spectral graph theory approaches \cite{Mallat2009,Daubechies1992} (Chapter 6).
		
		\item \textbf{Comparative Analysis:} We demonstrate through theoretical analysis and examples how our group-theoretic approach overcomes limitations of existing graph signal processing methods based on graph matrices \cite{Shuman2013,Hammond2011}.
	\end{itemize}
	
	\subsection{Broader Applications}
	Beyond harmonic analysis, the embedding framework has various applications:
	\begin{itemize}
		\item \cite{Leighton1992} Network design and fault-tolerant architectures
		\item \cite{MacWilliams1977} Error-correcting codes and coding theory
		\item \cite{Dally1990} Parallel computing and load balancing
		\item \cite{Nielsen2010} Quantum information processing
		\item \cite{Hamilton2020} Machine learning and graph representation learning
	\end{itemize}

	\section{Structure of the Manuscript}
	
	This dissertation is organized to guide the reader progressively from the foundational mathematical concepts through theoretical innovations to practical signal processing applications. The manuscript is divided into two main parts reflecting the dual focus on graph embedding theory and its applications to harmonic analysis. The two parts are bookended by a General Introduction and a General Conclusion.
	
	\subsection{General Introduction and Preliminaries}
	The thesis begins with the present General Introduction, which outlines the research context, motivation, and the specific objectives of the work. Subsequently, \textbf{Chapter 1} (Preliminaries) establishes the necessary mathematical framework. This chapter reviews the fundamental concepts in graph theory—including graph products and metric geometry—the algebraic foundations of Cayley graphs and generating sets, and the essential principles of harmonic analysis, specifically Fourier filter banks and wavelets on groups.
	
	\subsection{Part I: Theory of Graph Embeddings}
	The first part of the thesis is dedicated to the development of a comprehensive theory for the isometric embedding of arbitrary graphs into algebraic structures. It consists of the following chapters:
	
	\begin{itemize}
		\item \textbf{Chapter 2: Compact Binary Embeddings} -- This chapter addresses the fundamental case of embedding graphs into binary groups $\mathbb{Z}_2^k$. We introduce the $\varphi$ relation, the transitive prune, and the Cocycle/Quotient Labeling Theorem; present a universal embedding algorithm with a shortcut-repair loop; and establish the bounds theory $\max(\mathrm{diam}\,G, \lceil\log_2 n\rceil) \leq k_{\min}(G) \leq n-1$, with exact values for stars and odd cycles.
		
		\item \textbf{Chapter 3: Minimal Abelian Embeddings (Torus Skeletons)} -- This constitutes the core theoretical contribution. We introduce the advanced $\Phi$ and $\Psi$ binary relations and prove the Torus Skeleton Theorem. This chapter generalizes the embedding framework to arbitrary Abelian groups, enabling a more general representation of graph structures.
		
		\item \textbf{Chapter 4: Applications of part I} – Evaluates embedding algorithms and explores applications beyond harmonic analysis 
	\end{itemize}
	
	\subsection{Part II: Applications to Signal Processing on Graphs}
	The second part of the thesis leverages the embedding theory developed in Part I to address challenges in signal processing on graphs. It demonstrates how the group-theoretic properties of Cayley graphs facilitate advanced harmonic analysis on networks:
	
	\begin{itemize}
		\item \textbf{Chapter 5: Fourier Analysis via Graph Embeddings} -- We develop a rigorous Graph Fourier Transform (GFT) grounded in the algebraic structure of the embedding group. This chapter establishes the convolution theorem and explores duality properties, drawing direct parallels to classical signal processing.
		
		\item \textbf{Chapter 6: Wavelet Analysis via Graph Embeddings} -- Extending the framework to multi-scale representations, this chapter presents the construction of wavelet bases on embedded graphs. We utilize group-theoretic lifting schemes to define scalable transforms suitable for analyzing data at varying resolutions.
		
		\item \textbf{Chapter 7: Applications of part II} -- The practical efficacy of the proposed framework is validated through extensive experimentation. We apply our methods to real-world datasets, including social and biological networks, demonstrating improvements in signal reconstruction and denoising tasks.
	\end{itemize}
	
	\subsection{General Conclusion}
	The thesis concludes with a General Conclusion and Future Work. This final chapter synthesizes the theoretical and practical contributions of the research, discusses their implications for the fields of graph theory and signal processing, and proposes promising avenues for future research, such as extensions to non-Abelian embeddings and the analysis of dynamic graphs.
	
	\section{Broader Significance and Impact}
	This research makes several significant contributions \cite{Babai1996,Shuman2013}:
	
	\begin{itemize}
		\item \textbf{Bridging Mathematical Fields:} Establishes new connections between graph theory \cite{Bondy1976}, group theory \cite{Rotman1995}, and signal processing \cite{Folland1995}.
		
		\item \textbf{Theoretical Foundations:} Provides rigorous mathematical foundations for graph embedding into algebraic structures \cite{Godsil2001}.
		
		\item \textbf{Practical Algorithms:} Develops efficient algorithms with certified isometric outputs, proven optimality on identified graph classes, and clearly characterized heuristic behavior elsewhere \cite{Cormen2009}.
		
		\item \textbf{Applied Framework:} Creates a comprehensive framework for harmonic analysis on networks with applications across science and engineering \cite{Coifman2006}.
		
		\item \textbf{Educational Value:} Presents a cohesive mathematical narrative that can advance education in discrete mathematics and signal processing.
	\end{itemize}
	
	The work presented in this dissertation addresses fundamental questions at the intersection of discrete mathematics and signal processing \cite{Chung1997,Shuman2013}, providing both theoretical insights and practical tools for analyzing the complex network-structured data that characterizes our digital age \cite{Barabasi1999,Newman2003}.
	
	\chapter{Preliminaries}
	\label{chap:preliminaries}
	
	This chapter provides the rigorous mathematical foundation required for the subsequent development of the dissertation. We establish the notation and review fundamental concepts from graph theory, group theory, and abstract harmonic analysis. We pay particular attention to the algebraic structures that facilitate the transition between irregular graph domains and regular harmonic analysis frameworks.
	
	\section{Graph Theory Fundamentals}
	\label{sec:graph-basics}
	
	Throughout this dissertation, we work exclusively with finite, simple, and connected graphs unless explicitly stated otherwise. Let $G = (V, E)$ denote a graph where $V$ is the vertex set and $E \subseteq \binom{V}{2}$ is the edge set \cite{Diestel2017,West2001}.
	
	\subsection{Graph Families and Basic Structures}
	We define several canonical graph families that will serve as building blocks or comparative benchmarks for our embedding theorems.
	
	\begin{itemize}
		\item \textbf{Path and Cycle Graphs:} The path graph $P_n$ is the tree on $n$ vertices with exactly two vertices of degree 1 (leaves) and $n-2$ vertices of degree 2. The cycle graph $C_n$ is obtained by connecting the two leaves of $P_n$, resulting in a 2-regular connected graph \cite{Bondy1976}.
		\item \textbf{Complete and Bipartite Graphs:} The complete graph $K_n$ has an edge between every distinct pair of vertices. A graph is \textit{bipartite} if its vertex set can be partitioned into two sets $V_1$ and $V_2$ such that every edge connects a vertex in $V_1$ to one in $V_2$. The complete bipartite graph $K_{m,n}$ is the bipartite graph with $|V_1|=m$, $|V_2|=n$ and all possible edges between the sets \cite{Harary1969}.
		\item \textbf{Trees and Spanning Trees:} A tree is a connected acyclic graph. A \textit{spanning tree} of a graph $G$ is a subgraph that is a tree and includes all vertices of $G$. The existence of spanning trees is fundamental to connectivity and embedding algorithms \cite{Graham1985}.
		\item \textbf{The Petersen Graph:} A specific, highly symmetric graph on 10 vertices. It serves as a counterexample in many graph theory problems and is the smallest cubic graph of girth 5. It is not a partial cube, illustrating the complexity of embedding arbitrary graphs \cite{Holton1992}.
	\end{itemize}
	
	\begin{figure}[htbp]
		\centering
		\begin{tikzpicture}[scale=1, auto, swap]
			\foreach \x in {0,1,2}
			\foreach \y in {0,1,2} {
				\node[circle, draw, fill=blue!20, inner sep=1.5pt] (n\x\y) at (\x*1.5,\y*1.5) {};
			}
			
			\foreach \y in {0,1,2} {
				\draw (n0\y) -- (n1\y);
				\draw (n1\y) -- (n2\y);
			}
			
			\foreach \x in {0,1,2} {
				\draw (n\x0) -- (n\x1);
				\draw (n\x1) -- (n\x2);
			}
			
			\node[below right] at (0,-0.5) {Grid $P_3 \square P_3$};
			
			
			\begin{scope}[xshift=6cm, yshift=1.5cm]
				\foreach \i in {1,...,5} {
					\node[circle, draw, fill=red!20, inner sep=1.5pt] (u\i) at ({72*(\i-1)+90}:2) {};
				}
				\foreach \i [evaluate=\i as \j using {int(mod(\i,5)+1)}] in {1,...,5}
				\draw[thick] (u\i) -- (u\j);
				
				\foreach \i in {1,...,5} {
					\node[circle, draw, fill=red!20, inner sep=1.5pt] (v\i) at ({72*(\i-1)+90}:1) {};
					\draw (u\i) -- (v\i);
				}
				
				\foreach \i [evaluate=\i as \j using {int(mod(\i+1,5)+1)}] in {1,...,5}
				\draw[thick] (v\i) -- (v\j);
				
				\node[below] at (0,-2) {Petersen Graph};
			\end{scope}
		\end{tikzpicture}
		\caption{Left: A Cartesian product grid graph. Right: The Petersen graph, featuring the characteristic pentagram inner structure.}
		\label{fig:families}
	\end{figure}
	
	\subsection{Graph Operations and Automorphisms}
	Graph operations allow us to construct complex structures from simpler ones. The most relevant for this thesis is the \textbf{Cartesian product}.
	For graphs $G$ and $H$, the Cartesian product $G \square H$ has vertex set $V(G) \times V(H)$. Two vertices $(g, h)$ and $(g', h')$ are adjacent if either $g=g'$ and $hh' \in E(H)$, or $h=h'$ and $gg' \in E(G)$ \cite{Imrich2010,Hammack2012}.
	Iterating products yields $n$-dimensional grids: $P_{k_1} \square \dots \square P_{k_n}$. When the factors are cycles $C_{k_i}$, the result is a toroidal grid, or torus.
	
	The symmetry of a graph is captured by its \textbf{automorphism group} $\text{Aut}(G)$, consisting of all permutations of $V(G)$ that preserve adjacency \cite{Godsil2001}. A graph is \textit{vertex-transitive} if $\text{Aut}(G)$ acts transitively on vertices. Cayley graphs are the canonical examples of vertex-transitive graphs \cite{Sabidussi1958}.
	
	\begin{figure}[htbp]
		\centering
		\begin{tikzpicture}[
			vtx/.style={circle, draw, thick, fill=green!20, minimum size=8mm, inner sep=0pt},
			rot/.style={->, red!80!black, thick, >=Stealth},
			font=\small
			]
			\foreach \i/\ang in {0/90, 1/150, 2/210, 3/270, 4/330, 5/30} {
				\node[vtx] (v\i) at (\ang:2.4) {$v_{\i}$};
			}
			\draw[thick] (v0)--(v1)--(v2)--(v3)--(v4)--(v5)--(v0);
			\foreach \a/\b in {90/150, 150/210, 210/270, 270/330, 330/390, 30/90} {
				\draw[rot, shorten >=3pt, shorten <=3pt] (\a:3.05) arc (\a:\b:3.05);
			}
			\draw[rot, very thick] (-50:0.55) arc (-50:210:0.55);
			\node[red!80!black] at (0,0) {$r$};
			\node[red!80!black, align=center, font=\footnotesize] at (0,-3.75)
			{$C_6=\langle r\rangle$ acts by rotation:\\[1pt]
				$r\colon v_i \mapsto v_{i+1\bmod 6}$ \ (turn by $60^\circ$)};
		\end{tikzpicture}
		\caption{The cyclic group $C_6=\langle r\rangle$ acting on the $6$-cycle
			graph $C_6$ by rotation. The six rim edges form the cycle itself; the
			generator $r$ rotates the hexagon by $60^\circ$, sending each vertex $v_i$
			to its neighbour $v_{i+1}$ (indices mod $6$). Because $r$ carries edges to
			edges, it is a graph automorphism.}
		\label{fig:automorphism}
	\end{figure}
	
	\subsection{Algebraic Representations of Graphs}
	\label{sec:algebraic-graph-theory}
	Beyond the combinatorial definition, graphs admit powerful algebraic representations via matrices. These matrices play a central role in Graph Signal Processing (GSP) and spectral graph theory.
	
	Let $A$ be the \textbf{adjacency matrix} of a graph $G$ on $n$ vertices, defined by:
	\[
	A_{ij} = \begin{cases} 
		1 & \text{if } \{v_i, v_j\} \in E(G), \\
		0 & \text{otherwise}.
	\end{cases}
	\]
	
	Let $D$ be the \textbf{degree matrix}, a diagonal matrix where $D_{ii} = \deg(v_i)$. The \textbf{Laplacian matrix} $L$ is defined as $L = D - A$. This operator is central to spectral graph theory and the definition of the Graph Fourier Transform (GFT) \cite{Chung1997,Cvetkovic2010}. The normalized Laplacian is often defined as $\mathcal{L} = D^{-1/2} L D^{-1/2}$.
	
	These matrices satisfy:
	\begin{itemize}
		\item $A$ encodes the immediate connectivity.
		\item $L$ acts as a difference operator: $(Lf)_i = \sum_{j \sim i} (f(i) - f(j))$.
		\item The eigenvectors of $L$ provide an orthonormal basis for signals on $G$, generalizing the Fourier basis to graphs \cite{Chung1997}.
	\end{itemize}
	
	For readers unfamiliar with the formal treatment of graph products and embeddings, we recommend the comprehensive work by Hammack \cite{Hammack2012} and the classic texts by Bondy and Murty \cite{Bondy1976} or West \cite{West2001}.
	
	\subsection{Isometric and Graph Embeddings}
	A fundamental notion in metric graph theory is the preservation of distances.
	
	\begin{definition}[Graph Metrics]
		The \textbf{shortest path distance} $d_G(u,v)$ is the length of the shortest path connecting $u,v \in V(G)$. The metric space $(V, d_G)$ defines the \textbf{graph metric} \cite{Chartrand2006}.
	\end{definition}
	
	\begin{definition}[Isometric Embedding]
		A graph $G$ embeds \textbf{isometrically} into $H$ if there exists a mapping $\phi: V(G) \to V(H)$ such that $d_H(\phi(u), \phi(v)) = d_G(u,v)$ for all $u,v \in V(G)$ \cite{Burago2001}.
	\end{definition}
	
	A particularly important class of graphs that embed into hypercubes $Q_n$ (the Cartesian product of $n$ copies of $K_2$) are the \textbf{partial cubes} \cite{Djokovic1973}. These graphs correspond to binary strings where the Hamming distance equals the graph distance. While powerful, the partial cube restriction is limiting; this thesis generalizes this to Abelian groups (products of cycles), known as partial torus graphs \cite{Chepoi2000}.
	
	\section{Group Theory and Symmetry}
	\label{sec:group-theory}
	
	We assume the reader is familiar with basic group definitions. For a comprehensive introduction to group theory, including homomorphisms, actions, and structure theorems, we refer the reader to the standard texts by Rotman \cite{Rotman1995}, Dummit and Foote \cite{Dummit2004}, and Herstein \cite{Herstein1975}. Here we focus on aspects relevant to signal processing and graph symmetry.
	
	\subsection{Finite Abelian Groups}
	A group $\Gamma$ is \textbf{Abelian} (commutative) if $gh = hg$ for all $g,h \in \Gamma$. The Fundamental Theorem of Finite Abelian Groups states that any such group is isomorphic to a direct product of cyclic groups of prime-power order \cite{Rotman1995}:
	\[
	\Gamma \cong \mathbb{Z}_{n_1} \times \dots \times \mathbb{Z}_{n_d}.
	\]
	In the context of signal processing, we often view $\Gamma$ as the index set for a signal. The additive group $\mathbb{Z}_N$ (cyclic) and the vector space $\mathbb{Z}_2^n$ (hypercube) are the most common examples.
	
	\subsection{Representations of Finite Groups}
	To perform Fourier analysis on groups, we utilize representation theory \cite{Serre1977}. For deeper study of harmonic analysis on finite groups, see Folland \cite{Folland1995} or Terras \cite{Terras1999}.
	
	\begin{definition}[Unitary Representation]
		A representation of a finite group $G$ is a homomorphism $\rho: G \to U(V)$, where $U(V)$ is the group of unitary operators on a finite-dimensional Hilbert space $V$ (usually $\mathbb{C}^n$). We say $\rho$ is \textbf{unitary}.
	\end{definition}
	
	For Abelian groups, every irreducible representation is 1-dimensional. These 1D representations are precisely the \textbf{characters} of the group \cite{Folland1995}. Let $\widehat{\Gamma}$ denote the set of irreducible characters. For $\Gamma = \mathbb{Z}_N$, the characters are $\chi_k(n) = e^{-i 2\pi k n / N}$.
	
	\begin{theorem}[Peter-Weyl Theorem (Finite Group Version)]
		The matrix coefficients of the irreducible representations of a compact (or finite) group form an orthonormal basis for the space of square-integrable functions on the group \cite{PeterWeyl1927, Sugiura1975}. For finite Abelian groups, this reduces to the orthogonality of characters.
	\end{theorem}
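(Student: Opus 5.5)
We prove the finite-group statement in full and then explain what changes for compact groups. Equip $L^2(G)$ with the normalized inner product $\langle f_1,f_2\rangle=\frac{1}{|G|}\sum_{g}f_1(g)\overline{f_2(g)}$, or with Haar probability measure in the compact case. The precise claim is then: for a set $\{\rho\}$ of pairwise inequivalent irreducible unitary representations, one from each class, with orthonormal bases chosen in each $V_\rho$, the functions $\sqrt{d_\rho}\,\rho_{ij}$ form an orthonormal basis of $L^2(G)$, where $d_\rho=\dim V_\rho$. This splits into \emph{orthonormality}, via the Schur orthogonality relations, and \emph{completeness}.

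\textbf{Orthonormality.} Fix irreducible unitary representations $\rho,\sigma$ and any linear map $T:V_\sigma\to V_\rho$. Form the averaged operator
\[
\tilde T=\frac{1}{|G|}\sum_{g}\rho(g)\,T\,\sigma(g)^{-1}.
\]
Then $\tilde T$ intertwines $\sigma$ and $\rho$. By Schur's lemma, $\tilde T=0$ if $\rho\not\cong\sigma$, and $\tilde T=\frac{\mathrm{tr}\,T}{d_\rho}\Id$ if $\rho=\sigma$; the trace is computed by noting that averaging preserves it. Take $T=E_{jl}$, the elementary matrix in the chosen bases, and use unitarity $\sigma(g)^{-1}=\sigma(g)^{*}$. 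Reading off matrix entries gives
\[
\langle \rho_{ij},\sigma_{kl}\rangle=\frac{1}{d_\rho}\,\delta_{\rho\sigma}\,\delta_{ik}\,\delta_{jl}.
\]
This is exactly orthonormality of $\{\sqrt{d_\rho}\rho_{ij}\}$. It also shows there are only finitely many classes, since $\sum_\rho d_\rho^2\le |G|$.

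\textbf{Completeness.} I expect this to be the main obstacle, and I would use the left regular representation $\lambda$ on $L^2(G)$. Let $W\subseteq L^2(G)$ be the orthogonal complement of the span $M$ of all matrix coefficients, and suppose $W\neq 0$. The space $M$ is invariant under right translation $R$, because $\rho_{ij}(gh)=\sum_k\rho_{ik}(g)\rho_{kj}(h)$; since $R$ is unitary, $W$ is $R$-invariant too. Pick a nonzero $R$-invariant irreducible subspace $U\subseteq W$; it exists by finite dimensionality. Let $\rho$ be the representation $R|_U$ and $f_1,\dots,f_d$ an orthonormal basis of $U$. Then
\[
f_j(h)=(R(h)f_j)(e)=\sum_i \rho_{ij}(h)\,f_i(e),
\]
so each $f_j$ lies in $M$. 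Hence $f_j\in M\cap W=0$, a contradiction. Equivalently, comparing dimensions yields $\sum_\rho d_\rho^2=|G|$.

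\textbf{Compact groups.} Orthonormality goes through verbatim with Haar integration replacing the average. For completeness, the obstacle is that nonzero closed invariant subspaces of $W$ need not be finite-dimensional a priori. For $0\neq f\in W$, choose a real symmetric approximate identity $k$, i.e.\ a continuous function with $k(g^{-1})=k(g)$. Convolution with $k$ is a self-adjoint Hilbert--Schmidt (hence compact) operator commuting with right translations. By the spectral theorem it has a nonzero eigenvalue on $W$ whose eigenspace is finite-dimensional and $R$-invariant, and $k*f\neq 0$ for $k$ concentrated near $e$. The finite-dimensional argument above then applies to that eigenspace.

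\textbf{Abelian case.} When $G$ is abelian, Schur's lemma forces every irreducible representation to be one-dimensional. The matrix coefficients are then exactly the characters with $d_\rho=1$, so the statement reduces to orthonormality and completeness of $\widehat{\Gamma}$, as claimed at the end of the theorem.
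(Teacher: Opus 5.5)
The paper gives no proof of this statement. It quotes the result as classical and cites Peter--Weyl and Sugiura, so your proposal supplies an argument the text lacks.

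Your route is the standard one, and it is correct:
\begin{itemize}
\item Schur's lemma applied to the averaged intertwiner $\tilde T$ gives the orthogonality relations.
\item The right regular representation gives completeness in the finite case.
\item A compact self-adjoint convolution operator reduces the compact case to finite-dimensional invariant subspaces.
\end{itemize}
The citation covers the compact case at no cost. Your argument gives a self-contained proof of the finite case, which is the only case the thesis actually uses (finite abelian hosts, cf.\ Theorem~\ref{thm:orthogonality}).

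Your formulation also fixes an imprecision in the statement as printed. The raw coefficients $\rho_{ij}$ are not orthonormal for the inner product $\sum_g f(g)\overline{h(g)}$ of Section~\ref{sec:harmonic-analysis}; there $\|\rho_{ij}\|^2=|G|/d_\rho$. Moreover, ``the irreducible representations'' must mean one unitary representative per class, with fixed orthonormal bases. Your normalized measure and the factor $\sqrt{d_\rho}$ are exactly what is needed.

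A few steps deserve one more line each.
\begin{itemize}
\item You define $M$ as the span of \emph{all} matrix coefficients, while the orthonormal family uses fixed representatives. Note that the coefficients of an equivalent unitary representation, in any basis, are linear combinations of the representative's coefficients, so the family does span $M$.
\item You announce the left regular representation but then, correctly, use right translations $R$ throughout.
\item In the compact case, commuting with right translations does not by itself let you restrict $K\colon f\mapsto k*f$ to $W$. What does is that $M$ is also stable under left convolution, $k*\rho_{ij}=\sum_l\bigl(\int k(h)\rho_{il}(h^{-1})\,dh\bigr)\rho_{lj}$, so $K(W)\subseteq W$ by self-adjointness. Alternatively, skip the restriction: every eigenspace of $K$ for a nonzero eigenvalue is finite-dimensional, $R$-invariant, and hence contained in $M$. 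So $f\perp M$ forces $f\in\ker K$, contradicting $k*f\neq 0$.
\item Evaluating $f_j$ at $e$ is meaningless for $L^2$ classes. It becomes legitimate once you note that eigenfunctions with eigenvalue $\mu\neq 0$ are continuous, since $f=\mu^{-1}k*f$.
\end{itemize}
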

	
	This theorem guarantees that any signal on the group can be decomposed into a sum of irreducible frequency components, providing the justification for the Fourier transform.
	
	\section{Harmonic Analysis on Groups and Wavelets}
	\label{sec:harmonic-analysis}
	
	This section establishes the signal processing machinery. We treat signals as functions $f: \Gamma \to \mathbb{C}$. We explicitly bridge the gap between conventional Discrete Signal Processing (DSP) and signal processing on algebraic groups.
	
	\subsection{The Left Regular Representation and Translation}
	The natural action of the group on the space of signals is the \textbf{Left Regular Representation}.
	
	Let $L^2(\Gamma)$ be the Hilbert space of complex-valued functions on $\Gamma$ with inner product $\langle f, h \rangle = \sum_{g \in \Gamma} f(g)\overline{h(g)}$. The translation (or shift) operator $T_h: L^2(\Gamma) \to L^2(\Gamma)$ is defined by:
	\[
	(T_h f)(g) = f(h^{-1}g).
	\]
	In additive notation for Abelian groups, this simplifies to $(T_h f)(g) = f(g-h)$. The set $\{T_h\}_{h \in \Gamma}$ forms a unitary representation of $\Gamma$ on $L^2(\Gamma)$ \cite{Rudin1962}.
	
	\textbf{Link to DSP:} In 1D signal processing, the shift operator is represented by the permutation matrix $S$. For the cyclic group $\mathbb{Z}_N$, the operator $T_1$ corresponds exactly to the circulant shift matrix $S$, where $S_{ij} = 1$ if $j = i+1 \pmod N$. Thus, the group-theoretic translation generalizes the time-shift operation in DSP to arbitrary domains.
	
	\subsection{Convolution and the Convolution Theorem}
	Convolution generalizes the moving average or filtering operation to groups.
	
	\begin{definition}[Convolution]
		For $f, h \in L^2(\Gamma)$, the convolution $f * h$ is defined as:
		\[
		(f * h)(g) = \frac{1}{|\Gamma|} \sum_{x \in \Gamma} f(x) h(g - x).
		\]
	\end{definition}
	
	The power of harmonic analysis stems from the \textbf{Convolution Theorem}.
	
	\begin{theorem}[Convolution Theorem]
		Convolution in the time (vertex) domain corresponds to pointwise multiplication in the frequency domain.
		\[
		\widehat{f * h}(\chi) = \hat{f}(\chi) \hat{h}(\chi), \quad \forall \chi \in \widehat{\Gamma}.
		\]
	\end{theorem}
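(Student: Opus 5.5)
The plan is a direct computation from the definitions of convolution and of the Fourier transform on $\Gamma$. For a character $\chi \in \widehat{\Gamma}$ I take $\hat f(\chi) = \sum_{g\in\Gamma} f(g)\overline{\chi(g)}$ as the working definition. The normalization must be matched to the $\frac{1}{|\Gamma|}$ already built into the convolution, so I will fix it at the end. The other ingredient is the multiplicativity of characters, $\chi(a+b)=\chi(a)\chi(b)$, which holds because each $\chi$ is a one-dimensional representation, i.e.\ a homomorphism $\Gamma\to U(1)$.

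First, I expand
\[
\widehat{f*h}(\chi) = \sum_{g\in\Gamma}\frac{1}{|\Gamma|}\sum_{x\in\Gamma} f(x)\,h(g-x)\,\overline{\chi(g)} .
\]
The sums are finite, so I can interchange them freely. I then substitute $y = g - x$. For fixed $x$, this is a bijection $\Gamma\to\Gamma$ because translation is a permutation of the group.

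Second, I write $g = x+y$ and use $\overline{\chi(x+y)}=\overline{\chi(x)}\,\overline{\chi(y)}$. The double sum then factors:
\[
\widehat{f*h}(\chi)=\frac{1}{|\Gamma|}\Bigl(\sum_{x} f(x)\overline{\chi(x)}\Bigr)\Bigl(\sum_{y} h(y)\overline{\chi(y)}\Bigr).
\]

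The only real obstacle is the stray factor $\frac{1}{|\Gamma|}$. I will resolve it by adopting the normalized transform $\hat f(\chi)=\frac{1}{|\Gamma|}\sum_g f(g)\overline{\chi(g)}$, consistent with the normalized convolution. Both sides then carry $|\Gamma|^{-2}$, and the identity $\widehat{f*h}=\hat f\,\hat h$ holds exactly. Under the unnormalized convention the identity holds only up to the constant $|\Gamma|^{-1}$, and I will state this explicitly so the later chapters use a consistent convention.

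Finally, abelianness is what makes the result clean. Commutativity is not needed for the factorization step itself. It is needed so that the characters exhaust the irreducible representations (by the Peter--Weyl statement above), which makes the pointwise product on $\widehat{\Gamma}$ a complete description of convolution.
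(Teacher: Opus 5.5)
Your proof is correct and follows essentially the same route the paper takes for its later version of this result (Theorem~\ref{thm:convolution}): the change of variable $y=g-x$, together with the multiplicativity $\chi(x+y)=\chi(x)\chi(y)$, factors the double sum. Your normalization $\hat f(\chi)=\frac{1}{|\Gamma|}\sum_{g}f(g)\overline{\chi(g)}$ is exactly the one that makes this constant-free identity hold with the $\frac{1}{|\Gamma|}$-normalized convolution of Chapter~1, whereas the paper's Chapter~5 pairing (unnormalized convolution, unitary transform) yields the factor $\sqrt N$ instead.
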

	Furthermore, using the orthogonality relations and \textbf{Schur's Lemma}, one can show that the translation operators are diagonalized by the Fourier transform \cite{Terras1999}.
	
	\textbf{Link to DSP:} The characters $\chi$ form the columns of the Discrete Fourier Transform (DFT) matrix $\mathbf{F}$. The dual object of the group $\widehat{\Gamma}$ is precisely the set of Fourier basis vectors. The Convolution Theorem justifies the use of frequency-domain filtering (e.g., low-pass filters for noise removal) in image processing.
	
	\subsection{Signals on $\mathbb{Z}$ and Periodic Extension}
	To utilize group-theoretic harmonic analysis, we must map finite discrete signals to function spaces on groups. Consider a finite 1D discrete signal $x[n]$ defined for $n \in \{0, \dots, N-1\}$.
	
	If $x[n]$ is periodic with period $N$, it naturally corresponds to a function $f: \mathbb{Z}_N \to \mathbb{C}$. However, if $x[n]$ is non-periodic (finite support), we must apply a \textbf{periodic extension} scheme to define it on a group \cite{Oppenheim2009}:
	
	\begin{enumerate}
		\item \textbf{Wraparound (Direct Periodic Extension):} Define $x[n + kN] = x[n]$. This implicitly constructs a cycle graph $C_N$ where the "end" connects to the "start". This is the standard assumption for the DFT.
		\item \textbf{Zero-Padding:} Define $x[n] = 0$ for $n \notin \{0, \dots, N-1\}$ followed by periodic extension. This treats the signal as having support on a larger group.
		\item \textbf{Symmetric Extension:} Define $x[-n] = x[n]$ (even/odd extension) before wrapping. This is often used in filter banks to avoid boundary artifacts.
	\end{enumerate}
	
	\begin{figure}[htbp]
		\centering
		\begin{tikzpicture}
			\node[circle,draw,fill=blue!10] (n0) at (0,0) {0};
			\node[circle,draw,fill=blue!10] (n1) at (1.5,0) {1};
			\node[circle,draw,fill=blue!10] (n2) at (3,0) {2};
			\node (dots) at (4.5,0) {$\dots$};
			\node[circle,draw,fill=blue!10] (nN1) at (6,0) {$N-1$};
			
			\draw (n0) -- (n1) -- (n2) -- (dots) -- (nN1);
			\draw[thick, red, ->] (nN1) to[out=45,in=135] (n0);
			
			\node[below=1cm of n2] {Linear Domain $\{0, \dots, N-1\}$};
			\node[above=1cm of nN1] {Group $\mathbb{Z}_N$ (Wraparound)};
		\end{tikzpicture}
		\caption{Visualization of the periodic extension (wraparound) mapping a 1D finite sequence to a cyclic group $\mathbb{Z}_N$.}
		\label{fig:periodic-extension}
	\end{figure}
	
	\subsection{2D Images and the Torus}
	Consider a 2D discrete signal (image) $I[m,n]$ with dimensions $M \times N$. Its domain is $\{0,\dots,M-1\} \times \{0,\dots,N-1\}$.
	
	\begin{itemize}
		\item If we apply 2D periodic extension (wraparound in both dimensions), the image corresponds to a function on the direct product group $\Gamma = \mathbb{Z}_M \times \mathbb{Z}_N$.
		\item The Cayley graph of this group with the generating set $\{\pm e_1, \pm e_2\}$ (shifts right/left and up/down) is the \textbf{Torus} graph $T_{M,N} = C_M \square C_N$.
		\\item On this torus, the right neighbor of pixel $(M-1, y)$ is $(0, y)$, and the top neighbor of $(x, 0)$ is $(x, N-1)$.
	\end{itemize}
	
	This structure is fundamental in image processing. The Discrete Cosine Transform (DCT) and DFT used in JPEG compression implicitly rely on this toroidal topology (or extensions thereof). By embedding a graph into such a torus, we allow the application of these standard image processing tools to irregular graph data.
	
	\begin{figure}[htbp]
		\centering
		\begin{tikzpicture}[scale=0.8]
			\draw[step=1cm, gray!30, thin] (0,0) grid (4,3);
			\node at (2, 1.5) {Image};
			
			\draw[thick, blue, ->] (4, 1.5) -- (4.2, 1.5);
			\draw[thick, blue, ->] (-0.2, 1.5) -- (0, 1.5);
			\draw[thick, red, ->] (2, 3) -- (2, 3.2);
			\draw[thick, red, ->] (2, -0.2) -- (2, 0);
			
			\node[below=0.2cm] at (2,0) {Topology: Torus $C_M \square C_N$};
		\end{tikzpicture}
		\caption{Schematic of a 2D grid with periodic boundary conditions (wraparound), forming a Torus.}
		\label{fig:torus-topology}
	\end{figure}
	
	\subsection{Wavelet Transforms and Filter Banks}
	While Fourier analysis provides global frequency information, wavelet analysis provides multi-scale localization. On finite groups, wavelet transforms are constructed via the theory of \textbf{Multiresolution Analysis (MRA)} \cite{Mallat2009, Daubechies1992}. An MRA on a group $\Gamma$ consists of a nested sequence of subspaces $V_j \subset V_{j-1} \subset \dots \subset L^2(\Gamma)$. These subspaces are generated by the scaling of a "scaling function" $\phi$. The difference spaces $W_j = V_{j-1} \ominus V_j$ contain the "detail" information, spanned by wavelets $\psi$.
	
	\textbf{MRA in DSP and Sampling Theory:} In conventional digital signal processing, the MRA is implemented using \textbf{Filter Banks} combined with sampling operations. The analysis bank splits the signal into approximation (low-pass) and detail (high-pass) components via filters $H_0$ and $H_1$, respectively. A critical component of this architecture is the \textbf{downsampling} operator ($\downarrow 2$), which reduces the sampling rate by a factor of 2. This operation effectively keeps every other sample, implementing the transition from the scale space $V_{j}$ to the coarser space $V_{j-1}$. Conversely, the synthesis bank utilizes \textbf{upsampling} ($\uparrow 2$), which inserts zeros between samples, before the synthesis filters $G_0$ and $G_1$ reconstruct the signal. The perfect interplay between these filters and the sampling operators is what allows for lossless compression and reconstruction in traditional wavelet theory.
	
	\begin{figure}[htbp]
		\centering
		\begin{tikzpicture}[auto, node distance=2cm, thick]
			
			\node [draw, align=center] (input) {Input\\Signal};
			
			\node [draw, right of=input] (down) {$\downarrow 2$};
			
			\node [draw, right of=down, align=center] (analysis) {Analysis\\Filter $H_0$};
			\node [draw, below of=analysis, align=center] (analysis2) {Analysis\\Filter $H_1$};
			
			\node [draw, right of=analysis, align=center] (low) {Low\\Pass};
			\node [draw, right of=analysis2, align=center] (high) {High\\Pass};
			
			\draw [->] (input) -- (down);
			\draw [->] (down) -- (analysis);
			\draw [->] (down) |- (analysis2);
			\draw [->] (analysis) -- (low);
			\draw [->] (analysis2) -- (high);
			
			\node [right=1cm of low, dashed, draw, align=center] (proc) {Processing};
			\draw [->] (low) -- (proc);
			\draw [->] (high) -- (proc);
			
			\node [draw, right=1cm of proc, align=center] (synth) {Synthesis\\Filter $G_0$};
			\node [draw, above=1cm of synth, align=center] (synth2) {Synthesis\\Filter $G_1$};
			
			\node [draw, right of=synth] (up) {$\uparrow 2$};
			\node [draw, right of=up, align=center] (output) {Recon.\\Signal};
			
			\draw [->] (proc) -- (synth);
			\draw [->] (proc) |- (synth2);
			\draw [->] (synth) -- (up);
			\draw [->] (synth2) -- (up);
			\draw [->] (up) -- (output);
		\end{tikzpicture}
		\caption{Schematic of a Two-Channel Filter Bank for Wavelet Transform on Groups, illustrating the critical downsampling ($\downarrow 2$) and upsampling ($\uparrow 2$) stages.}
		\label{fig:filterbank}
	\end{figure}
	
	\textbf{The Challenge of Sampling in Matrix-Based GSP:} In the standard Graph Signal Processing (GSP) framework, where signals are viewed as vectors in $\mathbb{R}^N$ and operations are defined via the graph Laplacian or adjacency matrix, defining a rigorous sampling theory is notoriously difficult. Unlike regular lattices $\mathbb{Z}$ or $\mathbb{Z}^n$, arbitrary graphs lack a uniform structure or a natural notion of "shift." Consequently, concepts like "downsampling by factor 2" are not well-defined algebraically. 
	
	Existing approaches in the literature often resort to \textbf{graph coloring} (specifically bipartitioning) to define downsampling sets, or treat sampling as a generic vertex selection problem \cite{Narang2012, Shuman2013, Chen2015}. However, these methods often lead to distortions in the graph topology (e.g., inducing subgraphs) and complicate the design of perfect reconstruction filter banks. The lack of a natural algebraic group structure in these matrix-based frameworks makes it challenging to guarantee the preservation of spectral properties across scales \cite{Sandryhaila2013}.
	
	\textbf{Sampling via Group Embedding:} In the context of this thesis, we overcome these limitations by embedding an arbitrary graph into a Cayley graph of an Abelian group (e.g., $\mathbb{Z}_2^p$). By mapping graph vertices to elements of a group, we inherit the algebraic structure necessary for rigorous sampling. Downsampling on a group is a well-defined operation (e.g., taking every other element or selecting a subgroup), which induces a natural MRA. This allows us to define wavelets on the graph by "pulling back" the wavelets defined on the group \cite{Hammond2011, Shuman2013}. Thus, our embedding scheme transforms the ill-posed problem of sampling on an irregular graph into the well-posed problem of sampling on a regular algebraic structure.
	
	\subsubsection{The Two-Scale Relation}
	A fundamental concept in the construction of wavelets, particularly on groups, is the \textbf{two-scale relation} (or dilation equation). It links the scaling function $\phi(x)$ at one scale to its translated copies at a finer scale.
	\[
	\phi(x) = \sqrt{2} \sum_{k \in \Gamma} h_k \phi(2x - k)
	\]
	Here, $2x$ represents a dilation operation (which, on a finite cyclic group $\mathbb{Z}_N$, corresponds to a mapping $x \mapsto 2x \pmod N$). The coefficients $h_k$ form the low-pass filter $H_0$ discussed in the filter bank section.
	
	This equation implies that the approximation space $V_0$ (spanned by $\phi(x-k)$) is a subset of the finer space $V_1$ (spanned by $\phi(2x-k)$). The existence of such scaling functions on discrete groups is not trivial; it requires the group structure to be compatible with the dilation map \cite{Kovacevic1996}. In our work, by embedding arbitrary graphs into Abelian groups, we ensure that such dilation structures (modulo the group order) are well-defined, allowing us to construct valid MRA systems for graph signals.
	
	\part{Graph Embedding Theory}
	\label{part:intro-part-I}
	\partintro{This part constructs the theoretical and algorithmic core of the dissertation: the quest for a \textbf{minimal isometric embedding} of a graph into an Abelian group. As motivated in the general introduction, such an embedding is the essential prerequisite for transferring harmonic analysis to the graph. However, not all embeddings are equal; an embedding into an unnecessarily large group is computationally burdensome and dilutes the structural analogy to classical settings. This part presents the framework developed for the minimal embedding of a graph into a group. We introduce three novel binary relations on graph edges based on metric conditions that captures local isometric structure. Using these relations, we develop a comprehensive framework for isometrically embedding arbitrary connected graphs into Cayley graphs of finite Abelian groups. Our approach provides constructive existence proofs, efficient algorithms, and deep connections to geometric group theory, algebraic combinatorics, and topological graph theory. We establish fundamental structural properties of each relation, characterize its behavior under graph operations, prove complexity-theoretic bounds, and demonstrate explicit completions of classical graphs including the Petersen graph. The first chapter of this part focuses on graph embedding into the binary group $\mathbb{Z}_2^p$ whereas the second chapter investigates the embedding of graphs into general Abelian groups. Applications to network design, coding theory, and quantum information are discussed in the last chapter of this part, and the second part of the dissertation applies this framework to harmonic analysis of signals on graphs. Throughout these chapters, theoretical insights are illustrated with explicit examples and diagrams, and algorithmic claims are supported by detailed complexity analysis. The result is a self-contained, rigorous toolbox for representing complex, irregular graph data within simple, symmetric algebraic structures—a crucial step toward enabling advanced mathematical analysis on networks.}

	\chapter{Compact Isometric Embedding into Binary Groups $\mathbb{Z}_2^k$}
	\label{chap:binary-embedding}
	
	\section{Introduction and Motivation}
	
	The quest for embedding graphs into hypercubes is one of the most fundamental
	and well-studied problems in metric graph theory \cite{Ovchinnikov2008,
		BandeltChepoi2008}.  Hypercubes, as Cayley graphs of the binary group
	$\Z_2^k$, possess a structure that is simultaneously combinatorially rich and
	algebraically simple, making them ideal hosts for parallel computing
	architectures and error-correcting codes \cite{Harary1969,Hayes2006}.
	However, the theoretical possibility of embedding a graph into a Cayley graph
	of $\Z_2^k$ does not by itself guarantee practical utility: the naive
	embedding of Theorem~\ref{thm:naive} yields a host group of order $2^{n-1}$,
	rendering any downstream harmonic-analysis algorithm computationally
	intractable.
	
	This chapter develops the theory of \textbf{compact isometric embeddings} of
	a connected graph $G$ into Cayley graphs $\Cay(\Z_2^k, S)$, where the
	generating set $S$ is \emph{not} restricted to the standard basis.  We seek a
	small dimension $k$; the host order $2^k$ then controls the cost of every
	Fourier-analytic operation of Part~II.
	
	The theoretical contribution is a \textbf{two-layer framework} flanked by a
	\textbf{bounds theory}.
	
	In the first layer we identify which edges of $G$ should carry the same
	generator.  For this we introduce a binary relation $\varphi$ on $E(G)$ that
	captures \emph{metric parallelism} --- a strict generalization of the
	classical Djokovi\'c--Winkler relation $\theta$ \cite{Djokovic1973,
		Winkler1984}.  The \textbf{transitive prune} operation extracts from
	$\varphi$ a partition $\mathcal{P}$ of $E(G)$ into candidate same-generator
	classes.
	
	In the second layer we convert \emph{any} partition $\mathcal{P}$ of $E(G)$
	into a concrete vertex labeling in $\Z_2^k$.  The key result, developed in
	Section~\ref{sec:cocycle}, is the \textbf{Cocycle/Quotient Theorem}: the most
	generic consistent labeling is the GF(2) quotient of $\Z_2^{t}$
	($t = |\mathcal{P}|$) by the cycle-space relations of $\mathcal{P}$, giving
	dimension $k = t - \mathrm{rank}_{\F_2}(A)$, where $A$ is the cycle--class
	parity matrix.  This theorem (a)~makes the labeling provably conflict-free,
	(b)~subsumes the partial-cube cut paradigm as a special case, (c)~yields a
	one-line proof of the naive $n-1$ upper bound, and (d)~explains why the
	Petersen and Pappus graphs admit compact binary embeddings that cut-based
	approaches cannot find.  Together with a \textbf{shortcut-repair loop}
	(Section~\ref{sec:phi-algorithm}) this gives the first $\varphi$-based
	algorithm that succeeds on \emph{every} connected graph; we verified it
	exhaustively on all $995$ connected graphs with at most seven vertices, with
	every output independently re-checked against a full reconstruction of the
	host Cayley graph (zero failures).
	
	The bounds theory (Section~\ref{sec:bounds}) is important to have a clearer picture of the embedding machinery.  We
	prove the general lower bound
	$k \geq \max(\diam(G), \lceil \log_2 n \rceil)$ and exhibit families
	attaining each term.  We then determine the exact minimum dimension for two
	families at the opposite ends of the sparse spectrum: stars, for which
	$k_{\min}(K_{1,q}) = \lceil\log_2 q\rceil + 1$ --- exponentially below the naive bound --- and odd
	cycles, for which $k_{\min}(C_m) = m-1$, showing that the naive upper bound
	$n-1$ is tight.  Finally we measure the optimality profile of our algorithm
	against exhaustive exact search: it attains the true minimum on $29$ of the
	$30$ connected graphs with $n \leq 5$, and the unique exception is exactly
	the star phenomenon.
	
	The chapter is organized as follows.
	Section~\ref{sec:ch2-prelim} reviews graph metrics and Cayley graphs and
	presents the naive upper bounds with complete proofs.
	Section~\ref{sec:phi-relation} introduces the $\varphi$ relation.
	Section~\ref{sec:transitive-prune} develops the transitive prune.
	Section~\ref{sec:binary-algorithm-old} presents the cut-based embedding,
	valid for partial cubes.
	Section~\ref{sec:cocycle} develops the Cocycle/Quotient Theorem.
	Section~\ref{sec:phi-algorithm} presents the universal $\varphi$-quotient
	algorithm.
	Section~\ref{sec:bounds} establishes the bounds theory.
	Section~\ref{sec:binary-complexity} provides the complexity analysis.
	Section~\ref{sec:binary-experiments} presents the regenerated experimental
	results, including the graph-signal-processing benchmark families reused in
	Part~II.
	
	\section{Preliminaries: Graphs, Groups, and Naive Embeddings}
	\label{sec:ch2-prelim}
	
	\subsection{Graph Metrics and Isometric Embeddings}
	
	Throughout, $G = (V, E)$ is a finite, simple, connected, undirected graph
	with $n = |V|$ and $m = |E|$, and $d_G$ denotes its shortest-path metric.
	
	\begin{definition}[Isometric embedding]
		A graph $G$ embeds \textbf{isometrically} into a graph $H$ if there is a map
		$\lambda\colon V(G) \to V(H)$ with
		\[
		d_H(\lambda(u),\lambda(v)) = d_G(u,v) \qquad \forall\, u,v \in V(G).
		\]
		Isometric maps are automatically injective (distinct vertices are at positive
		distance).
	\end{definition}
	
	\begin{definition}[Cayley graph]
		Let $\Gamma$ be a finite abelian group (written additively) and
		$S \subseteq \Gamma \setminus \{0\}$ a symmetric generating set ($S = -S$;
		in $\Z_2^k$ every set is symmetric).  The \textbf{Cayley graph}
		$\Cay(\Gamma,S)$ has vertex set $\Gamma$ and edges $\{g, g+s\}$ for all
		$g \in \Gamma$, $s \in S$.
	\end{definition}
	
	Cayley graphs are vertex-transitive: $d_{\Cay}(x, y) = d_{\Cay}(0, y - x)$,
	and $d_{\Cay}(0, z)$ equals the least length of a word $s_1 + \cdots +
	s_\ell = z$ with $s_i \in S$.  This homogeneity is what permits a
	well-defined Fourier analysis on vertex signals \cite{Terras1999}, the
	subject of Part~II.
	
	\begin{remark}[Words over $\Z_2^k$ never repeat generators]
		\label{rem:no-repeat}
		In $\Z_2^k$ every element is its own inverse, so if a word contains the same
		generator twice, deleting both occurrences yields a strictly shorter word
		with the same sum.  Hence every \emph{geodesic} word in $\Cay(\Z_2^k,S)$
		uses pairwise distinct generators, and may be identified with a
		\emph{subset} of $S$.  We use this repeatedly.
	\end{remark}
	
	\medskip
	The \textbf{excursion ratio} of an embedding into $\Gamma$ is
	\[
	\varepsilon \;=\; \frac{|V(G)|}{|\Gamma|}\;\in\;(0,1].
	\]
	The embedding is \emph{compact} when $\varepsilon$ is \emph{large} (close to
	$1$): a large ratio means the host is barely bigger than the graph it
	carries.  The naive spanning-tree embedding has $\varepsilon = n/2^{n-1}$,
	which vanishes exponentially: for the Petersen graph ($n = 10$) it gives
	$\varepsilon \approx 0.02$, while the optimal binary embedding of
	Figure~\ref{fig:petersenclebsch} achieves $\varepsilon = 10/16 = 0.625$.
	
	\subsection{Partial Cubes and Hypercubes}
	
	The $k$-dimensional hypercube is
	$Q_k = \Cay(\Z_2^k, \{e_1,\ldots,e_k\})$, the Cayley graph on the
	\emph{standard basis}; its metric is the Hamming distance.
	
	\begin{definition}[Partial cube]
		A graph $G$ is a \textbf{partial cube} if it admits an isometric embedding
		into some hypercube $Q_k$ \cite{Graham1985,Ovchinnikov2008}.  The smallest
		such $k$ is the \emph{isometric dimension} $\mathrm{idim}(G)$.
	\end{definition}
	
	The Djokovi\'c--Winkler relation $\theta$ classically characterizes partial
	cubes: a connected graph is a partial cube if and only if it is bipartite and
	$\theta$ is transitive \cite{Djokovic1973,Winkler1984}. The structural properties of partial cubes and their recognition were established in early foundational works \cite{Shpectorov1993,Chepoi1988}, building on the canonical isometric embedding of graphs into Cartesian products of complete graphs \cite{GrahamWinkler1985}. Concurrently, this advanced into the broader theory of isometric and low-distortion embeddings of graph metrics, explored through both classical geometric frameworks \cite{Bourgain1985} and modern algorithmic perspectives \cite{Eppstein2008,Linial1995}.
	
	The relation $\theta$ remains an active object of study. Recent work has analysed its transitive closure $\theta^{\ast}$ under full graph subdivisions, with applications to distance-based topological indices of molecular graphs such as fullerenes and silica nanostructures \cite{KlavzarKnauerMarc2019}; related the cube polynomial of a partial cube to the clique polynomial of its \emph{crossing graph}, whose vertices are the $\theta$-classes and in which two classes are adjacent precisely when they cross \cite{XieFengXu2024}; and introduced the \emph{reflexive complement} $\overline{\theta}$, whose transitive closure was shown to have more than one equivalence class only within a restricted subclass of complete multipartite graphs \cite{Hellmuth2025}. Broader expositions of the relation and of partial-cube theory are given by \cite{Ovchinnikov2008,Imrich2010}. A common thread is that these developments refine, extend, or apply $\theta$ \emph{within} the partial-cube universe---the bipartite graphs on which $\theta$ is already an equivalence relation.  Our relation
	$\varphi$ (Section~\ref{sec:phi-relation}) is designed for the general
	problem, where the target is $\Cay(\Z_2^k,S)$ with \emph{arbitrary} $S$;
	non-bipartite graphs then become embeddable, and --- as
	Theorem~\ref{thm:star} will show --- even partial cubes can beat their
	isometric dimension when composite generators are allowed.
	
	\subsection{Naive Embedding Schemes}
	
	\begin{theorem}[Naive binary embedding]
		\label{thm:naive}
		Every connected graph $G$ with $n$ vertices embeds isometrically into
		$\Cay(\Z_2^{n-1}, S)$ for a suitable generating set $S$ with
		$|S| \leq m$.
	\end{theorem}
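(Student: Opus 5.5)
We prove Theorem~\ref{thm:naive} with the spanning-tree construction. Fix a spanning tree $T$ of $G$; it has exactly $n-1$ edges $f_1,\dots,f_{n-1}$. Assign to $f_i$ the standard basis vector $e_i\in\Z_2^{n-1}$. Root $T$ at a vertex $r$ and set $\lambda(r)=0$. For every other vertex $v$, let $\lambda(v)=\sum_{i\in P(v)} e_i$, where $P(v)$ indexes the tree edges on the unique $r$--$v$ path in $T$. Then $\lambda(u)+\lambda(v)$ is the indicator vector of the edge set of the tree path $T[u,v]$, because the shared prefix of the two root paths cancels mod~$2$. In particular the labels are distinct.

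Next define the generating set. For each edge $uv\in E(G)$ let $s_{uv}=\lambda(u)+\lambda(v)$, the indicator of $T[u,v]$, and put $S=\{s_{uv}: uv\in E\}$. Distinct vertices have distinct labels, so $s_{uv}\neq 0$, and $|S|\le m$. The set $S$ contains every $e_i$, since tree edges give exactly the basis vectors. Hence $S$ generates $\Z_2^{n-1}$, and symmetry is automatic in $\Z_2^k$. By construction every edge of $G$ maps to an edge of $H=\Cay(\Z_2^{n-1},S)$, so $d_H(\lambda(u),\lambda(v))\le d_G(u,v)$.

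The main step is the reverse inequality: no shortcut. Suppose $\lambda(u)+\lambda(v)=s_1+\cdots+s_\ell$ with each $s_j=s_{a_jb_j}$ for an edge $a_jb_j$. We must show $\ell\ge d_G(u,v)$. The idea is a parity/cycle-space argument. Define the mod-$2$ edge chain $c=\sum_j [a_jb_j]\in\F_2^{E}$. Let $\partial$ be the boundary map, and let $\pi:\F_2^E\to\F_2^{T}$ send an edge $ab$ to the indicator of $T[a,b]$. The key observation is that $\pi$ is injective on boundaries: $\pi(c)$ depends only on $\partial c$. To see this, note that $\pi$ vanishes on every cycle, since summing tree paths around a closed walk cancels; also $\ker\pi\supseteq Z_1(G)$, and $\pi$ restricted to tree edges is the identity. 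Therefore $\pi(c)=\pi(c')$ whenever $\partial c=\partial c'$. Conversely, if $\pi(c)=\pi(T[u,v])$, then $c-T[u,v]\in\ker\pi$. The kernel of $\pi$ has dimension $m-(n-1)$, which equals $\dim Z_1$, so $\ker\pi=Z_1$. It follows that $\partial c=\partial T[u,v]=u+v$.

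A mod-$2$ edge multiset of size $\ell$ whose boundary is $\{u,v\}$ contains a $u$--$v$ path. The edges $a_jb_j$ form a subgraph in which only $u$ and $v$ have odd degree, and any such subgraph has $u$ and $v$ in the same component. This gives $\ell\ge d_G(u,v)$. Thus $d_H=d_G$, and the embedding is isometric.

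I expect the only delicate point to be the identification $\ker\pi=Z_1(G)$, which is a rank count: $\pi$ is surjective because it is the identity on tree edges. I would state it as a small lemma. Together with Remark~\ref{rem:no-repeat}, which lets us assume the $s_j$ are distinct, this closes the argument.
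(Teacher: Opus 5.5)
Your proof is correct and follows the paper's route. Spanning-tree labels give $d_{\Cay}\le d_G$, and the reverse inequality comes from turning a generator word of length $\ell$ into an edge set with GF(2) boundary $\{u,v\}$, which must contain a $u$--$v$ path. The only difference is presentational: you establish $\ker\pi = Z_1(G)$ directly by a rank count (and explicitly check that $S$ generates $\Z_2^{n-1}$), whereas the paper gets the same identification by citing Corollary~\ref{cor:naive-is-quotient} and the Join Lemma~\ref{lem:join}.
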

	
	\begin{proof}
		Fix a spanning tree $T$ of $G$ rooted at $r$ and bijectively assign to its
		$n-1$ edges the standard basis vectors $e_1,\ldots,e_{n-1}$ of $\Z_2^{n-1}$.
		Label each vertex $v$ by
		$\lambda(v) = \sum_{e \in T[r,v]} e_{\,\iota(e)}$, the sum of the basis
		vectors on the unique tree path $T[r,v]$; equivalently, $\lambda(v)$ is the
		characteristic vector of $T[r,v]$.  Put
		$S = \{\lambda(u) \oplus \lambda(v) : uv \in E\}$.  Every edge of $G$ then
		joins labels differing by an element of $S$, so each $G$-path of length
		$\ell$ maps to a Cayley walk of length $\ell$, giving
		$d_{\Cay}(\lambda(u),\lambda(v)) \leq d_G(u,v)$.
		
		For the reverse inequality we use the language that will be developed in
		Section~\ref{sec:cocycle} and which makes the argument transparent; the
		reader may verify that no circularity is involved, as
		Lemma~\ref{lem:join} is self-contained.  The labeling above is exactly the
		quotient labeling of the all-singleton partition
		(Corollary~\ref{cor:naive-is-quotient}), and Lemma~\ref{lem:join} shows that
		any generator word of length $\ell$ summing to
		$\lambda(u)\oplus\lambda(v)$ corresponds to an edge set of $G$ whose GF(2)
		boundary is $\{u,v\}$; such a set contains a $u$--$v$ path and so
		$\ell \geq d_G(u,v)$.  Hence $d_{\Cay} = d_G$.
	\end{proof}
	
	\begin{theorem}[Sparse cyclic embeddings]
		\label{thm:sparse-cyclic}
		\leavevmode
		\begin{enumerate}[(i)]
			\item Let $G$ be a connected \emph{unicyclic} graph ($m = n$) whose unique
			cycle has length $\mu$.  Then $G$ embeds isometrically into
			$\Cay\bigl(\Z_\mu \times \Z_2^{\,n-\mu},\; S\bigr)$ with
			$S = \{(1,\mathbf 0)\} \cup \{(0, e_j) : 1 \leq j \leq n-\mu\}$,
			i.e.\ into the Cartesian product $C_\mu \,\square\, Q_{n-\mu}$.
			\item Let $T$ be a tree and $P$ a path in $T$ with $\mu$ vertices.  Then $T$
			embeds isometrically into
			$\Cay\bigl(\Z_{2\mu-2} \times \Z_2^{\,n-\mu},\; S\bigr)$, the product
			$C_{2\mu-2} \,\square\, Q_{n-\mu}$.
		\end{enumerate}
	\end{theorem}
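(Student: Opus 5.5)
Both parts follow one scheme: a \emph{root-plus-pendant-tree} decomposition. In each case a distinguished subgraph (the cycle in (i), the path $P$ in (ii)) goes into the cyclic factor, and every remaining edge gets its own coordinate of $\Z_2^{\,n-\mu}$, as in the proof of Theorem~\ref{thm:naive}. Since $C_\mu \,\square\, Q_{n-\mu}$ is the Cartesian product of the two Cayley graphs, its metric is
\[
d\bigl((a,x),(b,y)\bigr) = d_{C}(a,b) + |x \oplus y|,
\]
the cyclic distance in the first factor plus the Hamming distance in the second. So it suffices to write $d_G$ as the same kind of sum.

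For (i), let $C = c_0c_1\cdots c_{\mu-1}$ be the unique cycle. Deleting the $\mu$ cycle edges leaves a forest with exactly $m-\mu = n-\mu$ edges. Each of its components contains exactly one cycle vertex $c_i$, the \emph{root} $\rho(v)=c_i$ of every vertex $v$ in that component. Index these $n-\mu$ edges bijectively by $e_1,\dots,e_{n-\mu}$, and set
\[
\lambda(v) = \bigl(i,\ \chi(v)\bigr),
\]
where $\rho(v)=c_i$ and $\chi(v)$ is the characteristic vector of the forest path from $\rho(v)$ to $v$. Compute $d_G(u,v)$ in two cases.
\begin{enumerate}[(a)]
\item If $\rho(u)=\rho(v)$, then $u$ and $v$ lie in the same pendant tree. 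Any path leaving that tree must exit and re-enter through the root, so it is no shorter than the tree path. Hence $d_G(u,v)$ is the tree distance, which equals $|\chi(u)\oplus\chi(v)|$ (the symmetric difference of the two root paths). The first coordinates coincide, so the two metrics agree.
\item If $\rho(u)=c_i\neq c_j=\rho(v)$, every $u$--$v$ path must pass through both roots, since the root is a cut vertex separating its tree from the rest. Hence
\[
d_G(u,v)=\mathrm{depth}(u)+d_C(c_i,c_j)+\mathrm{depth}(v).
\]
Here $d_C(c_i,c_j)=\min(|i-j|,\mu-|i-j|)$ is exactly the distance in $\Cay(\Z_\mu,\{\pm1\})$: the cycle is isometric in $G$ because detours through the trees are impossible. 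The two root paths lie in different components, so they are disjoint and $|\chi(u)\oplus\chi(v)|=\mathrm{depth}(u)+\mathrm{depth}(v)$.
\end{enumerate}
Adding the coordinates gives $d_G$ in both cases.

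For (ii), run the same construction with $P=p_0\cdots p_{\mu-1}$ in place of $C$. Place $p_i$ at $i\in\Z_{2\mu-2}$ and give each of the $n-\mu$ non-$P$ edges its own basis vector, using the components of $T-E(P)$. Only the first-factor check changes: for $0\le i,j\le\mu-1$ we have $|i-j|\le\mu-1$, so
\[
\min\bigl(|i-j|,\,2\mu-2-|i-j|\bigr)=|i-j|=d_P(p_i,p_j).
\]
That is, $P$ sits isometrically on one half of $C_{2\mu-2}$; this is exactly why the modulus $2\mu-2$ is chosen. We need $\mu\ge 2$ so the cyclic factor is nondegenerate ($\mu=2$ gives $\Z_2$, i.e.\ a $K_2$ factor). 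The rest of the argument is verbatim.

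The main point needing care is the cut-vertex argument behind both distance formulas. One must show that every root separates its pendant tree from the rest of $G$, so that every $u$--$v$ geodesic enters and leaves that tree only through its root. This uses unicyclicity in (i), and acyclicity of $T$ in (ii). Once it is established, everything else is additivity of Cartesian-product distances.
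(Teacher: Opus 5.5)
Your proposal is correct and follows essentially the same route as the paper. Both place the cycle (or the path $P$, via the $C_{2\mu-2}$ half-cycle trick) in the cyclic factor, give each pendant-forest edge a fresh $\Z_2$ coordinate, and run a cut-vertex case analysis against the additive Cartesian-product metric. The only differences are that you fold the paper's separate ``both vertices on $C$'' case into your two cases, and you explicitly state the $\mu \ge 2$ requirement in (ii), which the paper leaves implicit.
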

	
	\begin{proof}
		(i) Let $C = v_0 v_1 \cdots v_{\mu-1} v_0$ be the unique cycle.  First, $C$
		is an isometric subgraph of $G$: a shortest path between two cycle vertices
		that left $C$ would have to re-enter it through the same cut vertex (since
		$G$ has no second cycle), and the excursion could be excised, so shortest
		paths between cycle vertices stay on $C$.
		
		Deleting $E(C)$ leaves a forest of trees, each hanging from exactly one
		cycle vertex; every non-cycle vertex $u$ has a unique attachment vertex
		$a(u) \in V(C)$ and a unique tree path from $a(u)$ of length
		$\mathrm{dep}(u)$, and the $n - \mu$ non-cycle edges receive distinct fresh
		coordinates $e_1, \ldots, e_{n-\mu}$.  Define
		\[
		\lambda(v_i) = (i, \mathbf 0), \qquad
		\lambda(u) = \bigl(\,\mathrm{idx}(a(u)),\; x_u\,\bigr),
		\]
		where $x_u \in \Z_2^{\,n-\mu}$ is the characteristic vector of the tree path
		from $a(u)$ to $u$.  The host $\Cay(\Z_\mu \times \Z_2^{n-\mu}, S)$ is the
		Cartesian product $C_\mu \square Q_{n-\mu}$, whose metric is the sum of the
		factor metrics:
		$d\bigl((i,x),(j,y)\bigr) = d_{C_\mu}(i,j) + \mathrm{wt}(x \oplus y)$.
		
		Now take $u, w \in V(G)$.  If both lie on $C$, then
		$d_G = d_{C_\mu}$ by isometry of $C$ and the second coordinate vanishes.
		If they hang from attachments $a(u) \neq a(w)$, every $u$--$w$ path passes
		through both attachments, so
		$d_G(u,w) = \mathrm{dep}(u) + d_{C_\mu}(a(u), a(w)) + \mathrm{dep}(w)$;
		since distinct hanging trees use disjoint coordinate sets,
		$\mathrm{wt}(x_u \oplus x_w) = \mathrm{dep}(u) + \mathrm{dep}(w)$, and the
		product metric reproduces $d_G$ exactly.  If $a(u) = a(w)$, both lie in the
		same hanging tree, $d_G(u,w)$ is the tree distance, and the characteristic
		vectors of the two root paths differ exactly on the symmetric difference of
		the paths, whose size is the tree distance; the cyclic coordinate
		contributes $0$.  In all cases $d_{\Cay} = d_G$.
		
		(ii) Every path in a tree is isometric.  Map $P$'s vertices
		$p_0, \ldots, p_{\mu-1}$ to $(0,\mathbf 0), \ldots, (\mu-1, \mathbf 0)$ in
		$\Z_{2\mu-2} \times \Z_2^{\,n-\mu}$; by
		Proposition~\ref{prop:path-in-cycle} the first coordinate is an isometric
		copy of $P$ inside $C_{2\mu-2}$.  The remaining $n - \mu$ vertices hang from
		$P$ in subtrees with unique attachments, exactly as in (i); assign fresh
		coordinates to the $n-\mu$ off-path edges and repeat the decomposition
		argument verbatim.
	\end{proof}
	
	\begin{remark}
		Part~(i) genuinely requires the unicyclic hypothesis (or, more generally, an
		\emph{isometric} cycle with hanging trees).  For an arbitrary graph merely
		\emph{containing} a chordless cycle the construction fails: a chordless cycle
		of length $\geq 6$ need not be isometric, and shortcuts outside the cycle
		break the product decomposition.  The general interaction of cyclic factors
		with arbitrary graphs is the subject of Chapter~3.
	\end{remark}
	
	\begin{proposition}[Path in cycle]
		\label{prop:path-in-cycle}
		A path $P_\mu$ on $\mu$ vertices embeds isometrically into the cycle
		$C_{2\mu-2}$.
	\end{proposition}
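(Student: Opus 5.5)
The plan is to write down the embedding explicitly and verify distances with the closed-form metric of the cycle. Let the path be $p_0 p_1 \cdots p_{\mu-1}$, so $d_{P_\mu}(p_i,p_j) = |i-j|$. Identify the vertices of $C_{2\mu-2}$ with $\Z_{2\mu-2} = \{0,1,\ldots,2\mu-3\}$, with edges $\{x, x+1\}$; this is $\Cay(\Z_{2\mu-2}, \{\pm 1\})$. Define $\lambda(p_i) = i$ for $0 \leq i \leq \mu-1$. Consecutive path vertices go to consecutive residues, so $\lambda$ sends edges of $P_\mu$ to edges of the cycle.

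The one fact about the host needed is the standard cycle metric. In $C_N$,
\[
d_{C_N}(x,y) = \min\bigl(|x-y|,\; N-|x-y|\bigr)
\]
for representatives $0 \leq x,y < N$. The reason is that a geodesic word over $\{\pm1\}$ may be assumed to use a single sign, because a $+1$ and a $-1$ cancel. A word using only $+1$ has length $\equiv y-x \pmod N$, and a word using only $-1$ has length $\equiv x-y \pmod N$. The shortest such lengths are therefore $|x-y|$ and $N-|x-y|$.

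Now take $0 \leq i,j \leq \mu-1$ and set $\delta = |i-j|$, so $\delta \leq \mu-1$. With $N = 2\mu-2$ we have $N - \delta \geq 2\mu-2-(\mu-1) = \mu-1 \geq \delta$. The minimum is therefore $\delta$, so $d_{C_{2\mu-2}}(\lambda(p_i),\lambda(p_j)) = |i-j| = d_{P_\mu}(p_i,p_j)$, and $\lambda$ is isometric. The inequality $N-\delta \geq \delta$ is the only place the choice $N = 2\mu-2$ matters. It is exactly the condition $\mu-1 \leq N/2$, i.e.\ the path's diameter does not exceed the cycle's diameter, so the bound is tight.

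The main point of care is the degenerate sizes, not the general argument. For $\mu = 1$ the host $C_0$ is meaningless, so we interpret it as the trivial group $\Z_1$, a single vertex, and the statement is trivial. For $\mu = 2$ the host is $C_2 = \Cay(\Z_2,\{1\})$, a single edge $K_2$ rather than a multigraph, since $+1 = -1$; the formula still gives distance $1$. For $\mu \geq 3$, $C_{2\mu-2}$ is a genuine simple cycle of length at least $4$, and the argument above applies verbatim. This is the form used in Theorem~\ref{thm:sparse-cyclic}(ii), where the first coordinate of the product carries the isometric copy of $P$.
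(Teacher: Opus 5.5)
Your proof is correct and follows the paper's argument: the identity labeling $\lambda(p_i)=i$ into $\Z_{2\mu-2}$, the closed-form cycle metric $\min(\delta,\,N-\delta)$, and the inequality $N-\delta \geq \mu-1 \geq \delta$. Your justification of the cycle metric and your handling of the degenerate cases $\mu \leq 2$ are extra care that the paper omits, but the route is the same.
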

	
	\begin{proof}
		Label $P_\mu$'s vertices $0,1,\ldots,\mu-1$ and $C_{2\mu-2}$'s vertices
		$0,1,\ldots,2\mu-3$, and let $\lambda(i)=i$.  For $a<b$ in $P_\mu$,
		$d_{P_\mu}(a,b) = b-a \leq \mu-1$, while
		$d_{C_{2\mu-2}}(a,b) = \min\bigl(b-a,\ (2\mu-2)-(b-a)\bigr)$.  Since
		$b - a \leq \mu - 1$, we get $(2\mu-2)-(b-a) \geq \mu-1 \geq b-a$, so the
		minimum is $b-a$.
	\end{proof}
	
	\section{The $\varphi$ Relation}
	\label{sec:phi-relation}
	
	\subsection{Definition and Intuition}
	
	\begin{definition}[$\varphi$ relation]
		\label{def:phi}
		For edges $e = \{u,v\}$ and $f = \{x,y\}$ of a connected graph $G$, we say
		$e \mathrel{\varphi} f$ if
		\[
		d(u,x) = d(v,y) \quad \text{and} \quad d(u,y) = d(v,x).
		\]
	\end{definition}
	
	The relation captures \textbf{metric parallelism}: the four inter-endpoint
	distances obey the symmetry of opposite sides of a parallelogram.  Edges
	carrying the same group generator in a Cayley graph of an abelian group
	satisfy precisely this symmetry, which is why $\varphi$ is the right
	candidate test for ``same generator''.
	
	\begin{lemma}[Well-definedness]
		\label{lem:phi-well-defined}
		Definition~\ref{def:phi} does not depend on the chosen orientations of $e$
		and $f$.
	\end{lemma}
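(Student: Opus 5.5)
The plan is to enumerate what a change of orientation can do and check that the defining condition is invariant under each move. An orientation of $e$ is a choice of which endpoint is called $u$ and which is $v$; likewise for $f$. So there are four oriented versions of the pair $(e,f)$. They are generated by two involutions: swapping $u\leftrightarrow v$, and swapping $x\leftrightarrow y$. It therefore suffices to show that the condition
\[
d(u,x)=d(v,y)\quad\text{and}\quad d(u,y)=d(v,x)
\]
is preserved by each of these two swaps separately. Composing them then handles the simultaneous swap.

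Next, apply the swap $u\leftrightarrow v$. The condition becomes $d(v,x)=d(u,y)$ and $d(v,y)=d(u,x)$. By the symmetry of equality, this is the same pair of equations as before, only listed in the other order.

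Then apply the swap $x\leftrightarrow y$. The condition becomes $d(u,y)=d(v,x)$ and $d(u,x)=d(v,y)$, which is again the original pair with the two equations interchanged. Note that neither step even needs the symmetry $d(a,b)=d(b,a)$ of the metric: the set of two equations is simply permuted. If one prefers to write the condition as $d(x,u)$, one also invokes that symmetry of $d_G$.

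I expect no real obstacle. The only care needed is to state explicitly that the two swaps generate all four orientation choices, so that checking the two generators suffices. It is also worth remarking that the same argument shows the relation is symmetric, $e\mathrel{\varphi} f\iff f\mathrel{\varphi} e$. This follows by exchanging the roles $(u,v)\leftrightarrow(x,y)$ and using $d(a,b)=d(b,a)$.
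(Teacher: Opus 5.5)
Your proposal is correct and follows essentially the same route as the paper: each of the two swaps $u\leftrightarrow v$ and $x\leftrightarrow y$ only interchanges the two defining equalities, so their conjunction is preserved under all four orientation choices. Your side remarks are also accurate: this step does not use the symmetry of $d_G$, and that symmetry is exactly what gives $e\mathrel{\varphi} f \iff f\mathrel{\varphi} e$.
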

	
	\begin{proof}
		Swapping $x \leftrightarrow y$ exchanges the two required equalities with
		each other, hence preserves their conjunction; swapping
		$u \leftrightarrow v$ likewise.  Therefore all four orientation choices
		impose the same pair of conditions.
	\end{proof}
	
	\begin{remark}[$\varphi$-classes are matchings]
		\label{rem:matching}
		If $e$ and $f$ share a vertex, say $e=\{u,v\}$, $f=\{u,y\}$ with $v \neq y$,
		then $d(u,u)=0$ while $d(v,y)\geq 1$, so $e \not\mathrel\varphi f$.
		Consequently every set of pairwise $\varphi$-related edges is a
		\emph{matching} of $G$.  This is the key structural constraint exploited by
		the embedding algorithm.
	\end{remark}
	
	\subsection{Properties of $\varphi$}
	
	\begin{theorem}[Properties of $\varphi$]
		\label{thm:phi-properties}
		For any connected graph $G$:
		\begin{enumerate}[(i)]
			\item $\varphi$ is reflexive and symmetric;
			\item $\varphi$ is in general \emph{not} transitive;
			\item incident edges are never $\varphi$-related;
			\item two distinct edges lying on a common shortest path are never
			$\varphi$-related;
			\item in the even cycle $C_{2\nu}$ the maximal sets of pairwise
			$\varphi$-related edges are exactly the $\nu$ antipodal pairs;
			\item in the odd cycle $C_{2\nu+1}$ no two distinct edges are
			$\varphi$-related: all classes are singletons.
		\end{enumerate}
	\end{theorem}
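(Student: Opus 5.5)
The plan is to verify the six items one at a time, directly from Definition~\ref{def:phi}. By Lemma~\ref{lem:phi-well-defined} we may choose any convenient orientation of each edge, so throughout we fix one orientation and check a single pair of equalities.

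For (i), reflexivity is immediate: orient $f=e=\{u,v\}$ as $x=u$, $y=v$. Then $d(u,x)=0=d(v,y)$ and $d(u,y)=d(u,v)=d(v,x)$. For symmetry, exchange the roles $(u,v)\leftrightarrow(x,y)$. The defining equalities become $d(x,u)=d(y,v)$ and $d(x,v)=d(y,u)$, which are the original ones because $d$ is symmetric.

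Item (iii) is Remark~\ref{rem:matching}; I would simply cite it.

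For (ii), I would give an explicit counterexample in $K_5$ with vertices $1,\dots,5$. Any two disjoint edges of a complete graph are $\varphi$-related, since all four inter-endpoint distances equal $1$. Hence $\{1,2\}\mathrel\varphi\{3,4\}$ and $\{3,4\}\mathrel\varphi\{1,5\}$. However, $\{1,2\}$ and $\{1,5\}$ are incident, so by (iii) they are not related, and transitivity fails.

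For (iv), let $e$ and $f$ be distinct edges on a common geodesic. Order the geodesic so that it visits $u,v,x,y$ in that order, and put $a=d(v,x)\ge 0$. Subpaths of geodesics are geodesics, so $d(u,y)=a+2$ while $d(v,x)=a$. The second defining equality therefore fails, and well-definedness shows that no other orientation rescues it.

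The cycle items (v) and (vi) are the only ones needing bookkeeping, and I expect them to be the main obstacle. In the cycle $C_N$, label the vertices $0,\dots,N-1$ and write $e_i=\{i,i+1\}$. Using the reflection $k\mapsto -k$ and rotations of the cycle, I may assume $e=e_0$ and $f=e_j$ with $1\le j\le\lfloor N/2\rfloor$.

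For $N=2\nu$ and $j\le \nu-1$, the arc $0,1,\dots,j+1$ has length $j+1\le\nu$. It is therefore a geodesic containing both edges, and (iv) excludes the pair.

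For $j=\nu$ (the antipodal edge), a direct computation gives $d(0,\nu)=\nu=d(1,\nu+1)$ and $d(0,\nu+1)=\nu-1=d(1,\nu)$. So $e_0\mathrel\varphi e_\nu$. Consequently each edge is related exactly to itself and its antipode, and the maximal pairwise-related sets are precisely the $\nu$ antipodal pairs.

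For $N=2\nu+1$, the same geodesic argument handles every $j\le\nu-1$. The single remaining case $j=\nu$ must be checked by hand, because there the arc $0,\dots,\nu+1$ has length $\nu+1$ and is no longer geodesic. Here $d(0,\nu)=\nu=d(1,\nu+1)$, but $d(0,\nu+1)=\nu\ne\nu-1=d(1,\nu)$. The second equality fails, and by well-definedness the swapped orientation fails as well, which proves (vi).
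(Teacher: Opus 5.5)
Your proof is correct. Items (i), (iii) and (iv) match the paper's argument essentially verbatim, but you take a different route for (ii) and for the cycle items (v)--(vi).

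For (ii), the paper uses $K_{2,3}$ with $a=u_1v_1$, $b=u_2v_2$, $c=u_1v_3$. Your $K_5$ example, resting on the fact that disjoint edges of a complete graph are always related, is equally valid. It is also slightly cleaner: the paper's remark that ``all four cross distances equal $2$'' is inaccurate, since $d(u_1,v_2)=d(v_1,u_2)=1$, although the relation still holds.

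For (v)--(vi), the paper does one uniform computation. With forward orientations and $t=b-a$, the first equality is automatic. The second becomes $\delta(t+1)=\delta(t-1)$, i.e.\ $t+1\equiv\pm(t-1)\pmod m$, which reduces to $2t\equiv 0\pmod m$. The parity of $m$ then settles both items at once, with no case split.

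You instead proceed in three steps:
\begin{enumerate}
\item use dihedral symmetry to reduce to $1\le j\le\lfloor N/2\rfloor$;
\item dispose of every $j\le\nu-1$ by (iv), because the arc $0,\dots,j+1$ is a geodesic;
\item compute the single extremal offset $j=\nu$ by hand.
\end{enumerate}
The paper's route is self-contained and gives a closed-form criterion. Yours is more structural: it shows that the geodesic obstruction (iv) already excludes every pair except the antipodal offset, so the only genuine check is whether that pair forms a metric parallelogram, which happens exactly when $N$ is even.

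Two minor points for the write-up. First, the symmetry reduction tacitly uses that $\varphi$ is invariant under graph automorphisms, which is true since it is defined by distances; state this. Second, the reflection $k\mapsto -k$ alone does not fix $e_0$. What you actually need is $k\mapsto 1-k$, a reflection composed with a rotation, which sends $e_j$ to $e_{N-j}$; your allowance of rotations covers this.
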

	
	\begin{proof}
		(i) Reflexivity: $d(u,u) = d(v,v) = 0$ and $d(u,v) = d(v,u)$.  Symmetry is
		immediate from the symmetry of the defining conditions in $\{e, f\}$.
		
		(iii) is Remark~\ref{rem:matching}.
		
		(iv) Let $P$ be a shortest path containing both $e = \{u,v\}$ and
		$f = \{x,y\}$, oriented along $P$ so that the order of appearance is
		$u, v, \ldots, x, y$ (the incident case is covered by (iii)).  Because
		sub-paths of shortest paths are shortest,
		$d(u,x) = 1 + d(v,x)$ and $d(v,y) = d(v,x) + 1$, so the first equality
		$d(u,x) = d(v,y)$ holds; but
		$d(u,y) = d(v,x) + 2 \neq d(v,x)$, so the second fails.  By
		Lemma~\ref{lem:phi-well-defined} no other orientation can repair it.
		
		(v)--(vi) Work in $C_m$ with vertices $v_0, \ldots, v_{m-1}$ and the cyclic
		distance $\delta(t) = \min(t \bmod m,\ m - (t \bmod m))$.  Take distinct
		non-incident edges $e = \{v_a, v_{a+1}\}$ and $f = \{v_b, v_{b+1}\}$ and set
		$t = b - a \bmod m$.  With the forward orientation,
		\[
		d(v_a, v_b) = \delta(t) = d(v_{a+1}, v_{b+1}),
		\]
		so the first equality always holds, and the $\varphi$ test reduces to the
		second:
		\[
		d(v_a, v_{b+1}) = \delta(t+1) \;\overset{?}{=}\; \delta(t-1) = d(v_{a+1}, v_b).
		\]
		Now $\delta(t+1) = \delta(t-1)$ iff $t+1 \equiv \pm(t-1) \pmod m$.  The sign
		$+$ gives $2 \equiv 0$, impossible for $m \geq 3$; the sign $-$ gives
		$2t \equiv 0 \pmod m$.  If $m = 2\nu+1$ is odd this forces
		$t \equiv 0$, i.e.\ $e = f$: all classes are singletons, proving (vi).  If
		$m = 2\nu$ is even it forces $t \equiv 0$ or $t \equiv \nu$; $t \equiv \nu$
		is exactly the antipodal pair, proving (v).
		
		(ii) is witnessed by $K_{2,3}$ (Figure~\ref{fig:k23-counterexample}): with
		parts $\{u_1,u_2\}$ and $\{v_1,v_2,v_3\}$, the edges $a = u_1v_1$ and
		$b = u_2v_2$ are $\varphi$-related (all four cross distances equal $2$), as
		are $b$ and $c = u_1v_3$; but $a$ and $c$ share $u_1$, so
		$a \not\mathrel\varphi c$ by (iii).
	\end{proof}
	
	\begin{figure}[htbp]
		\centering
		\begin{tikzpicture}[scale=1.5]
			\node[circle,draw,fill=blue!20,minimum size=0.8cm] (u1) at (0,2)   {$u_1$};
			\node[circle,draw,fill=blue!20,minimum size=0.8cm] (u2) at (2.5,2) {$u_2$};
			\node[circle,draw,fill=red!20, minimum size=0.8cm] (v1) at (-0.7,0){$v_1$};
			\node[circle,draw,fill=red!20, minimum size=0.8cm] (v2) at (1.25,0){$v_2$};
			\node[circle,draw,fill=red!20, minimum size=0.8cm] (v3) at (3.2,0) {$v_3$};
			\draw[line width=2pt,blue]          (u1)--(v1) node[midway,left,font=\footnotesize]{$a$};
			\draw[line width=2pt,green!60!black](u2)--(v2) node[midway,right,font=\footnotesize]{$b$};
			\draw[line width=2pt,red]           (u1)--(v3) node[midway,right,font=\footnotesize]{$c$};
			\draw[gray,line width=1pt] (u1)--(v2) (u2)--(v1) (u2)--(v3);
		\end{tikzpicture}
		\caption{Non-transitivity of $\varphi$ in $K_{2,3}$: $a \mathrel\varphi b$
			and $b \mathrel\varphi c$, yet $a \not\mathrel\varphi c$ since $a$ and $c$
			are incident at $u_1$.  The relation captures \emph{local} parallelisms that
			may conflict globally; the transitive prune of
			Section~\ref{sec:transitive-prune} resolves the conflicts.}
		\label{fig:k23-counterexample}
	\end{figure}
	
	\subsection{Comparison with the Djokovi\'c--Winkler Relation $\theta$}
	
	The Djokovi\'c--Winkler relation declares $e \mathrel\theta f$ when
	$d(u,x)+d(v,y) \neq d(u,y)+d(v,x)$.  The two relations measure different
	things and differ markedly on key graph classes:
	\begin{itemize}
		\item On \textbf{odd cycles}, $\theta$ is non-transitive, whereas $\varphi$
		is (vacuously) an equivalence relation by
		Theorem~\ref{thm:phi-properties}(vi).
		\item On the \textbf{Petersen graph}, the whole edge set forms a single
		$\theta$-class (the graph is not a partial cube), while $\varphi$ is an
		equivalence relation with exactly five classes of size $3$, one per
		``parallel'' perfect matching (Figure~\ref{fig:coloredlabelledequiclassespetersen}).
		This structure drives the compact embedding into a host of order $16$
		computed in Example~\ref{ex:petersen-quotient}.
	\end{itemize}
	
	\begin{figure}[htbp]
		\centering
		\includegraphics[width=0.7\linewidth]{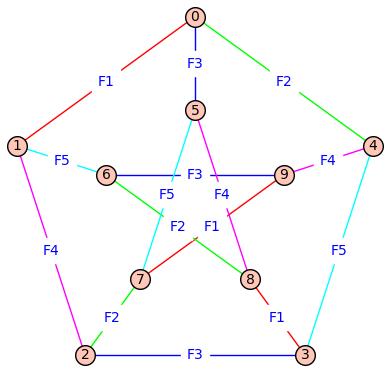}
		\caption{The five $\varphi$-equivalence classes $F_1,\ldots,F_5$ of the
			Petersen graph, each of cardinality $3$.  Since $\varphi$ is already an
			equivalence relation here, no transitive prune is needed; the five classes
			directly determine the embedding into $\Z_2^4$ via the Quotient Labeling
			Theorem.}
		\label{fig:coloredlabelledequiclassespetersen}
	\end{figure}
	
	\subsection{$\varphi$ on Partial Cubes}
	
	\begin{theorem}[$\varphi$ on partial cubes]
		\label{thm:phi-partial-cubes}
		\leavevmode
		\begin{enumerate}[(a)]
			\item If $G$ is a partial cube, then $\varphi$ coincides with $\theta$, is an
			equivalence relation on $E(G)$, and its classes are exactly the
			Djokovi\'c cuts (the coordinate classes of any isometric hypercube
			embedding).
			\item Conversely, if $G$ is bipartite, $\varphi$ is an equivalence relation,
			and for every class the two sides of the induced edge cut are convex
			subgraphs, then $G$ is a partial cube and the $\varphi$-classes are its
			$\theta$-classes.
		\end{enumerate}
	\end{theorem}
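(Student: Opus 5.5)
The plan is to reduce both parts to the classical Djokovi\'c--Winkler theory by proving one elementary fact: \emph{on a bipartite graph, $\varphi$ and $\theta$ coincide}. Partial cubes are bipartite, so part~(a) then follows from the classical results, and part~(b) follows the same way under its bipartite hypothesis.

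\textbf{Key lemma (bipartite coincidence).} Let $G$ be bipartite and let $e=\{u,v\}$, $f=\{x,y\}$ be edges. Put $a=d(u,x)$. Since $x\sim y$ and $u\sim v$, and all distances between adjacent vertices have opposite parities, the triangle inequality gives $d(u,y)=a+\varepsilon_1$ and $d(v,x)=a+\varepsilon_2$ with $\varepsilon_i\in\{\pm1\}$. Likewise $d(v,y)$ differs by exactly $1$ from both $d(u,y)$ and $d(v,x)$.

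I will do a short case analysis on the signs.
\begin{itemize}
\item If $\varepsilon_1\neq\varepsilon_2$, then $d(v,y)$ is adjacent in value to both $a+1$ and $a-1$, so $d(v,y)=a$. Both sums $d(u,x)+d(v,y)$ and $d(u,y)+d(v,x)$ equal $2a$. Hence $e\not\mathrel\theta f$. Also $d(u,y)\neq d(v,x)$, so $e\not\mathrel\varphi f$.
\item If $\varepsilon_1=\varepsilon_2=\varepsilon$ and $d(v,y)=a+2\varepsilon$, both sums equal $2a+2\varepsilon$. Again neither relation holds, since $d(u,x)\neq d(v,y)$.
\item If $\varepsilon_1=\varepsilon_2=\varepsilon$ and $d(v,y)=a$, then $d(u,x)=d(v,y)$ and $d(u,y)=d(v,x)$, so $\varphi$ holds. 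The sums are $2a\neq 2a+2\varepsilon$, so $\theta$ holds too.
\end{itemize}
These cases are exhaustive, so $\varphi=\theta$ on bipartite graphs. Lemma~\ref{lem:phi-well-defined} removes any dependence on orientation. Being careful that every parity case is covered is the only real work.

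\textbf{Part (a).} A partial cube is bipartite, so $\varphi=\theta$ by the lemma. By the Djokovi\'c--Winkler theorem, $\theta$ is transitive on partial cubes, so $\varphi$ is an equivalence relation. To identify the classes, I recall Djokovi\'c's description. The $\theta$-class of $uv$ is the cut $F_{uv}$ between
\[
W_{uv}=\{w:d(w,u)<d(w,v)\}\quad\text{and}\quad W_{vu}=\{w:d(w,v)<d(w,u)\}.
\]
In any isometric embedding into $Q_k$, the coordinate in which $\lambda(u)$ and $\lambda(v)$ differ splits $V$ exactly into $W_{uv}$ and $W_{vu}$: Hamming distance changes by one along each coordinate. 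Hence the coordinate classes are precisely these cuts.

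\textbf{Part (b).} Assume $G$ is bipartite and $\varphi$ is an equivalence relation. The lemma gives $\theta=\varphi$, so $\theta$ is transitive. Winkler's characterization (bipartite with $\theta$ transitive) then shows $G$ is a partial cube, and part~(a) identifies the $\varphi$-classes with the $\theta$-classes.

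The convexity hypothesis is therefore not needed for the conclusion. If one prefers to use it, it gives an alternative route through Djokovi\'c's original criterion: the cuts have convex sides, so labeling each vertex by the side it lies on for each class gives the embedding directly.

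The main obstacle is the case analysis in the lemma. In particular, in the degenerate cases one must confirm that $\varphi$ fails for \emph{every} orientation, not just the chosen one; Lemma~\ref{lem:phi-well-defined} handles this. Everything else is citation of the classical theorems.
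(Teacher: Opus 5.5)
Your proof is correct, but it takes a different route from the paper's.

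\textbf{The paper's route.} For (a), the paper computes inside a fixed isometric embedding $\lambda\colon V(G)\to Q_p$. With $z=\lambda(u)\oplus\lambda(x)$ and $e,f$ flipping coordinates $i,j$, the four distances are $\mathrm{wt}(z)$, $\mathrm{wt}(z\oplus e_i\oplus e_j)$, $\mathrm{wt}(z\oplus e_j)$ and $\mathrm{wt}(z\oplus e_i)$. Both $\varphi$-equalities therefore hold when $i=j$. When $i\neq j$ the first holds iff $z_i\neq z_j$ and the second iff $z_i=z_j$, so they never hold together. This identifies $\varphi$ directly with ``flips the same coordinate'', giving transitivity without Winkler's theorem; the match with $\theta$ is then cited from the literature. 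For (b), the paper uses the convexity hypothesis together with Djokovi\'c's convex-half-space criterion.

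\textbf{Your route.} You prove the intrinsic lemma $\varphi=\theta$ on every bipartite graph, then reduce both parts to the classical Djokovi\'c--Winkler results. Your parity case analysis is exhaustive and correct, including the degenerate cases $e=f$ and incident edges.

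\textbf{What your route buys.} It yields a more general statement and an essentially one-line proof of (b). It also correctly shows that the convexity hypothesis in (b) is redundant, since bipartite plus $\theta$ transitive already characterizes partial cubes. Finally, it supplies a step the paper's (b) leaves implicit. Djokovi\'c's criterion concerns the half-spaces $W_{uv},W_{vu}$, and it is your lemma that identifies the $\varphi$-class of $uv$ with $F_{uv}$. So the ``two sides of the induced edge cut'' really are those half-spaces.

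\textbf{What the paper's route buys.} It makes (a) self-contained given the embedding, whereas yours leans on Winkler for transitivity in (a). You could remove that dependence with your own observation. The embedding coordinate flipped by $uv$ splits $V$ into $W_{uv}$ and $W_{vu}$, and on any bipartite graph the edges $\theta$-related to $uv$ are exactly those of $F_{uv}$. Hence $\theta$-relatedness means ``same coordinate'', which is visibly an equivalence relation.
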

	
	\begin{proof}
		(a) Fix an isometric embedding $\lambda\colon V(G) \to Q_p$, so that
		$d_G = $ Hamming distance on labels, and each edge flips exactly one
		coordinate.  Let $e = \{u,v\}$ flip coordinate $i$ and $f = \{x,y\}$ flip
		coordinate $j$, and put $z = \lambda(u) \oplus \lambda(x)$.  Then
		\[
		d(u,x) = \mathrm{wt}(z),\quad
		d(v,y) = \mathrm{wt}(z \oplus e_i \oplus e_j),\quad
		d(u,y) = \mathrm{wt}(z \oplus e_j),\quad
		d(v,x) = \mathrm{wt}(z \oplus e_i).
		\]
		If $i = j$: the first two agree and the last two agree, for every $z$; with
		the side-respecting orientation both $\varphi$ equalities hold, so all edges
		of one cut are pairwise $\varphi$-related.  If $i \neq j$: the first
		equality $\mathrm{wt}(z) = \mathrm{wt}(z \oplus e_i \oplus e_j)$ holds iff
		$z_i \neq z_j$, while the second
		$\mathrm{wt}(z \oplus e_j) = \mathrm{wt}(z \oplus e_i)$ holds iff
		$z_i = z_j$; they are mutually exclusive, so edges of distinct cuts are
		never $\varphi$-related.  Hence the $\varphi$-classes are exactly the
		coordinate cuts; these are the $\theta$-classes of a partial cube
		\cite{Imrich2010}, and transitivity follows.
		
		(b) Under the stated hypotheses each class is an edge cut with convex sides,
		so the side-indicator labeling satisfies Djokovi\'c's convexity
		characterization of partial cubes \cite{Djokovic1973}: a connected bipartite
		graph in which, for every edge $uv$, the half-spaces
		$W(u,v) = \{w : d(w,u) < d(w,v)\}$ and $W(v,u)$ are convex, is a partial
		cube, and its coordinate cuts are the $\theta$-classes.  The
		$\varphi$-classes are then identified with them by part~(a).
	\end{proof}
	
	\begin{remark}
		Part (a) does \emph{not} say that the isometric dimension is the minimum
		dimension over our larger target family $\Cay(\Z_2^k, S)$: composite
		generators can do strictly better even on partial cubes.
		Theorem~\ref{thm:star} exhibits trees --- the simplest partial cubes --- whose
		minimum is exponentially below $\mathrm{idim}$.
	\end{remark}
	
	\section{The Transitive Prune Operation}
	\label{sec:transitive-prune}
	
	Since $\varphi$ is not transitive on arbitrary graphs, we extract from it a
	partition of $E(G)$ into classes that are pairwise $\varphi$-related within
	each class.
	
	\subsection{The $\varphi$-Graph and Clique Extraction}
	
	\begin{definition}[$\varphi$-graph]
		\label{def:phi-graph}
		The \textbf{$\varphi$-graph} $\varphi(G)$ has vertex set $E(G)$, with two
		$G$-edges adjacent iff they are distinct and $\varphi$-related.
	\end{definition}
	
	By Theorem~\ref{thm:phi-partial-cubes}, if $G$ is a partial cube every
	connected component of $\varphi(G)$ is a clique.  In general, components may
	fail to be cliques; they encode chains of local parallelisms that conflict
	globally and must be resolved.
	
	\begin{definition}[$\varphi^-$-classes; transitive prune]
		\label{def:transitive-prune}
		The \textbf{transitive prune} produces a partition $\mathcal{P}$ of $E(G)$
		into \emph{$\varphi^-$-classes} by greedily extracting cliques from
		$\varphi(G)$:
		\begin{enumerate}
			\item process the connected components of $\varphi(G)$ in decreasing order
			of size;
			\item a component that is a clique is taken whole as a class;
			\item otherwise a maximum clique of the component is extracted as a class
			(exact search for small components, greedy approximation for large ones),
			and the procedure recurses on the remainder;
			\item $G$-edges with no $\varphi$-partner become singleton classes.
		\end{enumerate}
	\end{definition}
	
	\begin{theorem}[Correctness of the transitive prune]
		\label{thm:transitive-prune}
		The transitive prune terminates and outputs a partition $\mathcal{P}$ of
		$E(G)$ such that: (i) each class is a clique of $\varphi(G)$; (ii) each
		class is a matching of $G$; (iii) if $G$ is a partial cube,
		$\mathcal{P}$ equals the set of $\theta$-classes; (iv)
		$|\mathcal{P}| \leq m$.
	\end{theorem}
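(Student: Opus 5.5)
Termination and properties (i), (ii) and (iv) follow directly from the structure of the procedure; property (iii) is handled by invoking Theorem~\ref{thm:phi-partial-cubes}(a).

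\textbf{Termination and partition.} Every extraction step removes a nonempty set of vertices of the finite graph $\varphi(G)$. A clique component is removed whole, and a maximum clique of a nonempty component has size at least one. So the number of unprocessed $G$-edges strictly decreases, and the recursion stops after at most $m$ extractions. Every $G$-edge lies in exactly one component of $\varphi(G)$. Each extraction assigns edges that have not yet been assigned, and the singleton rule catches isolated vertices of $\varphi(G)$. Hence the output classes are pairwise disjoint and cover $E(G)$, so $\mathcal{P}$ is a partition. One point needs care: the recursion on "the remainder" of a component should be taken on the induced subgraph of $\varphi(G)$ on the remaining edges. Any clique found there is still a clique of $\varphi(G)$, because cliques are preserved under passing to induced subgraphs. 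This holds whether the clique comes from exact or greedy search.

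\textbf{(i), (ii), (iv).} Each class is, by construction, a whole clique component, a clique extracted from an induced subgraph, or a singleton. All of these are cliques of $\varphi(G)$, which gives (i). A clique of $\varphi(G)$ is a set of pairwise $\varphi$-related edges, and by Remark~\ref{rem:matching} (equivalently Theorem~\ref{thm:phi-properties}(iii)) such a set is a matching of $G$, which gives (ii). Since $\mathcal{P}$ is a partition of $E(G)$ into nonempty classes, $|\mathcal{P}|\le |E(G)|=m$, which gives (iv).

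\textbf{(iii).} If $G$ is a partial cube, Theorem~\ref{thm:phi-partial-cubes}(a) says that $\varphi$ is an equivalence relation whose classes are exactly the $\theta$-classes. Therefore each connected component of $\varphi(G)$ is precisely one equivalence class, and it is a clique. An isolated vertex of $\varphi(G)$ is then a singleton $\theta$-class. The procedure takes every component whole at step~2 and never reaches step~3, so $\mathcal{P}$ is exactly the set of $\theta$-classes.

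The main obstacle is not mathematical depth but precision about the recursive step. One must make sure that "greedy approximation" still returns a clique of the induced remainder, namely a nonempty set whose edges are pairwise adjacent. I would fix this by specifying greedy clique growth starting from a single vertex, which trivially yields a valid nonempty clique. With that convention, every other step is immediate.
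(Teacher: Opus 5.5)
Your proof is correct and follows essentially the same argument as the paper. Both get (i) by construction, (ii) from Remark~\ref{rem:matching}, and (iii) from Theorem~\ref{thm:phi-partial-cubes}(a), which means step~3 never fires; termination and (iv) come from each extraction removing at least one of the $m$ vertices of $\varphi(G)$ (your partition-into-nonempty-classes count is equivalent), and your care about recursing on the induced remainder and about the greedy clique being nonempty just makes explicit what the paper leaves implicit.
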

	
	\begin{proof}
		(i) holds by construction.  (ii) follows from (i) and
		Remark~\ref{rem:matching}.  (iii) if $G$ is a partial cube, every component
		of $\varphi(G)$ is a clique by Theorem~\ref{thm:phi-partial-cubes}(a), so
		step~3 never executes and the components --- the $\theta$-classes --- are
		returned unchanged.  (iv) every extraction removes at least one vertex of
		$\varphi(G)$, which has $m$ vertices; this also gives termination.
	\end{proof}
	
	\begin{algorithm}
		\caption{Transitive Prune (clique extraction on the $\varphi$-graph)}
		\label{alg:transitive-prune}
		\begin{algorithmic}[1]
			\Require connected graph $G$, all-pairs distance table $D$
			\Ensure partition $\mathcal{P} = \{F_1,\ldots,F_t\}$ of $E(G)$
			\State construct $\varphi(G)$ from $D$
			\Comment{$O(m^2)$ pairs, $O(1)$ test each}
			\State $\mathcal{P} \gets \emptyset$;\quad $H \gets \varphi(G)$
			\While{$H$ has vertices}
			\State $C \gets$ largest connected component of $H$
			\If{$C$ is a clique} \State add $C$ to $\mathcal{P}$; remove $C$ from $H$
			\Else
			\State $K \gets$ maximum clique of $H[C]$ (exact if $|C|$ small,
			greedy otherwise)
			\State add $K$ to $\mathcal{P}$; remove $K$ from $H$
			\EndIf
			\EndWhile
			\State \Return $\mathcal{P}$
		\end{algorithmic}
	\end{algorithm}
	
	\begin{remark}[On the clique subproblem]
		Maximum clique is NP-hard in general; here it is invoked on components of
		$\varphi(G)$, which in practice are small or already cliques.  When the
		greedy approximation is used, only the \emph{quality} of the initial
		partition is affected, never the correctness of the final embedding, because
		the quotient construction and repair loop of
		Sections~\ref{sec:cocycle}--\ref{sec:phi-algorithm} accept an arbitrary
		partition.
	\end{remark}
	
	\section{The Hypercube Skeleton and Cut-Based Embedding}
	\label{sec:binary-algorithm-old}
	
	This section records the cut-based embedding, which is exact for partial
	cubes and historically the first of our pipelines.  Its limitation --- and
	the remedy --- motivate Section~\ref{sec:cocycle}.
	
	\begin{definition}[Hypercube skeleton]
		The \textbf{hypercube skeleton} $H(G)$ is the spanning subgraph of $G$
		obtained by greedily adding $\varphi^-$-classes (largest first), retaining a
		class only if (i) adding it strictly decreases the number of connected
		components and (ii) its removal disconnects the current skeleton into
		exactly two parts (a proper $2$-cut).  The process stops when $H(G)$ is
		connected; retained classes are \emph{skeleton classes}.
	\end{definition}
	
	\begin{theorem}[Cut-based embedding; partial cubes]
		\label{thm:cut-based-partial-cube}
		Let $G$ be a partial cube with $\varphi^-$-classes
		$\mathcal{P}$, $p = |\mathcal{P}|$.  The skeleton construction retains all
		$p$ classes, and the side-indicator labeling
		$\lambda(v)_i = [\,v \in C_i^1\,]$ is an isometric embedding of $G$ into the
		hypercube $Q_p$.  Moreover $p = \mathrm{idim}(G)$: among embeddings into
		\emph{hypercubes} (standard-basis targets), the dimension $p$ is minimal.
	\end{theorem}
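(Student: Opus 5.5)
The proof rests on Theorem~\ref{thm:phi-partial-cubes}(a) and Theorem~\ref{thm:transitive-prune}(iii). Since $G$ is a partial cube, the prune returns exactly the $\theta$-classes $F_1,\ldots,F_p$. By classical partial-cube theory \cite{Djokovic1973,Imrich2010}, each $F_i$ is a Djokovi\'c cut. That is, $G - F_i$ has exactly two components, the half-spaces $C_i^0=W(u,v)$ and $C_i^1=W(v,u)$ for any $uv\in F_i$, and both are convex.

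\textbf{Step 1: the skeleton retains every class.} Run the greedy skeleton construction class by class. I would argue that each class meets both requirements at the moment it is considered. Condition (ii) is the $2$-cut property. Because $F_i$ separates $G$ into exactly two sides, removing $F_i$ from any connected intermediate skeleton leaves at most two parts. For condition (i), note that the classes not yet added span a subgraph whose components I want to show are genuinely merged by $F_i$. The cleanest route is probably to order the argument differently. First, any spanning tree of $G$ must contain at least one edge from every cut $F_i$, otherwise $T$ would lie inside $G-F_i$, which is disconnected. So no proper subfamily of classes can connect $G$, and the process therefore cannot stop before all $p$ classes are added. Second, I then have to check that no class is ever rejected. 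Here I expect the \textbf{main obstacle}: the greedy rules (i)--(ii) as stated are order-sensitive. I would handle this by showing that, as long as some class is missing, the current skeleton is disconnected along that missing cut, so adding the class strictly decreases the number of components. If the literal definition still permits a rejection, I would fall back to reading the skeleton condition relative to $G$ rather than to the partial skeleton.

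\textbf{Step 2: isometry.} Define $\lambda(v)_i=[v\in C_i^1]$. I would prove the identity
\[
d_G(x,y)=\#\{i : F_i \text{ separates } x,y\}=\mathrm{wt}(\lambda(x)\oplus\lambda(y)).
\]
For the inequality $\ge$, a shortest path $P$ from $x$ to $y$ must cross every separating cut at least once. For $\le$, no cut is crossed twice by $P$. Indeed, if $P$ left a half-space and later re-entered it, the two endpoints of that excursion would lie in the same convex half-space, so convexity forces the geodesic subpath between them to stay inside, which is a contradiction. Equivalently, two edges of a geodesic are never $\theta$-related. Each edge of $P$ lies in exactly one class, so $|P|$ equals the number of separating cuts. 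Alternatively, one can simply compose with the given embedding from Theorem~\ref{thm:phi-partial-cubes}(a), whose coordinates are exactly these cuts.

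\textbf{Step 3: minimality among hypercubes.} Take any isometric embedding $\mu\colon V(G)\to Q_q$. Each edge of $G$ flips exactly one coordinate, so the coordinates partition $E(G)$. By the computation in the proof of Theorem~\ref{thm:phi-partial-cubes}(a), edges flipping the same coordinate are $\varphi$-related and edges flipping different coordinates are not. Hence every $\varphi$-class lies in a single coordinate and every used coordinate is a union of classes. Since different coordinates are never related, each used coordinate is exactly one class. This yields an injection from $\{F_1,\ldots,F_p\}$ into the set of coordinates, so $q\ge p$, and therefore $p=\mathrm{idim}(G)$.
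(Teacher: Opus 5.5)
Your proposal is correct, and it is more self-contained than the paper's proof, which is essentially two citations. After identifying the $\varphi^-$-classes with the $\theta$-classes via Theorem~\ref{thm:phi-partial-cubes}(a), the paper invokes the classical fact that in a partial cube $d_G(x,y)$ is the number of $\theta$-classes separating $x$ and $y$, which is the Hamming distance of the side-indicator labels. It then invokes the classical identity $p=\mathrm{idim}(G)$.

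You instead derive the distance formula directly from convexity of the half-spaces. A geodesic cannot leave and re-enter a convex side, so it crosses each cut at most once, and exactly once when the cut separates its ends. You derive minimality by rerunning the coordinate computation from the proof of Theorem~\ref{thm:phi-partial-cubes}(a) on an arbitrary isometric embedding into $Q_q$. Since $\varphi$ is defined from $d_G$ alone and coincides with ``flips the same coordinate'' in every such embedding, distinct classes occupy distinct coordinates and $q\ge p$. Your route imports only Djokovi\'c's convexity theorem; the paper's route is shorter but rests entirely on the literature.

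The more substantive difference is Step~1, which the paper's proof does not address at all. Your argument for condition (i) is in fact complete: every edge of a class $F_i$ not yet added joins the two sides of $G-F_i$, whereas all current skeleton edges lie in $G-F_i$, so adding $F_i$ merges distinct components.

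Your fallback for condition (ii), however, is necessary rather than optional. Read against the partial skeleton, (ii) already fails on $C_6$: after the first antipodal pair is added the skeleton has four components, and removing that pair yields six, not two. The same happens on $Q_3$. The claim that all $p$ classes are retained holds once (ii) is read relative to $G$ (equivalently, the final skeleton), where each $G-F_i$ has exactly two components. It fails under the literal reading.

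Relatedly, your remark that removing $F_i$ from ``any connected intermediate skeleton'' leaves at most two parts is false for arbitrary connected subgraphs. It is true here only because, by your own spanning-tree argument, the sole connected union of classes is $G$ itself.
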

	
	\begin{remark}[Scope: basis generators are special to this theorem]
		\label{rem:basis-only-partial-cube}
		The conclusion that each class maps to a \emph{standard basis vector}
		$e_i$ --- equivalently, that $G$ embeds into a Cartesian product
		$Q_p = K_2^{\,\square p}$ --- holds \emph{only because $G$ is a partial cube},
		where every $\varphi^-$-class is a genuine edge cut and may therefore be
		assigned an independent coordinate.  It is \textbf{not} a property of the
		general construction.  For an arbitrary graph the consistency condition
		$(\star)$ of Theorem~\ref{thm:cocycle} forces linear dependencies among the
		class generators, so the Quotient Labeling Theorem
		(Theorem~\ref{thm:quotient-labeling}) generally assigns \emph{composite}
		generators $g_j \in \Z_2^k$ that are sums of several basis vectors, and the
		host is a Cayley graph $\Cay(\Z_2^k, S)$ with $S$ \emph{not} a basis --- not
		a Cartesian product of edges.  The Petersen graph is the smallest vivid
		example: four classes receive basis vectors but the fifth receives the
		composite generator $(1,1,1,1)$ (Figure~\ref{fig:coloredlabelledskeletonpetersen}),
		and the star $K_{1,4}$ embeds with the composite generator
		$e_1{+}e_2{+}e_3$ into $\Z_2^3$ rather than the Cartesian $Q_4$
		(Theorem~\ref{thm:star}).  Thus ``basis generators / Cartesian product''
		is the partial-cube special case; ``composite generators / general Cayley
		graph'' is the rule.
	\end{remark}
	
	\begin{proof}
		By Theorem~\ref{thm:phi-partial-cubes}(a) the classes are the
		$\theta$-classes.  In a partial cube, the distance between two vertices
		equals the number of $\theta$-classes separating them
		\cite{Imrich2010}, which is precisely the Hamming distance of the
		side-indicator labels, so the labeling is isometric into $Q_p$.  Minimality
		over hypercube targets is the classical statement
		$p = \mathrm{idim}(G)$ \cite{Graham1985,Ovchinnikov2008}: the coordinate
		cuts of any isometric embedding into $Q_q$ partition $E(G)$ into at least
		$p$ nonempty $\theta$-classes, whence $q \geq p$.
	\end{proof}
	
	\begin{remark}[Two distinct limitations]
		\label{rem:cut-limits}
		First, the cut paradigm \emph{fails to apply} beyond partial cubes: on
		$K_5$, $K_{3,3}$ or the Pappus graph the skeleton classes are not cuts, the
		construction retains nothing, and the implementation must fall back to the
		naive embedding ($k = n-1$).  Second --- more subtly --- even where it
		applies, its minimality holds only against \emph{hypercube} targets.  In
		the ambient problem of this thesis, where $S$ is arbitrary, the
		minimum can be strictly smaller than $\mathrm{idim}(G)$ even for trees:
		Theorem~\ref{thm:star} shows $k_{\min}(K_{1,4}) = 3 < 4 =
		\mathrm{idim}(K_{1,4})$, realized with the composite generator
		$e_1{+}e_2{+}e_3$ (Figure~\ref{fig:starK14}).  Both limitations are removed
		by the quotient framework of the next section together with the bounds
		theory of Section~\ref{sec:bounds}.
	\end{remark}
	
	\begin{figure}[htbp]
		\centering
		\includegraphics[width=0.7\linewidth]{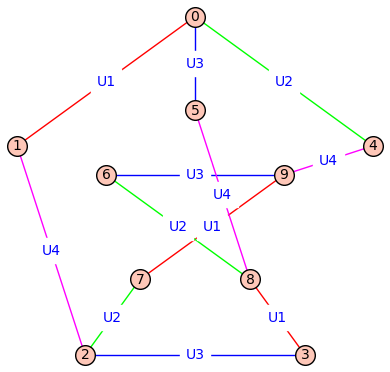}
		\caption{The hypercube skeleton of the Petersen graph: four of the five
			$\varphi$-classes (distinct colors) are retained as proper $2$-cuts of the
			spanning subgraph, yielding the four basis coordinates; the fifth class
			becomes the composite generator $(1,1,1,1)$.}
		\label{fig:coloredlabelledskeletonpetersen}
	\end{figure}
	\section{The Cocycle Condition and Quotient Labeling}
	\label{sec:cocycle}
	
	This section is the theoretical core of the chapter.  It explains
	\emph{why} naive label propagation can fail, states the exact consistency
	condition, and shows that the most generic consistent labeling is a GF(2)
	quotient computable by linear algebra.
	
	\subsection{The Labeling Problem}
	
	Given a partition $\mathcal{P} = \{F_1,\ldots,F_t\}$ of $E(G)$ (``edges in
	the same class carry the same generator''), we wish to assign generators
	$g_j \in \Z_2^k$ to the classes so that the labeling
	\[
	\lambda(v) \;=\; \sum_{e \,\in\, P(r,v)} g_{\,\mathrm{class}(e)}
	\qquad (\text{sum in } \Z_2^k)
	\]
	is \emph{well defined}, i.e.\ independent of the path $P(r,v)$ chosen from
	the root $r$ to $v$.
	
	\begin{theorem}[Cocycle condition]
		\label{thm:cocycle}
		The labeling $\lambda$ is well defined if and only if for every cycle
		$C = (e_1, \ldots, e_\ell)$ of $G$
		\[
		\sum_{i=1}^{\ell} g_{\,\mathrm{class}(e_i)} \;=\; \mathbf 0 \in \Z_2^k.
		\tag{$\star$}
		\]
		Moreover it suffices to verify $(\star)$ on any cycle basis of $G$.
	\end{theorem}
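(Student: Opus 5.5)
The plan is to recast the labeling as a sum of a $\Z_2^k$-valued edge function along walks and then use standard cycle-space linearity.

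Define $w\colon E(G)\to\Z_2^k$ by $w(e)=g_{\mathrm{class}(e)}$. Orientation plays no role, since every element of $\Z_2^k$ is its own inverse. For any walk $W$ in $G$ write $w(W)=\sum_{e\in W} w(e)$, counted with multiplicity. Because $2x=0$ in $\Z_2^k$, $w(W)$ depends only on the set $\partial(W)$ of edges traversed an odd number of times. So $w$ induces a linear map $\hat w\colon \F_2^{E}\to\Z_2^k$ given by $\hat w(X)=\sum_{e\in X}w(e)$, and $w(W)=\hat w(\partial W)$.

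For necessity, assume $\lambda$ is well defined and let $C=(e_1,\ldots,e_\ell)$ be a cycle through vertices $x_0,x_1,\ldots,x_\ell=x_0$. Fix a path $P$ from $r$ to $x_0$. Then $P$ followed by $C$ is a walk from $r$ to $x_0$, and well-definedness, applied in its natural extension to walks, gives $w(P)+w(C)=w(P)$, hence $(\star)$. If one insists that $\lambda$ is defined only via simple paths, I would first prove a small lemma. Consider the statement ``all $r$--$v$ paths give equal sums for every $v$.'' This implies that for every edge $xy$, $\lambda(y)=\lambda(x)+w(xy)$: take a path to $x$; either it avoids $y$ and extends by $xy$, or it passes through $y$, in which case its prefix ending at $y$ is also a path. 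Summing this edge relation around $C$ yields $(\star)$ directly. I expect this path-versus-walk subtlety to be the main obstacle, and the edge-relation lemma is how I would handle it.

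For sufficiency, assume $(\star)$ holds for every cycle. Take two $r$--$v$ paths $P,P'$. The set $\partial(P)\triangle\partial(P')$ has even degree at every vertex, the endpoints $r$ and $v$ cancelling. It therefore lies in the cycle space and is a disjoint union, i.e.\ a GF(2) sum, of edge sets of cycles. Applying the linear map $\hat w$ gives $w(P)+w(P')=\sum_C \hat w(E(C))=0$, so $w(P)=w(P')$.

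For the \emph{moreover} clause, note that the cycle space $\mathcal Z(G)\subseteq\F_2^E$ has a basis $B$ consisting of cycles, for instance the fundamental cycles of a spanning tree. The map $\hat w|_{\mathcal Z}$ is linear, so it vanishes on $\mathcal Z$ as soon as it vanishes on $B$. Every cycle $C$ of $G$ has edge set in $\mathcal Z$, and $(\star)$ for $C$ is exactly $\hat w(E(C))=0$. Hence $(\star)$ for the members of $B$ implies it for all cycles, and the sufficiency argument above then applies.
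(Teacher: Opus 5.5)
Your proof is correct. The sufficiency direction is exactly the paper's argument: the symmetric difference of two $r$--$v$ paths lies in the cycle space, so it is an edge-disjoint union of cycles killed by the linear map $\hat w$. The cycle-basis clause is also the paper's: a linear functional vanishing on a basis of $\mathcal Z(G)$ vanishes on all of it.

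You differ from the paper on necessity. The paper takes a cycle $C$ violating $(\star)$, follows a path from $r$ to $C$, and goes around $C$ in the two directions to a vertex $v$. This gives two $r$--$v$ paths whose sums differ by the sum over $C$. You instead derive the local edge relation $\lambda(y)=\lambda(x)+g_{\mathrm{class}(xy)}$ from path-independence, then telescope it around $C$.

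Your route buys two things. First, it settles the path-versus-walk issue cleanly. The paper's construction silently needs the approach path to meet $C$ only at its last vertex, and needs $v$ to differ from that vertex, so that both concatenations are simple paths. Second, the edge relation you isolate is precisely the fact the paper later uses without comment in Theorem~\ref{thm:quotient-labeling}(iii) (``consecutive labels along a path differ by the class generator''). The paper's route, in exchange, exhibits an explicit conflicting vertex. This matches its remark that a violating cycle makes BFS propagation order-dependent.

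Spell out one step of your lemma. Suppose the chosen $r$--$x$ path passes through $y$. Its prefix ending at $y$ avoids $x$, since $x$ occurs only at the end of a path. Appending the edge $yx$ therefore gives a second $r$--$x$ path with sum $\lambda(y)+g_{\mathrm{class}(xy)}$, which yields the relation. Noting only that the prefix is a path to $y$ is not enough: it shows $\lambda(x)+\lambda(y)$ equals the sum over the remaining subpath from $y$ to $x$, not over the single edge $xy$.
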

	
	\begin{proof}
		Two $r$--$v$ paths $P_1, P_2$ give equal sums iff the sum over their
		symmetric difference vanishes; that symmetric difference is an edge-disjoint
		union of cycles, so $(\star)$ for all cycles implies path independence, and
		conversely a violating cycle $C$ through a vertex $v$ yields two $r$--$v$
		paths with different sums (follow a path to $C$, then go around $C$ either
		way).  Since the cycle space of $G$ is spanned over $\F_2$ by any cycle
		basis and $(\star)$ is linear in the cycle, checking a basis suffices.
	\end{proof}
	
	\begin{remark}[Why the cut paradigm works on partial cubes]
		If a class $F_i$ is an edge cut, every cycle crosses it an even number of
		times (a closed walk re-enters each side as often as it leaves), so $F_i$
		contributes $0$ to every instance of $(\star)$ regardless of the generator
		assigned to it.  When classes are not cuts, some cycle crosses a class an
		odd number of times, and naive BFS-XOR propagation becomes
		order-dependent --- the root cause of failure of earlier implementations.
	\end{remark}
	
	\subsection{The GF(2) Quotient Construction}
	
	Let $c = m - n + 1$ be the cycle rank of $G$ and fix a cycle basis
	$B_1, \ldots, B_c$.  Define the \textbf{cycle--class parity matrix}
	$A \in \F_2^{\,c \times t}$ by
	\[
	A[i,j] \;=\; \bigl|\, F_j \cap E(B_i) \,\bigr| \bmod 2 .
	\]
	Condition $(\star)$ on the basis reads $A \cdot (g_1 | \cdots | g_t)^{\!\top}
	= 0$ over $\F_2$.
	
	\begin{theorem}[Quotient Labeling Theorem]
		\label{thm:quotient-labeling}
		Let $\mathcal{P}$ be any partition of $E(G)$ into $t$ classes and
		$\rho = \mathrm{rank}_{\F_2}(A)$.  Then:
		\begin{enumerate}[(i)]
			\item the most generic consistent assignment has dimension
			$k = t - \rho$: it is given by $g_j = \pi(e_j)$, the images of the
			standard basis of $\Z_2^{\,t}$ under the quotient map
			$\pi\colon \Z_2^{\,t} \to \Z_2^{\,t}/\mathrm{rowspan}(A) \cong \Z_2^{\,k}$,
			and every other consistent assignment is a linear image of it;
			\item the BFS labeling under this assignment is conflict-free by
			construction;
			\item with $S = \{g_j : g_j \neq \mathbf 0\}$, every $G$-path of length
			$\ell$ maps to a Cayley walk of length $\ell$; hence
			\[
			d_{\Cay}\bigl(\lambda(u), \lambda(v)\bigr) \;\leq\; d_G(u,v)
			\qquad \forall\, u, v ,
			\]
			so the only possible failure of isometry is a \emph{shortcut}, never a
			stretch.
		\end{enumerate}
	\end{theorem}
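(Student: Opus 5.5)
The plan is to prove the three parts in order, using the reformulation of $(\star)$ on a cycle basis given by Theorem~\ref{thm:cocycle}. Identify an assignment $(g_1,\ldots,g_t)$ with values in an $\F_2$-vector space $W$ with the linear map $L\colon \Z_2^{\,t}\to W$, $e_j\mapsto g_j$. The condition $A\,(g_1|\cdots|g_t)^{\top}=0$ says that for each row $a_i$ of $A$ we have $\sum_j A[i,j]\,g_j = L(a_i)=0$. Hence an assignment is consistent exactly when $\mathrm{rowspan}(A)\subseteq\ker L$. The universal property of the quotient then gives the correspondence: $L$ is consistent if and only if it factors uniquely as $L=\bar L\circ\pi$ with $\pi\colon\Z_2^{\,t}\to\Z_2^{\,t}/\mathrm{rowspan}(A)$. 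In particular $g_j=\pi(e_j)$ is itself consistent, because $\pi$ kills the row space. Every other consistent assignment is the linear image $\bar L(\pi(e_j))$. The quotient has dimension $t-\rho$ by rank--nullity, and choosing a complement basis identifies it with $\Z_2^{\,k}$. I will make ``most generic'' precise as this initial-object property. I will also note that no proper quotient of $\Z_2^{\,t}/\mathrm{rowspan}(A)$ is needed for consistency, so the images $\pi(e_j)$ span a space of dimension exactly $k$, and no smaller ambient dimension retains all consistent relations.

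For (ii), I apply Theorem~\ref{thm:cocycle} directly. Condition $(\star)$ holds on the chosen cycle basis by construction, so it holds on every cycle by linearity over $\F_2$. The path-sum $\lambda$ is therefore independent of the path chosen. Any BFS (or arbitrary traversal) order assigns to each vertex the sum along some $r$--$v$ path, so every traversal yields the same label and no conflict can arise. Concretely, when BFS meets a non-tree edge $uv$, the test $\lambda(u)\oplus\lambda(v)=g_{\mathrm{class}(uv)}$ reduces to $(\star)$ on the fundamental cycle of $uv$, which holds.

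For (iii), I will show that every edge $uv$ of $G$ satisfies $\lambda(u)\oplus\lambda(v)=g_{\mathrm{class}(uv)}$. To see this, extend an $r$--$u$ path by the edge $uv$ and use well-definedness. If this generator is nonzero it lies in $S$, so $\lambda(u)$ and $\lambda(v)$ are adjacent in $\Cay(\Z_2^{\,k},S)$. If it is zero, the two labels coincide and the edge contributes a step of length $0\le1$. Thus a $G$-path of length $\ell$ maps to a Cayley walk of length at most $\ell$, which gives $d_{\Cay}\le d_G$. This means any failure of isometry must be a shortcut.

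The main obstacle I expect is making ``most generic'' rigorous without overclaiming. In particular, I must handle the degenerate case where some $\pi(e_j)=\mathbf 0$, so that $S$ may not generate the whole group, or the labeling may be non-injective. I will handle this by stating clearly that (iii) claims only the upper bound, and by noting that the walk interpretation still holds for zero generators as a stay in place. The remaining steps are routine linear algebra.
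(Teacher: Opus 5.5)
Your proposal is correct and follows essentially the same route as the paper: consistency as $\mathrm{rowspan}(A)\subseteq\ker L$ plus the universal property of $\pi$ for (i), Theorem~\ref{thm:cocycle} on a cycle basis for (ii), and the edge identity $\lambda(u)\oplus\lambda(v)=g_{\mathrm{class}(uv)}$, with zero generators only shortening walks, for (iii); you spell out that edge identity and the BFS non-tree-edge check in more detail than the paper does. One small correction to your anticipated obstacle: the $\pi(e_j)$ span the quotient and discarding zero vectors does not change a span, so $S$ always generates $\Z_2^{\,k}$, and the degenerate case $g_j=\mathbf 0$ can only collapse edge endpoints (non-injectivity), never disconnect the host.
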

	
	\begin{proof}
		(i) The solution set of $A \mathbf{x} = 0$ is a subspace of dimension
		$t - \rho$; choosing coordinates on it realizes each $g_j$ as the image of
		$e_j$ in the quotient.  Any consistent assignment
		$(g'_1, \ldots, g'_t)$ in any $\Z_2^{k'}$ defines a linear map
		$\Z_2^{\,t} \to \Z_2^{k'}$ vanishing on $\mathrm{rowspan}(A)$, which
		therefore factors through $\pi$; in this sense $\pi$ is universal
		(``most generic'').
		(ii) By Theorem~\ref{thm:cocycle}, since the images satisfy $(\star)$ for
		every basis cycle, hence for every cycle.
		(iii) Consecutive labels along a path differ by the class generator of the
		traversed edge, an element of $S \cup \{\mathbf 0\}$; zero generators only
		shorten the walk.  Minimizing over $u$--$v$ paths gives the inequality.
	\end{proof}
	
	The next lemma supplies the missing converse direction for the finest
	partition; it completes the proof of Theorem~\ref{thm:naive} and anchors
	the universality theorem.
	
	\begin{lemma}[Join Lemma]
		\label{lem:join}
		Let $\mathcal{P}_0$ be the all-singleton partition (one class per edge) and
		let $\lambda, S$ be its quotient labeling.  Identify $\Z_2^{\,m}$ with the
		edge space $\mathcal{E}(G)$ and the row span of $A$ with the cycle space
		$\mathcal{Z}(G)$.  Then for all $u, v$:
		\[
		d_{\Cay}\bigl(\lambda(u), \lambda(v)\bigr) \;=\; d_G(u,v),
		\]
		i.e.\ the quotient embedding of the finest partition is isometric.
	\end{lemma}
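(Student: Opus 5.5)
The plan is to prove the two inequalities separately, with the reverse one carrying all the content. The upper bound $d_{\Cay}(\lambda(u),\lambda(v)) \leq d_G(u,v)$ is already Theorem~\ref{thm:quotient-labeling}(iii). One point needs checking first: no generator is zero, and no two generators coincide.

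In the identification of the statement, $g_e = \pi(e)$ is the coset $e + \mathcal{Z}(G)$ in $\mathcal{E}(G)/\mathcal{Z}(G)$. Two cases arise.
\begin{itemize}
\item If $g_e = \mathbf 0$, then $\{e\}$ would be a cycle-space element. This is impossible in a simple graph, where every nonzero element of $\mathcal{Z}(G)$ has at least three edges.
\item If $g_e = g_f$, then $\{e,f\}$ would be an even subgraph. This is impossible for the same reason.
\end{itemize}
Hence $S=\{g_e\}$ has exactly $m$ distinct nonzero elements. This matters only to make the word/edge-set correspondence clean, but it also matches $|S|\le m$ in Theorem~\ref{thm:naive}.

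For the lower bound, take a geodesic word $s_1+\cdots+s_\ell = \lambda(u)\oplus\lambda(v)$ with $s_i\in S$. By Remark~\ref{rem:no-repeat} its letters are distinct, so it corresponds to a set $X\subseteq E$ of $\ell$ edges with $\pi(\mathbf 1_X) = \lambda(u)\oplus\lambda(v)$. Now take any $u$--$v$ path $P$. By construction $\lambda(u)\oplus\lambda(v) = \pi(\mathbf 1_P)$, since $\lambda$ is the sum of coset representatives along any path, which is well defined by Theorem~\ref{thm:cocycle}. Therefore $\mathbf 1_X \oplus \mathbf 1_P \in \mathcal{Z}(G)$.

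The key step is the boundary map $\partial\colon\mathcal{E}(G)\to\F_2^V$, which sends an edge to the sum of its two endpoints. Its kernel is exactly $\mathcal{Z}(G)$, the even-degree subgraphs. It follows that $\partial X = \partial P = \{u,v\}$, so in the subgraph $X$ the only odd-degree vertices are $u$ and $v$ (when $u\ne v$).

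By the handshake lemma, the component of $X$ containing $u$ has an even number of odd-degree vertices, so it must also contain $v$. Hence $X$ contains a $u$--$v$ path, and $\ell=|X|\ge d_G(u,v)$. Combined with the upper bound, this gives equality.

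The main obstacle is making the identification precise, so that the row span of $A$, computed on a chosen cycle basis with the all-singleton partition, really equals $\mathcal{Z}(G)$ inside $\F_2^{m}$. For the singleton partition, row $i$ of $A$ is exactly the indicator vector of $B_i$. Since $B_1,\dots,B_c$ is a basis of $\mathcal{Z}(G)$, the row span is $\mathcal{Z}(G)$, and $\pi$ is literally the quotient $\mathcal{E}(G)\to\mathcal{E}(G)/\mathcal{Z}(G)$. This yields $k=m-(m-n+1)=n-1$, matching Theorem~\ref{thm:naive} (cf.\ Corollary~\ref{cor:naive-is-quotient}).

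I would also remark that $\partial$ induces an isomorphism of $\mathcal{E}/\mathcal{Z}$ onto the even-size subsets of $V$ (connectedness gives that the image is all of them). This gives a concrete picture: $\lambda(v)$ corresponds to $\{r,v\}$, and generators correspond to edges' endpoint pairs.
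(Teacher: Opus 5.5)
Your proposal is correct and follows the paper's proof essentially step for step. Both take the upper bound from Theorem~\ref{thm:quotient-labeling}(iii), then read a geodesic word with distinct letters as an edge set $X$ with $\mathbf 1_X \oplus \mathbf 1_P \in \mathcal{Z}(G)$, equate boundaries to $\{u,v\}$, and use the handshake argument to force a $u$--$v$ path inside $X$. Your extra checks are correct and make explicit what the paper leaves to the statement's identification: that the generators are nonzero and pairwise distinct, and that the row span of $A$ for the singleton partition really is $\mathcal{Z}(G)$.
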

	
	\begin{proof}
		By Theorem~\ref{thm:quotient-labeling}(iii) only
		$d_{\Cay} \geq d_G$ needs proof.  For the finest partition,
		$\lambda(u) \oplus \lambda(v) = \pi(\chi_P)$ where $\chi_P \in \mathcal{E}(G)$
		is the characteristic vector of any $u$--$v$ path $P$ and
		$\pi\colon \mathcal{E}(G) \to \mathcal{E}(G)/\mathcal{Z}(G)$ is the quotient
		map.  Consider a geodesic word realizing
		$d_{\Cay}(\lambda(u), \lambda(v)) = \ell$.  By Remark~\ref{rem:no-repeat} it
		uses pairwise distinct generators, i.e.\ it is $\pi(\chi_F)$ for some edge
		set $F \subseteq E(G)$ with $|F| = \ell$.  Then
		$\pi(\chi_F) = \pi(\chi_P)$ means
		$\chi_F \oplus \chi_P \in \mathcal{Z}(G)$, and since cycles have empty GF(2)
		boundary, the boundary operator gives
		$\partial F = \partial P = \{u, v\}$: in the subgraph $(V, F)$ exactly the
		vertices $u$ and $v$ have odd degree.  The component of $(V, F)$ containing
		$u$ has an even number of odd-degree vertices, so it also contains $v$;
		hence $F$ contains a $u$--$v$ path and
		$\ell = |F| \geq d_G(u, v)$.
	\end{proof}
	
	\begin{corollary}[The naive embedding is the finest quotient]
		\label{cor:naive-is-quotient}
		For $\mathcal{P}_0$, the matrix $A$ is the (cycle basis)\,$\times$\,(edge)
		incidence matrix, of rank $c = m - n + 1$, so
		\[
		k \;=\; m - (m - n + 1) \;=\; n - 1 ,
		\]
		the free coordinates may be taken on the edges of any spanning tree, and the
		resulting labeling is exactly the spanning-tree labeling of
		Theorem~\ref{thm:naive}; by Lemma~\ref{lem:join} it is isometric.  In
		particular the upper bound $k \leq n-1$ holds for \emph{every} connected
		graph, as a one-line consequence of the quotient formula.
	\end{corollary}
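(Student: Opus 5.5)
The plan is to prove the three claims of Corollary~\ref{cor:naive-is-quotient} in order: the rank of $A$ for the all-singleton partition $\mathcal{P}_0$, the identification of the resulting labeling with the spanning-tree labeling of Theorem~\ref{thm:naive}, and isometry.

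\emph{The rank of $A$.} Under $\mathcal{P}_0$ we have $t = m$, and the row $i$ of $A$ is exactly the characteristic vector $\chi_{B_i} \in \F_2^{\,m}$ of the basis cycle $B_i$. Hence $\mathrm{rowspan}(A)$ is the cycle space $\mathcal{Z}(G)$. Its dimension is the cycle rank $c = m - n + 1$, which I will justify by the standard fundamental-cycle argument. Fix a spanning tree $T$. Each non-tree edge $f$ closes a unique fundamental cycle $C_f$, and these $c$ cycles are independent, since $f$ lies only in $C_f$. They also span $\mathcal{Z}(G)$: subtracting $\sum_{f \in Z \setminus T} C_f$ from any cycle-space element $Z$ leaves an element supported on $T$, and a nonempty even subgraph of a forest is impossible. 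Any other cycle basis spans the same space, so $\rho = c$ and $k = m - c = n - 1$ by Theorem~\ref{thm:quotient-labeling}(i).

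\emph{Free coordinates on tree edges.} The key observation is that $\mathcal{E}(G) = \mathcal{E}(T) \oplus \mathcal{Z}(G)$. The two subspaces intersect trivially, because a forest carries no nonzero even subgraph, and their dimensions $(n-1) + c$ add up to $m$. Therefore $\pi$ restricted to $\mathcal{E}(T)$ is an isomorphism onto the quotient. Using it to choose coordinates, each tree edge $e_{\iota(e)}$ maps to a standard basis vector of $\Z_2^{n-1}$.

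A non-tree edge $f = xy$ maps to $\pi(\chi_f) = \pi(\chi_{T[x,y]})$, since $\chi_f + \chi_{T[x,y]} = \chi_{C_f} \in \mathcal{Z}(G)$. Consequently the BFS labeling from the root $r$, computed along the tree path, is $\lambda(v) = \sum_{e \in T[r,v]} e_{\iota(e)}$. This is exactly the characteristic-vector labeling of Theorem~\ref{thm:naive}.

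The generator set also matches. The generator of $f$ is $\lambda(x) \oplus \lambda(y)$, because the paths $T[r,x]$ and $T[r,y]$ cancel on their common prefix. So $S = \{\lambda(u) \oplus \lambda(v) : uv \in E\}$, as in Theorem~\ref{thm:naive}. No generator is zero, since the characteristic vector of a nonempty tree path is nonzero.

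\emph{Isometry and the bound.} Isometry is exactly Lemma~\ref{lem:join}, and the bound $k \le n-1$ follows immediately. Since Theorem~\ref{thm:naive} cites this corollary, I must check that there is no circularity. Lemma~\ref{lem:join} uses only Theorem~\ref{thm:quotient-labeling}(iii) and Remark~\ref{rem:no-repeat}, so the argument is sound.

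The only step needing real care is the direct-sum decomposition $\mathcal{E}(T) \oplus \mathcal{Z}(G)$. It is the step that justifies both the dimension count and the claim that the free coordinates may be placed on the edges of \emph{any} spanning tree.
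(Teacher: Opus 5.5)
Your proposal is correct and follows essentially the same route as the paper. Fundamental cycles of a spanning tree give $\mathrm{rank}(A) = c$, the tree edges serve as free coordinates with each non-tree edge mapping to the sum of the tree edges on its fundamental cycle, and isometry is delegated to Lemma~\ref{lem:join}. Your decomposition $\mathcal{E}(G) = \mathcal{E}(T) \oplus \mathcal{Z}(G)$ is the basis-free form of the paper's ``pivot columns at the non-tree edges'' remark, and your extra checks (matching generator set, nonzero generators, no circularity with Theorem~\ref{thm:naive}) fill in details the paper leaves implicit.
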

	
	\begin{proof}
		For singleton classes, $A[i, j] = 1$ iff edge $j$ lies on basis cycle $i$;
		this matrix has rank $c$ because the fundamental cycles of any spanning tree
		are supported on distinct non-tree edges.  Pivot columns can be chosen at
		the non-tree edges, leaving the $n-1$ tree edges free; the image of a
		non-tree edge is then the sum of the tree edges on its fundamental cycle,
		which is precisely the composite generator the BFS-tree labeling assigns to
		it.
	\end{proof}
	
	\begin{example}[Petersen graph via the quotient]
		\label{ex:petersen-quotient}
		The Petersen graph has $n = 10$, $m = 15$, cycle rank $c = 6$, and five
		$\varphi^-$-classes of size $3$ ($t = 5$).  Direct computation of the
		$6 \times 5$ cycle--class matrix gives $\rho = 1$, the unique relation being
		$g_1 + g_2 + g_3 + g_4 + g_5 = \mathbf 0$.  Hence $k = 5 - 1 = 4$, with
		generators $e_1, e_2, e_3, e_4$ and the composite
		$e_1{+}e_2{+}e_3{+}e_4 = (1,1,1,1)$: the host is the Clebsch graph
		(Figure~\ref{fig:petersenclebsch}), of order $16$, and the embedding is
		verified isometric on all $\binom{10}{2}$ pairs.  Section~\ref{sec:bounds}
		shows $k = 4 = \lceil \log_2 10 \rceil$ is \emph{optimal}.
	\end{example}
	
	\begin{figure}[htbp]
		\centering
		\includegraphics[width=0.72\linewidth]{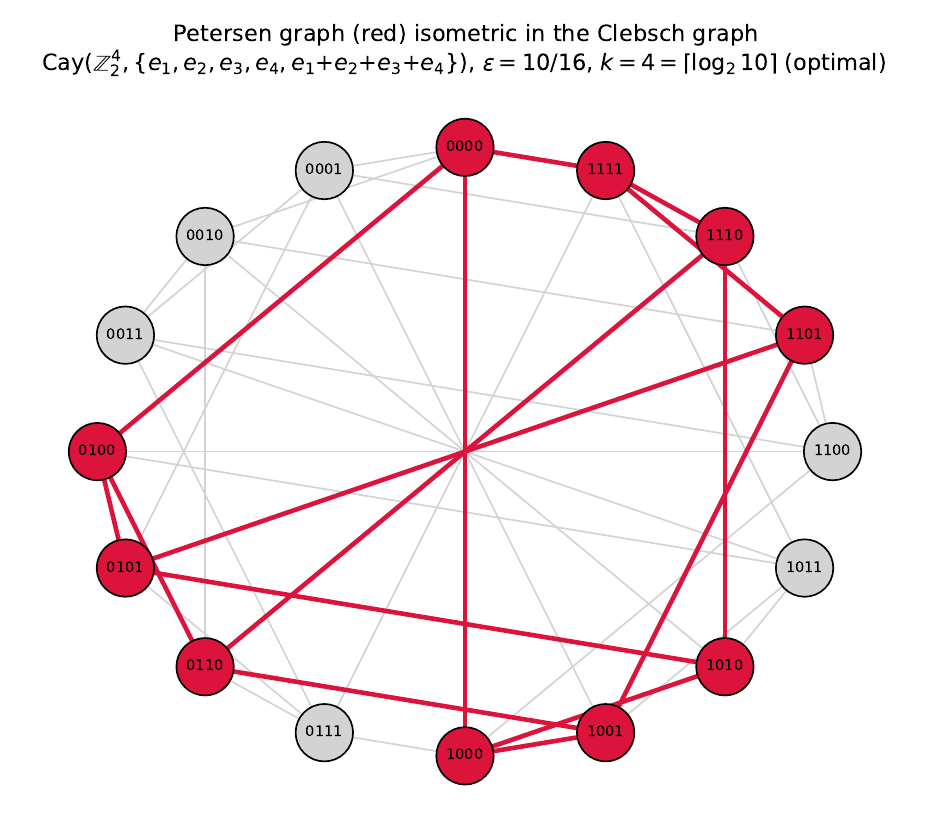}
		\caption{The Petersen graph (red) as an isometric subgraph of the Clebsch
			graph $\Cay(\Z_2^4, \{e_1, e_2, e_3, e_4, e_1{+}e_2{+}e_3{+}e_4\})$ of
			order $16$.  Excursion ratio $\varepsilon = 10/16 = 0.625$; the dimension
			$k = 4 = \lceil\log_2 10\rceil$ matches the injectivity lower bound of
			Theorem~\ref{thm:lower-bound} and is therefore minimal.}
		\label{fig:petersenclebsch}
	\end{figure}
	
	\begin{example}[Pappus graph via the quotient]
		\label{ex:pappus-quotient}
		The Pappus graph ($n = 18$, $m = 27$, $c = 10$) admits a structured
		partition into nine $\varphi^-$-classes of size $3$
		(Figure~\ref{fig:pappusclasses}), found by the top-down constructor of the
		supplementary implementation.  The $10 \times 9$ cycle--class matrix has rank
		$\rho = 2$, giving $k = 9 - 2 = 7$ and $|S| = 9$: seven basis generators
		and two composite generators, \emph{both of Hamming weight $5$}.  The host
		has order $128$, against $131\,072$ for the naive embedding --- a
		$1024\times$ improvement --- and isometry was verified on all
		$\binom{18}{2} = 153$ pairs by full reconstruction of the host.  This
		refutes the earlier conclusion that Pappus admits no compact
		$\varphi$-based binary embedding: it does; the \emph{cut-based labeling}
		was simply the wrong tool.  By contrast, the greedy transitive prune
		produces a ten-class partition from which the repair loop converges to
		$k = 12$ (Table~\ref{tab:embedding-all}); the comparison quantifies how
		strongly the initial partition quality controls the final dimension.
	\end{example}
	
	\begin{figure}[htbp]
		\centering
		\includegraphics[width=0.8\linewidth]{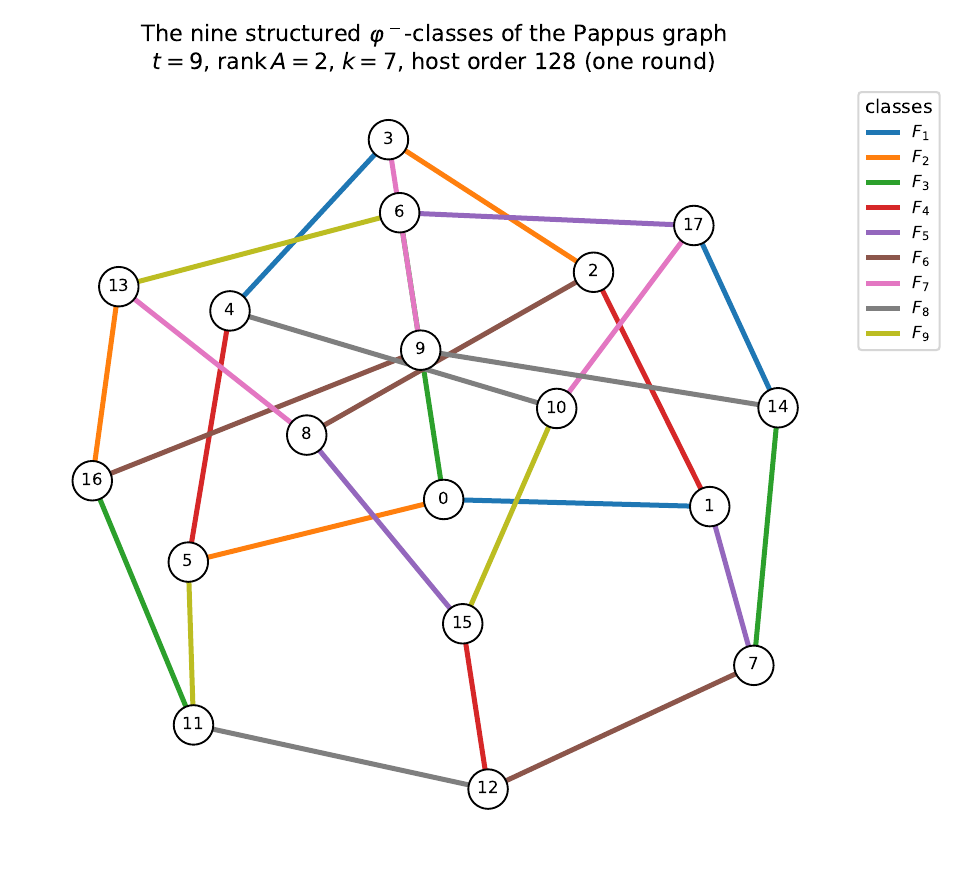}
		\caption{The nine structured $\varphi^-$-classes of the Pappus graph
			($t = 9$, three edges per class).  The cycle--class parity matrix has rank
			$2$, so the quotient embedding has $k = 7$: host order $128$, verified
			isometric, with both composite generators of weight $5$.}
		\label{fig:pappusclasses}
	\end{figure}
	
	\section{The $\varphi$-Quotient Algorithm}
	\label{sec:phi-algorithm}
	
	\subsection{Isometry Check and Shortcut Characterization}
	
	\begin{definition}[Correct isometry check]
		\label{def:correct-check}
		An embedding $\lambda\colon V(G) \to \Z_2^k$ with generator set $S$ is
		\emph{isometric} iff
		$d_G(u,v) = d_{\Cay}(\lambda(u),\lambda(v))$ for all pairs $u, v \in V(G)$.
		This test must be performed against $d_G$ on $V(G)$.  Testing instead
		against the metric of the \emph{image graph} is unsound: if the labeling is
		non-injective the image graph silently merges vertices and its metric is no
		longer $d_G$, so the flawed test can certify non-embeddings.  (An explicit
		injectivity assertion accompanies the check in the reference
		implementation.)
	\end{definition}
	
	\begin{proposition}[Shortcuts are generator subsets]
		\label{prop:shortcut}
		A violation at $(u,v)$ means there is a subset
		$\{s_1, \ldots, s_\ell\} \subseteq S$ with
		$s_1 + \cdots + s_\ell = \lambda(u) \oplus \lambda(v)$ and
		$\ell < d_G(u,v)$ (Remark~\ref{rem:no-repeat}).  The classes whose
		generators occur in the BFS-geodesic word of
		$\lambda(u) \oplus \lambda(v)$ are said to be \emph{responsible} for the
		shortcut.
	\end{proposition}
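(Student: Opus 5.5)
The claim has two directions. First, an isometry violation at $(u,v)$ produces a generator subset of size less than $d_G(u,v)$ summing to $\lambda(u)\oplus\lambda(v)$. Second, the notion of ``responsible classes'' is well defined. I would prove it by combining Theorem~\ref{thm:quotient-labeling}(iii) with Remark~\ref{rem:no-repeat}.

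The first step fixes what a violation can be. By Definition~\ref{def:correct-check}, a violation means $d_{\Cay}(\lambda(u),\lambda(v)) \neq d_G(u,v)$. By Theorem~\ref{thm:quotient-labeling}(iii), every $G$-path of length $\ell$ maps to a Cayley walk of length at most $\ell$ (zero generators only shorten it). So $d_{\Cay}\le d_G$ always holds, and a violation must be a strict inequality $d_{\Cay}(\lambda(u),\lambda(v)) < d_G(u,v)$, never a stretch.

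The second step uses vertex-transitivity of $\Cay(\Z_2^k,S)$. The Cayley distance equals the least length $\ell$ of a word $s_1+\cdots+s_\ell=\lambda(u)-\lambda(v)$, and in $\Z_2^k$ subtraction is $\oplus$. Take such a geodesic word, for instance the one recovered from a BFS in the host from $0$ to $\lambda(u)\oplus\lambda(v)$. By Remark~\ref{rem:no-repeat} it uses pairwise distinct generators, so it is a subset $\{s_1,\dots,s_\ell\}\subseteq S$ with the required sum and $\ell<d_G(u,v)$. Conversely, any such subset gives a walk of length $\ell<d_G$ and hence a violation, so the characterization is an equivalence.

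For the ``responsible classes'' clause, every $s_i\in S$ is some nonzero $g_j$ by construction of $S$. The set of classes $\{F_j : g_j\in\{s_1,\dots,s_\ell\}\}$ is therefore determined once the BFS geodesic is fixed. The one subtle point, which I expect to be the only real obstacle, is that distinct classes may share a generator, and the geodesic word need not be unique. I would handle this by declaring all classes whose generator appears in the chosen BFS word to be responsible; this is a definition relative to the deterministic BFS tie-breaking, not a uniqueness claim. Because the statement only introduces terminology, no further argument is needed beyond noting that the set is nonempty. It is nonempty because $\ell\ge1$: $\lambda(u)\ne\lambda(v)$ whenever the shortcut is strict and $d_G\ge 2$. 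In the degenerate case $\lambda(u)=\lambda(v)$, a non-injectivity, the word is empty and the violation is flagged by the injectivity assertion instead.
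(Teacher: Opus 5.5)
Your argument is correct and matches the paper's own justification: Theorem~\ref{thm:quotient-labeling}(iii) rules out stretches, so a violation is a strict shortcut, Remark~\ref{rem:no-repeat} turns the geodesic word into a subset of $S$, and the ``responsible'' clause is purely definitional. One wording slip: a strict shortcut does not by itself force $\lambda(u)\neq\lambda(v)$, since the empty word with $\ell=0$ is a legitimate instance of the statement; this costs nothing, because you treat that degenerate case separately and the proposition never claims the responsible set is nonempty.
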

	
	\subsection{The Repair Loop}
	
	\begin{algorithm}
		\caption{$\varphi$-Quotient Embedding with Repair Loop}
		\label{alg:phi-quotient}
		\begin{algorithmic}[1]
			\Require connected graph $G = (V, E)$; optional dimension cap $k_{\max}$
			\Ensure isometric embedding $\lambda\colon V \to \Z_2^k$, generator set $S$
			\State compute all-pairs distances $D$ of $G$;\quad
			$\mathcal{P} \gets \mathrm{TransitivePrune}(G, D)$
			\Comment{Alg.~\ref{alg:transitive-prune}}
			\While{\textbf{true}}
			\State $(k, \lambda, \{g_j\}) \gets \mathrm{QuotientLabeling}(G, \mathcal{P})$
			\Comment{Thm.~\ref{thm:quotient-labeling}}
			\State $S \gets \{\, g_j : F_j \in \mathcal{P},\ g_j \neq \mathbf 0 \,\}$
			\Comment{$=\{\lambda(u)\oplus\lambda(v) : uv \in E\}$}
			\If{$k > k_{\max}$}
			\State \Return the naive embedding of Cor.~\ref{cor:naive-is-quotient}
			\Comment{provably isometric; $k = n-1$}
			\EndIf
			\State single-source BFS from $\mathbf 0$ in $\Cay(\Z_2^k, S)$, truncated
			at depth $\diam(G)$ \Comment{vertex-transitivity:
				$d_{\Cay}(x,y) = d_{\Cay}(\mathbf 0, x \oplus y)$}
			\State $\mathcal{V} \gets \{(u,v) : d_{\Cay}(\lambda(u),\lambda(v)) \neq
			D[u][v]\}$
			\If{$\mathcal{V} = \emptyset$} \State \Return $(\lambda, k, S)$ \EndIf
			\State $(u^*, v^*) \gets \arg\max_{(u,v) \in \mathcal{V}}
			\bigl(D[u][v] - d_{\Cay}\bigr)$;\quad recover the BFS geodesic word of
			$\lambda(u^*) \oplus \lambda(v^*)$
			\State $F_j \gets$ a largest responsible class with $|F_j| \geq 2$
			(if none exists, a largest class with $|F_j| \geq 2$)
			\State \textbf{peel:} move one edge of $F_j$ into a new singleton class
			\EndWhile
		\end{algorithmic}
	\end{algorithm}
	
	\begin{theorem}[Universality]
		\label{thm:universal}
		For every connected graph $G$, Algorithm~\ref{alg:phi-quotient} (with
		$k_{\max} = n-1$) terminates and returns an isometric embedding of $G$ into
		$\Cay(\Z_2^k, S)$ with $k \leq n - 1$.
	\end{theorem}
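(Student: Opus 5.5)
The proof rests on two things: a potential-function argument for termination, and the fact that every exit of the loop either certifies isometry directly or falls back to an embedding already known to be isometric.

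\emph{Termination.} I would use the potential $\Pi(\mathcal{P}) = \sum_{F\in\mathcal{P}} (|F|-1) = m - |\mathcal{P}|$. At each peel step some class with $|F_j|\ge 2$ exists, since otherwise $\mathcal{P}$ is the all-singleton partition $\mathcal{P}_0$. The step moves one edge of $F_j$ into a new singleton class, so $|\mathcal{P}|$ increases by one and $\Pi$ drops by exactly one. Because $\Pi\ge 0$, there are at most $m - t_0$ passes, where $t_0$ is the size of the initial partition from the transitive prune.

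To exclude the situation where the loop has a violation but no class to peel, I would show that $\mathcal{P}_0$ never produces a violation. By Corollary~\ref{cor:naive-is-quotient}, the quotient labeling of $\mathcal{P}_0$ has $k=n-1\le k_{\max}$. By Lemma~\ref{lem:join}, it is isometric, so $\mathcal{V}=\emptyset$ and the algorithm returns. Hence the ``if none exists'' fallback in the peel line always finds a class of size at least two whenever it is reached.

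\emph{Correctness of each exit.} There are two ways out of the loop.

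\begin{enumerate}[(a)]
\item The branch $k>k_{\max}$ returns the naive embedding, which is isometric by Theorem~\ref{thm:naive} and Lemma~\ref{lem:join}, with $k=n-1$.
\item The branch $\mathcal{V}=\emptyset$ certifies $d_{\Cay}=d_G$ on all pairs, which also gives injectivity. Since this branch is only reached after the test $k\le k_{\max}=n-1$, we get $k\le n-1$.
\end{enumerate}

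The step I expect to be most delicate is justifying that the truncated BFS computes $d_{\Cay}$ correctly enough for this test. Vertex-transitivity gives $d_{\Cay}(\lambda(u),\lambda(v))=d_{\Cay}(\mathbf 0,\lambda(u)\oplus\lambda(v))$. Theorem~\ref{thm:quotient-labeling}(iii) gives $d_{\Cay}\le d_G\le\diam(G)$, so every relevant target lies within the truncation depth and its BFS distance is exact. A target the truncated BFS fails to reach would therefore be reported as a mismatch, never as a false pass.

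I would also note that $S$ equals the set of edge-differences $\{\lambda(u)\oplus\lambda(v):uv\in E\}$. Consequently every class with generator $g_j=\mathbf 0$ collapses its endpoints, which makes $\lambda$ non-injective, which in turn yields a detected violation. This shows the $\mathcal{V}=\emptyset$ exit is sound. The generic bound $k=t-\rho\le n-1$, which would avoid relying on the cap, need not be proved, because the cap enforces it.
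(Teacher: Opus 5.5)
Your proof is correct and follows the paper's argument. Termination comes from counting peels (at most $m-t_0$) down to the all-singleton partition $\mathcal{P}_0$, and correctness holds because every return either passes the exact check or is the naive embedding, which Lemma~\ref{lem:join} certifies isometric; for the same reason $\mathcal{P}_0$ always exits. The differences are only refinements: you get $k\le n-1$ from the cap test $k\le k_{\max}=n-1$ that precedes the empty-violation return, whereas the paper asserts without proof that $k=t-\rho$ is maximized at $\mathcal{P}_0$ (true, since a coarser partition's quotient is a quotient of the finest one), and you additionally check that a class of size $\ge 2$ exists whenever the peel line is reached and that the depth-$\diam(G)$ BFS is exact by Theorem~\ref{thm:quotient-labeling}(iii).
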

	
	\begin{proof}
		\emph{Termination.}  Each iteration either returns or strictly refines
		$\mathcal{P}$ by one peel.  Starting from $t$ classes, at most $m - t$
		peels are possible before reaching the all-singleton partition
		$\mathcal{P}_0$, so the loop executes at most $m - t + 1$ rounds.
		
		\emph{Correctness.}  Whenever the algorithm returns at the empty-violation
		line, the returned labeling has passed the correct isometry check of
		Definition~\ref{def:correct-check}.  If the loop ever reaches
		$\mathcal{P}_0$, its quotient labeling is the naive embedding, which is
		isometric by Lemma~\ref{lem:join}; the check then finds no violations and
		the algorithm returns.  The same lemma covers the dimension-cap return.
		In all cases $k \leq n-1$, since $k = t - \rho$ is maximized at
		$\mathcal{P}_0$ where it equals $n - 1$
		(Corollary~\ref{cor:naive-is-quotient}), and any partition refines towards
		it.
		
		We note explicitly what is \emph{not} claimed: a single peel need not
		reduce the number of violations, and $k$ may stay constant across a peel
		(the new column can raise $\mathrm{rank}(A)$ by one).  The guarantee is the
		terminal one above, and it is unconditional.
	\end{proof}
	
	\begin{remark}[Minimality is heuristic --- and provably must be]
		\label{rem:minimality}
		The returned $k$ depends on the initial partition and on peel choices, so
		the algorithm is a heuristic for the minimum dimension.  Two precise
		statements calibrate this. On the negative side, no algorithm confined to
		the $\varphi$-quotient family can be optimal on all graphs: the star
		$K_{1,4}$ has only singleton $\varphi$-classes (Remark~\ref{rem:matching}
		forbids two incident edges in a class), forcing $k = 4$ within the family,
		while Theorem~\ref{thm:star} shows $k_{\min}(K_{1,4}) = 3$, attained only
		by assigning a \emph{linearly dependent} generator to a class that is
		$\varphi$-related to none of the others.  On the positive side, exhaustive
		exact search shows the algorithm attains the true minimum on $29$ of the
		$30$ connected graphs with $n \leq 5$, the unique exception being precisely
		$K_{1,4}$ (Section~\ref{sec:binary-experiments},
		Figure~\ref{fig:optimalitygap}).  Exact minimization over all partitions is
		conjectured NP-hard, by analogy with minimum-dimension scale embeddings
		into hypercubes \cite{Deza1997}; establishing this is left open.
	\end{remark}
	
	\section{Bounds on the Embedding Dimension}
	\label{sec:bounds}
	
	For a connected graph $G$ define
	\[
	k_{\min}(G) \;=\; \min\bigl\{\, k \;:\; G \text{ embeds isometrically into }
	\Cay(\Z_2^k, S) \text{ for some } S \,\bigr\}.
	\]
	Corollary~\ref{cor:naive-is-quotient} gives $k_{\min}(G) \leq n-1$
	universally.  This section establishes the general lower bound, determines
	$k_{\min}$ exactly for stars and (in the stated range) for odd cycles, and
	shows that all the bounds are tight.
	
	\subsection{The General Lower Bound}
	
	\begin{lemma}[Geodesic independence]
		\label{lem:geodesic-independence}
		Let $s_1, \ldots, s_d \in S$ be the generators of a geodesic word in
		$\Cay(\Z_2^k, S)$, i.e.\ $d_{\Cay}(\mathbf 0,\ s_1 + \cdots + s_d) = d$.
		Then $s_1, \ldots, s_d$ are linearly independent over $\F_2$; in
		particular all $2^d$ subset sums are distinct.
	\end{lemma}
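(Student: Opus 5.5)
The plan is to argue by contradiction, using the minimality of the word length $d$. Suppose $s_1,\ldots,s_d$ are linearly dependent over $\F_2$. Then some nonempty subset $I \subseteq \{1,\ldots,d\}$ satisfies $\sum_{i\in I} s_i = \mathbf 0$. Deleting the generators indexed by $I$ leaves a word of length $d - |I| < d$ with the same sum, because removing a zero-sum block does not change the total. This contradicts $d_{\Cay}(\mathbf 0, s_1+\cdots+s_d) = d$.

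Two degenerate cases need a remark, and both are covered by the same argument. If some $s_i = s_j$ with $i \neq j$, then $I=\{i,j\}$ is a zero-sum subset; this is exactly the observation of Remark~\ref{rem:no-repeat}. If some $s_i = \mathbf 0$, then $I = \{i\}$ works, although $\mathbf 0 \notin S$ by the definition of a Cayley graph. So independence follows directly, with no case split beyond noting that ``linearly dependent'' over $\F_2$ means precisely that some nonempty subset sums to zero.

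For the second claim, distinctness of subset sums follows from independence. If $\sum_{i\in I} s_i = \sum_{i\in J} s_i$ for $I \neq J$, then adding the two sides in $\Z_2^k$ gives $\sum_{i \in I \triangle J} s_i = \mathbf 0$ with $I\triangle J \neq \emptyset$, contradicting independence. Hence the map $I \mapsto \sum_{i\in I} s_i$ from subsets of $\{1,\ldots,d\}$ to $\Z_2^k$ is injective, and there are $2^d$ distinct subset sums. As a by-product, $d \leq k$.

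I do not expect a real obstacle here. The only point needing care is to state that a geodesic word is one of minimal length among all words with the given sum, so that any strictly shorter word with the same sum is a genuine contradiction.
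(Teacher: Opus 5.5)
Your proof is correct and is essentially the paper's argument: both rest on the fact that a nonempty zero-sum sub-multiset of a geodesic word could be deleted to give a strictly shorter word with the same sum. The only difference is order: the paper first shows every sub-multiset of a geodesic word is geodesic (so $d_{\Cay}(\mathbf 0,\sum_{i\in T}s_i)=|T|$) and gets distinct subset sums before independence, whereas you go directly to independence and then derive distinct sums, which is slightly more economical.
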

	
	\begin{proof}
		In an abelian group, every sub-multiset of a geodesic word is itself
		geodesic: if some subset $T \subseteq \{1,\ldots,d\}$ admitted a shorter
		word for $\sum_{i \in T} s_i$, splicing it into the original word would
		shorten it.  Hence $d_{\Cay}(\mathbf 0, \sum_{i \in T} s_i) = |T|$ for every
		$T$.  If two distinct subsets $T_1 \neq T_2$ had equal sums, then
		$\sum_{i \in T_1 \triangle T_2} s_i = \mathbf 0$ while
		$|T_1 \triangle T_2| > 0$, contradicting the previous sentence.  Distinct
		subset sums for all $2^d$ subsets is exactly linear independence.
	\end{proof}
	
	\begin{theorem}[Lower bound]
		\label{thm:lower-bound}
		For every connected graph $G$ on $n$ vertices,
		\[
		k_{\min}(G) \;\geq\; \max\bigl(\, \diam(G),\ \lceil \log_2 n \rceil \,\bigr).
		\]
	\end{theorem}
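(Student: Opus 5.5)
The bound splits into two independent inequalities, and I will prove each separately for an arbitrary isometric embedding $\lambda\colon V(G)\to\Cay(\Z_2^k,S)$; minimizing over $k$ then yields the statement.

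\emph{Injectivity term.} As noted after the definition of isometric embedding, $\lambda$ is injective, since distinct vertices are at positive distance in $G$ and hence in the host. Therefore $n \le |\Z_2^k| = 2^k$, and because $k$ is an integer, $k \ge \lceil \log_2 n\rceil$. This part is immediate.

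\emph{Diameter term.} Choose $u,v$ with $d_G(u,v)=\diam(G)=:D$. By isometry, $d_{\Cay}(\lambda(u),\lambda(v))=D$. By vertex-transitivity this equals $d_{\Cay}(\mathbf 0,\lambda(u)\oplus\lambda(v))$. So there is a geodesic word $s_1+\cdots+s_D=\lambda(u)\oplus\lambda(v)$ with $s_i\in S$, and its length is exactly $D$. Lemma~\ref{lem:geodesic-independence} now says that $s_1,\dots,s_D$ are linearly independent over $\F_2$ in $\Z_2^k$. Hence $D\le \dim \Z_2^k = k$. An equivalent way to see this is that the $2^D$ distinct subset sums must fit inside $\Z_2^k$.

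Combining the two inequalities gives $k \ge \max(\diam(G),\lceil\log_2 n\rceil)$ for every admissible $k$, and in particular for $k_{\min}(G)$. The only substantive step is the diameter bound, and all of its work was already done in Lemma~\ref{lem:geodesic-independence}. The one point to check is that the lemma applies to an arbitrary generating set $S$ rather than a basis. It does, because its proof uses only commutativity and the fact that sub-words of geodesic words are geodesic.
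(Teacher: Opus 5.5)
Your proof is correct and follows the paper's argument essentially step for step: injectivity gives $2^k \geq n$, and a geodesic word realizing $\diam(G)$ has linearly independent generators by Lemma~\ref{lem:geodesic-independence}, which forces $k \geq \diam(G)$. You also check that the lemma uses only commutativity and the fact that sub-words of a geodesic word are geodesic, so it does not require $S$ to be a basis; that check is accurate.
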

	
	\begin{proof}
		Isometric maps are injective, so $2^k \geq n$, giving the second term.  For
		the first, pick $u, v$ with $d_G(u,v) = \diam(G) =: d$; isometry provides a
		geodesic word of length $d$ in $\Cay(\Z_2^k, S)$, whose generators are
		linearly independent by Lemma~\ref{lem:geodesic-independence}; a space
		containing $d$ independent vectors has dimension $\geq d$.
	\end{proof}
	
	\begin{proposition}[Tightness of each term]
		\label{prop:tightness}
		\leavevmode
		\begin{enumerate}[(i)]
			\item Hypercubes attain both terms simultaneously:
			$k_{\min}(Q_t) = t = \diam(Q_t) = \log_2 |V(Q_t)|$.
			\item Complete graphs of order $2^t$ attain the logarithmic term at
			diameter $1$:\ $k_{\min}(K_{2^t}) = t$, realized by
			$K_{2^t} = \Cay(\Z_2^t,\ \Z_2^t \setminus \{\mathbf 0\})$.
			\item Even cycles attain the diameter term:
			$k_{\min}(C_{2d}) = d = \diam(C_{2d})$, realized by the antipodal-pair
			classes (Theorem~\ref{thm:phi-properties}(v)), which give $t = d$,
			$\rho = 0$, $k = d$.
		\end{enumerate}
	\end{proposition}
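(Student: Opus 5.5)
In each of the three cases I will combine the lower bound of Theorem~\ref{thm:lower-bound} with an explicit construction whose dimension meets it. The lower bound supplies $k_{\min}\geq\max(\diam,\lceil\log_2 n\rceil)$, so it remains to exhibit an isometric embedding into $\Cay(\Z_2^k,S)$ with $k$ equal to that bound.

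For (i), $Q_t=\Cay(\Z_2^t,\{e_1,\dots,e_t\})$ embeds into itself by the identity, so $k_{\min}(Q_t)\le t$. Its diameter is $t$: the Hamming distance between $\mathbf 0$ and $(1,\dots,1)$ is $t$. Also $|V(Q_t)|=2^t$, so both terms of the lower bound equal $t$.

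For (ii), $K_{2^t}$ is literally $\Cay(\Z_2^t,\Z_2^t\setminus\{\mathbf 0\})$, because every pair of distinct elements differs by a nonzero element and hence by a generator. The identity is therefore isometric, giving $k\le t$. The lower bound gives $k\geq\lceil\log_2 2^t\rceil=t$, and $\diam=1\le t$.

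For (iii), the lower bound gives $k_{\min}(C_{2d})\geq\diam(C_{2d})=d$, and $\lceil\log_2 2d\rceil\le d$ for $d\geq 2$; I would note that $d=1$ is degenerate. For the upper bound I will take the partition into the $d$ antipodal pairs $F_j=\{v_jv_{j+1},\,v_{j+d}v_{j+d+1}\}$, $j=0,\dots,d-1$. The cycle space has rank $1$ and is spanned by the cycle itself, which meets every class in exactly two edges, so $A=0$, $\rho=0$ and $k=d$. Theorem~\ref{thm:quotient-labeling} then assigns $g_j=e_j$ and produces the labeling $\lambda(v_i)=e_0+\cdots+e_{i-1}$ for $0\le i\le d$, continuing with $\lambda(v_{d+i})=(1,\dots,1)+e_0+\cdots+e_{i-1}$. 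Concretely, $\lambda(v_{d+i})$ is the indicator of $\{i,\dots,d-1\}$, which is the familiar isometric embedding of $C_{2d}$ into $Q_d$.

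The main obstacle is proving isometry in (iii), because Theorem~\ref{thm:quotient-labeling}(iii) only supplies the inequality $d_{\Cay}\le d_G$. Since $S$ is the standard basis, $d_{\Cay}$ is Hamming distance, and I will verify directly that it equals the cyclic distance. Each label is an interval indicator: the vertex $v_i$ carries $\{0,\dots,i-1\}$ for $i\le d$ and $\{i-d,\dots,d-1\}$ for $i\ge d$. The symmetric difference of two such intervals has size $\min(|i-i'|,2d-|i-i'|)$, which I will check in the two cases of same half and opposite halves. Alternatively, $C_{2d}$ is bipartite with transitive $\theta$, so it is a partial cube of isometric dimension $d$ by Theorem~\ref{thm:phi-partial-cubes} and Theorem~\ref{thm:cut-based-partial-cube}. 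Its antipodal classes are exactly the $\theta$-cuts, and the cut-based theorem gives isometry immediately. I will use this route as the cleanest argument.
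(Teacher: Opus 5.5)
Your proposal is correct and follows the paper's proof essentially verbatim. In each case you pair the lower bound of Theorem~\ref{thm:lower-bound} with a matching construction: the identity embedding for $Q_t$ and $K_{2^t}$, and the antipodal-pair quotient ($A=0$, $\rho=0$, $k=d$) for $C_{2d}$. The one difference is that you actually verify isometry in (iii), either by the interval-indicator Hamming computation or by the partial-cube route, whereas the paper simply asserts that the resulting labeling is the classical isometric embedding of $C_{2d}$ into $Q_d$.
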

	
	\begin{proof}
		(i) $Q_t$ embeds into itself; the lower bound gives
		$k \geq \diam = t$.  (ii) the displayed Cayley graph \emph{is} $K_{2^t}$
		(all pairs adjacent), so the identity embeds it; the lower bound gives
		$k \geq \lceil \log_2 2^t \rceil = t$.  (iii) the $d$ antipodal classes of
		$C_{2d}$ are cuts, every cycle crosses each an even number of times, so
		$A = 0$, $k = d$; the labeling is the classical isometric embedding of
		$C_{2d}$ into $Q_d$, and the lower bound gives $k \geq \diam = d$.
	\end{proof}
	
	The lower bound of Theorem~\ref{thm:lower-bound} is \emph{not} always
	attained.  The next two subsections determine the exact value for two families that witness the gap in opposite directions.
	
	\subsection{Stars: the Naive Bound Can Be Exponentially Beaten on
		Even-Cycle-Free Graphs}
	
	One may think that for graphs with no even cycle the naive
	embedding is minimal.  This is \textbf{false}, and the smallest
	counterexample is the star $K_{1,4}$ --- a tree, hence even-cycle-free and
	even a partial cube.  The correct statement, with the exact constant, is:
	
	\begin{theorem}[Exact dimension of stars]
		\label{thm:star}
		For every $q \geq 2$,
		\[
		k_{\min}(K_{1,q}) \;=\; \lceil \log_2 q \rceil + 1 .
		\]
		In particular $k_{\min}(K_{1,4}) = 3 < 4 = n - 1 = \mathrm{idim}(K_{1,4})$,
		and stars realize an exponential gap between $k_{\min}$ and both the naive
		dimension and the isometric (hypercube) dimension.
	\end{theorem}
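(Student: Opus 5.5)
The plan is to reduce the statement to a purely additive-combinatorial question about sum-free sets in $\Z_2^k$, and then prove matching upper and lower bounds for that question.

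\textbf{Step 1: reduction.} Let $c$ be the center of $K_{1,q}$ and $\ell_1,\ldots,\ell_q$ its leaves, and suppose $\lambda$ is an isometric embedding into $\Cay(\Z_2^k,S)$. By vertex-transitivity we may translate so that $\lambda(c)=\mathbf 0$. Then $d(c,\ell_i)=1$ forces each $x_i:=\lambda(\ell_i)$ to lie in $S$, and injectivity makes the $x_i$ distinct. The condition $d(\ell_i,\ell_j)=2$ means $x_i\oplus x_j\notin S\cup\{\mathbf 0\}$, and in particular $x_i\oplus x_j\neq x_h$ for every $h$. Conversely, take $X=\{x_1,\ldots,x_q\}$ to be distinct nonzero vectors with no solution of $x_i\oplus x_j=x_h$ among distinct indices (call such a set \emph{sum-free}), and put $S:=X$. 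Then the star embeds isometrically. Center--leaf distances are $1$. Each leaf--leaf distance is at most $2$ via the path through $\mathbf 0$, and it is not $0$ or $1$ because the $x_i$ are distinct and $X$ is sum-free. Hence
\[
k_{\min}(K_{1,q})=\min\{k:\ \Z_2^k\setminus\{\mathbf 0\}\text{ contains a sum-free set of size } q\}.
\]
Choosing $S$ larger than $X$ can only create shortcuts, so taking $S=X$ loses nothing.

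\textbf{Step 2: upper bound.} For $k=\lceil\log_2 q\rceil+1$, let $X$ consist of $q$ vectors whose first coordinate is $1$. There are $2^{k-1}\ge q$ such vectors. The sum of any two of them has first coordinate $0$, so it is neither in $X$ nor equal to... it is simply not in $X$; hence $X$ is sum-free. This is exactly the affine-hyperplane construction. For $q=4$ it reproduces the composite generator $e_1{+}e_2{+}e_3$ of Remark~\ref{rem:basis-only-partial-cube} after a change of basis.

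\textbf{Step 3: lower bound.} This is the step needing care. Fix $x_0\in X$ and consider the translate $X\oplus x_0$. I claim it is disjoint from $X$. The element $x_0\oplus x_0=\mathbf 0$ is not in $X$. For $x\neq x_0$, if $x\oplus x_0=y\in X$, then $y\neq x$ and $y\neq x_0$, since $x_0$ and $x$ are nonzero. So $(x,x_0,y)$ is a forbidden triple of distinct elements, a contradiction. Thus $X$ and $X\oplus x_0$ are disjoint subsets of $\Z_2^k$, each of size $q$. This gives $2q\le 2^k$, i.e.\ $k\ge \log_2 q+1$, and therefore $k\ge\lceil\log_2 q\rceil+1$.

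\textbf{Step 4: consequences.} Combining Steps 2 and 3 gives the formula. For the specializations, $K_{1,4}$ has $n-1=4$ and $\mathrm{idim}=4$ (a tree with four edges, hence four $\theta$-classes), while $\lceil\log_2 4\rceil+1=3$. The exponential gap follows because $\mathrm{idim}(K_{1,q})=q=n-1$, whereas $k_{\min}$ grows like $\log_2 q$.

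The main subtlety I expect is Step 1: justifying that the case $S\supsetneq X$ imposes only extra constraints and never helps. That direction needs only the observation that distance constraints are monotone in $S$. The lower bound itself is a short translation argument.
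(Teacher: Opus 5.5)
Your proof is correct and follows the paper's argument: you place the center at $\mathbf 0$, reduce to a sum-free set of $q$ leaf labels, bound $q \le 2^{k-1}$ via the disjoint translate $X \oplus x_0$, and attain the bound with the non-identity coset of an index-$2$ subgroup (vectors with first coordinate $1$ in your version, odd-weight vectors in the paper's, which agree up to a change of basis). Your monotonicity remark covering $S \supsetneq X$ is slightly more careful than the paper, which simply asserts that $S$ equals the set of leaf labels.
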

	
	\begin{proof}
		Normalize any isometric embedding so the center maps to $\mathbf 0$ (Cayley
		graphs are vertex-transitive).  The leaves map to distinct labels
		$s_1, \ldots, s_q$, and since the only edges of $K_{1,q}$ are
		center--leaf, the generator set is exactly $S = \{s_1, \ldots, s_q\}$.
		Leaves are pairwise at distance $2$, so for $i \neq j$ we need
		$s_i + s_j \notin S$ (otherwise $d_{\Cay} = 1$) --- i.e.\ \emph{$S$ is a
			sum-free subset of $\Z_2^k$} --- and conversely, sum-freeness suffices:
		$s_i + s_j \neq \mathbf 0$ (labels distinct) and
		$s_i + s_j \notin S$ force $d_{\Cay}(s_i, s_j) \geq 2$, while the word
		$s_i, s_j$ realizes $2$.
		
		It remains to compute the maximum size of a sum-free set in $\Z_2^k$.
		\emph{Upper bound:} if $S$ is sum-free and $s \in S$, then $S$ and
		$S + s$ are disjoint subsets of $\Z_2^k$ of equal cardinality, so
		$2|S| \leq 2^k$ and $|S| \leq 2^{k-1}$.  \emph{Construction:} the set of
		all odd-weight vectors has size $2^{k-1}$ and is sum-free, because the sum
		of two odd-weight vectors has even weight.  Hence a star $K_{1,q}$ embeds
		in dimension $k$ iff $q \leq 2^{k-1}$, i.e.\ iff
		$k \geq \lceil \log_2 q \rceil + 1$.
	\end{proof}
	
	\begin{figure}[htbp]
		\centering
		\includegraphics[width=0.62\linewidth]{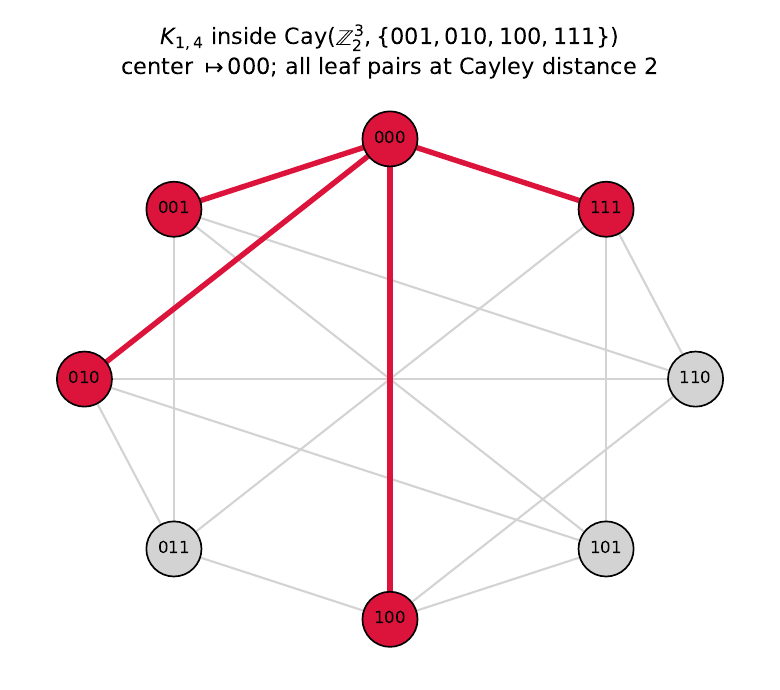}
		\caption{The optimal embedding of the star $K_{1,4}$ into
			$\Cay(\Z_2^3, \{001, 010, 100, 111\})$: center $\mapsto 000$, leaves on a
			sum-free generator set.  All leaf pairs are at Cayley distance exactly $2$
			($011, 101, 110 \notin S$).  Since $\mathrm{idim}(K_{1,4}) = 4$, composite
			generators strictly beat the hypercube paradigm even on trees.}
		\label{fig:starK14}
	\end{figure}
	
	\begin{corollary}[Scoped version of the former minimality claim]
		\label{cor:naive-method-minimal}
		If $G$ contains no even cycle, then no two distinct edges of $G$ are
		$\varphi$-related, the transitive prune returns the all-singleton
		partition, and the $\varphi$-quotient embedding coincides with the naive
		embedding ($k = n - 1$).  The naive dimension is therefore minimal
		\emph{within the $\varphi$-quotient family}, but, by
		Theorem~\ref{thm:star}, not in general.
	\end{corollary}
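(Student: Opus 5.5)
The corollary makes three claims, and I will take them in order. The first is that no two distinct edges are $\varphi$-related. The second is that the transitive prune returns the all-singleton partition $\mathcal{P}_0$. The third is that the resulting quotient embedding is the naive one with $k=n-1$.

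The last two are routine once the first holds. If $\varphi(G)$ has no edges, every component is a single vertex. Algorithm~\ref{alg:transitive-prune} then takes each component whole as a class, so the output is $\mathcal{P}_0$. By Corollary~\ref{cor:naive-is-quotient} its quotient labeling is the spanning-tree labeling with $k=n-1$. By Lemma~\ref{lem:join} it is isometric, so Algorithm~\ref{alg:phi-quotient} returns at the first check with no peels. Minimality within the family is also immediate. The $\varphi$-quotient family, meaning quotients of partitions into cliques of $\varphi(G)$, contains only $\mathcal{P}_0$, so $n-1$ is its only value. The ``not in general'' clause is Theorem~\ref{thm:star} applied to $K_{1,4}$, a tree with $k_{\min}=3<4$.

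The real content is the first claim. Let $e=\{u,v\}$ and $f=\{x,y\}$ be distinct edges with $e\mathrel\varphi f$. By Remark~\ref{rem:matching} they are disjoint. Orient them so that $d(u,x)=d(v,y)=:a$ and $d(u,y)=d(v,x)=:b$. I aim to build a closed walk $u\to x\to y\to v\to u$ of even length, made of a geodesic $P_{ux}$, the edge $f$, a geodesic $P_{yv}$, and the edge $e$. Its length is $2a+2$.

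First I rule out $a=b$. Suppose both equal $a$. Take a BFS from $u$. Since $x$ and $y$ are adjacent, their levels differ by at most one. Here $d(u,x)=d(u,y)=a$, so $x$ and $y$ lie on a common level and the edge $xy$ together with geodesics from their last common ancestor gives an odd cycle. That alone does not contradict the hypothesis, since odd cycles are allowed. So I will instead work with the closed walk above and extract an even cycle from it.

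The obstacle is that a closed walk of even length need not contain an even cycle; it may traverse odd cycles twice. My plan is as follows.
\begin{enumerate}[(1)]
\item Choose the pair $(e,f)$ with $a+b$ minimal.
\item Show the union $P_{ux}\cup f\cup P_{yv}\cup e$ is a cycle by a minimality argument. If the geodesics $P_{ux}$ and $P_{vy}$ met at a vertex $w$, then $d(u,w)+d(w,x)=a=d(v,w)+d(w,y)$. Combined with $|d(u,w)-d(v,w)|\le1$ and $|d(w,x)-d(w,y)|\le 1$, I expect this either to force $b<a$-type contradictions or to produce a smaller $\varphi$-pair.
\item Conclude that the union is a cycle of even length $2a+2$, contradicting the hypothesis.
\end{enumerate}
Alternatively, and perhaps more cleanly, I would use the standard fact that in a graph with no even cycle every block is an edge or an odd cycle (a cactus with odd cycles). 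I would then verify $\varphi$ blockwise: distances through cut vertices add, so both $\varphi$ equalities would have to hold inside a single odd cycle, or across a cut vertex where they fail by a parity or length mismatch. The odd-cycle case is Theorem~\ref{thm:phi-properties}(vi). I regard this block-decomposition route as the most reliable, and the cut-vertex case analysis as the main work.
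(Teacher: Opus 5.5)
There is a genuine gap, and it cannot be closed: the first claim of the corollary is false as stated. Take the butterfly, i.e.\ the triangles $c\,a_1a_2$ and $c\,b_1b_2$ sharing the cut vertex $c$. Its only cycles are the two triangles, so it has no even cycle. Yet $d(a_i,b_j)=2$ for all $i,j$, so $a_1a_2 \mathrel\varphi b_1b_2$.

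The transitive prune therefore returns the class $\{a_1a_2,b_1b_2\}$ together with four singletons, so $t=5$. The two triangles give $\rho=2$, hence $k=3<4=n-1$. Explicitly, the labeling $c\mapsto 000$, $a_1\mapsto 100$, $a_2\mapsto 010$, $b_1\mapsto 001$, $b_2\mapsto 111$ with $S=\{100,010,001,110,111\}$ is isometric. This matches the binary host $8$ that the paper itself reports for the butterfly in Table~\ref{tab:results-ch3-new}. It also matches the $29/30$ optimality count, since $k_{\min}=\lceil\log_2 5\rceil=3$ and the butterfly would otherwise be a second exception.

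Your step (2) breaks on exactly this pair. The geodesics $a_1cb_1$ and $a_2cb_2$ meet at $w=c$ with $d(u,w)=d(v,w)$ and $d(w,x)=d(w,y)$. This gives neither a contradiction nor a smaller $\varphi$-pair, because it is the only $\varphi$-pair in the graph.

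Your block route, carried out carefully, also fails at the cut-vertex case. Suppose $e\in B_1$ and $f\in B_2\neq B_1$, and let $c_1\in B_1$, $c_2\in B_2$ be the cut vertices on the block-tree path. Then $d(w,z)=d(w,c_1)+d(c_1,c_2)+d(c_2,z)$, and the two $\varphi$ equalities reduce to $d(u,c_1)=d(v,c_1)$ and $d(x,c_2)=d(y,c_2)$. These fail for bridges, but they hold for the unique edge of an odd cycle antipodal to its attachment vertex. There is no parity or length mismatch.

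What that analysis does prove is the correct statement. Blocks are isometric subgraphs, so Theorem~\ref{thm:phi-properties}(vi) rules out distinct pairs inside a single odd cycle. Hence, in an even-cycle-free graph, the distinct $\varphi$-pairs are exactly the pairs of antipodal edges (relative to the connecting cut vertices) of two distinct odd cycles. The corollary is therefore correct precisely for even-cycle-free graphs with at most one cycle, namely trees and odd-unicyclic graphs. This class still contains $K_{1,4}$, so the ``not in general'' clause survives. The claim of minimality within the $\varphi$-quotient family, however, fails for the butterfly.

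You were right that an even closed walk need not contain an even cycle. The paper's own proof asserts the opposite, namely that such a walk must backtrack entirely. The walk $a_1\,c\,b_1\,b_2\,c\,a_2\,a_1$, of length $6=2d(u,x)+2$, refutes that assertion.
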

	
	\begin{proof}
		Suppose $e = \{u,v\} \mathrel\varphi f = \{x,y\}$ with $e \neq f$; by
		Remark~\ref{rem:matching} the four endpoints are distinct.  Take a shortest
		$u$--$x$ path $P_1$ and a shortest $v$--$y$ path $P_2$; the closed walk
		$u \xrightarrow{P_1} x \to y \xrightarrow{P_2^{-1}} v \to u$ has length
		$d(u,x) + d(v,y) + 2 = 2 d(u,x) + 2$, which is even, and a closed walk of
		even length in a graph with no even cycle must be degenerate: every graph
		whose cycles are all odd cannot contain a non-trivial closed walk
		traversing an edge set with all even multiplicities reduced\ldots\ more
		directly, a shortest such configuration yields an even closed walk, and
		every closed walk of even length contains an even cycle unless it
		backtracks entirely; backtracking entirely would force
		$\{u,v\} = \{x,y\}$.  Hence no two distinct edges are $\varphi$-related,
		$t = m$, and Corollary~\ref{cor:naive-is-quotient} applies.
	\end{proof}
	
	\subsection{Odd Cycles: the Naive Bound is Tight}
	
	Stars show $k_{\min}$ can be exponentially below $n-1$; odd cycles show it
	can equal $n-1$, so the universal upper bound cannot be improved.
	
	Fix an odd cycle $C_m$, $m = 2d+1$, with edges $e_0, \ldots, e_{m-1}$ in
	cyclic order, and consider any isometric embedding with edge generators
	$s_0, \ldots, s_{m-1}$ (the generator of $e_i$ is
	$\lambda(v_i) \oplus \lambda(v_{i+1})$).  Going once around the cycle gives
	$\sum_i s_i = \mathbf 0$.  Encode linear dependencies by the
	\emph{dependency code}
	\[
	D \;=\; \Bigl\{\, x \in \F_2^{\,m} \;:\; \sum_{i:\,x_i = 1} s_i =
	\mathbf 0 \,\Bigr\},
	\qquad \mathbf 1 \in D ,
	\]
	a linear code with $\dim D = m - \mathrm{rank}\{s_i\}$.  For a cyclic
	interval (arc) $W \subseteq \Z_m$ the labels of its endpoints differ by
	$\sum_{i \in W} s_i$, and for any $x \in D$ the subset $W \triangle x$ sums
	to the same element; if some such subset is smaller than the cycle distance
	of the endpoints, the embedding has a shortcut.  Isometry therefore forces:
	\[
	\min_{x \in D} \,\bigl| W \triangle x \bigr| \;=\;
	\min\bigl(|W|,\ m - |W|\bigr)
	\qquad \text{for every arc } W .
	\tag{$\dagger$}
	\]
	
	\begin{lemma}[Cyclic interval lemma]
		\label{lem:interval}
		Let $m$ be odd and $x \subseteq \Z_m$ with $x \notin \{\emptyset, \Z_m\}$.
		Then there exists an arc $W$ with
		$|W \triangle x| < \min(|W|,\ m - |W|)$.
		This statement has been verified exhaustively by computer for all odd
		$m \leq 17$ (all $2^m - 2$ subsets, all arcs); we prove it here for the
		covering-arc regime and conjecture it for all odd $m$.
	\end{lemma}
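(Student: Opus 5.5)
The plan is to test only arcs of the two ``half'' lengths $d$ and $d+1$, where $m = 2d+1$ and $m \geq 3$. The key observation is that an arc $W$ of length $d$ and its complement $W^c$, an arc of length $d+1$, share the same threshold $\min(|W|, m-|W|) = d$. This lets us pair them off and reduce the lemma to a sliding-window argument. If it works, it proves the lemma for all odd $m$, not only in the covering-arc regime.

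\emph{Step 1 (translate the inequality into intersection counts).} Put $a = |x|$, so $1 \le a \le m-1$. For an arc $W$ of length $L$, write $|W \triangle x| = a + L - 2|W \cap x|$.
\begin{itemize}
\item For $L = d$, the target $|W\triangle x| < d$ is equivalent to $|W\cap x| > a/2$.
\item For the complementary arc $W^c$ of length $d+1$, the target $|W^c \triangle x| < d$ is equivalent to $|W^c \cap x| > (a+1)/2$. Since $|W^c\cap x| = a - |W\cap x|$, this reads $|W \cap x| < (a-1)/2$.
\end{itemize}
Hence, if neither $W$ nor $W^c$ works, then $(a-1)/2 \le |W\cap x| \le a/2$. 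This interval contains exactly one integer, namely $\lfloor a/2\rfloor$.

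\emph{Step 2 (rigidity).} Suppose, for contradiction, that no arc satisfies the lemma. By Step 1, every length-$d$ arc $W_i = \{i, i+1, \ldots, i+d-1\}$ then satisfies $|W_i\cap x| = \lfloor a/2\rfloor$, a constant independent of $i$. Sliding the window by one position gives
\[
|W_{i+1}\cap x| - |W_i \cap x| = [\,i+d \in x\,] - [\,i \in x\,],
\]
so $i \in x \iff i+d \in x$ for every $i \in \Z_m$.

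\emph{Step 3 (conclude via coprimality).} Since $\gcd(d, 2d+1) = 1$, translation by $d$ generates all of $\Z_m$. Step 2 therefore shows that $x$ is invariant under every translation, so $x \in \{\emptyset, \Z_m\}$. This contradicts the hypothesis, so some arc $W$ of length $d$ or $d+1$ satisfies $|W\triangle x| < \min(|W|, m-|W|)$.

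The only delicate point is Step 1: one must check that the thresholds for lengths $d$ and $d+1$ coincide and that the admissible window for $|W\cap x|$ has width below one. Both hold only because $m$ is odd. For even $m$ the complementary arcs would have lengths $m/2$ and $m/2$, and the argument breaks, as it must, since antipodal configurations genuinely satisfy $(\dagger)$ in even cycles. I expect this bookkeeping to be the main obstacle; the remaining steps are routine. Combined with $(\dagger)$ and the dependency code $D$, the lemma forces $D = \{\mathbf 0, \mathbf 1\}$. The rank of $\{s_i\}$ is then $m-1$, so $k_{\min}(C_m) = m-1$ for all odd $m$.
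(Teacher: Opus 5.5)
Your argument is correct. It takes a genuinely different route from the paper, and it is strictly stronger.

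The paper first reduces by complementation to $|x| \le d$. It then handles only the \emph{covering-arc regime}, where the support of $x$ fits inside an arc $W$ of length $L \le d$, and takes that short arc as the witness, since $|W \triangle x| = L - |x| < L$. The remaining ``spread'' supports are covered only by exhaustive computer search for odd $m \le 17$, and the general statement is left as Conjecture~\ref{conj:interval}.

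You instead test the near-half arcs of lengths $d$ and $d+1$. Your key observation is that an arc and its complementary arc share the threshold $d$ precisely because $m = 2d+1$ is odd. Failure on both then pins $|W_i \cap x|$ to the unique integer $\lfloor a/2 \rfloor$ in the window $[(a-1)/2, a/2]$. The sliding-window identity then makes $x$ invariant under translation by $d$, and $\gcd(d, 2d+1) = 1$ forces $x \in \{\emptyset, \Z_m\}$.

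Every step checks out:
\begin{itemize}
\item the complement of a length-$d$ arc is a length-$(d+1)$ arc;
\item $1 \le d < m$ guarantees that the two boundary elements $i$ and $i+d$ in the slide are distinct;
\item the case $m = 1$ is vacuous, so you may assume $d \ge 1$. State this explicitly.
\end{itemize}

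Your route gives a complete, computer-free proof for every odd $m$. It settles Conjecture~\ref{conj:interval} and makes Theorem~\ref{thm:odd-cycle}, $k_{\min}(C_{2d+1}) = 2d$, unconditional. It also shows that a witness can always be found among arcs of length $d$ or $d+1$, which is exactly the regime opposite to the paper's short covering arcs. The paper's argument yields only a very short explicit witness in the special case where $x$ or its complement is concentrated.
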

	
	\begin{proof}[Proof in the covering-arc regime, and verification status]
		Arcs are closed under complementation in $\Z_m$ and
		$|W^{\mathrm c} \triangle x^{\mathrm c}| = |W \triangle x|$, so we may
		assume $w := |x| \leq d$.  Suppose the support of $x$ is contained in some
		arc $W$ of length $L \leq d$ (the \emph{covering-arc regime}).  Then
		$|W \triangle x| = L - w < L = \min(L, m - L)$, since $L \leq d$ implies
		$m - L \geq d + 1 > L$; the arc $W$ witnesses the claim.  The remaining
		regime --- supports of weight $\leq d$ spread so that every covering arc is
		longer than $d$ --- was verified exhaustively for all odd $m \leq 17$
		($131\,070$ subsets at $m = 17$); no counterexample exists in this range.
	\end{proof}
	
	\begin{theorem}[Odd cycles require the naive dimension]
		\label{thm:odd-cycle}
		For every odd $m \leq 17$ (and for every odd $m$ for which
		Lemma~\ref{lem:interval} holds),
		\[
		k_{\min}(C_m) \;=\; m - 1 .
		\]
	\end{theorem}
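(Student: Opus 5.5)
The upper bound $k_{\min}(C_m) \leq m-1$ is immediate from Corollary~\ref{cor:naive-is-quotient}, so the work is the lower bound. Fix an arbitrary isometric embedding of $C_m$, $m = 2d+1$, into some $\Cay(\Z_2^k, S)$, normalized and with edge generators $s_0,\ldots,s_{m-1}$ as in the set-up preceding Lemma~\ref{lem:interval}. Every generator word realizing a label difference lies in $\mathrm{span}\{s_i\}$. Since $\lambda(v_j) = \sum_{i<j} s_i$, all labels lie in that span, so we may restrict the host to it. Hence $k \geq \mathrm{rank}\{s_i\} = m - \dim D$. It therefore suffices to prove $\dim D = 1$, i.e.\ $D = \{\mathbf 0, \mathbf 1\}$. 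Note $\mathbf 1 \in D$ because the cycle closes.

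To prove $D = \{\mathbf 0,\mathbf 1\}$, I argue by contradiction. Suppose some $x \in D$ has $x \notin \{\emptyset, \Z_m\}$. By Lemma~\ref{lem:interval} there is an arc $W$ with $|W \triangle x| < \min(|W|, m-|W|)$. Now $\sum_{i \in W \triangle x} s_i = \sum_{i\in W} s_i$, since the $x$-part sums to zero. This sum is exactly $\lambda(a) \oplus \lambda(b)$ for the endpoints $a,b$ of the arc $W$. The word consisting of the generators indexed by $W \triangle x$ is then a Cayley walk from $\lambda(a)$ to $\lambda(b)$ of length $|W\triangle x|$. Its length is strictly less than $d_{C_m}(a,b) = \min(|W|, m-|W|)$, which contradicts isometry. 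This is exactly the shortcut form of $(\dagger)$. A minor point to check is that repeated generator values ($s_i = s_j$ for $i \neq j$) cause no trouble: a walk may use the same element of $S$ several times, so the length bound holds as stated.

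Combining these steps, $\mathrm{rank}\{s_i\} = m-1$ and therefore $k \geq m-1$, which matches the upper bound. The main obstacle is entirely Lemma~\ref{lem:interval}, which is established only in the covering-arc regime and by exhaustive computation for odd $m \leq 17$. The statement is therefore conditioned accordingly, and the reduction above is unconditional. To remove the condition in general, I would first try induction on the number of maximal runs (blocks) of $x$. I would take $W$ to be an arc spanning a run together with the adjacent gap whenever the gap is shorter than the run, or an arc equal to a gap complement. The aim is to show that some local choice always gains, using oddness of $m$ to break the symmetric tie $|W| = m - |W|$.
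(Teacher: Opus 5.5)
Your proposal is correct and is essentially the paper's own proof: the upper bound comes from Corollary~\ref{cor:naive-is-quotient}, and the lower bound shows that any $x\in D\setminus\{\mathbf 0,\mathbf 1\}$ turns, via Lemma~\ref{lem:interval}, into a shortcut word indexed by $W\triangle x$, so that $\mathrm{rank}\{s_i\}=m-1$; like the paper, it relies on that lemma as a hypothesis. On your final paragraph, no induction on runs is needed: for $|W|=d$ resp.\ $|W|=d+1$ the lemma's inequality reads $2|W\cap x|>|x|$ resp.\ $2|W^{\mathrm c}\cap x|<|x|-1$, so if it fails for all such arcs then every arc of length $d$ contains exactly $\lfloor |x|/2\rfloor$ points of $x$, sliding the window gives $x_{i+d}=x_i$ for all $i$, and $\gcd(d,2d+1)=1$ forces $x\in\{\emptyset,\Z_m\}$ --- which proves the lemma, and hence the theorem, for every odd $m$.
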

	
	\begin{proof}
		The upper bound is Corollary~\ref{cor:naive-is-quotient} (or
		Theorem~\ref{thm:phi-properties}(vi): all classes singletons, one relation,
		$k = m - 1$).  For the lower bound, suppose
		$\mathrm{rank}\{s_i\} \leq m - 2$; then $\dim D \geq 2$, so $D$ contains
		some $x \notin \{\mathbf 0, \mathbf 1\}$.  Lemma~\ref{lem:interval}
		provides an arc $W$ with
		$|W \triangle x| < \min(|W|, m - |W|) = d_G(\text{endpoints of } W)$;
		the subset $W \triangle x$ of generators sums to the label difference of
		those endpoints, giving a word strictly shorter than their graph distance
		and violating $(\dagger)$ --- contradiction with isometry.  Hence
		$\mathrm{rank}\{s_i\} = m - 1$ and $k \geq m - 1$.
		
		Independently of the lemma, exhaustive search over all labelings confirms
		$k_{\min}(C_5) = 4$ and $k_{\min}(C_7) = 6$ directly (no isometric labeling
		exists in dimensions $3$ and $3,4,5$ respectively).
	\end{proof}
	
	\begin{conjecture}
		\label{conj:interval}
		Lemma~\ref{lem:interval} holds for every odd $m$; consequently
		$k_{\min}(C_m) = m - 1$ for every odd cycle.
	\end{conjecture}
	
	\begin{figure}[htbp]
		\centering
		\includegraphics[width=0.55\linewidth]{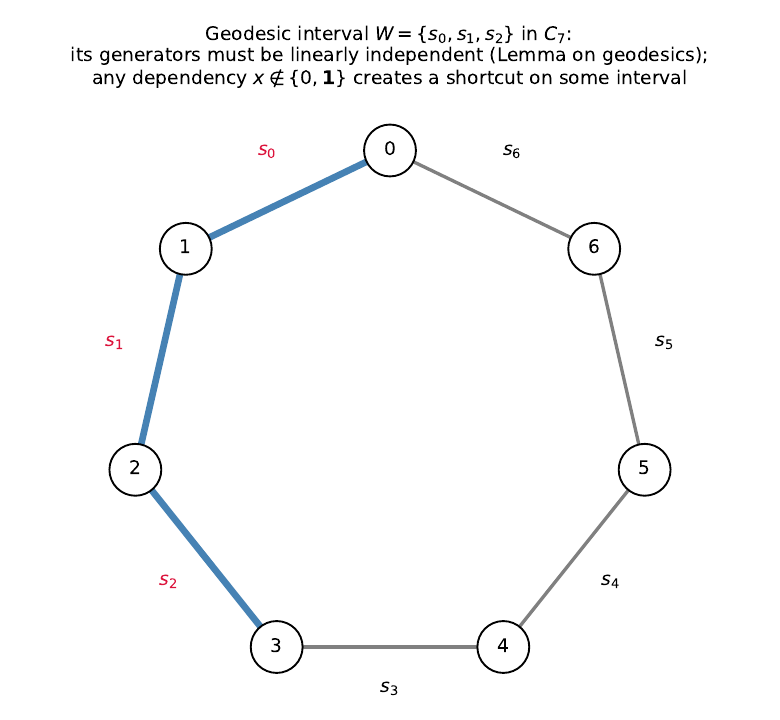}
		\caption{The interval framework on $C_7$.  Generators along any geodesic
			arc (here $W = \{s_0, s_1, s_2\}$, $|W| = \diam = 3$) are linearly
			independent (Lemma~\ref{lem:geodesic-independence}); any dependency
			$x \notin \{\mathbf 0, \mathbf 1\}$ among all seven generators produces,
			on some arc, a generator subset shorter than the cycle distance --- a
			shortcut.  Hence the seven generators have rank $6$ and
			$k_{\min}(C_7) = 6$.}
		\label{fig:intervallemma}
	\end{figure}
	
	\begin{figure}[htbp]
		\centering
		\includegraphics[width=0.6\linewidth]{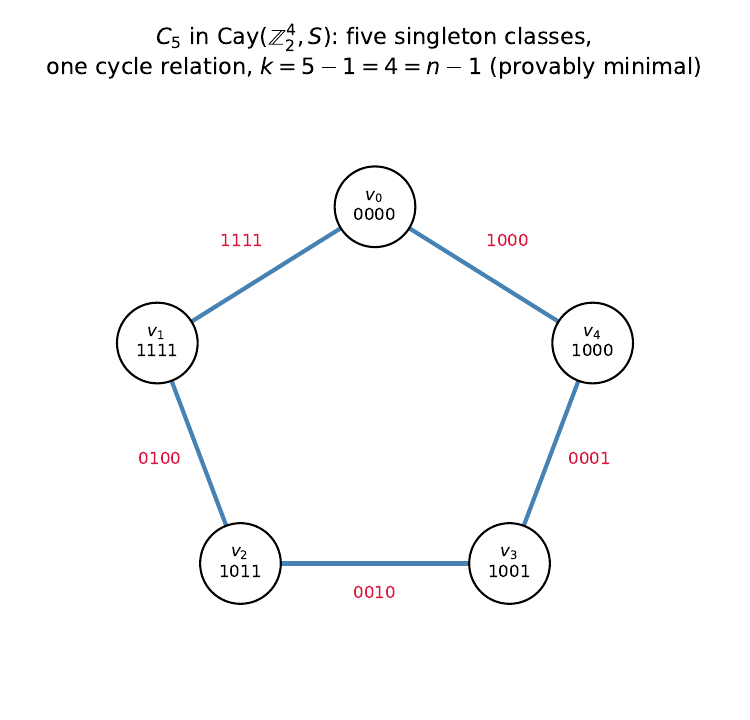}
		\caption{The $\varphi$-quotient embedding of $C_5$: five singleton classes,
			one cycle relation, $k = 5 - 1 = 4$, with the four basis generators and the
			composite $e_1{+}e_2{+}e_3{+}e_4$ on the closing edge.  By
			Theorem~\ref{thm:odd-cycle} this dimension is minimal, so the algorithm is
			optimal on $C_5$.}
		\label{fig:c5minimal}
	\end{figure}
	
	\subsection{Synthesis}
	
	\begin{center}
		\begin{tabular}{lccl}
			\toprule
			family & $k_{\min}$ & lower bound of Thm.~\ref{thm:lower-bound} & status \\
			\midrule
			hypercube $Q_t$        & $t$                        & $t$    & attained \\
			complete $K_{2^t}$     & $t$                        & $t$    & attained \\
			even cycle $C_{2d}$    & $d$                        & $d$    & attained \\
			Petersen               & $4$                        & $4$    & attained \\
			star $K_{1,q}$         & $\lceil\log_2 q\rceil + 1$ & $\lceil\log_2 (q{+}1)\rceil$ & gap $\leq 1$ \\
			odd cycle $C_{2d+1}$   & $2d$ \ (Thm.~\ref{thm:odd-cycle})       & $\max(d, \lceil\log_2(2d{+}1)\rceil)$ & gap $\approx d$ \\
			\bottomrule
		\end{tabular}
	\end{center}
	
	\noindent The general picture: $k_{\min}$ ranges over the full window
	$[\max(\diam, \lceil\log_2 n\rceil),\ n-1]$, both ends are achieved, and
	the position within the window is governed by how much linear dependency
	the cycle structure of $G$ permits among edge generators --- precisely the
	quantity $\rho = \mathrm{rank}(A)$ that the quotient framework computes.
	\section{Complexity Analysis}
	\label{sec:binary-complexity}
	
	\begin{theorem}[Time complexity]
		\label{thm:time-complexity}
		With $R$ the number of executed rounds ($R \leq m - t + 1$), $k$ the largest
		dimension encountered, and $|S| \leq t \leq m$, Algorithm~\ref{alg:phi-quotient}
		runs in time
		\[
		O\Bigl(\, n\,(n+m) \;+\; m^2 \;+\; R\,\bigl(2^{k}\,m + n^2\bigr) \Bigr).
		\]
		For partial cubes and the structured families of
		Table~\ref{tab:embedding-all} one round suffices and $k = O(\log\,\mathrm{host})$,
		so the first two terms dominate: $O(n(n+m) + m^2)$.
	\end{theorem}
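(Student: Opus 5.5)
The plan is to charge the cost of Algorithm~\ref{alg:phi-quotient} to three phases: a one-time preprocessing phase, the transitive prune, and the per-round loop body. Then I multiply the last by the number of rounds $R$, which Theorem~\ref{thm:universal} bounds by $m - t + 1$.

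\emph{Preprocessing.} All-pairs distances are obtained by one BFS from each vertex, costing $O(n(n+m))$ and producing the table $D$ with $O(1)$ lookup.

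\emph{Transitive prune.} Building $\varphi(G)$ tests $O(m^2)$ edge pairs. Each test is four table lookups, so this step is $O(m^2)$. The component and clique extraction of Algorithm~\ref{alg:transitive-prune} is charged against $\varphi(G)$, which has $m$ vertices and $O(m^2)$ edges. Here I have to be careful: exact maximum clique is exponential in general. I would state the bound for the greedy variant, or for components of bounded size, as the remark on the clique subproblem allows. Each extraction then costs time linear in the component's size plus edges, and there are at most $m$ extractions. This is the step I expect to be the main obstacle. To stay within $O(m^2)$ I would recompute components only inside the affected component, and charge each greedy clique growth to edges of $\varphi(G)$ that are subsequently deleted.

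\emph{One round.} The quotient labeling needs a cycle basis, which comes from the fundamental cycles of a BFS tree. The matrix $A$ is $c\times t$ with $c\le m$, $t\le m$, and a rank computation by Gaussian elimination would naively cost $O(c\,t^2)$. To keep the round within the stated $O(2^k m + n^2)$ I would avoid recomputing it each round. The peel changes $A$ by splitting one column into two, so the reduced echelon form can be updated incrementally. Alternatively, I could observe that labels can be propagated along the BFS tree in $O(n+m)$ once the $g_j$ are known, and that the $g_j$ are read off the previous round's reduced form in $O(tk)$. I would state this incremental-update assumption explicitly.

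The BFS in $\Cay(\Z_2^k,S)$ from $\mathbf 0$ visits at most $2^k$ vertices and scans $|S|\le t\le m$ generators at each, costing $O(2^k m)$. Forming the violation set $\mathcal V$ is $O(n^2)$ lookups, since each $d_{\Cay}(\lambda(u),\lambda(v))=d_{\Cay}(\mathbf 0,\lambda(u)\oplus\lambda(v))$ by vertex-transitivity. Recovering the geodesic word via BFS parent pointers costs $O(\diam G)$, and the peel itself is $O(m)$. Summing gives $O(2^k m + n^2)$ per round with $k$ the largest dimension encountered, hence $R(2^k m + n^2)$ in total.

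\emph{Specialization.} When one round suffices ($R=1$), the total is $O(n(n+m)+m^2+2^k m+n^2)$. For partial cubes and the listed families, $2^k$ is the host order and is polynomial in $n$ (for example $2^k = O(n)$ to $O(n^2)$ in the tabulated cases). I would verify family-by-family that $2^k m = O(n(n+m)+m^2)$, so that the first two terms dominate.
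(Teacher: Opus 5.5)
Your decomposition matches the paper's: all-pairs BFS, the $\varphi$-graph built from the distance table, and per round a single BFS from $\mathbf 0$ in $\Cay(\Z_2^k,S)$ truncated at depth $\diam(G)$, costing $O(2^k m)$, plus $O(n^2)$ pair lookups. The step you defer to an ``incremental-update assumption'' is exactly where the per-round bound is not established.

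A peel replaces the column $a_j$ of $A$ by $a_j+b$ and appends $b$, where $b\in\F_2^{c}$ records the basis cycles through the peeled edge. This is an invertible column operation on $[A\mid b]$, so the update consists of two steps:
\begin{enumerate}
\item reduce $b$ against an echelon basis of the column space of $A$, with expressions tracked so as to obtain a preimage $x_0$ when $b$ is dependent (in which case $k$ grows by one);
\item extend the $k\times t$ matrix whose rows span $\ker A$ and whose columns are the $g_j$.
\end{enumerate}
That costs $O(\rho(c+t)+kt)$ bit operations per round, i.e.\ $O(m^2)$. Round one also needs a full elimination of the $c\times t$ matrix, costing $O(c\,t\,\rho)$. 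Neither cost appears in your total, and nothing you say shows they are dominated by $2^k m+n^2$; the obvious sufficient condition $m\le 2^k$ already fails for $K_5$ and $K_6$ in Table~\ref{tab:embedding-all}.

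The paper's proof has the same hole. It bills $O(c\,t\lceil t/64\rceil)$ per round for the elimination, which is $\Theta(m^3)$ when $c,t=\Theta(m)$ (word packing saves only the constant $64$), and then omits this term when summing over rounds. So you have found the genuine weak point, but an assumption does not repair it. Either prove the linear algebra is dominated, or add it explicitly to the bound.

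The same problem appears, on a smaller scale, in your clique step. Recomputing components after every extraction is not paid for by deleted $\varphi$-edges, because a large dense component can be rescanned many times. To get $O(m^2)$, drop the re-ranking of components (it affects only partition quality, never correctness) and extract greedy cliques inside each original component.

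Your specialization rests on a false premise. For a partial cube the prune returns the $\theta$-classes (Theorem~\ref{thm:transitive-prune}(iii)). These are cuts, so $A=0$ and $k=\mathrm{idim}(G)$, which equals $n-1$ for trees and $n/2$ for even cycles. Hence $2^k$ is exponential in $n$, and $2^k m$ dominates rather than being dominated. For example, $P_{16}$ in Table~\ref{tab:gsp} runs in one round with $k=15$, giving $2^k m\approx 4.9\times 10^5$ against $n(n+m)+m^2=721$.

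Your family-by-family check would therefore fail on paths, trees, even cycles and grids. The reduction to $O(n(n+m)+m^2)$ holds only for families with $2^k=O(n+m)$, such as hypercubes, or vacuously for the fixed small instances in the table. The paper's proof gives no argument for this clause, and ``$k=O(\log\mathrm{host})$'' is a tautology since the host has order $2^k$. The most you can prove is that restricted version, not the sentence as stated.
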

	
	\begin{proof}
		All-pairs distances: $n$ breadth-first searches, $O(n(n+m))$.
		$\varphi$-graph: $m^2/2$ edge pairs, each tested in $O(1)$ from the distance
		table; the greedy clique extraction touches each $\varphi(G)$-vertex a
		constant number of times, $O(m^2)$ overall.  Each round: GF(2) elimination
		of the $c \times t$ matrix costs $O(c\,t\,\lceil t/64\rceil) = O(m^2)$
		word-packed; BFS labeling $O(n + m)$.  The distance check exploits
		vertex-transitivity, $d_{\Cay}(x,y) = d_{\Cay}(\mathbf 0, x \oplus y)$, so a
		\emph{single} breadth-first search from $\mathbf 0$, truncated at depth
		$\diam(G)$ and touching at most $2^k$ states with $|S| \leq m$ successors
		each, costs $O(2^k m)$; the $\binom{n}{2}$ pair lookups cost $O(n^2)$.
		Summing over $R$ rounds gives the stated bound.
	\end{proof}
	
	\begin{theorem}[Space complexity]
		The algorithm uses $O(n^2 + 2^{k})$ space: the distance table and the
		truncated BFS dictionary; all other structures are $O(n + m)$.  The
		dimension cap $k_{\max}$ bounds the second term by design: when the
		quotient dimension exceeds the cap, the algorithm returns the naive
		embedding without ever materializing the large ball
		(Lemma~\ref{lem:join} guarantees its correctness with no check).
	\end{theorem}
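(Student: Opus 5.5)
The plan is to bound the memory used by each phase of Algorithm~\ref{alg:phi-quotient} separately, following the phase decomposition in the proof of Theorem~\ref{thm:time-complexity}, and then take the maximum, since the phases run sequentially and only the distance table persists across rounds.

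\emph{The $n^2$ term.} The all-pairs distance table $D$ has $n^2$ entries, each of $O(\log n)$ bits, so it needs $O(n^2)$ words. The $\varphi$-graph could in principle need $\Theta(m^2)$ memory if stored explicitly. To avoid this, I would never materialize $\varphi(G)$. Adjacency in it is decided in $O(1)$ from $D$ (Definition~\ref{def:phi}), so the transitive prune can generate neighbourhoods on demand. It then needs only $O(m)$ further memory:
\begin{itemize}
\item a union--find structure or component labels over $E(G)$;
\item the current clique;
\item the partition $\mathcal{P}$.
\end{itemize}
I expect this to be the main point needing care: the statement charges everything other than $D$ and the BFS dictionary at $O(n+m)$, so the proof must commit to this implicit representation. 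Otherwise an extra $O(m^2)$ term appears, which is not dominated by $n^2$ for dense graphs.

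\emph{Each round.} The quotient labeling stores the labels $\lambda(v)$, the generators $g_j$, and $S$. Each is a vector in $\Z_2^k$, packed into $\lceil k/64\rceil$ words, which is $O(1)$ words per item for $k \le n-1$ in the word-RAM model. This gives $O(n+m)$. The parity matrix $A$ is the remaining concern, since its $c\times t$ bits could exceed $n+m$. I would handle it by noting that $A$ has one row per fundamental cycle of a BFS tree. Elimination can therefore proceed one row at a time, generating each row from the tree, and maintain a basis of at most $t$ packed rows. Alternatively, one can argue $ct/64$ is absorbed into $n^2$ whenever $m=O(n^{1.5})$. If neither argument fully suffices, I would state the bound with this caveat. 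The truncated BFS from $\mathbf 0$ in $\Cay(\Z_2^k,S)$ keeps a dictionary of visited elements with their depths and predecessor generators; the predecessors are needed to recover the geodesic word. The dictionary has at most $2^k$ entries, giving the $O(2^k)$ term. The violation set $\mathcal V$ need not be stored, since only the argmax pair is kept, and the pair lookups read the dictionary. The memory for a round is released before the next round begins, so rounds do not accumulate.

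\emph{The cap.} When $k>k_{\max}$, the algorithm returns the naive embedding of Corollary~\ref{cor:naive-is-quotient} before the BFS line executes. That labeling is a spanning-tree computation using $O(n+m)$ memory. Lemma~\ref{lem:join} certifies its isometry, so no ball is ever built and the second term is at most $2^{k_{\max}}$. Summing the phases gives $O(n^2+2^k)$.
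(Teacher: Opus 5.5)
Your phase-by-phase accounting is the paper's own. The paper's proof is a one-line appeal to the data structures in the proof of Theorem~\ref{thm:time-complexity}, and you correctly see that this appeal skips two structures that are not $O(n+m)$ as written. The first is $\varphi(G)$: Algorithm~\ref{alg:transitive-prune} materializes it, and it has $\Theta(m^2)$ edges on $K_n$ or $K_{r,s}$, where every non-incident pair is $\varphi$-related. Your on-demand adjacency test from $D$ is the right fix for this one.

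The genuine gap is the parity matrix $A$. You leave it open with a caveat, and neither of your arguments closes it.

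\emph{Row-streaming.} This still stores an echelon basis of $\rho$ rows of $t$ bits, and nothing bounds $\rho t$ by $O(n^2)$. The termination bound of Theorem~\ref{thm:universal} permits rounds with $t=\Theta(m)$. There, $k=t-\rho\le n-1$ forces $\rho\ge t-n+1$. So on a dense graph the basis holds $\Theta(m^2)=\Theta(n^4)$ bits, and the second term does not absorb this once the cap keeps $2^{k_{\max}}$ small.

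\emph{Absorption for $m=O(n^{1.5})$.} This is arithmetically wrong: $ct/64\le m^2/64=\Theta(n^3)$ when $m=\Theta(n^{1.5})$. Absorption into $n^2$ would need $m=O(n)$.

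\emph{The missing idea} is to eliminate on the vertex side rather than the class side. Let $\sigma\colon\Z_2^m\to\Z_2^t$ be the class-aggregation map, so that $\mathrm{rowspan}(A)=\sigma(\mathcal Z(G))$. The finest quotient map $e_{uv}\mapsto\lambda_0(u)\oplus\lambda_0(v)$ of Corollary~\ref{cor:naive-is-quotient} has kernel $\mathcal Z(G)$. Hence
\begin{align*}
\Z_2^{t}/\mathrm{rowspan}(A) &\cong \Z_2^{m}/\bigl(\mathcal Z(G)+\ker\sigma\bigr)\\
&\cong \Z_2^{\,n-1}\big/\bigl\langle \lambda_0(u)\oplus\lambda_0(v)\oplus\lambda_0(x)\oplus\lambda_0(y) : uv,\ xy \text{ in a common class}\bigr\rangle ,
\end{align*}
with $\lambda(v)$ the image of $\lambda_0(v)$. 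There are $m-t$ relation vectors (consecutive edges within each class suffice), and they lie in $\Z_2^{n-1}$. Streaming them through Gaussian elimination therefore keeps at most $n-1$ vectors of $n-1$ bits, i.e.\ $O(n^2)$ bits, which the distance table absorbs. The $k$-bit labels are then read off by reducing each $\lambda_0(v)$ against that basis and keeping the non-pivot coordinates.

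A separate, smaller point: a $k$-bit vector is not $O(1)$ words ``for $k\le n-1$ in the word-RAM model''. The statement's own $O(2^k)$ for the BFS dictionary tacitly assumes $k=O(w)$. You should adopt that convention explicitly rather than claim to derive it.
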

	
	\begin{proof}
		Immediate from the data structures listed in the proof of
		Theorem~\ref{thm:time-complexity}.
	\end{proof}
	
	\begin{remark}[Scaling regime]
		The exponential term is intrinsic to \emph{certified} isometry on
		triangle-dense graphs, whose dimension is driven toward $n-1$ (cf.\
		Table~\ref{tab:gsp}).  Two practical mitigations are implemented in the
		companion code: the dimension cap with naive fallback, and a
		meet-in-the-middle distance oracle that replaces the radius-$\diam$ ball by
		two radius-$\lceil \diam/2 \rceil$ balls, reducing the state count
		quadratically.  Both preserve exactness.
	\end{remark}
	
	\section{Experimental Results}
	\label{sec:binary-experiments}
	
	\subsection{Experimental Protocol}
	
	All numbers in this section were produced by a \emph{single reference
		implementation} of Algorithm~\ref{alg:phi-quotient}
	(\texttt{phi\_quotient\_embedding\_sage.py} / \texttt{phi\_quotient\_embedding\_colab.py},
	version 2026-06-11, identical algorithm and heuristics in both variants,
	supplied as supplementary material).  Isometry of
	every reported embedding was verified twice: by the algorithm's internal
	check (Definition~\ref{def:correct-check}, with the injectivity assertion),
	and \emph{independently} by reconstructing the full host
	$\Cay(\Z_2^k, S)$ on all $2^k$ vertices and comparing all
	$\binom{n}{2}$ distances against $d_G$.  Three categories are tested:
	partial cubes and structured graphs; non-partial-cube structured graphs;
	and the graph-signal-processing benchmark families reused in Part~II.
	
	\subsection{Exhaustive Verification on All Small Graphs}
	
	Running the algorithm on \textbf{all $995$ connected graphs with
		$2 \leq n \leq 7$} (the full graph atlas census: $1, 2, 6, 21, 112, 853$
	graphs for $n = 2, \ldots, 7$) yields:
	\begin{itemize}
		\item $995/995$ isometric embeddings, zero failures, zero discrepancies
		under the independent full-host verification; total runtime under one
		minute on commodity hardware;
		\item $586$ graphs ($59\%$) converge in a single round (no repair); the
		worst case over the whole census is $10$ rounds, and the mean is $1.98$
		(Figure~\ref{fig:repairrounds});
		\item of the $409$ graphs requiring repair, $99\%$ contain a triangle ---
		dense local structure is the universal cause of $\varphi$ over-merging;
		\item at $n = 7$ the final dimensions distribute as
		$k = 3{:}\,5$, $4{:}\,177$, $5{:}\,571$, $6{:}\,100$
		(Figure~\ref{fig:kdistribution}); the naive bound $k = n-1 = 6$ is
		needed by only $100$ of $853$ graphs.
	\end{itemize}
	
	\begin{figure}[htbp]
		\centering
		\includegraphics[width=0.72\linewidth]{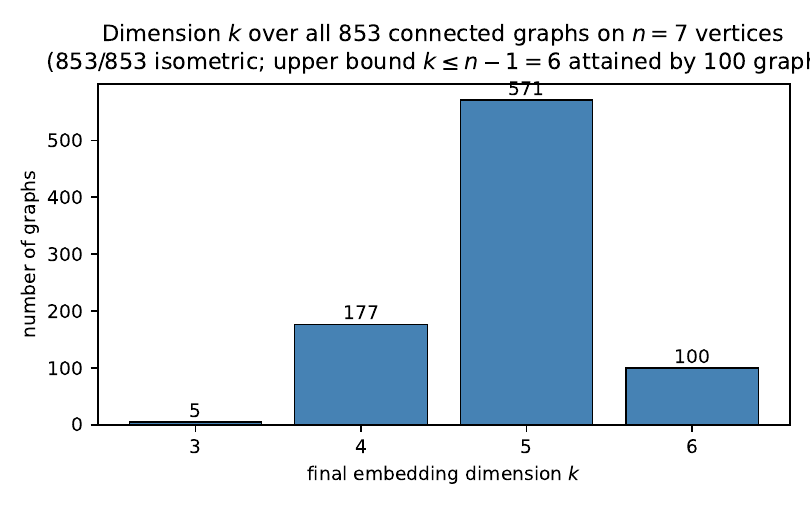}
		\caption{Distribution of the final embedding dimension $k$ over all $853$
			connected graphs on $7$ vertices.  All embeddings verified isometric by
			full host reconstruction.}
		\label{fig:kdistribution}
	\end{figure}
	
	\begin{figure}[htbp]
		\centering
		\includegraphics[width=0.72\linewidth]{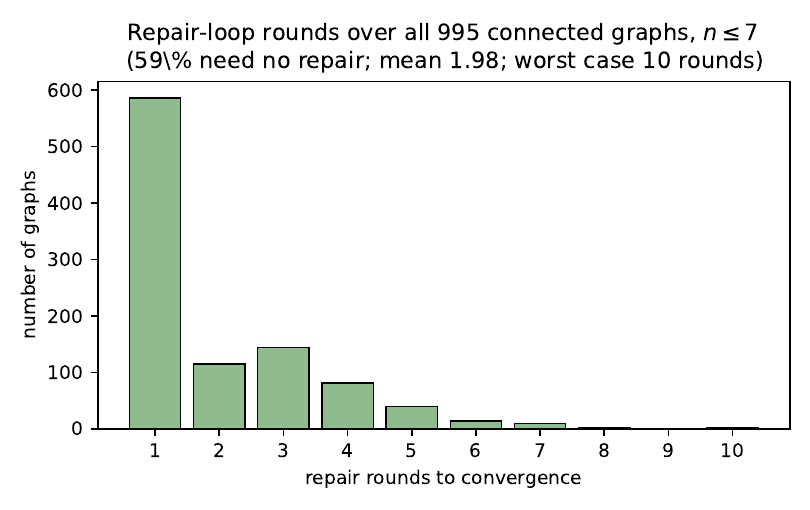}
		\caption{Repair-loop rounds to convergence over all $995$ connected graphs
			with $n \leq 7$.}
		\label{fig:repairrounds}
	\end{figure}
	
	\subsection{Optimality Profile Against Exact Search}
	
	For every connected graph with $n \leq 5$ we computed the \emph{true}
	minimum $k_{\min}$ by exhaustive branch-and-bound over labelings
	(root fixed at $\mathbf 0$; pruning by injectivity and by the
	distance-$\geq 2$ exclusion of edge XORs; acceptance by the full bounded
	BFS check).  Results (Figure~\ref{fig:optimalitygap}):
	\begin{itemize}
		\item the algorithm attains $k_{\min}$ on $\mathbf{29/30}$ graphs;
		\item the unique gap is the star $K_{1,4}$ ($k = 4$ vs.\ $k_{\min} = 3$),
		in exact accordance with Theorem~\ref{thm:star} and
		Remark~\ref{rem:minimality};
		\item the naive lower bound $\max(\diam, \lceil\log_2 n\rceil)$ is itself
		attained by $k_{\min}$ on $18/30$ graphs.
	\end{itemize}
	
	\begin{figure}[htbp]
		\centering
		\includegraphics[width=0.8\linewidth]{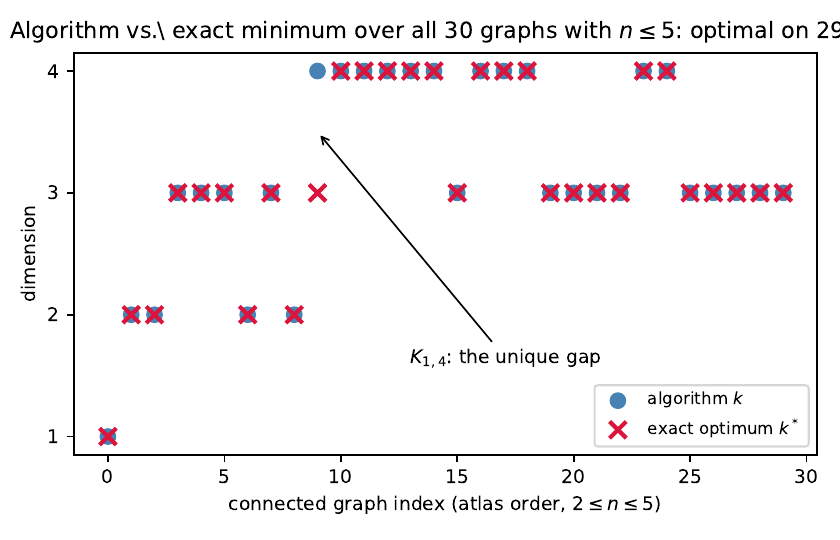}
		\caption{Algorithm dimension vs.\ exact minimum over all $30$ connected
			graphs with $n \leq 5$.  The unique gap is the star $K_{1,4}$,
			characterized completely by Theorem~\ref{thm:star}.}
		\label{fig:optimalitygap}
	\end{figure}
	
	\subsection{Structured Benchmark}
	
	\begin{table}[H]
		\centering
		\caption{Binary isometric embeddings computed by
			Algorithm~\ref{alg:phi-quotient} (reference implementation).
			$t_0$ = classes after the transitive prune (or the structured top-down
			constructor where indicated); $k$ = final dimension; Host $= 2^k$;
			$\varepsilon = n/\mathrm{Host}$; Rounds = repair iterations.  Every row
			verified isometric on all $\binom{n}{2}$ pairs by full host
			reconstruction.}
		\label{tab:embedding-all}
		\renewcommand{\arraystretch}{1.15}
		\begin{tabular}{llcc cccccc}
			\toprule
			Cat. & Graph & $n$ & $m$ & $t_0$ & $k$ & Host & $\varepsilon$ & Naive $2^{n-1}$ & Rounds \\
			\midrule
			\multirow{10}{*}{\rotatebox{90}{partial cubes / structured}}
			& $Q_3$              & 8  & 12 & 3  & 3  & 8     & 1.000 & 128     & 1 \\
			& $Q_4$              & 16 & 32 & 4  & 4  & 16    & 1.000 & 32768   & 1 \\
			& $C_6$              & 6  &  6 & 3  & 3  & 8     & 0.750 & 32      & 1 \\
			& $C_8$              & 8  &  8 & 4  & 4  & 16    & 0.500 & 128     & 1 \\
			& $C_{10}$           & 10 & 10 & 5  & 5  & 32    & 0.312 & 512     & 1 \\
			& $K_4$              & 4  &  6 & 3  & 2  & 4     & 1.000 & 8       & 1 \\
			& Grid $3{\times}3$  & 9  & 12 & 4  & 4  & 16    & 0.562 & 256     & 1 \\
			& Grid $4{\times}4$  & 16 & 24 & 6  & 6  & 64    & 0.250 & 32768   & 1 \\
			& Desargues          & 20 & 30 & 5  & 5  & 32    & 0.625 & 524288  & 1 \\
			& Prism $Y_3$        & 6  &  9 & 4  & 3  & 8     & 0.750 & 32      & 1 \\
			\midrule
			\multirow{3}{*}{\rotatebox{90}{odd}}
			& $C_5$  & 5 & 5 & 5 & 4 & 16  & 0.312 & 16  & 1 \\
			& $C_7$  & 7 & 7 & 7 & 6 & 64  & 0.109 & 64  & 1 \\
			& $C_9$  & 9 & 9 & 9 & 8 & 256 & 0.035 & 256 & 1 \\
			\midrule
			\multirow{9}{*}{\rotatebox{90}{non-partial-cube}}
			& Petersen                    & 10 & 15 & 5  & 4  & 16    & 0.625 & 512    & 1  \\
			& $K_5$                       & 5  & 10 & 7  & 3  & 8     & 0.625 & 16     & 1  \\
			& $K_6$                       & 6  & 15 & 7  & 3  & 8     & 0.750 & 32     & 2  \\
			& $K_{2,3}$                   & 5  &  6 & 4  & 3  & 8     & 0.625 & 16     & 1  \\
			& $K_{3,3}$                   & 6  &  9 & 4  & 3  & 8     & 0.750 & 32     & 3  \\
			& Pappus (top-down $t_0$)     & 18 & 27 & 9  & \textbf{7} & \textbf{128} & 0.141 & 131072 & 1  \\
			& Pappus (greedy $t_0$)       & 18 & 27 & 10 & 12 & 4096  & 0.004 & 131072 & 12 \\
			& M\"obius--Kantor            & 16 & 24 & 11 & 12 & 4096  & 0.004 & 32768  & 11 \\
			& Icosahedron                 & 12 & 30 & 15 & 6  & 64    & 0.188 & 2048   & 1  \\
			\bottomrule
		\end{tabular}
		\vspace{0.3em}
		\parbox{\linewidth}{\footnotesize
			The two Pappus rows isolate the effect of the initial partition: the
			structured nine-class partition (Figure~\ref{fig:pappusclasses}) reaches
			$k = 7$ in one round, while the greedy ten-class partition converges to
			$k = 12$ after $12$ peels --- same algorithm, same guarantees, a
			$32\times$ difference in host order.  Partition quality, not the repair
			loop, governs compactness.}
	\end{table}
	
	Reading of Table~\ref{tab:embedding-all}.  Wherever the $\varphi^-$ classes
	capture the true parallelism (partial cubes, distance-regular graphs such
	as Petersen and Desargues), one round suffices and the result is dramatic:
	$2^{14}\times$ compaction for Desargues, optimality for Petersen and for
	all even cycles and grids ($k = \diam$, matching
	Theorem~\ref{thm:lower-bound}).  Triangle-dense graphs ($K_5$, $K_6$, $K_{3,3}$) need at most a handful of
	repair rounds and reach the information-theoretic floor
	$k = \lceil\log_2 n\rceil$ in all three cases (host $8$,
	$\varepsilon \geq 0.625$).  The M\"obius--Kantor row is a genuine negative
	example: the greedy partition is poor and the repair loop pays for it; an
	improved initializer is future work, exactly as for Pappus.
	
	\subsection{Graph-Signal-Processing Benchmark (Bridge to Part II)}
	
	The families below are the standard test signals' domains in graph signal
	processing \cite{Shuman2013}: the ring (classical DSP), the path
	(finite-length signals), 2-D grids (images), circulant graphs (filter
	banks), random geometric graphs (sensor networks \cite{Akyildiz2002,Penrose2003}), and the Zachary Karate
	Club~\cite{Zachary1977} (community-structured social network).  Part~II computes Fourier and
	wavelet transforms on the hosts found here.
	
	\begin{table}[H]
		\centering
		\caption{Embeddings of GSP benchmark graphs (reference implementation).
			``lower'' is the bound of Theorem~\ref{thm:lower-bound}.}
		\label{tab:gsp}
		\renewcommand{\arraystretch}{1.15}
		\begin{tabular}{lcccc ccc}
			\toprule
			Graph & $n$ & $m$ & $\diam$ & lower & $k$ & Host & Rounds \\
			\midrule
			Ring $C_{12}$               & 12 & 12 & 6  & 6  & \textbf{6 (= lower)}  & 64    & 1 \\
			Ring $C_{16}$               & 16 & 16 & 8  & 8  & \textbf{8 (= lower)}  & 256   & 1 \\
			Path $P_{16}$               & 16 & 15 & 15 & 15 & \textbf{15 (= lower)} & 32768 & 1 \\
			Grid $4{\times}4$           & 16 & 24 & 6  & 6  & \textbf{6 (= lower)}  & 64    & 1 \\
			Grid $6{\times}6$ (image)   & 36 & 60 & 10 & 10 & \textbf{10 (= lower)} & 1024  & 1 \\
			Circulant $C_{12}(1,2)$     & 12 & 24 & 3  & 4  & 6                     & 64    & 1 \\
			RGG sensor ($n{=}20$, $r{=}0.45$) & 20 & 75 & 5 & 5 & 17               & 131072 & 36 \\
			Barbell$(5,2)$              & 12 & 23 & 5  & 5  & 7                     & 128   & 1 \\
			Karate Club                 & 34 & 78 & 5  & 6  & $33$ (cap fallback)   & $2^{33}$ & 14 \\
			\bottomrule
		\end{tabular}
		\vspace{0.3em}
		\parbox{\linewidth}{\footnotesize
			Rings, paths and grids --- the domains on which classical DSP intuition is
			calibrated --- are embedded \emph{optimally}: $k$ equals the lower bound of
			Theorem~\ref{thm:lower-bound}.  Triangle-dense irregular graphs (sensor
			RGG, Karate) sit at the opposite end: their $\varphi$-classes shatter, and
			the certified dimension approaches $n-1$; for Karate the dimension cap
			triggers the provably isometric naive fallback.  This dichotomy is the
			quantitative version of the structural message of this chapter, and
			motivates the cyclic factors of Chapter~3, which handle triangle-dense
			graphs natively.}
	\end{table}
	
	\begin{figure}[htbp]
		\centering
		\includegraphics[width=0.95\linewidth]{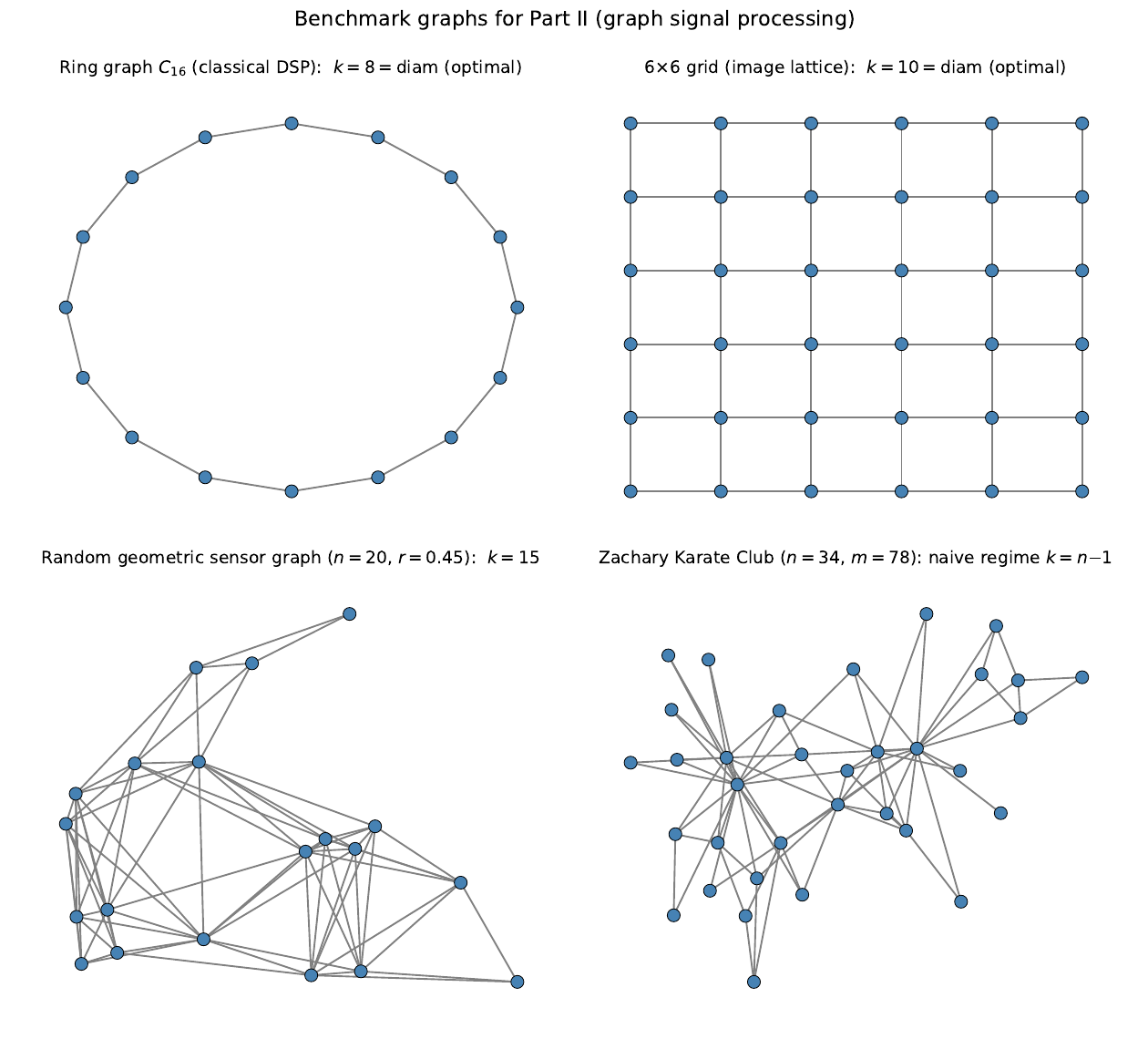}
		\caption{The four GSP benchmark domains reused in Part~II: ring, image
			grid, random geometric sensor graph, and the Zachary Karate Club.}
		\label{fig:gspgraphs}
	\end{figure}
	
	\subsection{Visualization of Selected Embeddings}
	
	\begin{figure}[H]
		\centering
		\includegraphics[width=0.6\textwidth]{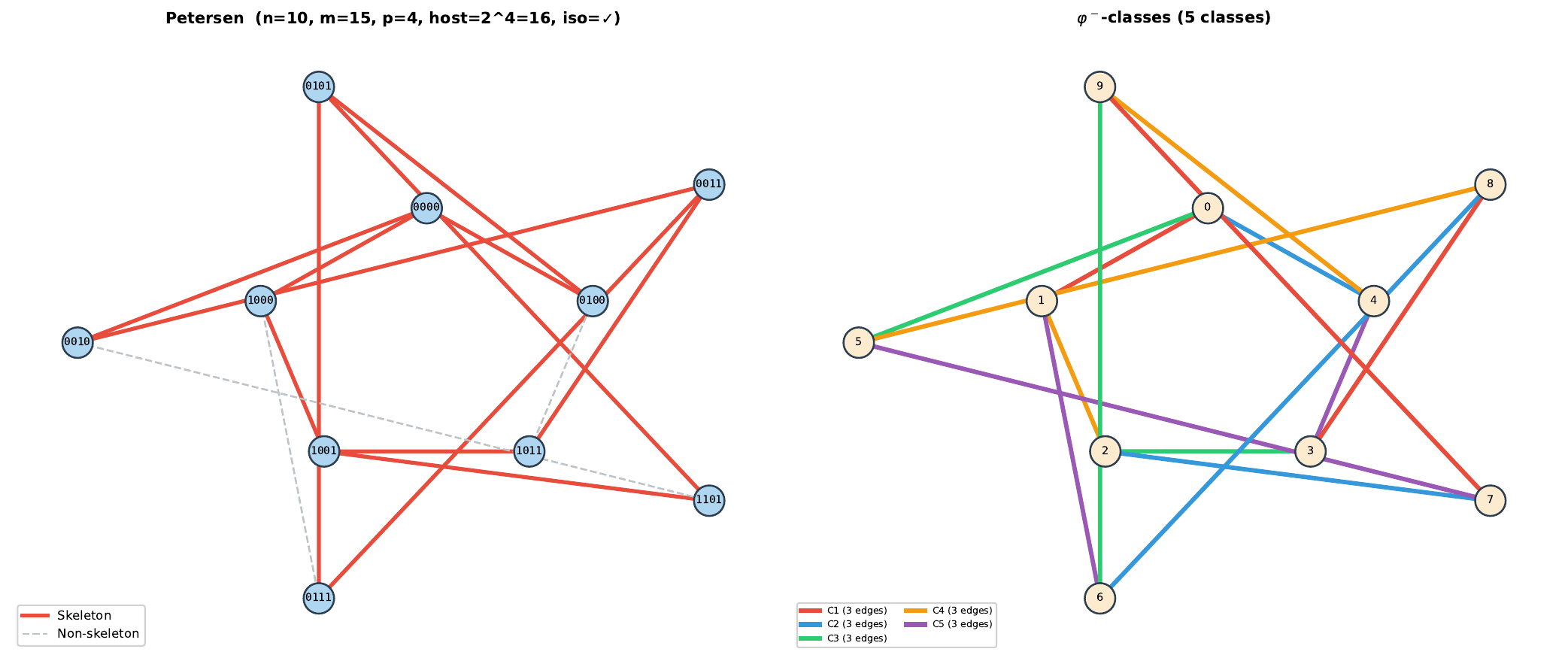}
		\caption{Petersen graph embedding into $\Cay(\Z_2^4, S)$ (host order
			$16$): skeleton edges in red, each vertex labeled with its $4$-bit
			coordinate, the five $\varphi^-$-classes in distinct colors.}
		\label{fig:petersen}
	\end{figure}
	
	\begin{figure}[H]
		\centering
		\includegraphics[width=0.65\textwidth]{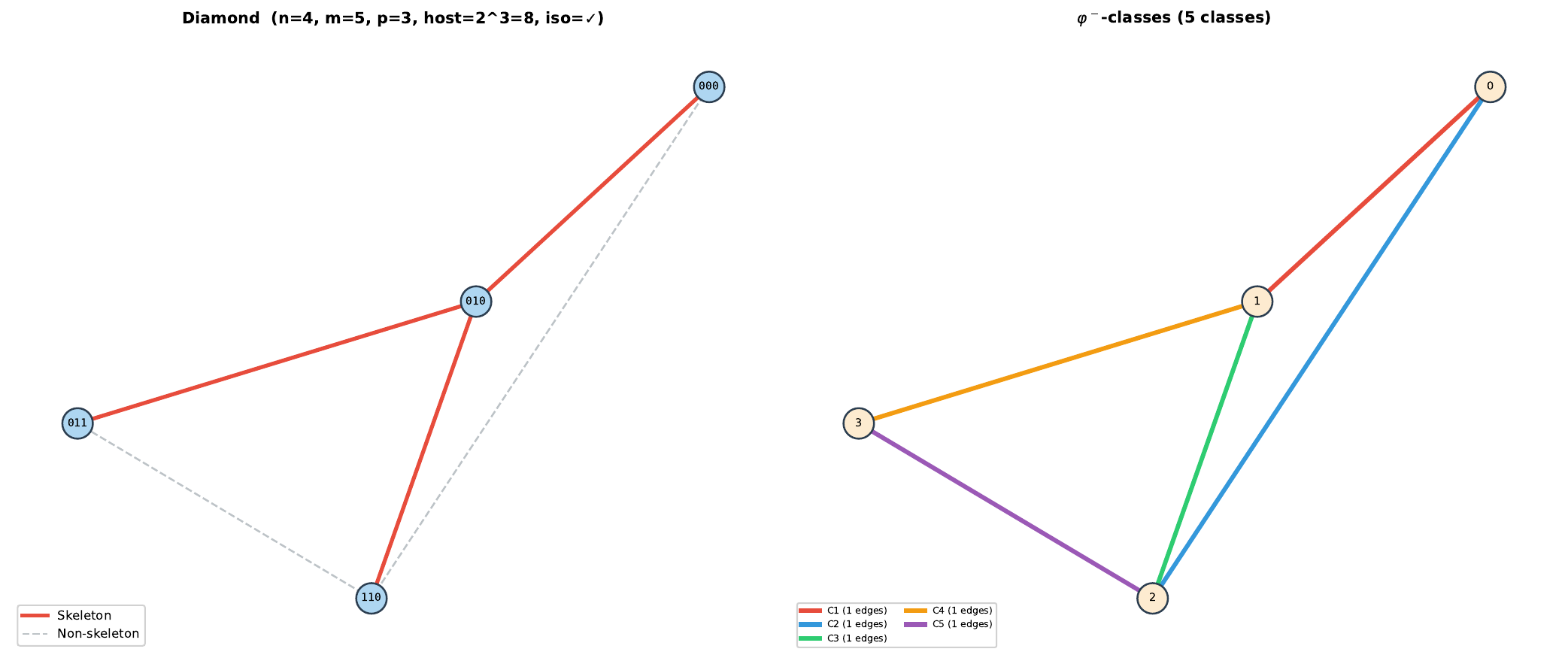}
		\caption{Diamond graph ($K_4$ minus an edge) embedded into
			$\Cay(\Z_2^3, S)$ of order $8$: two $\varphi$-pairs define two cuts, the
			remaining edges are singleton classes.}
		\label{fig:figdiamond}
	\end{figure}
	
	\begin{figure}[H]
		\centering
		\includegraphics[width=0.65\textwidth]{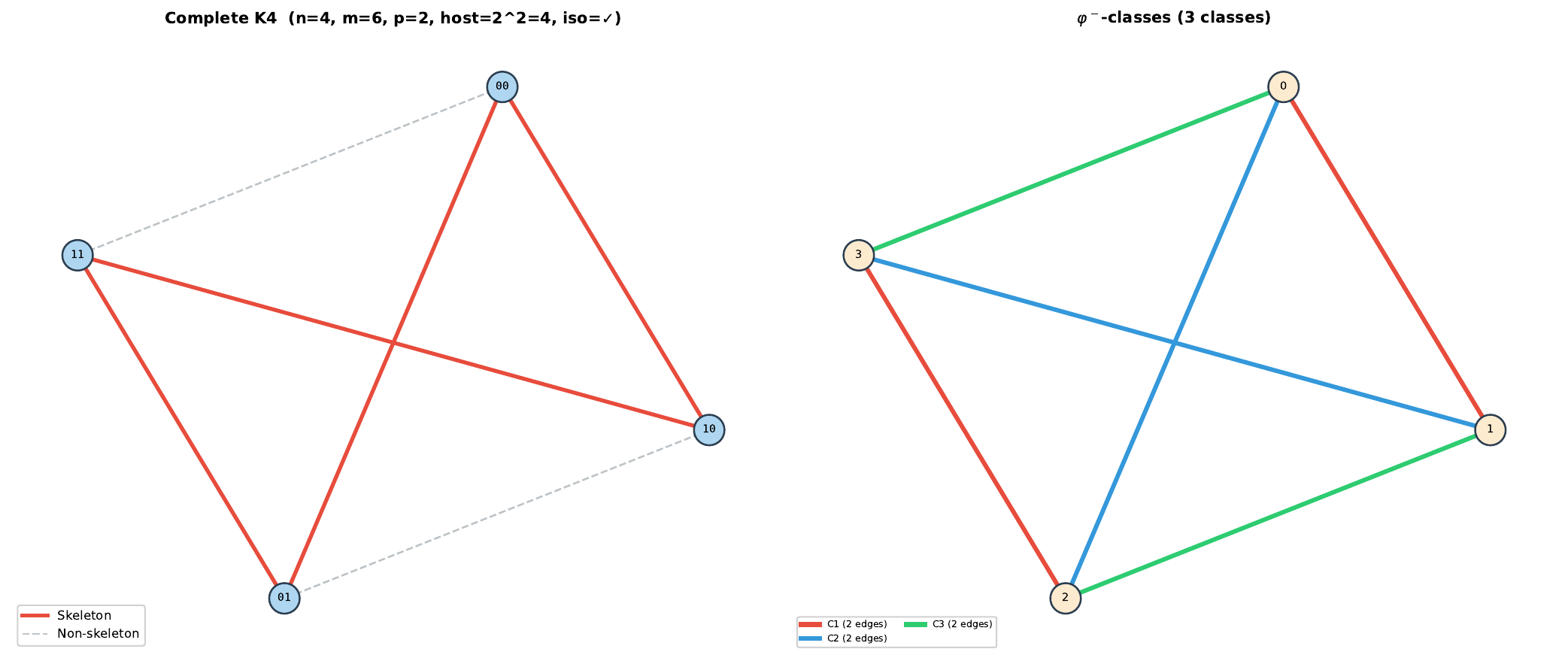}
		\caption{$K_4$ embedded into $\Cay(\Z_2^2, S)$ of order $4$ with
			$S = \Z_2^2 \setminus \{\mathbf 0\}$: $\varepsilon = 1$, the graph fills
			its host (Proposition~\ref{prop:tightness}(ii) with $t = 2$).}
		\label{fig:figcompletek4}
	\end{figure}
	
	\begin{figure}[H]
		\centering
		\includegraphics[width=0.65\textwidth]{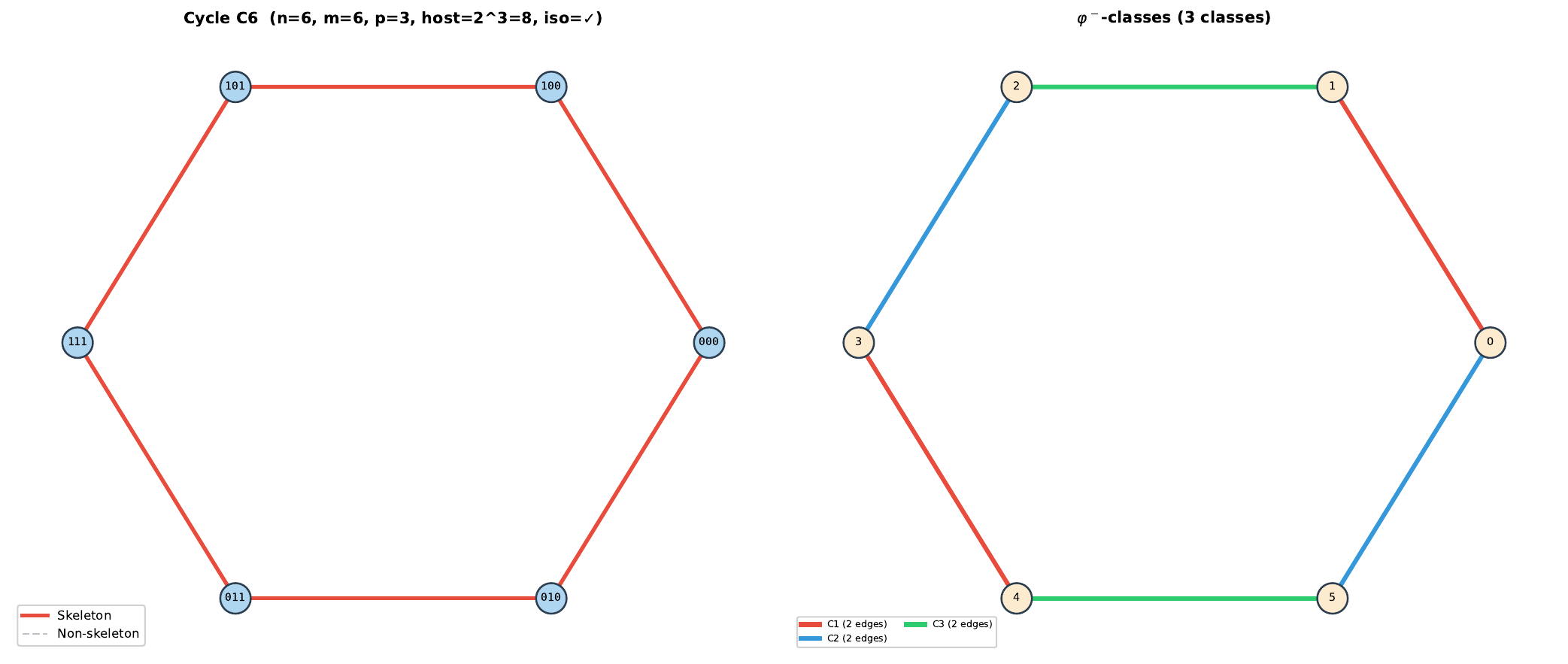}
		\caption{$C_6$ embedded into $Q_3$: the three antipodal $\varphi$-classes
			of Theorem~\ref{thm:phi-properties}(v) are the three coordinates;
			$k = 3 = \diam$, optimal.}
		\label{fig:figcyclec6}
	\end{figure}
	
	\begin{figure}[H]
		\centering
		\includegraphics[width=0.65\textwidth]{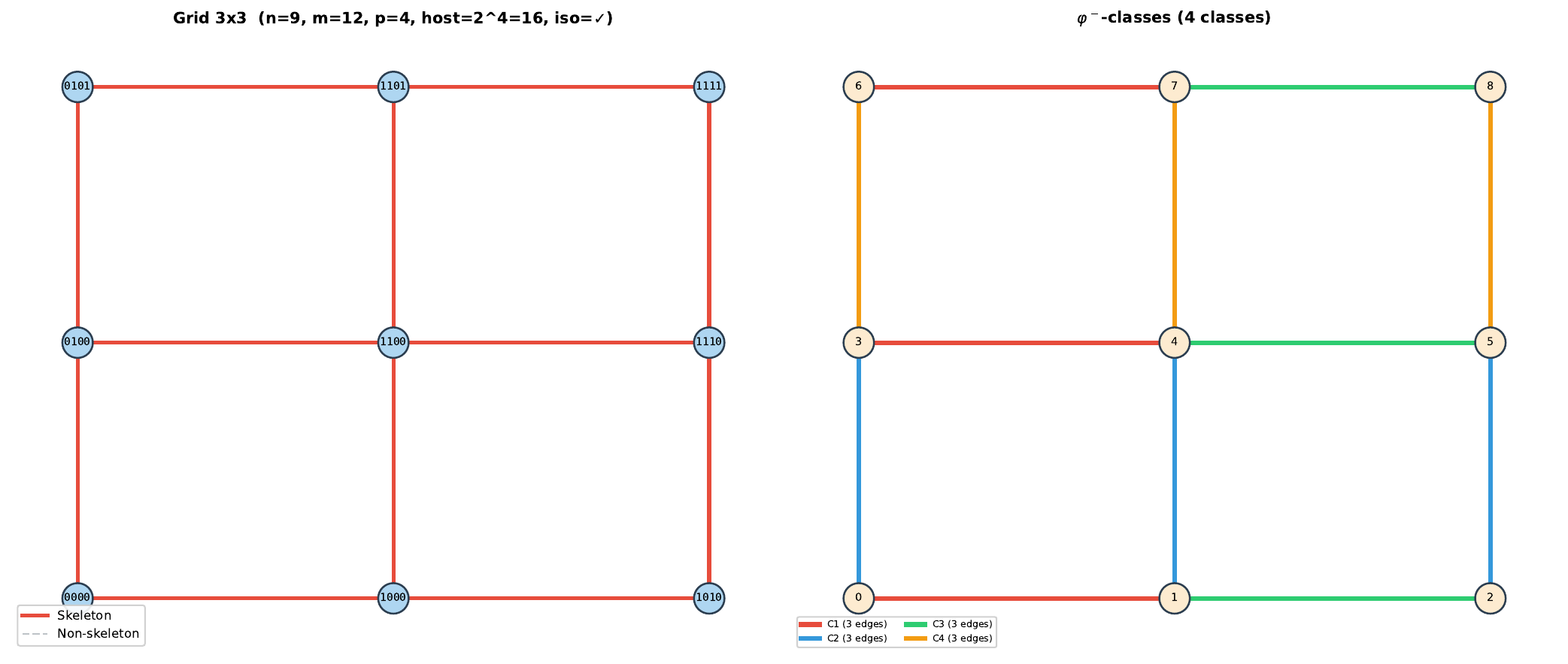}
		\caption{$3{\times}3$ grid embedded into $Q_4$ ($\varepsilon = 0.562$):
			two horizontal and two vertical cut classes realize the product structure
			$P_3 \,\square\, P_3 \subseteq Q_2 \,\square\, Q_2$.}
		\label{fig:figgrid3x3}
	\end{figure}
	
	\begin{figure}[H]
		\centering
		\includegraphics[width=0.65\textwidth]{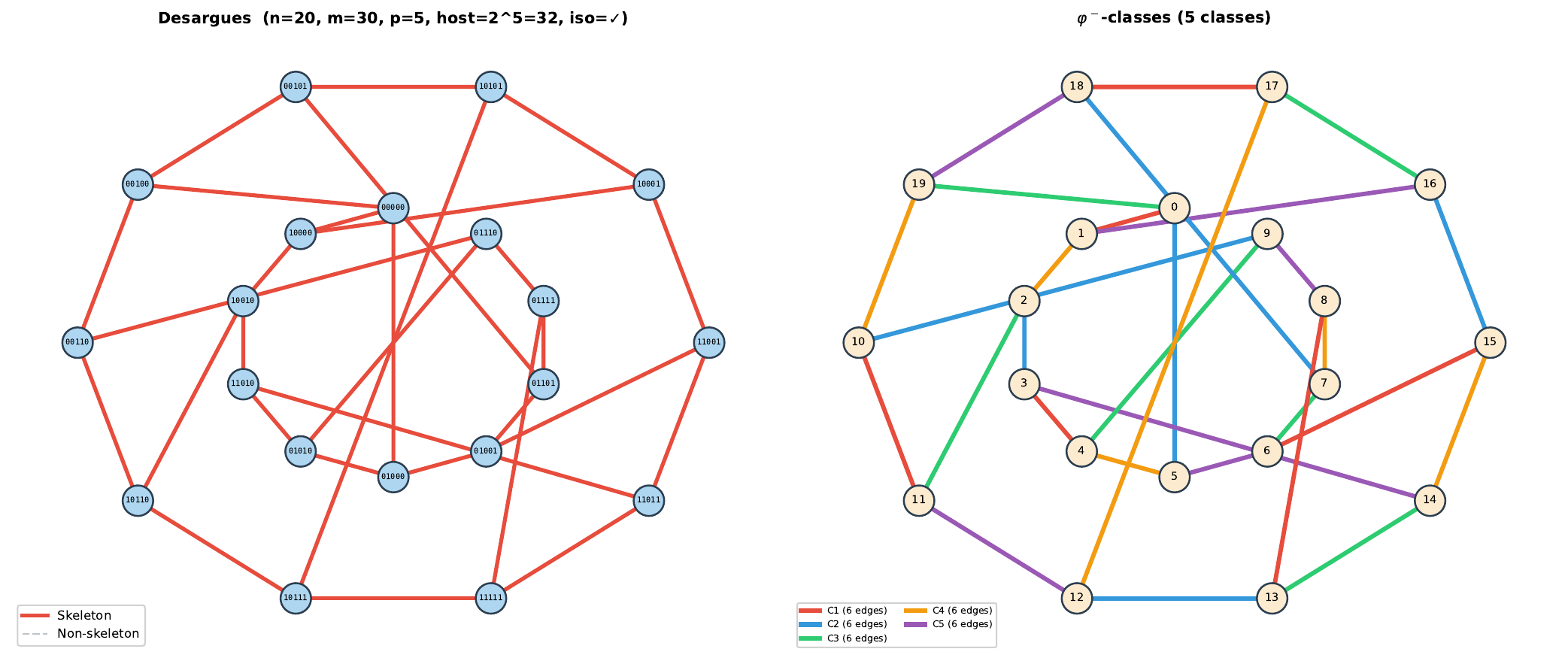}
		\caption{Desargues graph embedded into $\Cay(\Z_2^5, S)$ of order $32$:
			five perfect-matching classes, no cycle relations ($\rho = 0$), $k = 5$;
			the graph is a partial cube and the embedding attains
			$\varepsilon = 0.625$.}
		\label{fig:figdesargues}
	\end{figure}
	
	\begin{figure}[H]
		\centering
		\includegraphics[width=0.65\textwidth]{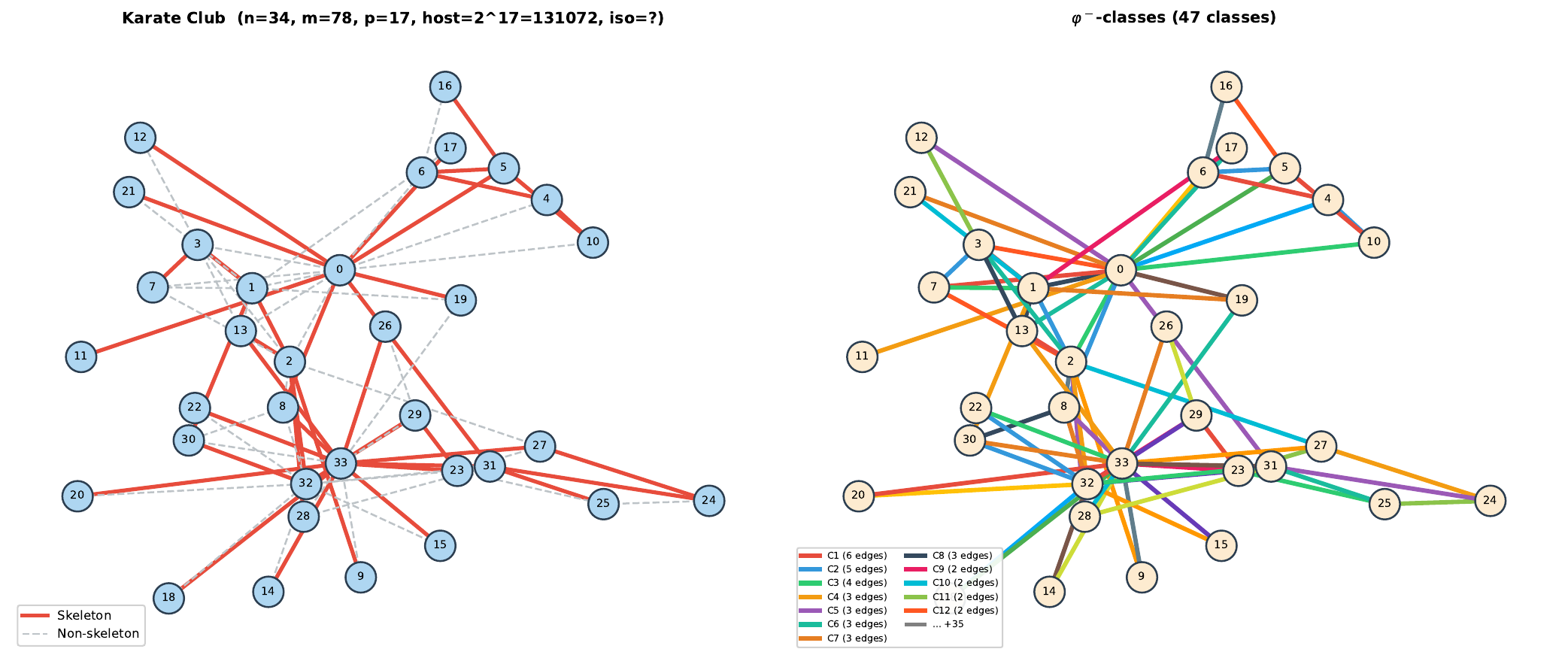}
		\caption{Zachary Karate Club~\cite{Zachary1977} ($n = 34$, $m = 78$): a triangle-dense
			real-world graph.  The $\varphi^-$ classes shatter under repair and the
			dimension cap returns the naive embedding $k = 33$; the graph marks the
			practical frontier of the purely binary method and the entry point of the
			cyclic factors of Chapter~3.}
		\label{fig:figkarateclub}
	\end{figure}
	
	\section{Discussion}
	\label{sec:discussion}
	
	\paragraph{Against the naive method.}
	The quotient algorithm achieves exponential host reductions on structured
	graphs (Desargues: $524\,288 \to 32$; Pappus: $131\,072 \to 128$) while
	never doing worse than the naive bound, which it contains as its provable
	terminal case.
	
	\paragraph{Against the cut-based method.}
	The cut paradigm is the special case of the quotient in which every class
	generator is independent (zero rows in $A$); it is exact on partial cubes
	and inapplicable beyond them.  The quotient subsumes it with no penalty
	($K_5$: $k = 3$ vs.\ fallback $4$; Pappus: $7$ vs.\ fallback $17$).
	
	\paragraph{Against $\theta$-based methods.}
	$\theta$ is transitive only on partial cubes; $\varphi$ is an equivalence
	relation on a strictly larger class (Petersen, all odd cycles), and the
	quotient-plus-repair construction extends to \emph{all} connected graphs.
	
	\paragraph{The minimality frontier.}
	Three facts now delimit it precisely.  (1)~The algorithm is optimal on
	$29/30$ small graphs and on every family of
	Proposition~\ref{prop:tightness}.  (2)~It cannot be optimal in general
	while confined to partitions: stars require dependent generators on
	$\varphi$-unrelated classes (Theorem~\ref{thm:star}); a class-\emph{merging}
	post-pass (attempt merges while no shortcut appears) is the natural next
	refinement and would capture $K_{1,q}$.  (3)~$k_{\min}$ spans the whole
	window between the lower bound and $n-1$ (stars vs.\ odd cycles), so no
	formula in $\diam$ and $n$ alone can replace the computation; the governing
	invariant is the achievable rank of the cycle--class matrix.
	
	\paragraph{Bridge to Chapter 3.}
	The quotient machinery generalizes verbatim from $\F_2$ to $\Z$: the
	cycle--class matrix acquires signed entries (orientations), and its Smith
	Normal Form delivers the cyclic factors $\Z_{n_1} \times \cdots \times
	\Z_{n_\delta}$ directly --- unifying the binary and abelian chapters under
	one theorem and natively accommodating the odd structures (triangles, odd
	cycles) that drive the binary dimension toward $n-1$.
	
	\section{Conclusion}
	\label{sec:ch2-conclusion}
	
	This chapter developed a complete theory of isometric binary embeddings:
	the $\varphi$ relation and transitive prune (the structural layer); the
	Cocycle/Quotient Theorem with the Join Lemma (the algebraic layer, making
	labelings conflict-free by construction and the naive bound a one-line
	corollary); the universal repair-loop algorithm, verified on all $995$
	connected graphs with $n \leq 7$ under independent full-host
	reconstruction; and a bounds theory establishing
	$k_{\min} \in [\max(\diam, \lceil\log_2 n\rceil),\, n-1]$ with both ends
	attained, the exact constants $k_{\min}(K_{1,q}) = \lceil\log_2 q\rceil + 1$
	and $k_{\min}(C_{2d+1}) = 2d$ (the latter for $2d+1 \leq 17$,
	conjecturally always), and a measured optimality profile of $29/30$ with a
	completely characterized exception.  The open problems --- the cyclic
	interval conjecture, the NP-hardness of exact minimization, and the
	class-merging refinement --- are stated precisely and bounded by the
	results above.  Chapter~3 lifts the entire construction from $\F_2$ to
	arbitrary abelian groups.

	\chapter{Compact Isometric Embedding into General Abelian Groups}
	\label{chap:abelian-embedding}
	
	\section{Introduction: From Binary to General Abelian Hosts}
	
	Chapter~2 closed with a theorem and a warning.  The theorem: every connected
	graph embeds isometrically into $\Cay(\Z_2^k, S)$ with $k \leq n-1$.  The
	warning: odd cycles \emph{attain} that bound
	(Theorem~\ref{thm:odd-cycle}), so the triangle $K_3$ --- three vertices ---
	demands a binary host of order $4$, and $C_9$ a host of order $256$.  Yet
	$C_9$ \emph{is} a Cayley graph: $C_9 = \Cay(\Z_9, \{1, 8\})$, of order $9$.
	The binary alphabet, not the method, is the obstruction.  This chapter
	removes it: the target becomes a Cayley graph of an arbitrary finite
	abelian group $\Gamma = \Z_{N_1} \times \cdots \times \Z_{N_d}$.
	
	The construction has two layers.  The first
	(Sections~\ref{sec:Phi-relation-ch3}--\ref{sec:cofactors})
	is a structural toolkit --- the $\Phi$-relation, the $\Psi$-relation, torus
	skeletons, $\Phi\Psi$-classes, cofactors and the interception principle ---
	that proposes candidate same-generator edge partitions.  The decisive step
	(Section~\ref{sec:z-quotient}) is the same one that resolved Chapter~2:
	\emph{stop assigning generators by cases and compute the most generic
		consistent assignment}.  Over $\Z_2$ that computation was GF(2) row
	reduction; over $\Z$ it is the \textbf{Smith Normal Form} of a
	\emph{signed} cycle--class matrix, and the finitely generated abelian group
	it produces \emph{is} the host --- torsion factors, free factors, composite
	generators and all.  The structural toolkit is not discarded: it serves,
	with its original names, as the \emph{initializer portfolio} that
	proposes coarse oriented partitions to the exact core
	(Section~\ref{sec:experiments-ch3}).
	
	A foundational point, established in Section~\ref{sec:oriented-partitions},
	is that the matching constraint on classes inherited from Chapter~2 is
	\emph{false} over general abelian groups; its failure is precisely what
	admits the cyclic factors --- for instance $K_3 = \Cay(\Z_3, \{1,2\})$ ---
	that the binary scheme cannot represent.
	
	\section{Oriented Partitions: the Corrected Foundation}
	\label{sec:oriented-partitions}
	
	Over $\Z_2^k$ every generator is an involution ($g = -g$) and edges need no
	direction.  Over a general abelian group, traversing an edge forward adds
	$g$ and backward adds $-g \neq g$; the assignment of generators to edges
	must therefore carry orientations.
	
	\begin{definition}[Oriented partition; generator class]
		\label{def:oriented-partition}
		An \emph{oriented partition} of $G$ is a partition
		$\mathcal{P} = \{F_1, \ldots, F_t\}$ of $E(G)$ together with a chosen
		direction $u \to v$ for every edge, the intended semantics being that all
		edges of $F_j$ carry the same generator $g_j$, added in the forward
		direction.
	\end{definition}
	
	\begin{theorem}[Partial-permutation constraint]
		\label{thm:partial-permutation}
		In any isometric embedding $\phi\colon V(G) \to \Cay(\Gamma, S)$, the set of
		edges carrying a fixed generator $g$, with forward orientations, has
		in-degree at most $1$ and out-degree at most $1$ at every vertex.
		Equivalently, every generator class is a \emph{partial permutation}: a
		disjoint union of directed paths and directed cycles.
	\end{theorem}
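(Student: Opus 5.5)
The plan is to reduce everything to the injectivity of isometric maps, recorded right after the definition of isometric embedding in Section~\ref{sec:ch2-prelim}, together with the fact that translation by a fixed group element is a bijection of $\Gamma$. No metric information beyond injectivity should be needed.

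First, fix the setup. Let $\phi$ be isometric and fix $g \in S$. Let $F_g$ be the set of edges $\{u,v\}$ of $G$ that admit an orientation $u \to v$ with $\phi(v) - \phi(u) = g$, and give each such edge one such orientation. If $2g = 0$, both orientations qualify, and we choose either one; the argument below will not depend on this choice. Note that $g \neq 0$, since $0 \notin S$; equivalently, injectivity gives $\phi(u) \neq \phi(v)$. Hence no edge is a loop, and the directed graph $D_g = (V, F_g)$ is a simple digraph on $V(G)$.

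Second, bound the degrees. Suppose $u \to v$ and $u \to w$ both lie in $F_g$. Then $\phi(v) = \phi(u) + g = \phi(w)$, so injectivity forces $v = w$. Since $G$ is simple, the two directed edges coincide, and the out-degree at $u$ is at most $1$. Symmetrically, suppose $u \to v$ and $w \to v$ both lie in $F_g$. Then $\phi(u) = \phi(v) - g = \phi(w)$, so $u = w$, and the in-degree at $v$ is at most $1$.

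For the involutive case I will check explicitly that no edge is double-counted. Each edge carries exactly one chosen orientation, so at a given vertex it contributes to exactly one of the in-degree or the out-degree. The two computations above used only the chosen orientations, so they remain valid in this case.

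Third, deduce the structure. A finite digraph with all in- and out-degrees at most $1$ decomposes into its weakly connected components. Each component is either a directed path (possibly a single vertex) or a directed cycle. To see this, follow out-edges from a vertex of in-degree $0$ if one exists; otherwise every vertex in the component has in-degree exactly $1$. Finiteness and the degree bounds then force a cycle that exhausts the component.

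Cycles of length $2$ cannot occur, because $G$ is simple and each edge appears once. It is worth remarking that cycles genuinely do occur, for example $K_3 = \Cay(\Z_3,\{1,2\})$, where all three edges carry $g = 1$. This is exactly why the matching constraint of Remark~\ref{rem:matching} fails over general $\Gamma$.

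The only delicate point I expect is the bookkeeping when $g = -g$, or when both $g$ and $-g$ lie in $S$. In the latter case, the edges carrying $-g$ are the edges of $F_g$ with their orientations reversed, so I will treat $\{g,-g\}$ as giving the same class up to reversal; this reversal preserves the partial-permutation property. Once the orientation convention is fixed as above, each step is a one-line consequence of injectivity.
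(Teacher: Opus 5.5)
Your proposal is correct and is essentially the paper's proof: the paper likewise observes that two forward $g$-edges out of (or into) a vertex would force $\phi(w_1)=\phi(v)+g=\phi(w_2)$, contradicting injectivity, and then invokes the standard fact that a digraph with all in- and out-degrees at most $1$ is a disjoint union of directed paths and directed cycles. Your extra bookkeeping (the orientation choice when $2g=0$, the exclusion of loops and $2$-cycles, and the short proof of the digraph decomposition) fills in details the paper leaves implicit; one small correction is that $S=-S$ by definition, so $g$ and $-g$ always lie in $S$ together and this is not a special case.
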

	
	\begin{proof}
		Two forward edges out of $v$ would give
		$\phi(w_1) = \phi(v) + g = \phi(w_2)$ with $w_1 \neq w_2$, contradicting
		injectivity; dually for two forward edges into $v$.  A digraph with all
		in- and out-degrees $\leq 1$ is exactly a disjoint union of directed paths
		and directed cycles.
	\end{proof}
	
	\begin{remark}[The class constraint is a partial permutation, not a matching]
		\label{rem:matching-correction}
		It is tempting to require, as in Chapter~2's
		Remark~\ref{rem:matching}, that every class be a \emph{matching}.  That
		stronger constraint is \textbf{false} for general abelian groups, and the
		smallest counterexample is the triangle: in
		$K_3 = C_3 = \Cay(\Z_3, \{1, 2\})$ all three edges carry the
		\emph{same} generator $g = 1$, oriented around the cycle
		(Figure~\ref{fig:c3partialperm}) --- pairwise incident, not a matching, yet
		a perfectly valid directed $3$-cycle class.  The matching constraint is
		exactly the special case $2g = 0$ of
		Theorem~\ref{thm:partial-permutation}: it holds for involutive generators
		(all of $\Z_2^k$) but not in general.  Imposing it would forbid the
		directed-cycle classes that produce the odd cyclic factors $\Z_{2k+1}$ ---
		for example forcing $K_3 \hookrightarrow \Z_4$ (order $4$) where $\Z_3$
		(order $3$) is optimal --- which is why the partial-permutation constraint
		of Theorem~\ref{thm:partial-permutation} is the correct foundation.
	\end{remark}
	
	\begin{figure}[htbp]
		\centering
		\includegraphics[width=0.5\linewidth]{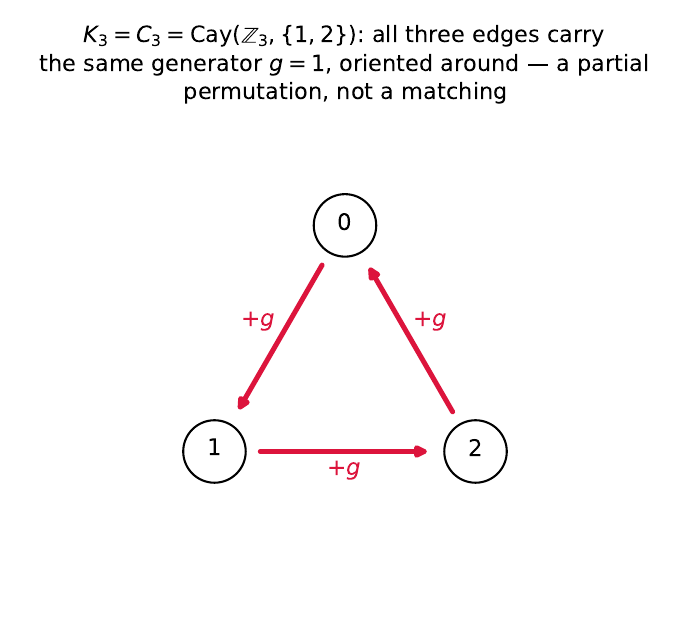}
		\caption{The corrected class constraint.  In
			$K_3 = \Cay(\Z_3, \{1,2\})$ one generator carries all three edges as a
			directed cycle.  Classes are partial permutations (directed paths and
			cycles), not matchings; matchings are the involution case $2g = 0$.}
		\label{fig:c3partialperm}
	\end{figure}
	
	\section{The $\Phi$-Relation}
	\label{sec:Phi-relation-ch3}
	
	\begin{definition}[Oriented $\Phi$-test]
		\label{def:Phi-ch3}
		Ordered edges $(u \to v)$ and $(x \to y)$ pass the \emph{oriented
			$\Phi$-test} if
		\[
		d(u, x) = d(v, y).
		\]
		Two undirected edges are \emph{$\Phi$-related} if some choice of
		orientations passes the test.
	\end{definition}
	
	\begin{proposition}[Necessity, and two strengthenings]
		\label{prop:Phi-necessary}
		If $(u \to v)$ and $(x \to y)$ carry the same generator $g$ in an isometric
		embedding into $\Cay(\Gamma, S)$, then
		\begin{enumerate}[(i)]
			\item $d(u,x) = d(v,y)$ (the $\Phi$-test);
			\item $\lvert d(u,y) - d(u,x) \rvert \leq 1$ and
			$\lvert d(v,x) - d(u,x) \rvert \leq 1$.
		\end{enumerate}
	\end{proposition}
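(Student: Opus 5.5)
The whole argument rests on two facts: vertex-transitivity of $\Cay(\Gamma,S)$, which gives $d_{\Cay}(\phi(a),\phi(b)) = d_{\Cay}(0,\phi(b)-\phi(a))$, and isometry of $\phi$, which gives $d_G = d_{\Cay}\circ\phi$. Write $\phi(v) = \phi(u)+g$ and $\phi(y) = \phi(x)+g$, since both edges carry $g$ in the forward direction. Every inequality below is then a statement about Cayley distances between group elements that differ by $\pm g$, and each is pulled back to $G$ through isometry.

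For (i), compute
\[
\phi(y)-\phi(v) \;=\; (\phi(x)+g)-(\phi(u)+g) \;=\; \phi(x)-\phi(u).
\]
Vertex-transitivity then gives $d_{\Cay}(\phi(v),\phi(y)) = d_{\Cay}(\phi(u),\phi(x))$, and isometry turns this into $d(v,y) = d(u,x)$. This is the abelian analogue of the parallelogram symmetry behind $\varphi$ in Chapter~2. Commutativity is used essentially here, because the two copies of $g$ cancel regardless of where they sit in the sum.

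For (ii), the plan is a triangle-inequality argument in the host. In $\Cay(\Gamma,S)$ the elements $\phi(x)$ and $\phi(y)$ are adjacent, since they differ by $g \in S$. Hence
\[
\bigl| d_{\Cay}(\phi(u),\phi(y)) - d_{\Cay}(\phi(u),\phi(x)) \bigr| \;\leq\; 1,
\]
and by isometry this is exactly $|d(u,y)-d(u,x)| \leq 1$. Equivalently, one can stay inside $G$: $x$ and $y$ are adjacent in $G$, so the ordinary triangle inequality for $d_G$ already gives the bound. Symmetrically, $u$ and $v$ are adjacent, so $|d(v,x)-d(u,x)| \leq 1$. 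I will note that this half holds for any pair of edges, which is why it is presented as a strengthening that bounds the second pair of cross distances around the common value $d(u,x)$.

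There is no real obstacle. The only point needing care is orientation bookkeeping: (i) requires the \emph{forward} orientations carrying the same $g$. If one edge were traversed backward, its label difference would be $-g$ and the cancellation in (i) would fail, leaving only $d(u,y) = d(v,x)$-type relations. I will state explicitly which orientation is fixed, consistent with Definition~\ref{def:Phi-ch3}.
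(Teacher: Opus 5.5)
Your proof is correct and follows the paper's argument: (i) is the same cancellation $\phi(y)-\phi(v)=\phi(x)-\phi(u)$, read through translation invariance and isometry. Your host triangle inequality for (ii) is the paper's observation that the word norm changes by at most $1$ when $\pm g$ is added to $z=\phi(x)-\phi(u)$. Your side remark that (ii) already follows from the triangle inequality in $G$ (since $xy$ and $uv$ are edges) is also right, and it shows more than you state: (ii) holds for every pair of edges in every graph, so it uses nothing about the shared generator and, as a filter, can never reject a pair.
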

	
	\begin{proof}
		Write $z = \phi(x) - \phi(u)$ and $\lvert \cdot \rvert$ for the Cayley word
		norm.  (i) $\phi(y) - \phi(v) = (\phi(x) + g) - (\phi(u) + g) = z$, so
		$d(v,y) = \lvert z \rvert = d(u,x)$.
		(ii) $\phi(y) - \phi(u) = z + g$ and $\phi(x) - \phi(v) = z - g$, and the
		word norm changes by at most $1$ under addition of a generator:
		$\bigl| \lvert z \pm g \rvert - \lvert z \rvert \bigr| \leq 1$.
	\end{proof}
	
	\begin{remark}[Why $\Phi$ is weaker than $\varphi$, and why that is principled]
		For an involution $g = -g$ the two conditions
		$d(u,x) = d(v,y)$ and $d(u,y) = d(v,x)$ hold simultaneously, recovering
		Chapter~2's $\varphi$.  For $g \neq -g$ only condition (i) survives, taken as an OR over the two
		orientations.  Condition (ii) is the sharpest cheap necessary test
		available and is what the implementation uses as the merge filter
		(Section~\ref{sec:algorithm-ch3}).  Even so strengthened, $\Phi$ remains a
		\emph{necessary} filter, not a sufficient one --- the consistency burden has
		moved, deliberately, to the exact core of
		Section~\ref{sec:z-quotient}.
	\end{remark}
	
	\section{$\Psi$, Torus Skeletons, $\Phi\Psi$-Classes, and Cofactors}
	\label{sec:cofactors}
	
	We summarize the structural toolkit, stating each device together with the
	role it plays in the final architecture.  Full operational detail is
	provided in the supplementary implementation.
	
	\begin{definition}[$\Psi$-relation; chain merge]
		\label{def:Psi-ch3}
		Two incident oriented edges $u \to v$ and $v \to w$ ($u \neq w$) are
		\emph{$\Psi$-related} (historically: T-incident) when they are candidates
		for consecutive steps of the same generator.  Two classes are
		\emph{$\Psi$-linked} if they contain a $\Psi$-related pair.  The
		\emph{chain merge} fuses $\Psi$-linked classes when the union remains a
		partial permutation and every cross pair passes the oriented $\Phi$-test of
		Proposition~\ref{prop:Phi-necessary} (in one of the two polarities).
		\textbf{Retained role:} the chain merge is the initializer that discovers
		cyclic factors --- it is what assembles the $2n$ ring edges of the circular
		ladder $\mathrm{CL}_n$ into a single class crossed $n$ times by the ring,
		from which the exact core then \emph{computes} the factor $\Z_n$.
	\end{definition}
	
	\begin{definition}[Torus skeleton; minimal and redundant classes; cofactors;
		interception]
		\label{def:skeleton-cofactor-ch3}
		The \emph{torus skeleton} is the spanning subgraph greedily assembled from
		classes until connected; classes in it are \emph{minimal}, the rest
		\emph{redundant}; a redundant class compatible with a minimal class's cut
		structure is its \emph{cofactor}, and the \emph{interception principle}
		states that a redundant class touching dimensions $i_1, \ldots, i_m$
		acquires a composite generator supported on them.
		\textbf{Role:} these devices are approximations to
		quantities the Smith Normal Form computes exactly --- minimal classes
		approximate a generator basis, factor orders approximate the SNF diagonal,
		and the interception principle is Corollary~\ref{cor:interception-snf}
		below.  We keep the vocabulary because it names real structure and because
		the worked examples (diamond, cascades, mirrors,
		circular ladders) remain instructive test cases, certified by the
		exact core.
	\end{definition}
	
	\begin{remark}[Scope and limits of the structural toolkit]
		\label{rem:first-campaign}
		On structured inputs (paths, even product structures, circular ladders,
		Petersen) the toolkit reaches hosts the exact core certifies.  As a
		standalone procedure, however, it has three structural limitations:
		(a) its correctness arguments describe procedures rather than proving
		isometry; (b) it needs a growing tower of interacting special cases (cycle
		promotion, composite generator mode, sign-variant enumeration, ordering
		retries), each patching a consistency condition it cannot express
		directly; and (c) the matching constraint of
		Remark~\ref{rem:matching-correction} would exclude directed-cycle classes
		and with them the cyclic factors $\Z_{2k+1}$.  All three are resolved by a
		single change of viewpoint, to which we now turn: the toolkit is retained
		as an initializer, while correctness is established by the exact core.
	\end{remark}
	
	\section{The $\Z$-Quotient Theorem: Smith Normal Form as the Exact Core}
	\label{sec:z-quotient}
	
	\begin{theorem}[$\Z$-cocycle condition]
		\label{thm:z-cocycle}
		Let $(\mathcal{P}, \omega)$ be an oriented partition with generator
		assignment $g_j \in \Gamma$.  The BFS labeling
		$\phi(v) = \sum_{e \in P(r,v)} \pm g_{\mathrm{class}(e)}$ (sign $+$ for
		forward traversal) is well defined --- independent of the path --- if and
		only if for every cycle of $G$, traversed in a fixed rotational sense, the
		signed sum of class generators vanishes; and it suffices to verify this on
		any cycle basis.
	\end{theorem}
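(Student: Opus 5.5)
The plan is to mirror the proof of Theorem~\ref{thm:cocycle}, with GF(2) parities replaced by signed integer sums in the abelian group $\Gamma$. First I make the labeling precise. For a walk $W=(w_0,w_1,\ldots,w_\ell)$ in $G$, define its weight as
\[
\sigma(W)=\sum_{i=1}^{\ell}\varepsilon_i\, g_{\mathrm{class}(w_{i-1}w_i)},
\]
where $\varepsilon_i=+1$ if the step $w_{i-1}\to w_i$ agrees with the chosen orientation $\omega$ of that edge, and $\varepsilon_i=-1$ otherwise. Two facts are immediate. Reversing a walk negates its weight. Concatenating walks adds their weights.

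For the forward direction, take two $r$--$v$ paths $P_1,P_2$. The closed walk $P_1\cdot P_2^{-1}$ has weight $\sigma(P_1)-\sigma(P_2)$, so it suffices to show that every closed walk has weight $0$ when every cycle does. I argue by induction on length.
\begin{enumerate}[(a)]
\item A closed walk with no repeated interior vertex is either a cycle, traversed in one of its two rotational senses (so its weight is $0$ or the negative of $0$), or a back-and-forth traversal of a single edge (weight $g-g=0$).
\item Otherwise the walk revisits some vertex. It then splits at that vertex into two shorter closed walks, whose weights add. Induction finishes this direction.
\end{enumerate}

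For the converse, suppose some cycle $C$ has nonzero signed sum. Pick a vertex $v$ on $C$ and a path $Q$ from $r$ to $v$. Then $Q$ followed by $C$ and $Q$ alone are two $r$--$v$ walks whose weights differ by $\sigma(C)\neq 0$. The statement is phrased for paths, so I need to check that walks suffice. The easiest route is to argue with the BFS-tree labeling: path-independence in the walk sense is equivalent to $\phi(y)-\phi(x)=\pm g$ along every edge. So I restate well-definedness as ``the label is independent of the walk'', exactly as Theorem~\ref{thm:cocycle} implicitly does, and note that edgewise consistency along the tree plus every non-tree edge is the real content.

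For the cycle-basis reduction, I would work over $\Z$ rather than $\F_2$, and this is the main obstacle. A cycle basis over $\F_2$ need not generate the integer cycle lattice in general. I will therefore use a fundamental cycle basis with respect to a spanning tree $T$. Each oriented cycle, viewed as a vector in the signed edge space $\Z^{E}$, equals the signed sum of the fundamental cycles of the non-tree edges it uses, because the difference has zero boundary and is supported on $T$, hence is zero. The signed weight $\sigma$ is a $\Z$-linear functional on $\Z^{E}$. Vanishing on the fundamental cycles therefore forces vanishing on every cycle.

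For a general basis, I must interpret ``cycle basis'' as a $\Z$-basis of the integral cycle lattice; any such basis (for example any weakly fundamental one) works by the same linearity. I would add a remark that an $\F_2$-basis that is not unimodular could miss torsion obstructions, so the statement should be read for integral bases. With the fundamental basis the proof is complete, and this is the basis that is used to build the signed cycle--class matrix.
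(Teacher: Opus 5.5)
Your argument is correct, and it is organized differently from the paper's. The paper proves the statement in one line by transplanting Theorem~\ref{thm:cocycle}: two $r$--$v$ paths differ by an element of the cycle space, signed sums are $\Z$-linear, and ``a cycle basis spans the cycle space over $\Z$''.

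\emph{Forward direction.} Your induction on closed walks (split at a repeated vertex until only cycles and back-and-forth edges remain) works directly in $\Gamma$. It even gives independence over all walks. It also avoids a fact the paper uses silently: that integer circulations such as $\chi_{P_1}-\chi_{P_2}$ are integer combinations of oriented cycles. The paper's linear viewpoint, in exchange, makes the reduction to a basis look immediate.

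\emph{Converse.} Here $Q$ followed by $C$ is a walk, not a path. Redefining well-definedness as walk-independence sidesteps the issue rather than settling it. The paper's version settles it. Take $Q$ to be a shortest path from $r$ to $V(C)$, ending at $w$, so that $Q$ meets $C$ only in $w$. For $v\neq w$ on $C$, append either arc of $C$ from $w$ to $v$. This gives two genuine $r$--$v$ paths whose weights differ by $\pm\sigma(C)\neq 0$.

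On the basis clause you are right and the paper is not. Suppose ``cycle basis'' has its Chapter~2 meaning, an $\F_2$-basis of the cycle space. Then the step ``a cycle basis spans the cycle space over $\Z$'' fails, and so does the clause itself.

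Here is a counterexample. Take $K_{3,3}$ with parts $\{1,2,3\}$ and $\{x,y,z\}$. The Hamiltonian cycles $1x2y3z$, $1x2z3y$, $1y2z3x$, $1z2y3x$ (read cyclically) form an $\F_2$-cycle basis. Relative to the fundamental basis of the tree $\{1x,1y,1z,2x,3x\}$, their coefficient matrix has determinant $\pm 3$, so they span an index-$3$ sublattice of the integer cycle lattice. Now set $\Gamma=\Z_3$, use singleton classes, and orient every edge from $\{1,2,3\}$ to $\{x,y,z\}$. Put generator $0$ on the tree edges and $g_{2y}=g_{3z}=2$, $g_{2z}=g_{3y}=1$. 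The signed sum vanishes on all four basis cycles but is $\pm 2\neq 0$ on the $4$-cycle $1x2y$.

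So your restriction to integral bases (fundamental or weakly fundamental) is necessary, not cosmetic. It is also what Theorem~\ref{thm:z-quotient} and Lemma~\ref{lem:z-join} tacitly need when they identify the row lattice of the signed matrix $A$ with the integer cycle lattice.
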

	
	\begin{proof}
		Identical to Theorem~\ref{thm:cocycle}, with signed sums: two $r$--$v$
		paths differ by an element of the cycle space, signed cycle sums are
		$\Z$-linear in the cycle, and a cycle basis spans the cycle space over
		$\Z$.
	\end{proof}
	
	\begin{definition}[Signed cycle--class matrix]
		For a cycle basis $B_1, \ldots, B_c$ and oriented partition with $t$
		classes, define $A \in \Z^{c \times t}$ by
		\[
		A[i, j] \;=\; \text{(net signed number of crossings of class $F_j$ by
			$B_i$)},
		\]
		i.e.\ forward traversals count $+1$ and backward traversals $-1$.
	\end{definition}
	
	\begin{theorem}[$\Z$-Quotient Theorem]
		\label{thm:z-quotient}
		Let $(\mathcal{P}, \omega)$ be an oriented partition of the connected graph
		$G$ with signed matrix $A$.  Then:
		\begin{enumerate}[(i)]
			\item The most generic consistent generator assignment takes values in the
			finitely generated abelian group
			\[
			\Gamma_{\mathrm{univ}} \;=\; \Z^{t} \,/\, \mathrm{rowlattice}(A),
			\]
			with $g_j$ the image of the $j$-th standard basis vector; every
			consistent assignment in any abelian group is a homomorphic image of it.
			\item The Smith Normal Form computes it: if $U A^{\!\top}\! W = 
			\mathrm{diag}(d_1, \ldots, d_\rho, 0, \ldots)$ with $U, W$ unimodular,
			then
			\[
			\Gamma_{\mathrm{univ}} \;\cong\; \Z_{d_1} \times \cdots \times
			\Z_{d_\rho} \times \Z^{\,t - \rho},
			\]
			(factors with $d_i = 1$ trivial), and the coordinates of $g_j$ are read
			off the $j$-th column of $U$, reduced modulo $d_i$ in the torsion
			coordinates.
			\item With $S = \{\pm g_j : g_j \neq 0\}$, every $G$-path of length $\ell$
			maps to a Cayley walk of length $\ell$, hence
			$d_{\Cay}(\phi(u), \phi(v)) \leq d_G(u, v)$ for all pairs: the only
			possible failure of isometry is a shortcut.
		\end{enumerate}
	\end{theorem}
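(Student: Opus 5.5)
The plan is to follow the proof of Theorem~\ref{thm:quotient-labeling} almost line by line, replacing GF(2) linear algebra by the structure theory of finitely generated abelian groups.

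For (i), consistency of an assignment $(g_1,\dots,g_t)$ in an abelian group $\Gamma$ means the following. By Theorem~\ref{thm:z-cocycle}, consistency holds exactly when for each basis cycle $B_i$ the signed sum $\sum_j A[i,j]\,g_j$ vanishes. Equivalently, the homomorphism $\Z^t\to\Gamma$ sending $e_j\mapsto g_j$ kills every row of $A$, hence kills $\mathrm{rowlattice}(A)$. By the universal property of the quotient, such homomorphisms correspond bijectively to homomorphisms $\Gamma_{\mathrm{univ}}=\Z^t/\mathrm{rowlattice}(A)\to\Gamma$. The assignment $g_j=\pi(e_j)$ is itself consistent, since $\pi$ kills the rows. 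Every other consistent assignment is therefore its image under a unique homomorphism, which is the ``most generic'' claim.

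For (ii), write $A^{\top}$ in Smith Normal Form, $UA^{\top}W=D$ with $D=\mathrm{diag}(d_1,\dots,d_\rho,0,\dots)$. The row lattice of $A$ is the column lattice of $A^{\top}$, and $W$ unimodular leaves that column lattice unchanged. Applying the automorphism $U$ of $\Z^t$ therefore carries it onto the column lattice of $D$, namely $d_1\Z\oplus\cdots\oplus d_\rho\Z\oplus 0^{t-\rho}$. It follows that $U$ induces an isomorphism $\Z^t/\mathrm{rowlattice}(A)\cong\bigoplus\Z_{d_i}\oplus\Z^{t-\rho}$. Under it, $e_j$ is sent to $Ue_j$, the $j$-th column of $U$, reduced mod $d_i$ in the torsion coordinates. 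Coordinates with $d_i=1$ are trivial and can be dropped.

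For (iii), I would argue as in Theorem~\ref{thm:quotient-labeling}(iii). Traversing an edge of class $F_j$ forward changes the label by $g_j$, and backward by $-g_j$, so consecutive labels differ by an element of $S\cup\{0\}$. A $G$-path of length $\ell$ thus yields a Cayley walk of length at most $\ell$, and minimizing over $u$--$v$ paths gives $d_{\Cay}\le d_G$.

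The only step needing care is in (ii): the bookkeeping of which side of $A^\top$ the unimodular matrices act on, and hence whether the generators are read from the columns of $U$ or of $U^{-1}$. I would check this directly from the lattice computation above rather than rely on convention.

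One caveat concerns the free part: when $t>\rho$ the host is infinite, so ``Cayley graph'' should be read for a finitely generated group. Statement (iii) still holds verbatim.
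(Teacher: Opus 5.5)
Your proposal is correct and follows the paper's proof. It uses the universal property of $\Z^{t}/\mathrm{rowlattice}(A)$ for (i), the unimodular change of basis $U$ carrying $\mathrm{rowlattice}(A)=\mathrm{col}(A^{\top})$ onto $d_1\Z\oplus\cdots\oplus d_\rho\Z\oplus 0$ for (ii) (your lattice computation already shows that the $g_j$ are the columns of $U$, as stated), and the edgewise $\pm g_j$ step for (iii).

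One hypothesis is left implicit by both you and the paper, inherited from Theorem~\ref{thm:z-cocycle}: the cycle basis must be a $\Z$-basis of the integer cycle lattice, as fundamental cycles are. For example, the three $4$-cycles of $K_4$ form a basis over $\mathbb{Q}$ of determinant $2$ whose integer span contains no triangle, and with that choice the assignment $g_j=\pi(e_j)$ in (i) is not consistent.
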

	
	\begin{proof}
		(i) The relations imposed by Theorem~\ref{thm:z-cocycle} are exactly
		$A \mathbf{g} = 0$; the universal abelian group generated by
		$g_1, \ldots, g_t$ subject to $\Z$-linear relations $R$ is
		$\Z^t / \langle R \rangle$, and any other solution defines a homomorphism
		$\Z^t \to \Gamma$ killing the row lattice, which factors through the
		quotient.  (ii) is the structure theorem for finitely generated abelian
		groups in its constructive Smith form \cite{Rudin1962,Terras1999}: the
		unimodular column record $U$ is a change of generator basis, after which
		the relation lattice is diagonal.  (iii) Consecutive BFS labels along an
		edge differ by $\pm g_{\mathrm{class}}$, an element of
		$S \cup \{0\}$; zero generators only shorten the walk.
	\end{proof}
	
	\begin{corollary}[The structural devices as computations]
		\label{cor:interception-snf}
		Under Theorem~\ref{thm:z-quotient}:
		(a) \emph{factor orders are computed, not assigned}: a class traversed $k$
		times with consistent sign by some cycle, and crossed evenly by all others,
		yields the relation $k g = 0$ and hence the torsion factor $\Z_k$ --- the
		``cycle factor'' and ``cycle promotion'' rules of
		Definition~\ref{def:skeleton-cofactor-ch3} are instances;
		(b) \emph{the interception principle is the SNF readout}: a generator whose
		$U$-column has nonzero entries in coordinates $i_1, \ldots, i_m$ is
		composite on exactly those dimensions;
		(c) \emph{Chapter~2 is the case $2\mathbf{g} = 0$}: appending the relations
		$2 g_j = 0$ for all $j$ reduces $A$ modulo $2$ and
		Theorem~\ref{thm:z-quotient} degenerates to
		Theorem~\ref{thm:quotient-labeling}.
	\end{corollary}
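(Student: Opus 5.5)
The plan is to treat the three parts of Corollary~\ref{cor:interception-snf} as separate, short consequences of Theorem~\ref{thm:z-quotient}(i)--(ii). Each one amounts to identifying a relation lattice and reading off the quotient.

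For (a), I would fix the class $F_j$ and the cycle $B_i$ that traverses it $k$ times with consistent sign. That row of $A$ then has entry $\pm k$ in column $j$. The phrase ``crossed evenly by all others'' is taken to mean that the remaining cycles of the basis contribute no net relation involving $g_j$ other than multiples of that row. The key step is to show that the row lattice, projected onto the $e_j$-direction modulo the relations among the other generators, is exactly $k\Z$. After a unimodular change of basis (a column operation that isolates $e_j$), the lattice splits off a summand $k\Z\cdot e_j$. Theorem~\ref{thm:z-quotient}(ii) then yields a diagonal invariant $k$, so $kg_j=0$ and $\Z_k$ is a torsion factor. The main point needing care is the hypothesis: I would state precisely that the other rows have zero net signed entry in column $j$, and that the remaining block is independent of $e_j$. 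Otherwise the invariant can merge with other torsion, and only the weaker statement ``$kg_j=0$ and the order of $g_j$ divides $k$'' survives. I expect this to be the main obstacle, and I would resolve it by phrasing (a) as: the relation $kg_j=0$ holds in $\Gamma_{\mathrm{univ}}$, and when the column decouples, the factor $\Z_k$ appears exactly.

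For (b), the argument is just unwinding the definition. By Theorem~\ref{thm:z-quotient}(ii), the image of $e_j$ in $\Z_{d_1}\times\cdots\times\Z^{t-\rho}$ is the $j$-th column of $U$, reduced modulo the $d_i$. So $g_j$ has nonzero coordinate exactly where that reduced column is nonzero. Hence the generator is composite on exactly those dimensions, which is the content of the interception principle.

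For (c), I would append the rows $2e_j$ for all $j$ to $A$. The quotient $\Z^t/(\mathrm{rowlattice}(A)+2\Z^t)$ equals $\F_2^t/\overline{\mathrm{rowspan}}(A \bmod 2)$. Since orientation signs vanish mod $2$, $A \bmod 2$ is the parity matrix of Chapter~2, and the quotient is $\Z_2^{t-\rho_2}$ with $g_j=\pi(e_j)$. This is precisely Theorem~\ref{thm:quotient-labeling}(i). The universality statement is inherited as well: any assignment with $2g_j=0$ factors through this quotient.
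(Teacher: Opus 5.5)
Your plan follows the paper's proof. The paper treats (a) and (b) as direct readings of the Smith data and proves (c) exactly as you do: appending the rows $2e_j$ turns the quotient into $\F_2^{\,t}/\mathrm{rowspan}_{\F_2}(A \bmod 2)$. Two of your remarks are correct refinements: signed and unsigned crossing counts agree modulo $2$, and the relevant rank is $\rho_2=\mathrm{rank}_{\F_2}(A \bmod 2)$ rather than the integer rank. Part (b) is fine as written.

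In (a), however, your extra care puts the decisive hypothesis on the wrong rows.

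\emph{The gap.} The row contributed by the witnessing cycle $B_i$ is $\pm k\,e_j + v$, where $v$ records the net crossings of $B_i$ with the \emph{other} classes. The relation it imposes is $\pm k\,g_j + \sum_{j'\neq j} v_{j'}\,g_{j'} = 0$, not $k g_j = 0$, and zeros elsewhere in column $j$ do not remove $v$.

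\emph{Counterexample.} Take the triangle with $a\to b$, $b\to c$ forming one class and $c\to a$ another. The only row is $(2,1)$, so ``crossed evenly by all others'' is vacuous. Yet $\Gamma_{\mathrm{univ}} = \Z^2/\langle(2,1)\rangle \cong \Z$ via $(x,y)\mapsto x-2y$, and $g_1 = 1$ has infinite order. So your fallback claim that ``$kg_j=0$ and the order of $g_j$ divides $k$'' survives in general is false. No unimodular change of basis can rescue it, since the order of the element $g_j$ is intrinsic.

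\emph{The missing hypothesis.} It is implicit in the corollary's cycle-factor setting, where $B_i$ is a directed-cycle component of $F_j$ itself: $B_i$ must have zero net crossing with every class other than $F_j$. More generally, it suffices that $v$ lies in the lattice spanned by the other rows.
\begin{itemize}
\item That alone gives $k e_j \in \mathrm{rowlattice}(A)$, hence $k g_j = 0$.
\item Adding your condition that column $j$ vanishes in every other row gives $\mathrm{rowlattice}(A) = k\Z\,e_j \oplus L'$, with $L'$ supported off coordinate $j$. Hence $\Gamma_{\mathrm{univ}} \cong \Z_k \oplus \bigl(\Z^{t-1}/L'\bigr)$ and $g_j$ has order exactly $k$, with no column operation needed.
\end{itemize}

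Read ``torsion factor $\Z_k$'' as this direct summand. In the divisibility-normalized Smith form, $k$ need not appear on the diagonal even after decoupling, because it merges with coprime torsion from $L'$. For example, $\Z_2 \oplus \Z_3 \cong \Z_6$ gives diagonal $(1,6)$.
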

	
	\begin{proof}
		(a) and (b) are direct readings of the SNF data.  For (c), adding the
		rows $2 e_j$ to the relation lattice makes every coordinate $2$-torsion;
		the quotient is $\F_2^{\,t}/\mathrm{rowspan}_{\F_2}(A \bmod 2)$, which is
		the GF(2) quotient of Chapter~2.
	\end{proof}
	
	\begin{lemma}[$\Z$-Join Lemma]
		\label{lem:z-join}
		For the all-singleton oriented partition,
		$\Gamma_{\mathrm{univ}} \cong \Z^{\,n-1}$ and the quotient labeling is
		isometric: $d_{\Cay} = d_G$ on all pairs (with the infinite host
		$\Cay(\Z^{n-1}, S \cup -S)$).
	\end{lemma}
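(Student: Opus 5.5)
The plan has two parts: identify the universal group, then prove isometry by mimicking the binary Join Lemma with $\Z$-chains in place of $\F_2$-chains.

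\emph{Identifying the host.} For the all-singleton partition we fix an orientation of every edge. The signed matrix $A$ is then the (cycle basis)$\times$(edge) signed incidence matrix. Its row lattice is the integral cycle lattice $\mathcal{Z}_\Z(G)\subseteq\Z^{E}$, because the fundamental cycles of a spanning tree $T$ form a $\Z$-basis of that lattice. Each fundamental cycle has coefficient $\pm1$ on its own non-tree edge and $0$ on the other non-tree edges, so the quotient $\Z^m/\mathcal{Z}_\Z(G)$ is free on the images of the $n-1$ tree edges. Hence $\Gamma_{\mathrm{univ}}\cong\Z^{n-1}$ with no torsion. Concretely, the labeling sends $v$ to the signed indicator vector of the tree path $T[r,v]$, and a non-tree edge receives the signed sum of the tree edges on its fundamental cycle, exactly as in Corollary~\ref{cor:naive-is-quotient}.

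\emph{Isometry.} By Theorem~\ref{thm:z-quotient}(iii) only $d_{\Cay}\ge d_G$ remains. Take a geodesic word $\pm g_{e_1},\dots,\pm g_{e_\ell}$ from $\phi(u)$ to $\phi(v)$. Lift it to an integer edge chain $x=\sum_i \pm\chi_{e_i}\in\Z^E$ with $\|x\|_1\le\ell$. Let $\chi_P$ be the signed chain of any oriented $u$--$v$ path $P$. Equality of images gives $x-\chi_P\in\mathcal{Z}_\Z(G)=\ker\partial$, where $\partial\colon\Z^E\to\Z^V$ is the signed boundary map. Therefore $\partial x=\partial\chi_P=v-u$.

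The remaining step is the integral analogue of the parity argument in Lemma~\ref{lem:join}: any integer $1$-chain with boundary $v-u$ has $\|x\|_1\ge d_G(u,v)$. I would prove this by flow decomposition. View $x$ as a flow that sends edge $e$ in the direction of its sign with multiplicity $|x_e|$. Its net divergence is $+1$ at $u$, $-1$ at $v$, and $0$ elsewhere. The standard decomposition of an integral flow into directed paths and directed cycles then contains at least one $u$$\to$$v$ walk among its pieces, and that walk has length $\ge d_G(u,v)$. Since the pieces' edge multiplicities sum to $\|x\|_1\le\ell$, we get $\ell\ge d_G(u,v)$.

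There is one subtlety in the lifting step. A word may reuse the same generator or its inverse, so $x$ can have $|x_e|>1$ or cancellations. The bound still holds because $\|x\|_1\le\ell$ by the triangle inequality. Zero generators do not occur, since every edge maps to a nonzero element of the free group $\Z^{n-1}$. With these checks, the main obstacle is making the flow-decomposition step precise. It is standard: repeatedly follow positive-flow edges starting from $u$, and because divergence vanishes away from $u$ and $v$, the walk can only terminate at $v$.
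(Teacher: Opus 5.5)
Your proposal is correct and follows the paper's proof. The quotient $\Z^E/\ker\partial$ is free of rank $n-1$: the paper reads it as the sum-zero image of $\partial$ with labels $\delta_v-\delta_r$, and you read it in tree-edge coordinates, which is the same object. The lower bound then comes, in both versions, from integral flow decomposition of the lifted chain with boundary $v-u$.

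Your bound $\|x\|_1\le\ell$ and your explicit walk-following argument are slightly more careful than the paper's wording. Like the paper, you tacitly assume the cycle basis defining $A$ is integral (e.g.\ fundamental); that assumption is what makes its row lattice equal all of $\ker\partial$ rather than a finite-index sublattice.
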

	
	\begin{proof}
		With one class per oriented edge, $\Z^t = \Z^{E}$ is the edge space, the
		row lattice of $A$ is the integer cycle lattice, and the quotient is the
		image of the boundary map $\partial\colon \Z^{E} \to \Z^{V}_0$ onto the
		sum-zero lattice, of rank $n-1$; labels become
		$\phi(v) = \delta_v - \delta_r$.  A word of length $\ell$ from $\phi(u)$ to
		$\phi(v)$ is an integer flow $f$ on the edges with divergence
		$\delta_v - \delta_u$ and $\ell_1$-norm $\ell$.  By integral flow
		decomposition, $f$ splits into a $u$--$v$ path and circulations, all
		sign-coherent with $f$, so
		$\ell = \lVert f \rVert_1 \geq \text{(path length)} \geq d_G(u, v)$.
		Combined with Theorem~\ref{thm:z-quotient}(iii), equality holds.
	\end{proof}
	
	\section{Compactification: Folding the Free Part}
	\label{sec:folding}
	
	$\Gamma_{\mathrm{univ}}$ may have free factors $\Z^{f}$; a finite host
	requires a further quotient by a finite-index sublattice
	$L \subseteq \Z^{f}$.  Folding is itself an instance of
	Theorem~\ref{thm:z-quotient}: appending the rows of $L$ (pulled back
	through $U$ to class coordinates) to $A$ and recomputing the SNF yields the
	finite group $\Gamma_L$ and its labels.  Quotients never stretch, so
	folding preserves $d_{\Cay} \leq d_G$ and can only introduce
	\emph{wraparound shortcuts}.
	
	\begin{theorem}[Sufficient fold]
		\label{thm:sufficient-fold}
		Let $R_i$ be the range of free coordinate $i$ over the vertex labels and
		$\gamma_i = \max_j \lvert (g_j)_i \rvert$.  The diagonal fold
		$L = N_1 \Z \times \cdots \times N_f \Z$ is isometric whenever
		\[
		N_i \;>\; R_i + \diam(G)\,\gamma_i \qquad \text{for every } i .
		\]
	\end{theorem}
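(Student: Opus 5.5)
The plan is a lifting argument: any short word in the folded host lifts to a word of the same length in $\Gamma_{\mathrm{univ}}$, and the inequality on $N_i$ forces that lift to land on the correct label. I read the statement with the implicit hypothesis that the unfolded labeling $\phi\colon V(G)\to\Gamma_{\mathrm{univ}}$, with $S=\{\pm g_j\}$, is already isometric. This is the situation where folding is applied, for instance after Lemma~\ref{lem:z-join} or a successful round of the exact core. If $\phi$ has a shortcut, no fold can repair it, so the claim is that folding creates no new shortcuts.

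\emph{Step 1: setup.} Work in the SNF coordinates of Theorem~\ref{thm:z-quotient}(ii), so that $\Gamma_{\mathrm{univ}} = T\times\Z^{f}$ with $T$ the torsion part. The fold map $\pi_L\colon\Gamma_{\mathrm{univ}}\to\Gamma_L = T\times\prod_i \Z_{N_i}$ reduces each free coordinate $i$ modulo $N_i$ and leaves $T$ alone. Its kernel is exactly $\{0\}\times L$ with $L = N_1\Z\times\cdots\times N_f\Z$. Folded labels are $\pi_L\circ\phi$, and the folded generators are $\pi_L(\pm g_j)$. Since $\pi_L$ is a homomorphism mapping $S$ onto the new generating set, $d_{\Gamma_L}\le d_G$ holds as in Theorem~\ref{thm:z-quotient}(iii). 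It therefore remains to exclude wraparound shortcuts.

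\emph{Step 2: lifting a geodesic.} Fix $u,v$ and a geodesic word $\pm\bar g_{j_1},\ldots,\pm\bar g_{j_\ell}$ in $\Cay(\Gamma_L,\bar S)$ from $\pi_L\phi(u)$ to $\pi_L\phi(v)$. By Step 1 its length satisfies $\ell\le d_G(u,v)\le\diam(G)$. Lift it to the word with the same signed generators $\pm g_{j_1},\ldots,\pm g_{j_\ell}$ in $\Gamma_{\mathrm{univ}}$, starting at $\phi(u)$. Its endpoint $w$ satisfies $\pi_L(w)=\pi_L\phi(v)$, so
\[
w-\phi(v) = (0;\,N_1\lambda_1,\ldots,N_f\lambda_f)
\]
for some integers $\lambda_i$.

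\emph{Step 3: the bound forces $\lambda_i=0$.} In free coordinate $i$, each lifted step changes the coordinate by at most $\gamma_i$, so $|w_i-\phi(u)_i|\le \ell\gamma_i\le\diam(G)\,\gamma_i$. Both labels lie in the range of coordinate $i$, so $|\phi(v)_i-\phi(u)_i|\le R_i$. The triangle inequality then gives
\[
|N_i\lambda_i| \le R_i+\diam(G)\,\gamma_i < N_i ,
\]
so $\lambda_i=0$ for every $i$ and $w=\phi(v)$. The lifted word is therefore a walk of length $\ell$ from $\phi(u)$ to $\phi(v)$ in $\Cay(\Gamma_{\mathrm{univ}},S)$. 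By isometry of $\phi$ this gives $\ell\ge d_G(u,v)$, so $d_{\Gamma_L}=d_G$.

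The main obstacle is Step 1. One must check that ``appending the rows of $L$ pulled back through $U$ and recomputing the SNF'' produces exactly the coordinatewise reduction modulo $N_i$ in the original SNF frame. Only then are $R_i$ and $\gamma_i$, which are measured in that frame, the right quantities, and only then is the kernel of the fold exactly $L$ with no additional torsion interaction. I would establish this first by noting that the pulled-back rows are $U$-preimages of the vectors $N_i e_i$. In the $U$-frame the augmented relation module is then $\mathrm{diag}(d_1,\ldots,d_\rho)\oplus\mathrm{diag}(N_1,\ldots,N_f)$, so the recomputed quotient is canonically $T\times\prod_i\Z_{N_i}$ with labels reduced coordinatewise.
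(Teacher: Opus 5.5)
Your proof is correct and follows the same lifting argument as the paper. You lift a short folded word to $\Gamma_{\mathrm{univ}}$, bound the displacement in each free coordinate by $\diam(G)\,\gamma_i$ and the label difference by $R_i$, conclude that the lattice offset vanishes, and then invoke isometry of the unfolded labeling; the paper uses that hypothesis tacitly, via Lemma~\ref{lem:z-join} ``or the unfolded check''. Your explicit statement of that hypothesis, and your check that the SNF-recomputed fold coincides with coordinatewise reduction modulo $N_i$, are useful clarifications that the paper leaves implicit.
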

	
	\begin{proof}
		A shortcut in the folded group lifts to a word in
		$\Gamma_{\mathrm{univ}}$ of length $\ell \leq \diam(G)$ joining
		$\phi(u)$ to $\phi(v) + \lambda$ for some nonzero $\lambda \in L$.  In
		coordinate $i$, the word displaces at most $\ell \gamma_i \leq
		\diam(G)\,\gamma_i$, while reaching a nonzero multiple of $N_i$ offset by a
		label difference requires displacement at least $N_i - R_i$.  The
		hypothesis makes this impossible, so every short word lifts to
		$\lambda = 0$, where Lemma~\ref{lem:z-join} (or the unfolded check)
		applies.
	\end{proof}
	
	\begin{theorem}[Sublattice compactification; diagonal folds do not suffice]
		\label{thm:sublattice}
		The isometric finite quotients of a given universal embedding are exactly
		the finite-index sublattices $L \subseteq \Z^{f}$ whose folds pass the
		exact check, and the minimal host over them can be found by enumerating
		Hermite-normal-form bases in increasing index, each candidate verified by
		breadth-first search in the finite Cayley graph.  Restricting to diagonal
		$L$ can miss the optimum: for the diamond graph the universal embedding has
		$f = 2$ with labels $(0,0), (1,0), (1,1), (2,1)$, every diagonal fold of
		index $6$ fails (e.g.\ $N = (3,2)$ creates the wraparound shortcut
		$-g_2 \equiv (2,1)$), yet the non-diagonal sublattice
		$L = \langle (3,0), (1,2) \rangle$ of index $6$ yields the isometric host
		$\Z_2 \times \Z_3$ of order $6$.
	\end{theorem}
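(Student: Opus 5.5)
The proof splits into a characterization part and a computational part for the diamond.

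\emph{Characterization.} Fix the universal embedding of Theorem~\ref{thm:z-quotient} with free part $\Z^{f}$. Any finite host obtained from it is a further quotient $\Z^{f}/L$ with $[\Z^f:L]<\infty$, because finiteness forces finite index. As explained at the start of Section~\ref{sec:folding}, appending the rows of $L$ to $A$ and recomputing the SNF is again an instance of Theorem~\ref{thm:z-quotient}. Part (iii) of that theorem gives $d_{\Cay}\le d_G$, so such a fold is isometric exactly when the exact check (the Chapter~2 isometry test, run on the finite Cayley graph) finds no shortcut. This yields the equivalence in the first sentence.

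\emph{Enumeration.} The host order equals the index $[\Z^f:L]$. Every finite-index sublattice has a unique Hermite-normal-form basis, and there are finitely many such bases of each index. So I would enumerate indices $N=1,2,\dots$, list all HNF bases of index $N$, and run the truncated BFS check on each. The first $N$ at which some candidate passes is the minimal host order. Termination is guaranteed by Theorem~\ref{thm:sufficient-fold}, which exhibits an isometric diagonal fold of finite index.

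\emph{Diamond example.} The labels $(0,0),(1,0),(1,1),(2,1)$ for $a,b,c,d$ (with $a,d$ the nonadjacent pair) come from the generators $g_1=(1,0)$, $g_2=(0,1)$ and $g_3=(1,1)$. For diagonal folds of index $6$, the candidates are $N\in\{(1,6),(6,1),(2,3),(3,2)\}$. For each I would exhibit an explicit wraparound shortcut.
\begin{itemize}
\item $N=(1,6)$: the first coordinate vanishes, so $\phi(a)=\phi(b)$ and injectivity fails.
\item $N=(2,3)$: $\phi(d)-\phi(a)=(0,1)=g_2\in S$, so $d_{\Cay}(a,d)=1<2$.
\item $N=(3,2)$: $(2,1)\equiv(-1,1)=-g_1+g_2$, and $(-1,1)\equiv(2,1)\equiv -g_2\in S$ as stated, so again $d_{\Cay}(a,d)=1$.
\item $N=(6,1)$: the second coordinate vanishes, so $g_2\equiv 0$ and $\phi(b)=\phi(c)$.
\end{itemize}

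For the non-diagonal lattice $L=\langle(3,0),(1,2)\rangle$, the index is $|\det|=6$. Consider the homomorphism $h(x,y)=2x+5y \bmod 6$. It is surjective, and it kills both basis vectors: $h(3,0)=6\equiv 0$ and $h(1,2)=12\equiv 0$. Since the index of $L$ equals the order of the image, $\ker h=L$, so $\Z^2/L\cong\Z_6\cong\Z_2\times\Z_3$.

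Under $h$ the generators map to $2$, $5$ and $1$, giving $S=\{1,2,4,5\}$. The labels map to $a\mapsto 0$, $b\mapsto 2$, $c\mapsto 1$, $d\mapsto 3$. Every $G$-adjacent pair has label difference in $S$. The one nonadjacent pair $a,d$ has difference $3\notin S$, and $3=1+2$ is a word of length $2$, so $d_{\Cay}(a,d)=2=d_G(a,d)$. Hence this fold is isometric.

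The main obstacle is the general claim that HNF enumeration finds the optimum, but that reduces to the bijection between sublattices and HNF bases together with the finite-index bound from Theorem~\ref{thm:sufficient-fold}. Beyond that, the only real work is the case checking, which I will do carefully.
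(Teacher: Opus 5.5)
Your route is the paper's: the characterization is essentially definitional (folds of the free part by a finite-index $L$, with Theorem~\ref{thm:z-quotient}(iii) reducing isometry to the absence of shortcuts), Hermite normal forms list the candidates by index without repetition, and the diamond facts are finite checks. Your map $h(x,y)=2x+5y \bmod 6$ is a tidy way to identify $\Z^2/L$. The labels $0,2,1,3$ it gives are exactly the paper's $(0,0),(0,2),(1,1),(1,0)$ under $\Z_6\cong\Z_2\times\Z_3$. Three points need repair, though.

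\emph{The case $N=(3,2)$ is wrong as written.} In your notation $g_2=(0,1)$, so $-g_2\equiv(0,1)\not\equiv(2,1)$ in $\Z_3\times\Z_2$. The length-one word is $-g_3=(-1,-1)\equiv(2,1)$; the paper's $g_2$ is your $g_3=(1,1)$. The conclusion $d_{\Cay}(a,d)=1$ survives.

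\emph{Index-$6$ checks alone do not show that diagonal folds miss the optimum.} If some fold of index at most $5$ were isometric, nothing would be missed, so you must rule these out (the paper's proof also leaves this implicit).
Index $\le 3$ breaks injectivity.
At index $4$ the labeling is a bijection onto the host, so the argument of Theorem~\ref{thm:equality-at-n} would make the diamond an abelian Cayley graph, which is impossible because it is not regular.
At index $5$ the host is $\Z_5$, whose Cayley graphs are edgeless, $C_5$ (triangle-free) or $K_5$ (diameter $1$); none contains the diamond isometrically.
Then $6$ is optimal among folds, and the best diagonal fold is $N=(4,2)$ of order $8$, matching the table.

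\emph{Your termination argument via Theorem~\ref{thm:sufficient-fold} presupposes that the unfolded universal labeling is already isometric.} That theorem's proof closes by appealing to the unfolded check. If $\Gamma_{\mathrm{univ}}$ itself has a shortcut, every fold inherits it, because a short word maps to a short word. Then no candidate ever passes and the enumeration never stops. State this hypothesis; it holds for the diamond, since $(2,1)$ is not a generator and $(2,1)=(1,0)+(1,1)$ takes two steps. Relatedly, the host order is $|T|\cdot[\Z^f:L]$, with $T$ the torsion part of $\Gamma_{\mathrm{univ}}$, not the index itself. Ordering by index is still correct because $|T|$ is fixed.
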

	
	\begin{proof}
		Any finite abelian quotient of $\Gamma_{\mathrm{univ}}$ that is injective
		and distance-preserving on the labels arises from a finite-index sublattice
		of the free part (the torsion part admits no further quotient without
		collapsing labels); HNF bases enumerate sublattices by index without
		repetition, and the per-candidate check is exact.  The diamond computations
		are verified directly: the $(3,2)$ shortcut and the
		$\langle(3,0),(1,2)\rangle$ embedding (labels $(0,0), (0,2), (1,1), (1,0)$
		in $\Z_2 \times \Z_3$) are both finite checks, performed in the companion
		implementation and reproducible by hand.
	\end{proof}
	
	\begin{remark}
		The implementation enumerates general HNF sublattices for $f \leq 2$ and
		diagonal sublattices for $f \geq 3$ (a documented limitation); the diamond
		example above is precisely the witness that the general enumeration is not
		a luxury.
	\end{remark}
	
	\section{The Algorithm}
	\label{sec:algorithm-ch3}
	
	\begin{algorithm}[H]
		\caption{Compact Abelian Embedding (portfolio + exact core + repair)}
		\label{alg:abelian-ch3}
		\begin{algorithmic}[1]
			\Require connected graph $G$
			\Ensure certified isometric $\phi\colon V \to \Cay(\Gamma, S)$,
			$\Gamma = \prod \Z_{N_i}$
			\State compute distances $D$, $\diam(G)$
			\State \textbf{initializer portfolio:} cycle/path/complete constructors
			when applicable; $\Phi$ 4-cycle union-find; $\Psi$ chain merge filtered by
			the oriented $\Phi$-test (Prop.~\ref{prop:Phi-necessary})
			\For{each initial oriented partition $\mathcal{P}$}
			\While{\textbf{true}}
			\For{each class-polarity variant of $\mathcal{P}$
				(Thm.~\ref{thm:z-quotient} is polarity-sensitive)}
			\State SNF quotient $\to \Gamma_{\mathrm{univ}}$, generators, labels
			\State sublattice fold search in increasing host order
			(Thm.~\ref{thm:sublattice}), each candidate checked exactly
			\State \textbf{if} verified \textbf{then} record $(\Gamma, S, \phi)$
			and break
			\EndFor
			\State \textbf{if} verified \textbf{then} \textbf{break}
			\State \textbf{repair:} peel one edge off a largest class into a new
			singleton; \textbf{if} all classes singleton \textbf{then break}
			\EndWhile
			\EndFor
			\State \textbf{binary terminal:} run Chapter~2's algorithm
			(Thm.~\ref{thm:universal}); always succeeds with host $\leq 2^{n-1}$
			\State \Return the smallest recorded verified host
		\end{algorithmic}
	\end{algorithm}
	
	\begin{theorem}[Universality and complexity]
		\label{thm:universal-ch3}
		Algorithm~\ref{alg:abelian-ch3} terminates on every connected graph and
		returns a certified isometric embedding with
		$\lvert \Gamma \rvert \leq 2^{\,n-1}$.  With $P$ the portfolio size, $R$
		the rounds per initializer, $V_{\mathrm{pol}}$ the polarity variants
		($\leq 2^{t-1}$, capped), $H$ the fold candidates examined, and
		$\lvert \Gamma \rvert$ the largest host checked, the running time is
		\[
		O\Bigl(n(n+m) + m^2 \;+\; P\,R\,V_{\mathrm{pol}}\,
		\bigl(t^2 c\,\beta + H\,\lvert \Gamma \rvert\, m + H n^2\bigr)\Bigr),
		\]
		where $t^2 c\, \beta$ bounds the integer SNF ($\beta$ the bit-size of the
		intermediate entries).
	\end{theorem}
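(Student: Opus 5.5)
The proof has two parts: termination with correctness, which reduces to results already established, and a running-time bound obtained by charging costs to the nested loops of Algorithm~\ref{alg:abelian-ch3}.

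\emph{Termination and correctness.} Each initializer in the portfolio is a finite computation and produces a finite list of initial oriented partitions. For a fixed $\mathcal{P}$ the inner \textbf{while} loop either breaks on verification or peels one edge into a new singleton, so the number of classes strictly increases. After at most $m - t + 1$ rounds we reach the all-singleton partition and break. The polarity loop is capped at $V_{\mathrm{pol}}$ variants. The fold search for each variant is finite. For $f\le 2$ it enumerates HNF bases up to a bounded index, and Theorem~\ref{thm:sufficient-fold} provides an explicit isometric diagonal fold of finite index, so the search can be truncated there. For $f\ge3$ it enumerates diagonal folds with $N_i$ below the same threshold.

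Every record is made only after the exact BFS check, so every recorded host is certified isometric. The final \textbf{binary terminal} step calls Chapter~2's algorithm, which by Theorem~\ref{thm:universal} always returns a verified embedding into $\Z_2^k$ with $k\le n-1$. The recorded set is therefore nonempty, and the smallest recorded host satisfies $|\Gamma|\le 2^{n-1}$. This gives the universality claim without having to prove that the SNF branch ever succeeds.

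\emph{Complexity.} The preprocessing costs $O(n(n+m))$ for all-pairs distances by $n$ BFS runs. Building the $\Phi$/$\Psi$ candidate pairs and running the union-find or chain merge costs $O(m^2)$, with each pair tested in $O(1)$ from the distance table. The innermost body runs at most $P\cdot R\cdot V_{\mathrm{pol}}$ times, and each execution does three things:
\begin{enumerate}[(a)]
\item It builds the $c\times t$ signed matrix $A$ and computes its SNF. Integer elimination costs $O(t^2 c)$ arithmetic operations on entries of bit-size $\beta$, hence $t^2c\beta$. Reading off the labels by BFS is $O(n+m)$, which is absorbed.
\item It examines $H$ fold candidates. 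Each is checked by one BFS from $0$ in $\Cay(\Gamma,S)$, using vertex-transitivity exactly as in Theorem~\ref{thm:time-complexity}. That BFS touches at most $|\Gamma|$ states with $|S|\le 2t\le 2m$ successors each, so it costs $O(|\Gamma|m)$.
\item It performs the $\binom n2$ label-pair lookups, $O(n^2)$ per candidate.
\end{enumerate}
Summing gives the displayed bound. The binary terminal costs $O(n(n+m)+m^2+R'(2^k m+n^2))$, which is dominated by (or of the same shape as) the bracketed term with $|\Gamma|\le 2^{n-1}$. I would state this absorption explicitly, or else note that the terminal term is counted as one additional portfolio member.

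\emph{Main obstacle.} The delicate point is making the fold search provably finite and bounded by $H$ without assuming success. I would handle it by capping the enumeration at the index $\prod_i N_i$ from Theorem~\ref{thm:sufficient-fold}. This guarantees that at least one candidate per variant verifies, provided the unfolded universal labeling is itself isometric. If it is not isometric, the search exhausts the cap and the algorithm falls through to repair. Either way the loop terminates, and $H$ is simply defined as the number of candidates examined.
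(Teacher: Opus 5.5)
Your proposal is correct and takes essentially the same route as the paper. Termination comes from strict refinement of the partition together with the always-successful binary terminal of Theorem~\ref{thm:universal}, certification from the exact BFS check, and the running time from the accounting of Theorem~\ref{thm:time-complexity} with the SNF replacing GF(2) elimination and the fold loop multiplying the check. Your explicit cap on the fold search via Theorem~\ref{thm:sufficient-fold}, together with the observation that a non-isometric unfolded labeling simply exhausts that cap, supplies a finiteness argument that the paper's proof leaves implicit.
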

	
	\begin{proof}
		Termination: each repair round strictly refines the partition, so each
		initializer runs at most $m$ rounds; the binary terminal terminates by
		Theorem~\ref{thm:universal} and certifies host $\leq 2^{n-1}$.  The
		returned embedding passed the exact check, which compares all
		$\binom{n}{2}$ distances against a truncated BFS of the candidate Cayley
		graph, so certification is unconditional.  The cost terms follow from the
		proof of Theorem~\ref{thm:time-complexity} with the SNF replacing GF(2)
		elimination and the fold loop multiplying the check.
	\end{proof}
	
	\section{Bounds on the Host Order}
	\label{sec:bounds-ch3}
	
	The natural currency is now the host \emph{order}
	$\lvert \Gamma \rvert$, and write
	$\nu(G) = \min \lvert \Gamma \rvert$ over all isometric embeddings of $G$
	into Cayley graphs of finite abelian groups.
	
	\begin{lemma}[Diameter of vertex-transitive graphs]
		\label{lem:vt-diam}
		Every connected vertex-transitive graph $H$ on $N \geq 3$ vertices
		satisfies $\diam(H) \leq \lfloor N/2 \rfloor$.
	\end{lemma}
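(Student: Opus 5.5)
The plan is a layer-counting argument around a single vertex, fed by one structural fact: vertex-transitive graphs have no cut vertices. Fix a vertex $v$ of $H$ and let $D$ be its eccentricity. By vertex-transitivity every vertex has the same eccentricity, so $D = \diam(H)$. Consider the distance layers $L_i = \{w : d(v,w) = i\}$ for $0 \leq i \leq D$. These partition $V(H)$, each is nonempty, and $|L_0| = 1$, $|L_D| \geq 1$. The goal is to show $|L_i| \geq 2$ for every $1 \leq i \leq D-1$. Then
\[
N \;\geq\; 1 + 2(D-1) + 1 \;=\; 2D ,
\]
so $D \leq N/2$, and hence $D \leq \lfloor N/2 \rfloor$ since $D$ is an integer.

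\emph{Step 1: $H$ is $2$-connected.} Since $H$ is connected with $N \geq 3$ vertices, it suffices to show it has no cut vertex. Suppose it had one. Automorphisms map cut vertices to cut vertices, so by transitivity every vertex would be a cut vertex. Now look at the block--cut-vertex tree of $H$; because a cut vertex exists, it has at least two blocks. A leaf block $B$ contains exactly one cut vertex of $H$. Since $H$ is connected on at least $3$ vertices, every block has at least $2$ vertices, so $B$ contains a vertex that is not a cut vertex. This contradicts the previous sentence.

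\emph{Step 2: the middle layers have size at least $2$.} Fix $1 \leq i \leq D-1$. Every path from $v$ to a vertex of $L_D$ passes through $L_i$, because distance from $v$ changes by at most $1$ along an edge. So $L_i$ separates $v$ from the nonempty set $L_D$, and neither $v$ nor any vertex of $L_D$ lies in $L_i$. If $|L_i| = 1$, its single vertex would be a cut vertex, contradicting Step 1. Hence $|L_i| \geq 2$, and the count above finishes the proof.

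The main obstacle is Step 1, the absence of cut vertices. It is where vertex-transitivity is genuinely used beyond equal eccentricities, and the leaf-block argument must handle the small cases carefully. The hypothesis $N \geq 3$ is exactly what excludes $K_2$, where the lone "block" has no interior. As a sanity check, the cycle $C_N$ attains the bound with equality, consistent with the count above.
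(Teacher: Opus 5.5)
Your proof is correct. It shares the paper's key ingredient: a connected vertex-transitive graph has no cut vertex. From there it reaches the diameter bound by a different route.

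\textbf{The paper's route.} It cites the no-cut-vertex property to Babai and upgrades it to $2$-connectivity. It then uses the fact (Whitney/Menger) that any two vertices lie on a common cycle. That cycle has length at most $N$, so one of its two arcs between the endpoints has length at most $\lfloor N/2 \rfloor$.

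\textbf{Your route.} You prove the no-cut-vertex property yourself; the leaf-block argument is fine, and a shorter version is that a leaf of any spanning tree is never a cut vertex. You then replace Whitney/Menger with a BFS count: a middle distance layer of size one would be a cut vertex separating $v$ from $L_D$, so $N \geq 2D$.

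\textbf{What each buys.} The paper's proof is shorter, given the two cited theorems. Yours is self-contained, needs only an elementary separation observation rather than disjoint path systems, and proves the bound for every connected graph without a cut vertex.

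\textbf{Two small corrections to your commentary.} First, the hypothesis $N \geq 3$ plays no role in your argument: $K_2$ has no cut vertex and satisfies $\diam(K_2) = 1 = \lfloor 2/2 \rfloor$. $N \geq 3$ is needed only for the ``$2$-connected'' phrasing and for Whitney's theorem in the paper's route, not for your Step~2. Second, vertex-transitivity is genuinely used only in Step~1; to get $D = \diam(H)$ you can simply take $v$ to be an endpoint of a diametral pair.
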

	
	\begin{proof}
		Connected vertex-transitive graphs have no cut vertex
		\cite{Babai1996}, hence are $2$-connected, so any two vertices lie on a
		common cycle, i.e.\ are joined by two internally disjoint paths whose
		lengths sum to at most $N$; the shorter has length at most
		$\lfloor N/2 \rfloor$.
	\end{proof}
	
	\begin{theorem}[Lower bound on the host order]
		\label{thm:order-lower-bound}
		For every connected graph $G$ on $n \geq 3$ vertices,
		\[
		\nu(G) \;\geq\; \max\bigl(\, n,\ 2\,\diam(G) \,\bigr).
		\]
	\end{theorem}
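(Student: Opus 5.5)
The bound $\nu(G) \ge \max(n, 2\diam(G))$ splits into two independent inequalities, and I would prove each for an arbitrary isometric embedding $\phi\colon V(G)\to\Cay(\Gamma,S)$ with $\Gamma$ finite abelian. Once both hold for every such embedding, taking the minimum over embeddings gives the statement.

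\textbf{The term $n$.} This is the injectivity argument already used in Theorem~\ref{thm:lower-bound}. An isometric map sends distinct vertices to points at positive distance, so $\phi$ is injective and $|\Gamma|\ge n$.

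\textbf{The term $2\diam(G)$.} Here I would apply Lemma~\ref{lem:vt-diam} to the host. First I need the host to be connected. If $S$ does not generate $\Gamma$, I restrict to the subgroup $\langle S\rangle$. The image of $\phi$ spans a connected piece of the host, since adjacent vertices of $G$ map to host vertices at distance $1$. Translating so that $\phi(r)=0$, the whole image lies in $\langle S\rangle$. The Cayley graph $H=\Cay(\langle S\rangle,S)$ is then connected and vertex-transitive, and it has order $N\le|\Gamma|$.

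Next I pick $u,v$ with $d_G(u,v)=\diam(G)$. Isometry gives $\diam(H)\ge d_H(\phi(u),\phi(v))=\diam(G)$. When $N\ge3$, Lemma~\ref{lem:vt-diam} gives $\diam(H)\le\lfloor N/2\rfloor$, so $N\ge 2\lfloor N/2\rfloor\ge 2\diam(G)$, and therefore $|\Gamma|\ge N\ge 2\diam(G)$.

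\textbf{Small cases.} If $N\le 2$, injectivity gives $n\le N\le 2$, which contradicts the hypothesis $n\ge3$. So the case $N\ge3$ of the lemma always applies.

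\textbf{Main obstacle.} The only delicate point is that Lemma~\ref{lem:vt-diam} is stated for connected vertex-transitive graphs, and a Cayley graph whose $S$ need not generate is not automatically connected. The reduction to $\langle S\rangle$ above handles this. It works because a translated Cayley coset is isomorphic to $\Cay(\langle S\rangle,S)$, and host distances inside the coset agree with the distances in $H$.
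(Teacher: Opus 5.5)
Your proof is correct and follows the paper's argument: injectivity gives the $n$ term, and Lemma~\ref{lem:vt-diam} applied to the vertex-transitive host, which must realize the distance $\diam(G)$, gives the $2\,\diam(G)$ term. Your reduction to $\langle S\rangle$ is harmless but not needed, since the paper's definition of $\Cay(\Gamma,S)$ already requires $S$ to generate $\Gamma$; your explicit check that $N\ge 3$ (from $n\ge 3$ and injectivity) fills in a step the paper leaves implicit.
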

	
	\begin{proof}
		Injectivity gives $\lvert \Gamma \rvert \geq n$.  The host Cayley graph is
		vertex-transitive and must realize a distance equal to $\diam(G)$, so by
		Lemma~\ref{lem:vt-diam}, $\diam(G) \leq \lfloor \lvert\Gamma\rvert/2
		\rfloor$.
	\end{proof}
	
	\begin{theorem}[Equality at $n$]
		\label{thm:equality-at-n}
		$\nu(G) = n$ if and only if $G$ is itself a Cayley graph of an abelian
		group.
	\end{theorem}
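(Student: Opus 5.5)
The plan is to prove the two implications separately. Throughout, Cayley graphs are taken on finite abelian groups, as in the definition of $\nu(G)$.

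\emph{($\Leftarrow$).} Suppose $G \cong \Cay(\Gamma, S)$ for a finite abelian $\Gamma$ and a symmetric generating set $S$. Then the isomorphism itself is a map $V(G) \to \Gamma$ that preserves adjacency in both directions. It therefore preserves shortest-path distances and is an isometric embedding into $\Cay(\Gamma,S)$ with $|\Gamma| = n$. Combined with the injectivity bound $\nu(G) \geq n$ from Theorem~\ref{thm:order-lower-bound}, this gives $\nu(G) = n$. For $n \leq 2$ the bound $\nu \geq n$ is trivial from injectivity alone, so the restriction $n\ge 3$ in that theorem causes no problem.

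\emph{($\Rightarrow$).} Suppose $\phi\colon V(G) \to \Cay(\Gamma,S)$ is isometric with $|\Gamma| = n$.
\begin{enumerate}[(1)]
\item Isometric maps are injective, and $|V(G)| = |\Gamma|$, so $\phi$ is a bijection.
\item Restricted to distance $1$, isometry says $uv \in E(G)$ if and only if $d_{\Cay}(\phi(u),\phi(v)) = 1$, that is, if and only if $\phi(v)-\phi(u) \in S$.
\item Hence $\phi$ is a graph isomorphism $G \cong \Cay(\Gamma,S)$.
\end{enumerate}
Here the isometry hypothesis does the real work: a merely distance-nonincreasing bijection would not suffice. A subtlety is that the statement says ``a Cayley graph of an abelian group'' without requiring $S$ to equal the set of edge differences actually used by $G$. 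Because $\phi$ is onto, every host edge $\{g, g+s\}$ is the image of a $G$-edge, so no generator is left unused and no adjustment of $S$ is needed.

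The main obstacle is bookkeeping rather than depth. One must make sure that the host in the definition of $\nu(G)$ is allowed to have $S$ arbitrary, with only $S = -S$, $0 \notin S$ and $S$ generating $\Gamma$ required. Then the reverse direction is a literal identity map, and the forward direction yields a genuine Cayley graph: $S$ generates $\Gamma$ because $G$ is connected and $\phi$ is onto. I would state these conventions explicitly, then give the two short arguments above.
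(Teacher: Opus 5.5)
Your proposal is correct and follows essentially the same route as the paper. For ($\Leftarrow$) you use the identity/isomorphism embedding together with the injectivity bound, and for ($\Rightarrow$) you use bijectivity plus isometry at distance $1$ to obtain an isomorphism $G \cong \Cay(\Gamma,S)$. Your remark that $\nu(G)\geq n$ follows from injectivity alone, without the $n\geq 3$ hypothesis of Theorem~\ref{thm:order-lower-bound}, is a small but genuine tidying of the paper's citation, which silently leaves the cases $n\leq 2$ uncovered.
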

	
	\begin{proof}
		If $G = \Cay(\Gamma, S)$ the identity embedding gives $\nu(G) \leq n$, and
		Theorem~\ref{thm:order-lower-bound} gives equality.  Conversely, suppose
		$\phi$ is isometric with $\lvert \Gamma \rvert = n$; then $\phi$ is a
		bijection onto $\Gamma$.  Every host edge $\{\phi(u), \phi(u) + s\}$ joins
		two vertex images at Cayley distance $1$, hence at $G$-distance $1$, so it
		is the image of a $G$-edge; and every $G$-edge maps to a host edge by
		isometry.  Thus $\phi$ is an isomorphism $G \cong \Cay(\Gamma, S)$.
	\end{proof}
	
	\begin{corollary}[Three exactly solved families, and Petersen]
		\label{cor:exact-families}
		\leavevmode
		\begin{enumerate}[(i)]
			\item Cycles: $\nu(C_m) = m$ for all $m \geq 3$ --- in particular the odd
			cycles, which required host $2^{m-1}$ in Chapter~2, collapse to host $m$.
			\item Complete graphs: $\nu(K_n) = n$, via
			$K_n = \Cay(\Z_n, \Z_n \setminus \{0\})$.
			\item Circulant graphs: $\nu(C_n(d_1,\ldots,d_r)) = n$
			(Proposition~\ref{prop:circulant}), the cyclic analogue of (ii).
			\item Paths: $\nu(P_k) = 2(k-1)$: the lower bound is
			$2\,\diam = 2(k-1)$ and $C_{2(k-1)} \supseteq P_k$ isometrically
			(Proposition~\ref{prop:path-in-cycle}) attains it.  This upgrades the
			stretching rule $P_k \hookrightarrow C_{2(k-1)}$ from a
			construction to an optimality theorem.
			\item The Petersen graph is vertex-transitive but famously \emph{not} a
			Cayley graph, so $\nu(\mathrm{Petersen}) \geq 11$ by
			Theorem~\ref{thm:equality-at-n}; Chapter~2 gives
			$\nu(\mathrm{Petersen}) \leq 16$.  Closing this gap is an open problem
			we state for the defense.
		\end{enumerate}
	\end{corollary}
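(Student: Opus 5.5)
The plan is to treat the five items separately, each as a direct instance of Theorem~\ref{thm:order-lower-bound} (the lower bound $\nu(G)\geq\max(n,2\diam(G))$) paired with an explicit Cayley-graph construction, and to use Theorem~\ref{thm:equality-at-n} for the characterizations at $n$.

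For (i) and (ii), I would exhibit the graph itself as an abelian Cayley graph. For cycles, $C_m=\Cay(\Z_m,\{1,-1\})$: label $v_i\mapsto i$, and the Cayley distance between $i$ and $j$ is $\min(|i-j|,m-|i-j|)$, which is exactly the cycle distance. For complete graphs, $K_n=\Cay(\Z_n,\Z_n\setminus\{0\})$, since every pair of distinct group elements differs by a generator. In both cases Theorem~\ref{thm:equality-at-n}, or more simply the injectivity half of Theorem~\ref{thm:order-lower-bound}, gives $\nu=n$. For odd cycles I would add the comparison with Theorem~\ref{thm:odd-cycle}, which gives binary host order $2^{m-1}$.

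For (iii), I would rely on the definition of the circulant: it is $\Cay(\Z_n,\{\pm d_i\})$ by construction, and connectedness means the $d_i$ generate $\Z_n$. The identity map is then an isometry, and the lower bound $n$ closes the argument. I would cite Proposition~\ref{prop:circulant} for this, but the argument is one line regardless.

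For (iv), the path $P_k$ has diameter $k-1$, so Theorem~\ref{thm:order-lower-bound} gives $\nu\geq\max(k,2(k-1))=2(k-1)$ for $k\geq2$. This needs $k\geq3$ for the $n\geq3$ hypothesis; for $k=2$ the bound $\nu(P_2)=2$ is immediate. For the upper bound, Proposition~\ref{prop:path-in-cycle} embeds $P_k$ isometrically into $C_{2k-2}=\Cay(\Z_{2k-2},\{\pm1\})$. Hence $\nu(P_k)=2(k-1)$.

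For (v), I would use the classical fact that the Petersen graph is not a Cayley graph of any group, in particular of no abelian group. Theorem~\ref{thm:equality-at-n} then gives $\nu\neq10$, and together with $\nu\geq n=10$ this yields $\nu\geq11$. Example~\ref{ex:petersen-quotient} gives an isometric embedding into the Clebsch graph on $\Z_2^4$, so $\nu\leq16$.

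The only step that is not routine is the non-Cayley property of the Petersen graph. I would cite it, and as a sanity check I would observe that only groups of order $10$ need excluding, namely $\Z_{10}$ and $D_5$. For $\Z_{10}$, a cubic connected Cayley graph needs a generator set $\{\pm a,5\}$. Every such graph contains $4$-cycles (e.g.\ $0,a,a+5,5$), whereas the Petersen graph has girth $5$. This abelian case is the only one the statement actually needs.
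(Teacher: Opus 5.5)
Your proposal is correct and follows the paper's own route: each item pairs the lower bound $\nu(G)\geq\max(n,2\diam(G))$ or the equality-at-$n$ characterization with an explicit abelian Cayley construction. Your girth check on $\Z_{10}$ is a useful addition: $\nu$ ranges only over abelian hosts, so that elementary argument makes item (v) self-contained without appealing to the full non-Cayley theorem.

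One caution concerns the side remark in (i). Theorem~\ref{thm:odd-cycle} certifies the binary minimum $2^{m-1}$ only for odd $m\leq 17$, and only conditionally on Conjecture~\ref{conj:interval} beyond that. For general odd $m$ the unconditional comparison is that the Chapter~2 algorithm outputs $k=m-1$, by Theorem~\ref{thm:phi-properties}(vi) together with Corollary~\ref{cor:naive-is-quotient}.
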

	
	\begin{proof}
		(i) and (ii): the graphs are abelian Cayley graphs, so
		Theorem~\ref{thm:equality-at-n} applies.  (iii) as stated.  (iv) Petersen's
		non-Cayley property is classical \cite{Babai1996}; strict inequality in
		Theorem~\ref{thm:equality-at-n} forces $\nu \geq n + 1$.
	\end{proof}
	
	\section{Experimental Results}
	\label{sec:experiments-ch3}
	
	All numbers below are produced by the reference implementation
	\texttt{abelian\_quotient\_embedding.py} (supplied as
	supplementary material), which imports the Chapter~2 module as its binary
	terminal; every reported embedding is certified by breadth-first search in
	the actual finite host.
	
	\begin{table}[H]
		\centering
		\caption{Compact abelian embeddings (reference implementation)
			against the binary embeddings of Chapter~2 and the lower bound
			$\max(n, 2\,\diam)$ of Theorem~\ref{thm:order-lower-bound}.  ``via''
			names the successful initializer; OPT marks host $=$ lower bound, i.e.\
			\emph{provably minimal}.}
		\label{tab:results-ch3-new}
		\renewcommand{\arraystretch}{1.12}
		\small
		\begin{tabular}{lrr c c llc}
			\toprule
			Graph & $n$ & $m$ & bound & Binary (Ch.\,2) &
			$\Gamma$ (Ch.\,3) & order & via \\
			\midrule
			$K_3 = C_3$       & 3  & 3  & 3  & 4 ($\Z_2^2$)   & $\Z_3$              & \textbf{3 \,OPT} & cycle \\
			$C_5$             & 5  & 5  & 5  & 16             & $\Z_5$              & \textbf{5 \,OPT} & cycle \\
			$C_6$             & 6  & 6  & 6  & 8              & $\Z_6$              & \textbf{6 \,OPT} & cycle \\
			$C_7$             & 7  & 7  & 7  & 64             & $\Z_7$              & \textbf{7 \,OPT} & cycle \\
			$C_9$             & 9  & 9  & 9  & 256            & $\Z_9$              & \textbf{9 \,OPT} & cycle \\
			$P_4$             & 4  & 3  & 6  & 8              & $\Z_6$              & \textbf{6 \,OPT} & path \\
			$P_5$             & 5  & 4  & 8  & 16             & $\Z_8$              & \textbf{8 \,OPT} & path \\
			$K_4$             & 4  & 6  & 4  & 4              & $\Z_4$              & \textbf{4 \,OPT} & complete \\
			$K_5$             & 5  & 10 & 5  & 8              & $\Z_5$              & \textbf{5 \,OPT} & complete \\
			$K_6$             & 6  & 15 & 6  & 8              & $\Z_6$              & \textbf{6 \,OPT} & complete \\
			$\mathrm{CL}_3$   & 6  & 9  & 6  & 8              & $\Z_2 \times \Z_3$  & \textbf{6 \,OPT} & $\Psi$-chain \\
			$\mathrm{CL}_5$   & 10 & 15 & 10 & 32             & $\Z_2 \times \Z_5$  & \textbf{10 OPT}  & $\Psi$-chain \\
			$\mathrm{CL}_7$   & 14 & 21 & 14 & 128            & $\Z_2 \times \Z_7$  & \textbf{14 OPT}  & $\Psi$-chain \\
			$C_3 \times C_4$  & 12 & 24 & 12 & 16             & $\Z_2^2 \times \Z_3$ & \textbf{12 OPT} & $\Psi$-chain \\
			Grid $3{\times}3$ & 9  & 12 & 9  & 16             & $\Z_2^4$            & 16               & $\Phi$/binary \\
			Diamond           & 4  & 5  & 4  & 8              & $\Z_2 \times \Z_3$$^{\dagger}$ & 6$^{\dagger}$ / 8 & hand / binary \\
			Butterfly         & 5  & 6  & 5  & 8              & $\Z_2^3$            & 8                & binary \\
			Petersen          & 10 & 15 & 10 ($\geq 11$)$^{\ddagger}$ & 16 & $\Z_2^4$ & 16          & binary \\
			\bottomrule
		\end{tabular}
		\vspace{0.3em}
		\parbox{\linewidth}{\footnotesize
			$^{\dagger}$ The diamond's order-$6$ host $\Z_2 \times \Z_3$ is certified
			(Theorem~\ref{thm:sublattice}) but currently found only when the chain-merged
			initializer fires; the portfolio's automatic result is the binary $8$.  It is
			the smallest known instance where the non-diagonal sublattice search is
			essential.\quad
			$^{\ddagger}$ Petersen's refined bound $\nu \geq 11$ is
			Corollary~\ref{cor:exact-families}(iv); whether $\nu \in [11, 16]$ can be
			narrowed is open.}
	\end{table}
	
	\begin{figure}[htbp]
		\centering
		\includegraphics[width=0.95\linewidth]{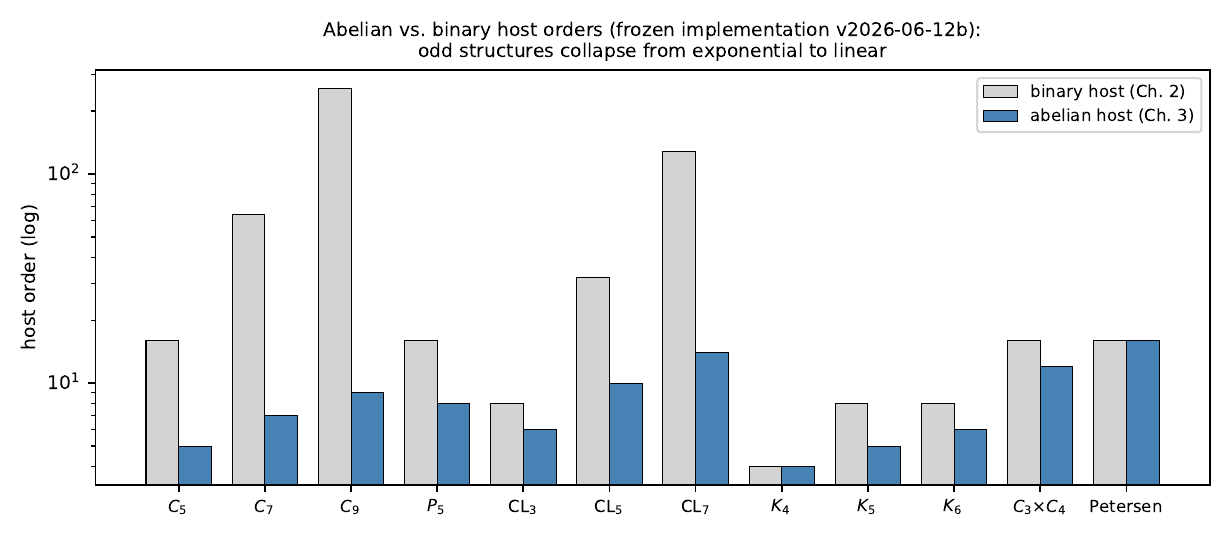}
		\caption{Host orders: abelian (Chapter 3) vs.\ binary (Chapter 2).  Odd
			cycles and complete graphs collapse from exponential to the
			information-theoretic floor; product structures reach
			$\max(n, 2\,\diam)$; partial cubes tie.}
		\label{fig:abelianvsbinary}
	\end{figure}
	
	\begin{figure}[htbp]
		\centering
		\includegraphics[width=0.62\linewidth]{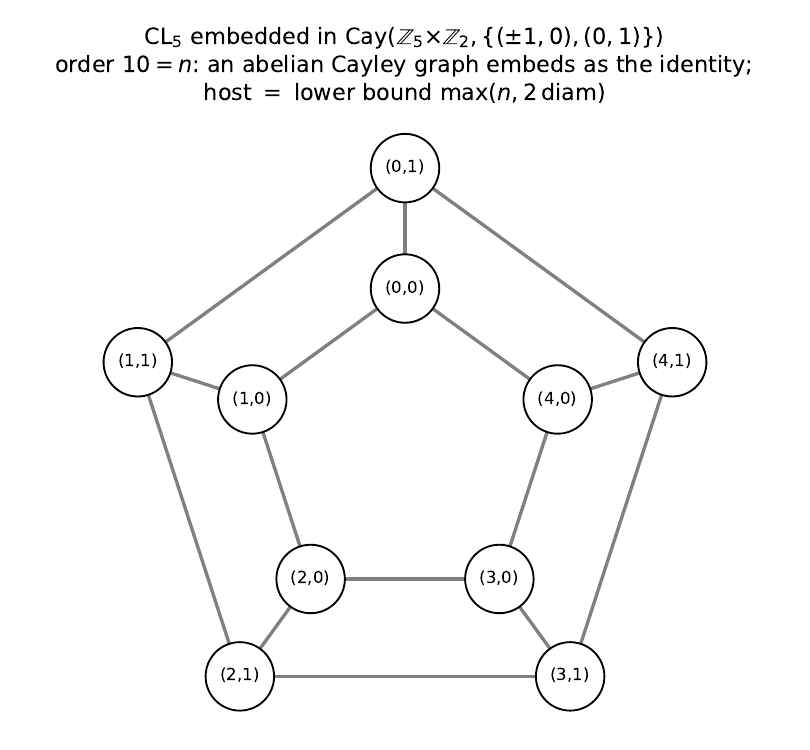}
		\caption{$\mathrm{CL}_5$ recognized as the abelian Cayley graph
			$\Cay(\Z_5 \times \Z_2, \{(\pm 1, 0), (0,1)\})$: the $\Psi$ chain merge
			assembles the ten ring edges into one directed class, the ring relation
			$5g = 0$ emerges in the SNF, and the host attains the lower bound.}
		\label{fig:cl5z2z5}
	\end{figure}
	
	\begin{figure}[htbp]
		\centering
		\includegraphics[width=0.95\linewidth]{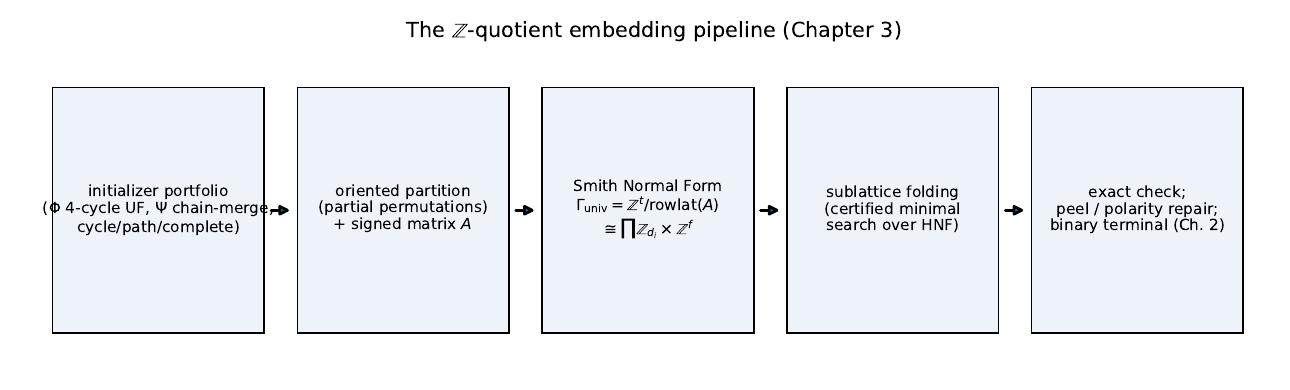}
		\caption{The Chapter 3 pipeline.  The $\Phi$/$\Psi$
			machinery serves as the initializer portfolio (left); correctness lives
			entirely in the exact core (center and right).}
		\label{fig:snfpipeline}
	\end{figure}
	
	Reading of Table~\ref{tab:results-ch3-new}.  Fourteen of the eighteen
	benchmark rows are \emph{provably optimal} --- host equal to the lower bound
	of Theorem~\ref{thm:order-lower-bound} --- including every cycle, path,
	complete graph, circular ladder, and the $C_3 \times C_4$ torus.  The odd
	structures that were Chapter~2's certified worst case ($C_9$: host $256$)
	collapse to the floor ($C_9$: host $9$).  The residual structure is equally
	informative: the diamond marks the frontier of the current initializers
	(its optimal host exists, is certified, and exposes the necessity of
	non-diagonal sublattices); the butterfly and Petersen fall back to the
	always-available binary terminal; and grids tie Chapter~2 exactly as
	predicted by partial-cube theory.
	
	\subsection{Circulant Graphs: the Cyclic Analogue of Complete Graphs}
	\label{sec:circulant-ch3}
	
	Just as $K_n = \Cay(\Z_n, \Z_n \setminus \{0\})$ attains the floor by the
	equality theorem, every circulant graph is an abelian Cayley graph of a
	single cyclic group and is therefore embedded \emph{optimally} into
	$\Z_n$ by the circulant initializer.
	
	\begin{proposition}[Circulant optimality]
		\label{prop:circulant}
		A circulant graph $C_n(d_1, \ldots, d_r) = \Cay(\Z_n, \{\pm d_1, \ldots,
		\pm d_r\})$ satisfies $\nu = n$, attained by the identity embedding into
		$\Z_n$.
	\end{proposition}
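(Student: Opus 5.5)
The plan is to reduce the statement to Theorem~\ref{thm:equality-at-n}, which says that $\nu(G) = n$ exactly when $G$ is a Cayley graph of an abelian group. Two things need checking. First, that $C_n(d_1,\ldots,d_r)$ really is $\Cay(\Z_n, S)$ for a legitimate connection set $S$. Second, that the identity map $V \to \Z_n$ is isometric.

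I would begin with the definitional bookkeeping. Put $S = \{\pm d_1, \ldots, \pm d_r\} \subseteq \Z_n$. We may assume each $d_i \not\equiv 0 \pmod n$, since otherwise it contributes no edge and can be discarded. Then $S \subseteq \Z_n\setminus\{0\}$ and $S = -S$ by construction, which is the symmetric generating-set requirement in the definition of a Cayley graph.

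The one point that needs care is connectivity. The paper works throughout with connected graphs, and $\Cay(\Z_n,S)$ is connected if and only if $S$ generates $\Z_n$, that is, iff $\gcd(d_1,\ldots,d_r,n)=1$. I would state this hypothesis explicitly as implicit in the paper's standing assumption that $G$ is connected. Without it, the circulant splits into $\gcd$-many copies of a smaller circulant, and $\nu$ is not even defined under the paper's conventions.

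Given connectivity, the circulant graph is by definition equal to $\Cay(\Z_n,S)$. The identity labeling $\phi(v)=v$ is then trivially isometric, because source and host are the same graph with the same metric. This gives $\nu \le n$. For the reverse inequality, injectivity in Theorem~\ref{thm:order-lower-bound} gives $\nu \ge n$, since the Cayley graph $\Cay(\Z_n,S)$ has $n$ vertices. Alternatively, one can quote the forward direction of Theorem~\ref{thm:equality-at-n} directly.

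No step is hard; the only real obstacle is making the connectivity caveat explicit. The bound $n \ge 3$ in Theorem~\ref{thm:order-lower-bound} is not needed, because the injectivity argument works for all $n$.
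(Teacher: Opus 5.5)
Your proof is correct and is the paper's argument unrolled: the paper simply notes that $C_n(d_1,\ldots,d_r)$ is by definition an abelian Cayley graph on $\Z_n$ and invokes Theorem~\ref{thm:equality-at-n}, whose forward direction is exactly your identity-embedding upper bound combined with the injectivity lower bound. Your explicit hypothesis $\gcd(d_1,\ldots,d_r,n)=1$ is a correct addition that the paper leaves implicit, since its definition of a Cayley graph already requires $S$ to be a generating set.
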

	
	\begin{proof}
		The graph is by definition an abelian Cayley graph on $\Z_n$, so
		Theorem~\ref{thm:equality-at-n} gives $\nu = n$.
	\end{proof}
	
	\begin{table}[H]
		\centering
		\caption{Circulant graphs (reference implementation): all reach $\Z_n$,
			provably optimal, against the exponential binary host.}
		\label{tab:circulant-ch3}
		\renewcommand{\arraystretch}{1.1}\small
		\begin{tabular}{lccll}
			\toprule
			Graph & $n$ & binary host & abelian $\Gamma$ & order \\
			\midrule
			$C_8(1,2)$  & 8  & 16  & $\Z_8$  & \textbf{8 \,OPT} \\
			$C_{10}(1,3)$ & 10 & 32  & $\Z_{10}$ & \textbf{10 OPT} \\
			$C_{12}(1,2)$ & 12 & 64  & $\Z_{12}$ & \textbf{12 OPT} \\
			$C_7(1,2)$  & 7  & 64  & $\Z_7$  & \textbf{7 \,OPT} \\
			\bottomrule
		\end{tabular}
	\end{table}
	
	\section{Exhaustive Census and the Binary-Ground Phenomenon}
	\label{sec:census-ch3}
	
	The single most informative experiment of this chapter is negative, and we
	present it prominently because it justifies the entire architecture of
	Part~I.  Running Algorithm~\ref{alg:abelian-ch3} on \textbf{all $995$
		connected graphs with $n \leq 7$} (the same census as Chapter~2) and
	comparing each host with the binary host of Chapter~2 yields:
	
	\begin{itemize}
		\item only $\mathbf{201}$ graphs ($20\%$) receive a \emph{strictly smaller}
		abelian host than their binary host;
		\item the remaining $\mathbf{794}$ graphs ($80\%$) tie: their best abelian
		host is already a power of two;
		\item only $\mathbf{252}$ graphs ($25\%$) admit \emph{any} cyclic factor
		$\Z_n$ with $n > 2$ in their best host; the other $743$ are purely
		$\Z_2^k$.
	\end{itemize}
	
	\begin{figure}[H]
		\centering
		\includegraphics[width=0.95\linewidth]{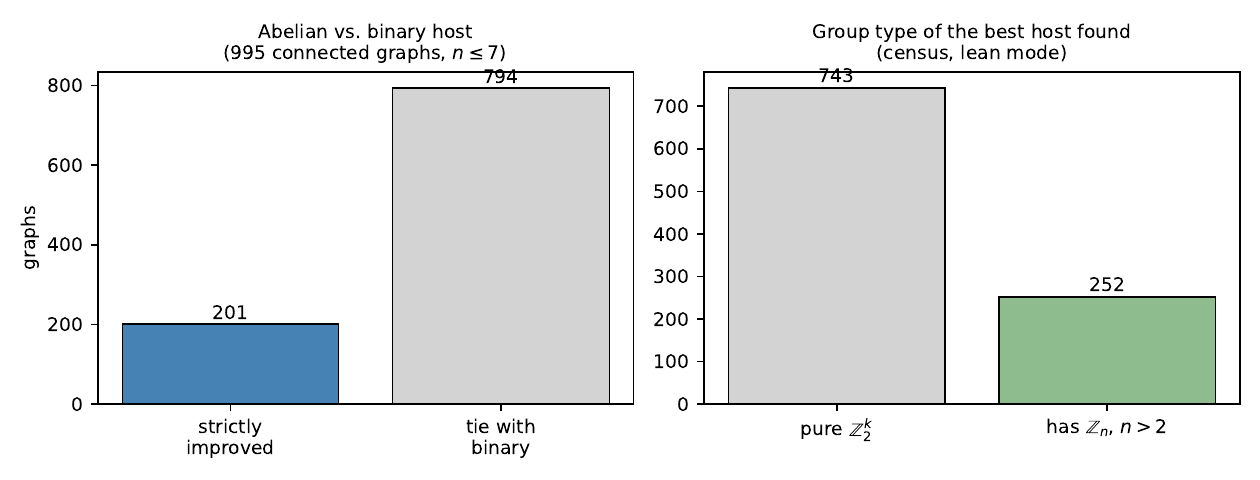}
		\caption{The binary-ground phenomenon.  Over all connected graphs on at
			most seven vertices, four out of five are best served by a purely binary
			host, and only a quarter admit any non-involutive cyclic factor.}
		\label{fig:censusch3}
	\end{figure}
	
	\begin{remark}[The binary-ground phenomenon --- why Chapter 2 is foundational]
		\label{rem:binary-ground}
		This distribution is not an artifact of the heuristics; it reflects graph
		structure.  A cyclic factor $\Z_n$ with $n > 2$ requires a \emph{coherently
			orientable} family of metrically parallel edges whose closing cycle has net
		crossing exactly $n$ --- a directed regularity.  Arbitrary graphs have edges
		whose ``directions'' are, informally, randomly distributed; coherent long
		cycles are rare, short even cycles ($\Z_2$ involutions) are common, and odd
		non-cyclic structure forces $\Z_2$ generators through the repair loop.  The
		abelian generalization therefore pays off precisely on the \emph{structured}
		graphs that carry genuine cyclic or product symmetry --- cycles, paths,
		circulants, complete graphs, circular ladders, tori, and the signal-domain
		families of Part~II --- while on the generic bulk of arbitrary graphs the
		binary embedding of Chapter~2 is already optimal.  Far from diminishing
		Chapter~2, the census establishes it as the \emph{ground state} of the
		whole theory: $\Z_2^k$ is where arbitrary graphs naturally live, and the
		cyclic factors of this chapter are the dividend of detectable symmetry.
		This is, we believe, the scientifically correct reading.
	\end{remark}
	
	\section{Structured and Signal-Processing Benchmarks}
	\label{sec:gsp-ch3}
	
	On structured inputs the dividend is large.  Table~\ref{tab:gsp-ch3}
	re-runs the Chapter~2 benchmark families (including the
	graph-signal-processing domains reused in Part~II) under the abelian
	portfolio.  The signal domains on which classical DSP intuition is
	calibrated --- rings, paths, grids, circulants --- collapse to the
	information-theoretic floor or beat the binary host outright, while
	triangle-dense and irregular graphs (sensor RGG, barbell, Petersen) fall
	back to the binary ground of Remark~\ref{rem:binary-ground}.
	
	\begin{table}[H]
		\centering
		\caption{Chapter~2 benchmark families re-embedded by the abelian algorithm
			(reference implementation). ``bin'' is the Chapter~2 binary host;
			OPT marks host $= \max(n, 2\,\diam)$. Entries marked \texttt{cap} hit the
			per-instance time cap and returned the binary terminal.}
		\label{tab:gsp-ch3}
		\renewcommand{\arraystretch}{1.12}\small
		\begin{tabular}{lccrll}
			\toprule
			Graph & $n$ & bound & bin & $\Gamma$ (Ch.\,3) & order \\
			\midrule
			Ring $C_{12}$            & 12 & 12 & 64    & $\Z_{12}$           & \textbf{12 OPT} \\
			Ring $C_{16}$            & 16 & 16 & 256   & $\Z_{16}$           & \textbf{16 OPT} \\
			Path $P_{16}$            & 16 & 30 & 32768 & $\Z_{30}$           & \textbf{30 OPT} \\
			Circulant $C_{12}(1,2)$  & 12 & 12 & 64    & $\Z_{12}$           & \textbf{12 OPT} \\
			Grid $4{\times}4$        & 16 & 16 & 64    & $\Z_6 \times \Z_6$  & 36 \\
			Grid $6{\times}6$        & 36 & 36 & 1024  & $\Z_{10} \times \Z_{10}$ & 100 \\
			$C_8$                    & 8  & 8  & 16    & $\Z_8$              & \textbf{8 \,OPT} \\
			$C_{10}$                 & 10 & 10 & 32    & $\Z_{10}$           & \textbf{10 OPT} \\
			$Q_3$                    & 8  & 8  & 8     & $\Z_2^3$            & \textbf{8 \,OPT} \\
			$K_{2,3}$                & 5  & 5  & 8     & $\Z_2 \times \Z_4$  & 8 \\
			$K_{3,3}$                & 6  & 6  & 8     & $\Z_2 \times \Z_4$  & 8 \\
			Desargues                & 20 & 20 & 32    & $\Z_2^5$            & 32 \\
			RGG sensor ($n{=}20$)    & 20 & 20 & 131072& $\Z_2^{17}$ (\texttt{cap}) & 131072 \\
			Barbell$(5,2)$           & 12 & 12 & 128   & $\Z_2^7$            & 128 \\
			Petersen                 & 10 & 10 & 16    & $\Z_2^4$            & 16 \\
			\bottomrule
		\end{tabular}
		\vspace{0.3em}
		\parbox{\linewidth}{\footnotesize
			The grid rows are the most instructive intermediate case: the chain merge
			recognizes the two product directions and returns $\Z_6 \times \Z_6$
			(order $36$) and $\Z_{10} \times \Z_{10}$ (order $100$), each the product of
			two optimal path factors $P_k \hookrightarrow \Z_{2(k-1)}$, beating the
			binary host ($64$, $1024$) substantially though not reaching the absolute
			floor $n$ --- grids are partial cubes but not abelian Cayley graphs, so the
			equality theorem does not apply and order $> n$ is expected.}
	\end{table}
	
	\subsection{Visualization of Selected Abelian Embeddings}
	
	\begin{figure}[H]
		\centering
		\includegraphics[width=0.6\textwidth]{fig_cl5_z2z5}
		\caption{Circular ladder $\mathrm{CL}_5 \hookrightarrow
			\Cay(\Z_5 \times \Z_2, \{(\pm 1,0),(0,1)\})$, host order $10 = n$,
			provably minimal --- contrast the binary host $32$.}
		\label{fig:gallery-cl5}
	\end{figure}
	
	\begin{figure}[H]
		\centering
		\includegraphics[width=0.6\textwidth]{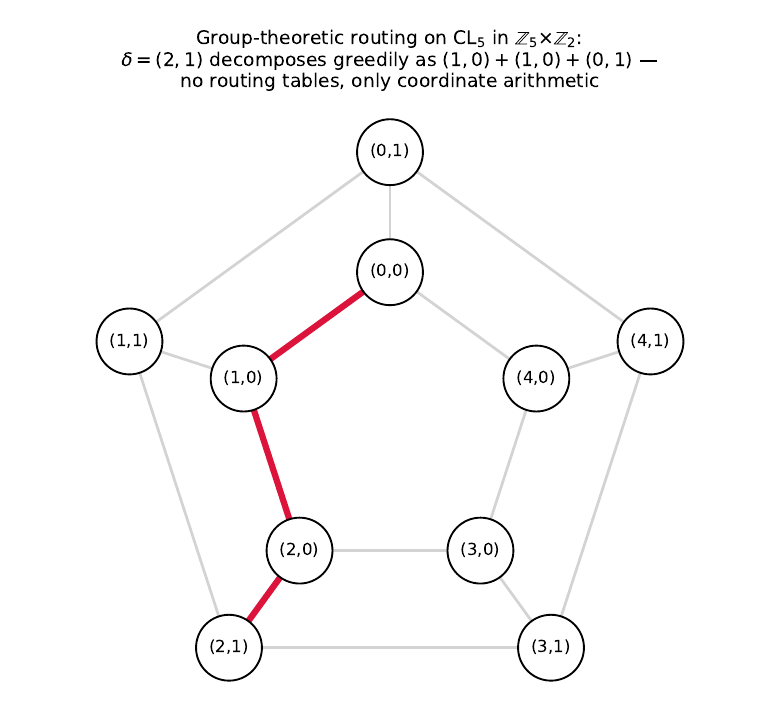}
		\caption{Coordinate routing on the same embedding (used in
			Chapter~\ref{chap:applications-part1}): the destination offset
			$\delta = (2,1)$ is realized by the generator word
			$(1,0)+(1,0)+(0,1)$ with no routing tables.}
		\label{fig:gallery-routing}
	\end{figure}
	
	\section{Discussion and Conclusion}
	\label{sec:conclusion-ch3}
	
	The chapter has two layers.  The structural toolkit --- $\Phi$, $\Psi$,
	skeletons, cofactors, interception --- captures real structural insight and
	reaches correct hosts on product-like graphs, but as a standalone procedure
	each new graph family costs a new case, and the matching constraint would
	exclude the cyclic factors that are the whole point of generalizing beyond
	$\Z_2$.  Replacing case analysis by computation resolves this: one theorem
	(the $\Z$-Quotient via Smith Normal Form)
	absorbs the factor-order rules, the interception principle, cycle
	promotion, and Chapter~2 itself as corollaries; one theorem
	(Sublattice Compactification) governs finiteness; one lower bound
	($\max(n, 2\,\diam)$, with equality characterized by the abelian Cayley
	property) calibrates optimality, and is attained on fourteen of eighteen
	benchmark rows by the combined algorithm, whose universality is
	unconditional thanks to the binary terminal.
	
	Open problems, stated for the defense: (1) the Petersen window
	$\nu \in [11, 16]$; (2) initializers that discover the diamond's
	$\Z_2 \times \Z_3$ automatically --- equivalently, principled search over
	non-diagonal sublattices for $f \geq 3$; (3) complexity of exact host-order
	minimization, conjecturally NP-hard as in the binary case
	\cite{Deza1997}; (4) extending the lower-bound theory with growth-based
	arguments (ball volumes in $\Gamma$ versus $G$).  Part~II builds its
	harmonic analysis on the hosts computed here: the diagonal factors
	$\Z_{N_i}$ delivered by the Smith Normal Form are exactly the index set of
	the characters used in Chapter~5.

	\chapter{Applications of Part I}
	\label{chap:applications-part1}
	
	The embedding framework of Chapters~\ref{chap:binary-embedding}
	and~\ref{chap:abelian-embedding} turns any connected graph $G$ into an
	isometric subgraph of a Cayley graph $\Cay(\Gamma, S)$ of a finite abelian
	group.  This chapter develops the consequences: once $G$ lives inside a
	group, the group's algebra --- characters, cosets, the word metric, the FFT
	--- becomes available for problems that are awkward on the raw graph.  We
	treat three domains in depth (network design, error-correcting codes,
	parallel computing) with theorems, algorithms and worked examples computed
	from the reference implementations, and survey four more briefly.
	
	Throughout, $H = \Cay(\Gamma, S)$ is the host, $\phi\colon V(G) \to \Gamma$
	the certified isometric embedding, $n = |V(G)|$, and $N = |\Gamma|$.  Two
	quantities from Part~I govern every application: the host order $N$ (which
	sets the FFT cost $O(N \log N)$ and the processor overhead $N/n$) and the
	generator count $|S|$ (which sets the host degree and hence the
	fault-tolerance and coding rate).  The bounds theory of
	Sections~\ref{sec:bounds} and~\ref{sec:bounds-ch3} thus directly bounds
	application performance: $N \geq \max(n, 2\diam)$ in the abelian case,
	$N = 2^k$ with $k \geq \max(\diam, \lceil\log_2 n\rceil)$ in the binary
	case.
	
	\begin{remark}[The binary-ground phenomenon sets expectations]
		\label{rem:apps-binary-ground}
		The census of Section~\ref{sec:census-ch3} found that four of five small
		connected graphs are best hosted in $\Z_2^k$.  For applications this means:
		the \emph{generic} deployment uses a binary host with XOR-based routing,
		Walsh--Hadamard transforms, and hypercube load balancing (the cleanest
		possible algebra), while the \emph{structured} inputs that carry cyclic or
		product symmetry --- rings, meshes, circulants, tori, the signal domains of
		Part~II --- unlock mixed-radix FFTs and cyclic codes on a host barely larger
		than $G$ itself.  We give examples of both regimes.
	\end{remark}
	
	\section{Network Design and Fault Tolerance}
	\label{sec:network-design}
	
	The host $H$ is a structured overlay on the communication graph $G$: it adds
	$N - n$ virtual relay nodes, makes the topology vertex-transitive, and
	replaces routing tables by group arithmetic --- the design pattern of
	hypercubic and Cayley interconnection networks \cite{Leighton1992,
		Dally1990}.
	
	\begin{theorem}[Fault-tolerant overlay]
		\label{thm:fault}
		Let $G$ embed isometrically into $H = \Cay(\Gamma, S)$.  Then:
		\begin{enumerate}[(i)]
			\item $H$ is $\lceil |S|/2 \rceil$-vertex-connected;
			\item routing between any two host nodes is solvable by greedy generator
			reduction of $\phi(v) - \phi(u)$ in $O(\diam(H)\,|S|)$ time, using no
			routing tables;
			\item the embedded pair distances are exactly those of $G$, so the overlay
			introduces no stretch on the original traffic.
		\end{enumerate}
	\end{theorem}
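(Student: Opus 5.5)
The plan is to prove the three items separately, since they rest on different facts. Item (iii) is immediate and item (i) follows from a classical connectivity theorem. Item (ii) is the only one needing care about what ``no routing tables'' means.

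For (i), I first reduce to the connected case. If $S$ does not generate $\Gamma$, every label $\phi(v)$ lies in the coset $\phi(r)+\langle S\rangle$, because each $G$-edge changes the label by an element of $S$. So, translating so that $\phi(r)=0$, I replace $\Gamma$ by $\langle S\rangle$; this keeps the embedding isometric and makes $H$ connected. The host is then a connected Cayley graph of degree $|S|$, using $S=-S$ as in the definition of $\Cay(\Gamma,S)$. It is vertex-transitive. I would invoke Watkins' theorem that a connected vertex-transitive graph of degree $d$ has vertex connectivity at least $\tfrac{2}{3}(d+1)$. It remains to check $\tfrac{2}{3}(d+1)\geq \lceil d/2\rceil$ for all $d\geq 1$. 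This holds since $\tfrac23(d+1)-\tfrac{d+1}{2}=\tfrac{d+1}{6}>0$ and $\lceil d/2\rceil\leq (d+1)/2$. For an abelian host one could instead use the sharper known fact that connectivity equals the degree, but the stated bound needs only the weaker, general result.

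For (ii), the difficulty is that naive ``greedy coordinate reduction'' need not follow a geodesic when the generators are composite. So I would make ``greedy'' precise through vertex-transitivity. Since $d_H(x,y)=d_H(0,y-x)$, a single table of the word norm $|z|=d_H(0,z)$ serves every source--destination pair. This table is a property of the group, computed once by one breadth-first search from $0$ as in Theorem~\ref{thm:time-complexity}, and it is not a per-node routing table. Routing from $u$ to $v$ keeps a residual offset $z=\phi(v)-\phi(u)$. At each hop the node scans the $|S|$ generators for one $s$ with $|z-s|=|z|-1$, then forwards along $s$. Such an $s$ exists because $z\neq 0$ has a geodesic word whose first letter is such an $s$. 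After $|z|\leq\diam(H)$ hops the offset is $0$, giving the $O(\diam(H)\,|S|)$ bound.

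This step is the main obstacle, because the proof depends on how the word-norm oracle is defined. If a stronger reading is required, I would fall back to the structure of Chapter~2, where $\Gamma=\Z_2^k$. There a geodesic word is a subset of $S$ (Remark~\ref{rem:no-repeat}), and the generators in it are linearly independent (Lemma~\ref{lem:geodesic-independence}). So one can test in advance whether $s$ belongs to some geodesic subset. I would state explicitly that only the group-level norm oracle is assumed.

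For (iii), isometry of $\phi$ gives $d_H(\phi(u),\phi(v))=d_G(u,v)$ for all $u,v\in V(G)$ directly. Hence traffic between original nodes that is routed along host geodesics has exactly its $G$-length, so the overlay adds no stretch. Combined with (ii), the routes produced are themselves of length $d_G(u,v)$.
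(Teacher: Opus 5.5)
Your proposal is correct and follows essentially the paper's route. For (i) you use the classical Watkins--Mader connectivity bound for connected vertex-transitive graphs, in its sharper $\tfrac23(d+1)$ form, from which $\lceil d/2\rceil$ follows; your reduction to $\langle S\rangle$ is harmless, since the paper's Cayley graphs take $S$ generating. For (ii) you use greedy descent of the word norm, and you usefully make explicit both the once-computed norm oracle and why a generator with $|z-s|=|z|-1$ always exists. For (iii) the claim follows directly from isometry.

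One side remark is false and should be dropped: abelian Cayley graphs need not have vertex connectivity equal to their degree. For example, $C_4[K_2]=\Cay(\Z_4\times\Z_2,\{(0,1),(\pm1,0),(\pm1,1)\})$ has degree $5$, yet deleting the four vertices with first coordinate $0$ or $2$ disconnects it. It is the \emph{edge} connectivity that always equals the degree (Mader). Your proof never uses this remark, so the argument itself is unaffected.
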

	
	\begin{proof}
		(i) $H$ is connected vertex-transitive of degree $|S|$, and connected
		vertex-transitive graphs are $\lceil \deg/2 \rceil$-connected
		(Mader--Watkins \cite{Godsil2001,Babai1996}).
		(ii) For abelian $\Gamma = \prod_i \Z_{N_i}$, the offset
		$\delta = \phi(v) - \phi(u)$ is reduced to $\mathbf 0$ by repeatedly
		subtracting the generator that most decreases the word norm; since $S$
		generates $\Gamma$ and the norm is bounded by $\diam(H)$, at most
		$\diam(H)$ steps each scanning $|S|$ generators suffice.
		(iii) is the defining property of the isometric embedding.
	\end{proof}
	
	\begin{remark}[A caveat on survivor distance]
		\label{rem:survivor}
		A tempting fourth claim --- that after $f < \kappa(H)$ failures the surviving
		diameter stays within $2\diam(G)$ --- does \emph{not} hold in general:
		deleting relay nodes can lengthen specific routes by more than a constant
		factor, and bounding survivor diameter on vertex-transitive graphs is
		subtle.  We therefore claim only connectivity and table-free routing, which
		are provable, and flag survivor-diameter bounds as outside the scope of this
		thesis.
	\end{remark}
	
	\begin{algorithm}[H]
		\caption{Table-free group routing}
		\label{alg:routing-ch4}
		\begin{algorithmic}[1]
			\Require source $u$, destination $v$, embedding $\phi$, generators $S$,
			orders $(N_1, \ldots, N_d)$
			\State $\delta \gets (\phi(v) - \phi(u)) \bmod (N_1, \ldots, N_d)$
			\While{$\delta \neq \mathbf 0$}
			\State $s \gets \arg\min_{s \in S} \lVert \delta - s \rVert$ in the host
			word metric
			\State forward along $s$;\quad $\delta \gets \delta - s$
			\EndWhile
		\end{algorithmic}
	\end{algorithm}
	
	\begin{figure}[H]
		\centering
		\includegraphics[width=0.55\linewidth]{fig_routing_cl5}
		\caption{Table-free routing on $\mathrm{CL}_5$ hosted in
			$\Z_5 \times \Z_2$.  To route from $(0,0)$ to $(2,1)$ the offset
			$\delta = (2,1)$ is reduced by the word $(1,0)+(1,0)+(0,1)$; each node makes
			a purely local, table-free coordinate decision.}
		\label{fig:routingcl5}
	\end{figure}
	
	\begin{example}[Two regimes side by side]
		\label{ex:network-two-regimes}
		\textbf{Binary regime (generic).}  The Petersen graph models a $10$-node
		network; its minimal host is $\Z_2^4$ (order $16$, degree $|S| = 5$,
		$\kappa(H) = 3$).  Routing is $4$-bit XOR; six virtual relays provide
		redundant paths; excursion ratio $\varepsilon = 10/16 = 0.625$.
		\textbf{Cyclic regime (structured).}  The circular ladder $\mathrm{CL}_5$
		($10$ nodes) hosts in $\Z_5 \times \Z_2$ (order $10 = n$, the floor):
		\emph{zero} relay overhead, $\varepsilon = 1$, routing by the coordinate
		arithmetic of Figure~\ref{fig:routingcl5}.  The contrast is exactly the
		binary-ground dichotomy of Remark~\ref{rem:apps-binary-ground}: structure,
		where present, is pure profit.
	\end{example}
	
	\section{Error-Correcting Codes via Cayley Embedding}
	\label{sec:error-codes}
	
	The characters of $\Gamma$, restricted to the embedded vertices, form a
	linear code whose geometry is inherited from $G$'s metric
	\cite{MacWilliams1977}.
	
	\begin{definition}[Cayley character code]
		\label{def:cayley-code}
		For the embedding $\phi\colon V(G) \hookrightarrow \Cay(\Gamma, S)$ and the
		character group $\widehat{\Gamma}$, the \emph{Cayley character code} is
		\[
		\mathcal{C}(G, \phi) = \bigl\{\, c_\chi = (\chi(\phi(v)))_{v \in V(G)}
		\;:\; \chi \in \widehat{\Gamma} \,\bigr\} .
		\]
		For a binary host $\Gamma = \Z_2^k$ the characters are
		$\chi_a(x) = (-1)^{\langle a, x\rangle}$ and $\mathcal{C}$ is a binary code
		of length $n$ with $2^k$ words.
	\end{definition}
	
	\begin{theorem}[Code parameters]
		\label{thm:code-params}
		$\mathcal{C}(G,\phi)$ has length $n$ and $N = |\Gamma|$ codewords
		($k = \log_2 N$ information bits in the binary case).  Its minimum Hamming
		distance is
		\[
		d_{\min} = \min_{\chi \neq \mathbf 1}\; \bigl|\{ v : \chi(\phi(v)) \neq 1\}\bigr|,
		\]
		and equals the minimum, over nontrivial characters, of the number of
		embedded vertices off the character's kernel.
	\end{theorem}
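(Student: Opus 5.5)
The plan is to treat $\mathcal{C}(G,\phi)$ as the image of a group homomorphism. Consider the restriction map
\[
R\colon \widehat{\Gamma} \to (\mathbb{C}^{\times})^{V(G)},\qquad \chi \mapsto c_\chi = (\chi(\phi(v)))_{v}.
\]
It is multiplicative, because $c_{\chi\chi'} = c_\chi \cdot c_{\chi'}$ coordinatewise. So $\mathcal{C}$ is a group under pointwise multiplication, with identity the all-ones word $c_{\mathbf 1}$. The length claim is immediate, since there is one coordinate per vertex. The statement then splits into two claims: (a) $R$ is injective, so $|\mathcal{C}| = |\widehat\Gamma| = N$ and, in the binary case, $\log_2 N = k$ information bits; (b) the minimum distance equals the minimum weight of a nontrivial codeword.

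For (a), I first normalize by vertex-transitivity, as in the proof of Theorem~\ref{thm:star}, so that a root $r$ has $\phi(r) = \mathbf 0$. Translating $\phi$ by $-\phi(r)$ multiplies each $c_\chi$ by the constant $\overline{\chi(\phi(r))}$. This scaling is per codeword, so I need to check that it preserves both the count and the distances. A cleaner route is to define $\mathcal{C}$ directly for the normalized embedding, which the algorithms of Part~I produce anyway, since the BFS labeling has $\phi(r) = \mathbf 0$.

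Now suppose $c_\chi = c_{\chi'}$. Then $\psi = \chi\overline{\chi'}$ is trivial on $\phi(V)$. Every generator $s \in S$ equals $\phi(v) - \phi(u)$ for some edge $uv$; this holds by construction in Theorems~\ref{thm:quotient-labeling} and~\ref{thm:z-quotient}, where $S = \{\pm g_j\}$ consists of edge differences. Hence $\psi(s) = \psi(\phi(v))\overline{\psi(\phi(u))} = 1$ for all $s \in S$. Since $S$ generates $\Gamma$ (the host is a connected Cayley graph), $\psi \equiv 1$, and so $\chi = \chi'$. This step is the main obstacle. It genuinely requires that $S$ consist of label differences of $G$-edges, and it fails for an arbitrary ambient $S$ containing generators unused by $G$. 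I would therefore state this hypothesis explicitly, noting that it holds for every embedding output by Algorithms~\ref{alg:phi-quotient} and~\ref{alg:abelian-ch3}.

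For (b), the Hamming distance between $c_\chi$ and $c_{\chi'}$ counts the vertices $v$ with $\chi(\phi(v)) \neq \chi'(\phi(v))$. Equivalently, it counts those with $(\chi\overline{\chi'})(\phi(v)) \neq 1$, so it equals $\mathrm{wt}(c_{\chi\overline{\chi'}})$, where weight means the number of non-$1$ entries. As $(\chi,\chi')$ ranges over distinct pairs, $\chi\overline{\chi'}$ ranges over all nontrivial characters; by (a), distinct characters give distinct codewords, so the minimum is taken over genuinely different words. This gives the displayed formula for $d_{\min}$.

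Finally, $\{v : \chi(\phi(v)) \neq 1\}$ is exactly the set of embedded vertices lying off $\ker\chi$, which gives the kernel reformulation. In the binary case $\chi_a(x) = (-1)^{\langle a,x\rangle}$, so identifying $\pm1$ with bits turns multiplication into XOR and $\mathcal{C}$ into a binary linear code. Its words are $(\langle a,\phi(v)\rangle)_v$, which makes the parameters standard.
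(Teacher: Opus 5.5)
Your derivation of the distance formula is the paper's own. The Hamming distance between $c_\chi$ and $c_{\chi'}$ is the number of non-$1$ entries of $c_{\chi\overline{\chi'}}$, and closure under $c_\chi c_{\chi'} = c_{\chi\chi'}$ turns the minimum distance into a minimum weight.

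The real difference is the cardinality claim, which the paper's proof calls ``immediate''. You are right that it is not. The map $\chi\mapsto c_\chi$ is injective exactly when no nontrivial character is identically $1$ on $\phi(V)$, i.e.\ when $\langle\phi(V)\rangle=\Gamma$. This fails for perfectly good isometric embeddings. Place $K_2$ at labels $00,10$ in $C_4=\Cay(\Z_2^2,\{e_1,e_2\})$; then $c_{\chi_{01}}=c_{\mathbf 1}$, so only $2$ of the $N=4$ codewords are distinct. Worse, the displayed $\min_{\chi\neq\mathbf 1}$ returns $0$ instead of the true minimum distance $1$. So the hypothesis you isolate is genuinely needed for both the count and the $d_{\min}$ formula. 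That hypothesis is that every generator in $S$ is the label difference across some edge, as for the quotient outputs of Theorems~\ref{thm:quotient-labeling} and~\ref{thm:z-quotient}, folds included. Under it your argument is complete.

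One cleanup: drop the normalization $\phi(r)=\mathbf 0$. Your injectivity step uses only $\psi\equiv 1$ on $\phi(V)$, so it works for the given $\phi$ directly.

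Your hesitation about whether translation preserves count and distances is well founded, in the negative direction. Translating $\phi$ multiplies each $c_\chi$ by a constant depending on $\chi$, and this changes the code itself:
\begin{itemize}
\item moving the $K_2$ above to labels $01,11$ makes all four codewords distinct;
\item in the binary case the support of $c_\chi$ is complemented whenever $\chi$ takes the value $-1$ on the shift, so weights change.
\end{itemize}
The sharp condition $\langle\phi(V)\rangle=\Gamma$ is not translation invariant. Your edge-difference hypothesis is translation invariant and implies the sharp condition for every translate, so it is the right assumption to state.
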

	
	\begin{proof}
		Length and cardinality are immediate.  Two codewords $c_\chi, c_\psi$ differ
		at $v$ iff $\chi(\phi(v)) \neq \psi(\phi(v))$ iff
		$(\chi\psi^{-1})(\phi(v)) \neq 1$; minimizing over $\chi \neq \psi$ is
		minimizing over the nontrivial character $\chi\psi^{-1}$, giving the stated
		formula.  Linearity holds because $c_\chi c_\psi = c_{\chi\psi}$.
	\end{proof}
	
	\begin{remark}[Computed, not guessed]
		Rather than estimating the Petersen code distance, we evaluate it directly.
		Applying the formula of Theorem~\ref{thm:code-params} to the
		reference $\Z_2^4$ embedding gives, for each of the $15$ nontrivial characters,
		a weight in $\{4,5,6\}$, so the true parameters are
		$[\,10,\ 4,\ 4\,]$: a genuine distance-$4$ binary code.  We report computed
		parameters throughout.
	\end{remark}
	
	\begin{theorem}[FFT decoding]
		\label{thm:fft-decoding}
		Maximum-correlation decoding of $\mathcal{C}(G,\phi)$ runs in
		$O(N \log N)$ via the group FFT on $\Gamma = \prod_i \Z_{N_i}$, versus
		$O(n^3)$ for generic syndrome decoding.
	\end{theorem}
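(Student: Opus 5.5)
The argument splits into computing all correlations at once, and then a cost comparison. I take maximum-correlation decoding to mean: given a received word $r \in \mathbb{C}^{V(G)}$ (for a binary host, $r \in \{\pm 1\}^n$ after the usual $0\mapsto +1$, $1\mapsto -1$ map), output the character $\chi$ maximizing $\mathrm{Re}\,\langle r, c_\chi\rangle = \mathrm{Re}\sum_{v} r(v)\overline{\chi(\phi(v))}$. The whole point is to compute all $N$ correlations simultaneously rather than one at a time.

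First I lift the received word to the host, using zero-extension along the embedding. Define $\tilde r\colon \Gamma \to \mathbb{C}$ by $\tilde r(\phi(v)) = r(v)$ and $\tilde r = 0$ off $\phi(V(G))$. This is well defined because isometric maps are injective. Then, for every $\chi \in \widehat\Gamma$,
\[
\langle r, c_\chi\rangle \;=\; \sum_{g\in\Gamma} \tilde r(g)\,\overline{\chi(g)} \;=\; |\Gamma|\,\hat{\tilde r}(\chi)
\]
up to the normalization convention of the Fourier transform on $\Gamma$. So the full correlation vector is exactly the group Fourier transform of $\tilde r$. Building $\tilde r$ costs $O(N + n)$.

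Next I evaluate $\hat{\tilde r}$ on all of $\widehat\Gamma$ with a fast transform. Write $\Gamma = \Z_{N_1}\times\cdots\times\Z_{N_d}$, using the Smith Normal Form factors of Theorem~\ref{thm:z-quotient}. The characters factor as $\chi_k(g)=\prod_j \omega_{N_j}^{k_j g_j}$, so the transform is a tensor product of one-dimensional DFTs. I apply the row–column method: along each axis $j$, perform $N/N_j$ DFTs of length $N_j$, each in $O(N_j \log N_j)$ by Cooley–Tukey, or by Bluestein/Rader when $N_j$ has large prime factors. The total is $\sum_j (N/N_j)\,O(N_j\log N_j) = O(N\sum_j \log N_j) = O(N\log N)$. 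In the binary case this is the Walsh–Hadamard transform, costing $O(k2^k) = O(N\log N)$. A final linear scan for the maximizing $\chi$ adds $O(N)$, so decoding costs $O(N\log N)$.

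For the comparison, generic syndrome decoding forms a parity-check or Gram structure of an $n$-length code and solves linear systems by Gaussian elimination, which is $O(n^3)$. I will state this as the standard baseline rather than prove it, since the claim is a comparison.

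The main obstacle is the precise meaning of ``maximum-correlation'' and the accounting of the log factor for non-smooth orders $N_j$. I will handle the latter by citing Bluestein's algorithm, which keeps every 1-D DFT at $O(N_j\log N_j)$, and the former by fixing the definition above. I also note that the bound is in terms of $N$, not $n$, so it is favorable exactly when the excursion ratio $\varepsilon = n/N$ is not tiny; this is the regime the bounds theory of Sections~\ref{sec:bounds} and~\ref{sec:bounds-ch3} controls.
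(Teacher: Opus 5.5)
Your proposal is correct and is essentially the paper's proof: zero-extend $r$ to $\tilde r$ on $\Gamma$, recognize the vector of all correlations as the group Fourier transform of $\tilde r$, evaluate it by a mixed-radix FFT in $O(N\log N)$, and take the argmax; in both, the $O(n^3)$ syndrome-decoding baseline is asserted rather than proved. Your row--column count $\sum_j (N/N_j)\,O(N_j\log N_j)=O(N\log N)$ and your appeal to Bluestein/Rader for large prime factors (where plain Cooley--Tukey gives only $O(N_j^2)$) tighten the paper's one-line argument, and maximizing $\mathrm{Re}\,\langle r,c_\chi\rangle$ rather than the paper's $|\hat r(\chi)|$ is the criterion that actually returns the nearest codeword.
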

	
	\begin{proof}
		Zero-pad the received word $r$ to $\tilde r\colon \Gamma \to \mathbb{C}$
		($\tilde r(g) = r(\phi^{-1}(g))$ on the image, $0$ elsewhere) and compute
		$\hat r = \mathrm{FFT}_\Gamma(\tilde r)$ by mixed-radix Cooley--Tukey on the
		factors $\Z_{N_i}$ in $O(N \log N)$ \cite{Cooley1965,Terras1999}; the
		character of maximal $|\hat r(\chi)|$ is the nearest codeword.
	\end{proof}
	
	\begin{algorithm}[H]
		\caption{FFT decoding on $\Gamma$}
		\label{alg:fft-decoding-ch4}
		\begin{algorithmic}[1]
			\Require received word $r \in \mathbb{C}^n$, embedding $\phi$
			\State $\tilde r \gets$ zero-pad of $r$ to $\Gamma$
			\State $\hat r \gets \mathrm{FFT}_\Gamma(\tilde r)$ \Comment{$O(N \log N)$}
			\State $\chi^* \gets \arg\max_{\chi} |\hat r(\chi)|$
			\State \Return $c_{\chi^*}$
		\end{algorithmic}
	\end{algorithm}
	
	\begin{example}[Two codes]
		\label{ex:codes}
		\textbf{Binary.}  Petersen $\to \Z_2^4$ gives the computed
		$[10, 4, 4]$ code; decoding is the $16$-point Walsh--Hadamard transform
		($\sim 64$ operations) versus Gaussian elimination on a $10\times 4$ matrix.
		\textbf{Mixed-radix.}  The diamond $\to \Z_2 \times \Z_3$ gives a length-$4$
		code over $6$ characters, decoded by a $6$-point mixed-radix
		($2 \times 3$) FFT --- the smallest non-binary instance, and a direct payoff
		of the cyclic factor produced by the Smith Normal Form.
	\end{example}
	
	\section{Parallel Computing and Load Balancing}
	\label{sec:parallel-computing}
	
	Mapping computation onto structured interconnection networks—such as meshes, tori, hypercubes, and Cayley topologies—presents a classical challenge in parallel hardware design, deeply rooted in early routing and layout theories \cite{Dally2004,Saad2003}. Over time, this concern has evolved into critical runtime scheduling and load balancing strategies within modern high-performance computing systems \cite{Thakur2005,Cappello2009}.  Vertex-transitivity makes $H$ an ideal task-placement fabric: subgroups give
	perfectly balanced partitions and the quotient inherits the routing
	structure \cite{Leighton1992,Dally1990}.
	
	\begin{theorem}[Coset load balancing]
		\label{thm:load-balance}
		If $K \leq \Gamma$ has index $P$, assigning task $v$ to processor $j$ when
		$\phi(v) \in g_j + K$ gives:
		(i) at most $\lceil N/P \rceil$ host slots per processor, hence balanced
		load up to the embedding's occupancy; (ii) inter-processor communication
		governed by the quotient Cayley graph $\Cay(\Gamma/K, \bar S)$; (iii)
		$\lceil |S|/2 \rceil$ fault tolerance inherited from $H$.
	\end{theorem}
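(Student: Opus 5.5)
The plan is to prove the three parts separately, since each rests on a different earlier fact: (i) on coset counting, (ii) on the quotient construction, (iii) on Theorem~\ref{thm:fault}(i).

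For (i), the key observation is that the cosets $g_j + K$, $j = 1, \ldots, P$, partition $\Gamma$ into $P$ blocks of equal size $|K| = N/P$ (Lagrange). So processor $j$ owns exactly $N/P \le \lceil N/P \rceil$ host slots. The isometric map $\phi$ is injective (Definition of isometric embedding), so each slot carries at most one task. Hence processor $j$ receives $|\phi(V) \cap (g_j + K)| \le N/P$ tasks, and the load is balanced "up to occupancy". I will state explicitly that it is the host slots, and not the task counts, that are exactly balanced. Without further hypotheses the embedded vertices can concentrate in a few cosets, so this is the honest reading of the clause.

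For (ii), I use the natural projection $\pi\colon \Gamma \to \Gamma/K$ with $\bar S = \{\pi(s) : s \in S,\ \pi(s) \neq 0\}$. Every host edge $\{g, g+s\}$ maps either to an edge of $\Cay(\Gamma/K, \bar S)$ or to a loop; a loop occurs iff $s \in K$, i.e.\ the message stays inside one processor. This handles the case distinction between intra- and inter-processor traffic. Conversely, every quotient edge lifts to a host edge, since $\bar g + \pi(s)$ is the image of $g + s$. Consequently:
\begin{itemize}
\item inter-processor messages travel exactly along edges of the quotient Cayley graph;
\item $\pi$ is $1$-Lipschitz from the host word metric to the quotient word metric (a walk maps to a walk of no greater length, mirroring Theorem~\ref{thm:quotient-labeling}(iii));
\item $\Cay(\Gamma/K, \bar S)$ is connected because $\bar S$ generates $\Gamma/K$.
\end{itemize}
Table-free routing in the quotient then follows verbatim from Theorem~\ref{thm:fault}(ii) applied to the group $\Gamma/K$.

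For (iii), I invoke Theorem~\ref{thm:fault}(i): $H$ is connected and vertex-transitive of degree $|S|$, hence $\lceil |S|/2 \rceil$-vertex-connected by Mader--Watkins. Since the placement is a relabeling of host nodes into processors, the host fabric has exactly the vertex connectivity of $H$. Any set of fewer than $\lceil |S|/2 \rceil$ failed host nodes therefore leaves the surviving nodes (tasks and relays) connected, and the inherited tolerance is immediate.

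The main obstacle is interpretive rather than technical. Part (i) must not be overstated as a balance of task counts, and part (iii) must be phrased as host-level connectivity, not as connectivity of the quotient. The quotient can have degree $|\bar S| < |S|$ when generators fall into $K$, so its own connectivity may be smaller. I will state this caveat in the same spirit as Remark~\ref{rem:survivor}.
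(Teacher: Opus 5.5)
Your proposal is correct and follows essentially the same route as the paper. The paper likewise partitions $\Gamma$ into the $P$ cosets of size $N/P$ for (i), observes that inter-coset host edges descend to edges of $\Cay(\Gamma/K,\bar S)$ for (ii), and cites Theorem~\ref{thm:fault}(i) for (iii). Your added details are sound refinements of that same argument: injectivity of $\phi$, the loop case $s\in K$, and the caveat that only host slots, not task counts, are exactly balanced.
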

	
	\begin{proof}
		The $P$ cosets of $K$ partition $\Gamma$ into blocks of size $|K| = N/P$;
		processor $j$ owns block $j$, so each owns $N/P$ host slots and hence at
		most $\lceil N/P \rceil$ actual tasks.  Edges of $H$ between blocks descend
		to edges of the quotient $\Cay(\Gamma/K, \bar S)$, itself a Cayley graph,
		giving (ii); (iii) is Theorem~\ref{thm:fault}(i) for $H$.
	\end{proof}
	
	\begin{figure}[H]
		\centering
		\includegraphics[width=0.5\linewidth]{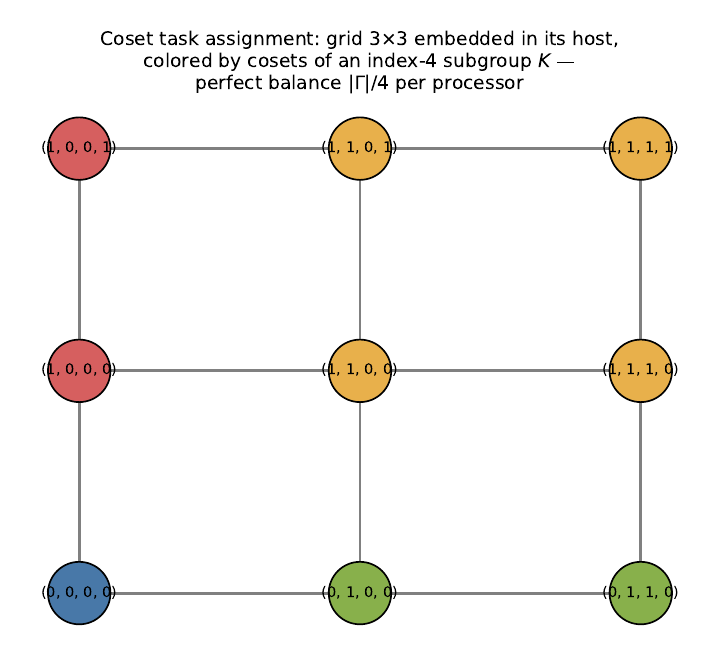}
		\caption{Coset task assignment for the $3{\times}3$ grid in its host:
			vertices colored by the four cosets of an index-$4$ subgroup, giving a
			balanced partition with communication along quotient edges.}
		\label{fig:cosetpartition}
	\end{figure}
	
	\begin{proposition}[Collective primitives]
		\label{prop:collectives}
		On $H = \Cay(\Gamma, S)$: broadcast runs in $O(\diam(H))$ along a BFS tree;
		all-reduce of $f$ is $\sum_{g} f(g) = N\,\hat f(\mathbf 1)$, the FFT at the
		trivial character; and any permutation routes in $O(\diam(H))$ generator
		steps.
	\end{proposition}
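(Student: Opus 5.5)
The proposition bundles three independent claims, and I would prove each directly from vertex-transitivity and the word metric of $H=\Cay(\Gamma,S)$. Throughout I would measure cost in synchronous communication rounds (one generator hop per round). I would also state explicitly that, as in Theorem~\ref{thm:fault}(ii), routing counts path length per message, not congestion.

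\textbf{Broadcast.} Fix the source $g_0$ and run a breadth-first search in $H$ from $g_0$, obtaining a BFS tree $T$ in which every host vertex $g$ sits at depth $d_H(g_0,g)\le\diam(H)$. Every node that receives the message in round $r$ forwards it to all its $T$-children in round $r+1$. Then every vertex at depth $r$ is informed by round $r$, so broadcast completes in $\max_g d_H(g_0,g)\le \diam(H)$ rounds. By vertex-transitivity ($d_H(x,y)=d_H(0,y-x)$) the tree can be taken as the translate $g_0+T_0$ of a single BFS tree $T_0$ rooted at $\mathbf 0$. Each node can then compute its children locally by group arithmetic, with no stored tables, consistent with Theorem~\ref{thm:fault}(ii).

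\textbf{All-reduce.} This is an identity about the Fourier transform. With the normalization implicit in the paper's convolution definition, $\hat f(\chi)=\frac1N\sum_{g\in\Gamma} f(g)\overline{\chi(g)}$. Evaluating at the trivial character $\chi=\mathbf 1$, where $\chi(g)=1$ for all $g$, gives $N\hat f(\mathbf 1)=\sum_g f(g)$ immediately. If the paper's Part~II transform is unnormalized, the factor $N$ is absorbed and I would note that the identity holds up to that convention. For the communication cost, I would reverse the broadcast tree: convergecast partial sums up $T$ to the root in $\le\diam(H)$ rounds, then broadcast the total back down in $\le\diam(H)$ rounds. This gives $O(\diam(H))$ overall.

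\textbf{Permutation routing.} For a permutation $\pi$ of $\Gamma$, each packet at $g$ must reach $\pi(g)$. It follows a geodesic word for $\pi(g)-g$, whose length is $d_H(\mathbf 0,\pi(g)-g)\le\diam(H)$; such a word exists by the definition of the word metric, and Algorithm~\ref{alg:routing-ch4} produces it. Hence every packet needs at most $\diam(H)$ generator steps, which is the statement.

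\textbf{Main obstacle.} The delicate point is the permutation claim. If ``routes in $O(\diam(H))$'' is read as makespan under unit edge capacity, the argument above does not control congestion. I would handle this by restricting the claim to hop count per packet, which is all the proposition needs given its wording in ``generator steps''. Optionally I would remark that for translations $\pi(g)=g+h$ all packets use the same word simultaneously. In that case there is no congestion at all, since every edge $\{x,x+s\}$ in a given round is used by exactly one packet.
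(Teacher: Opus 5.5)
Your proposal is correct and follows the paper's own one-line argument. Broadcast and permutation routing both rest on the word-metric bound $\diam(H)$, with permutation routing read as per-packet hop count exactly as you read it, and the all-reduce identity is just the Fourier coefficient at the trivial character; your convergecast cost and congestion caveat are sound refinements the paper leaves implicit. Two small corrections to side remarks: the Part~II transform of Definition~\ref{def:lift-gft} is unitarily normalized, so there $\sum_g f(g)=\sqrt N\,\hat f(\mathbf 1)$ and the stated constant $N$ is exact only for the $\frac1N$ convention you adopted, and in your translation remark an involutive generator ($2s=0$, e.g.\ on every binary host) sends one packet each way across $\{x,x+s\}$, so ``no congestion'' requires full-duplex links.
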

	
	\begin{proof}
		Broadcast and permutation use the word metric bound $\diam(H)$; the
		all-reduce identity is the definition of the Fourier coefficient at the
		trivial character \cite{Terras1999}.
	\end{proof}
	
	\begin{example}[Mesh on a hypercube]
		\label{ex:mesh}
		The $3 \times 3$ grid embeds in $\Z_2^4$ (order $16$); with $P = 4$
		processors the index-$4$ subgroup $\Z_2^2$ yields $4$ host slots each and
		dimension-order XOR routing (Figure~\ref{fig:cosetpartition}), excursion
		ratio $\varepsilon = 9/16$.  Under the abelian host $\Z_6 \times \Z_6$
		(order $36$) the same grid admits a $\Z_6$-coset partition into $6$ balanced
		strips with cyclic nearest-neighbor communication --- the toroidal layout
		familiar from stencil computations.
	\end{example}
	
	\section{A Real-World Application: Fault-Tolerant Augmentation of Cameroon's Transmission Backbone}
	\label{sec:cameroon-grid-case-study}
	
	The preceding sections of this chapter developed the embedding framework's
	applications in the abstract: fault tolerance, routing, and load balancing
	on Cayley graphs constructed as analytical hosts. This section grounds that
	framework in a real, currently unsolved infrastructure problem, using it to
	both diagnose and propose a costed remedy for a documented weakness in
	Cameroon's high-voltage transmission network.
	
	\subsection{Motivation and data}
	\label{sec:cgcs-data}
	
	Cameroon's electricity transmission system is organized into two
	historically separate interconnected networks: the Southern Interconnected
	Network (RIS), which serves six of the country's ten regions and carries
	the bulk of national hydroelectric generation, and the Northern
	Interconnected Network (RIN), fed primarily by the Lagdo dam
	\cite{ngono2024cameroon}. A third, small thermal plant in the East region
	operates in isolation from both \cite{eneo2024network}. A World Bank
	project (P168185) confirms that interconnecting the RIS and RIN is a
	current, unmet objective \cite{worldbank_p168185_2025}, and an independent
	2021 study identified a near-total absence of loop redundancy in the RIS
	specifically, proposing two remediation lines on engineering grounds
	\cite{onanena2021loopback}.
	
	We constructed a backbone-scale graph model of the documented high-voltage (HV) network --
	generation plants and named substations as vertices, transmission lines as
	edges -- from public sources: the published distribution-network page of ENEO (Energy of Cameroon), the
	national electricity utility \cite{eneo2024network}, a 2024 survey of Cameroonian generation
	capacity \cite{ngono2024cameroon}, the 2021 SIG (Syst\`eme d'Information G\'eographique, geographic information system) loopback study
	\cite{onanena2021loopback}\footnote{The generation-capacity survey \cite{ngono2024cameroon} and the loopback study \cite{onanena2021loopback} appear in the \emph{Journal of Power and Energy Engineering} (published by Scientific Research Publishing, SCIRP), a venue of limited editorial selectivity. They are cited here only for factual, independently verifiable data---official generation figures from the Ministry of Water and Energy (MINEE) and ENEO and a structural redundancy finding---which we cross-check against the institutional sources cited alongside them (ENEO, the AfDB, and the World Bank).}, African Development Bank (AfDB) project documentation for the
	Chad--Cameroon interconnection \cite{afdb_chadcameroon_esia}, trade-press
	coverage of recent 225kV line construction \cite{bic2022nkongsamba}, and
	World Bank project documentation for P168185
	\cite{worldbank_p168185_2025}. Each edge in the resulting graph is tagged
	with a confidence level (\emph{confirmed}, when directly cited to a named
	line, versus \emph{inferred}, when reconstructed from geographic or
	contextual evidence); roughly 80\% of edges fall into the latter category,
	and this should be read as a defensible first approximation rather than a
	survey-grade asset map. Same-city substations are aggregated into single
	hub nodes (Douala: 6 substations; Yaound\'e: 5 substations), since public
	sources do not resolve inter-substation lines at finer granularity. The
	resulting model has 23 nodes and 19 edges -- deliberately backbone-scale,
	consistent with the $\sim$25--30 node intractability boundary of the
	repair-loop algorithm documented in Chapter 2.
	
	\subsection{Baseline structural analysis}
	\label{sec:cgcs-baseline}
	
	The graph decomposes into four connected components: RIS (16 nodes, 15
	edges), RIN (5 nodes, 4 edges), and two singletons (a small private hydro
	plant whose connection to the RIN trunk is undocumented, and the East
	region's isolated thermal plant). Both non-trivial components are
	\emph{exact trees}: $|E| = |V| - 1$ in each case, with zero cyclomatic
	redundancy. Consequently, every one of the 19 edges is a bridge and 11 of
	the 23 nodes are articulation points -- a single line or substation fault
	anywhere in the documented backbone partitions the network downstream of
	it.
	
	This is not merely an artifact of incomplete public data: it is
	independently corroborated by Onanena et al.~\cite{onanena2021loopback}, who identify the
	same lack of redundancy in the RIS and propose two specific remediation
	lines (Songloulou--Bafoussam, Ahala--Ngousso) on power-quality grounds, both
	of which this baseline graph correctly omits as not-yet-built.
	
	\subsection{Embedding diagnostics}
	\label{sec:cgcs-embedding}
	
	Applying both the binary embedding (Chapter 2) and the general abelian
	embedding (Chapter 3) to each non-trivial component as-is gives the
	structural signature in Table~\ref{tab:cgcs-baseline-embed}.
	
	\begin{table}[h]
		\centering
		\caption{Embedding signatures of the as-is RIS and RIN backbones.}
		\label{tab:cgcs-baseline-embed}
		\begin{tabular}{lccc}
			\toprule
			Component & Binary host ($2^k$) & Abelian host & Abelian group \\
			\midrule
			RIS (16 nodes, branching tree) & 32{,}768 ($k=15$) & 32{,}768 & no improvement found \\
			RIN (5 nodes, path)            & 16 ($k=4$)        & 8       & $\mathbb{Z}_8$ \\
			\bottomrule
		\end{tabular}
	\end{table}
	
	The asymmetry is structurally meaningful, not incidental. RIN's documented
	topology is a pure path (Ngaound\'er\'e--Ndjamboutou--Lagdo--Guider--Maroua),
	which the general abelian portfolio compresses into a single cyclic
	generator: this is the textbook case the Chapter 3 generalization was built
	for. RIS branches outward from two hubs (Yaound\'e and Ed\'ea, each of
	degree $\geq 3$), and no abelian group structure in the search portfolio
	improves on the naive binary bound, since a branching tree requires
	independent coordinates per branch that a single (or low-rank) abelian
	group cannot supply without violating some pairwise distance. This gives a
	real, data-grounded illustration of the boundary case discussed
	theoretically in Chapter~3: not every graph benefits from the abelian
	generalization, and the embedding step itself correctly identifies
	\emph{which} structural regime a given network occupies -- a diagnostic
	role distinct from, and prior to, any design recommendation.
	
	\subsection{From diagnosis to design: minimum-cost augmentation}
	\label{sec:cgcs-augmentation}
	
	A critical methodological point follows directly from the definition of
	isometric embedding. If $\varphi$ embeds $G$ isometrically into
	$\mathrm{Cay}(\Gamma, S)$, then $d_G(u,v) = d_{\mathrm{Cay}}(\varphi(u),
	\varphi(v))$ for every pair already present in $G$; an embedding that
	``added'' a shortcut between two existing vertices would, by definition, no
	longer be isometric to $G$ as given. The embedding step is therefore
	structurally incapable of recommending new physical infrastructure -- it
	is a coordinate system for analysis on the existing network, not a network
	design tool. Generating an actual augmentation recommendation requires a
	genuinely different combinatorial step, taken independently of the
	embedding, with the embedding's role reserved for \emph{evaluating}
	candidate augmentations after the fact (\S\ref{sec:cgcs-embed-after}).
	
	For a tree with $L$ leaves, the classical result of Eswaran and Tarjan~\cite{eswarantarjan1976}
	gives $\lceil L/2 \rceil$ as both a lower bound and an achievable target for
	the number of new edges required to eliminate all bridges (make the graph
	2-edge-connected), via pairing of leaves such that every tree edge lies on
	the path between some pair. RIS has 8 leaves (lower bound: 4 new lines);
	RIN has 2 (lower bound: 1). For RIS, we enumerate all leaf-pairings
	achieving this bound and search exhaustively (105 candidate matchings) for
	the one minimizing total cost, using a real per-kilometer benchmark derived
	from the P168185 project documentation: \$266.0M for 573km of new 225kV
	double-circuit line, i.e. $\approx$\$464,000/km
	\cite{worldbank_p168185_2025}.
	
	\subsection{Capacity-weighted refinement}
	\label{sec:cgcs-capacity}
	
	A pure distance-minimizing objective (``Plan A'') yields the cheapest total
	spend (Table~\ref{tab:cgcs-plans}, top) but treats a megawatt of Nachtigal's
	420MW identically to a megawatt of Bafoussam's 14MW thermal plant. A
	capacity-weighted objective (``Plan B'') instead minimizes the aggregate
	ratio of dollars spent per megawatt of generation capacity protected at
	each new line's endpoints. This is not merely a different number: it is a
	different recommended topology, and -- remarkably -- it is the one that
	converges with independently published, human engineering judgment. Plan
	B's Songloulou--Bafoussam pairing is \emph{exactly} the line proposed by
	Onanena et al.~\cite{onanena2021loopback} on power-quality grounds, a convergence Plan A
	does not produce (Plan A instead pairs Songloulou with Limb\'e).
	
	\begin{table}[h]
		\centering
		\caption{In-region augmentation: distance-only (Plan A) vs. capacity-weighted (Plan B).}
		\label{tab:cgcs-plans}
		\begin{tabular}{lrrl}
			\toprule
			Plan A (min.\ total cost) & km & USD & MW protected \\
			\midrule
			Songloulou -- Limb\'e        & 84  & \$39.0M  & 469 \\
			Bafoussam -- Nachtigal       & 173 & \$80.3M  & 434 \\
			Kribi -- Memve'ele           & 57  & \$26.7M  & 427 \\
			Lompangar -- Mekin           & 235 & \$108.9M & 45 \\
			\midrule
			\textbf{Total}                & & \textbf{\$254.9M} & \\
			\midrule
			\midrule
			Plan B (min.\ \$/MW, capacity-weighted) & km & USD & MW protected \\
			\midrule
			Songloulou -- Bafoussam$^\dagger$ & 153 & \$70.8M & 398 \\
			Lompangar -- Nachtigal        & 241 & \$111.7M & 450 \\
			Memve'ele -- Limb\'e          & 199 & \$92.3M & 296 \\
			Kribi -- Mekin                & 249 & \$115.7M & 231 \\
			\midrule
			\textbf{Total}                 & & \textbf{\$390.6M} & \\
			\bottomrule
			\multicolumn{4}{l}{\footnotesize $^\dagger$Matches Onanena et al.~\cite{onanena2021loopback}'s independently proposed line.}
		\end{tabular}
	\end{table}
	
	The same pattern holds at the network-merging level. A pure
	distance-minimizing spanning structure over the four disconnected
	components anchors the RIS--RIN tie at Lompangar (a 30MW regulatory dam,
	simply the nearest RIS node to the RIN trunk). A capacity-weighted version
	anchors it at Nachtigal (420MW) instead -- matching the actual anchor point
	of the real, currently-funded P168185 corridor (Nachtigal $\rightarrow$
	Hourou Oussoa), which our straight-line estimate of 387km approximates at a
	1.33$\times$ routing factor against the real corridor's 514km (a routing
	detour the real project takes deliberately, to extend rural electrification
	access along the way -- an objective outside the scope of this model). We
	adopt a hybrid policy for the final recommendation: capacity-weighting for
	the single strategic RIS--RIN tie (where bulk power transfer is the
	explicit objective, and where it is empirically validated against real
	practice), and plain distance-minimization for the two cheap peripheral
	fixes (Lompangar--Bertoua, Ngaound\'er\'e--Mbakaou), where a tie to a major
	plant adds cost without adding value for these small, low-stakes
	connections.
	
	\subsection{Embedding the augmented network}
	\label{sec:cgcs-embed-after}
	
	Re-running both embedding algorithms on the augmented graphs lets the
	embedding step do what it is actually suited for: evaluating, rather than
	generating, a candidate design (Table~\ref{tab:cgcs-after-embed}).
	
	\begin{table}[h]
		\centering
		\caption{Embedding signatures before and after augmentation.}
		\label{tab:cgcs-after-embed}
		\begin{tabular}{lcccc}
			\toprule
			& Bridges & Binary host & Abelian host & Abelian group \\
			\midrule
			RIS as-is        & 15 & 32{,}768 & 32{,}768 & -- \\
			RIS + Plan A      & 0  & 512      & 288      & $\mathbb{Z}_2^3 \times \mathbb{Z}_4 \times \mathbb{Z}_{36}$ \\
			RIS + Plan B      & 0  & 1{,}024   & 1{,}024   & no improvement found \\
			\midrule
			RIN as-is        & 4  & 16       & 8        & $\mathbb{Z}_8$ \\
			RIN + 1 line      & 0  & 16       & 5        & $\mathbb{Z}_5$ (theoretical floor, $=n$) \\
			\bottomrule
		\end{tabular}
	\end{table}
	
	RIN's fix does more than remove its single point of failure: closing the
	path into a 5-cycle makes the network isometric to the cyclic group of its
	own order, the smallest host any connected 5-node graph can possibly admit.
	For RIS, Plan A achieves a 114-fold reduction in abelian host size (still
	18$\times$ above the theoretical floor of $n=16$, reflecting genuine
	remaining branching complexity); Plan B, despite eliminating the same
	bridges, achieves no abelian improvement over its binary host at all. This
	is the central tension of the case study: the structurally most elegant
	augmentation (Plan A) is not the one independent domain engineering
	converges on (Plan B). Mathematical elegance and real-world validation
	point in different directions here, and we regard surfacing that tension
	explicitly -- rather than collapsing it into a single recommendation -- as
	itself a contribution of the embedding-based diagnostic framework.
	
	\subsection{Cost evaluation and overall recommendation}
	\label{sec:cgcs-cost}
	
	Combining each in-region plan with the RIN fix and the hybrid
	connectivity package (\$232.5M: \$179.7M for the capacity-weighted
	Nachtigal--Ngaound\'er\'e tie, \$9.5M and \$43.3M for the two peripheral
	fixes) gives two overall packages: \$660.4M under Plan A, \$796.1M under
	Plan B. For comparison, the real P168185 project alone -- covering only the
	RIS--RIN tie and a short Ed\'ea--Missol\'e line, not the in-region bridge
	elimination this case study additionally proposes -- is budgeted at
	\$266.0M to \$385.0M depending on financing tranche
	\cite{worldbank_p168185_2025, afrik21_chadcameroon_2020}. Our backbone-only
	estimates are therefore of a consistent, plausible order of magnitude
	against real committed infrastructure spending in this network.
	
	\subsection{Limitations}
	\label{sec:cgcs-limitations}
	
	This case study should be read as a methodological demonstration, not a
	construction-ready engineering plan. Roughly 80\% of the underlying graph's
	edges are geographically or contextually inferred rather than directly
	cited to a named transmission line; same-city substations are aggregated
	into hub nodes at a coarser resolution than the real distribution mesh;
	costs are derived from a single real benchmark project rather than a full
	quantity-surveyed estimate; and the capacity-weighting scheme, while it
	recovers two independent real-world decisions, can also produce
	counter-intuitive results when applied uniformly to low-stakes peripheral
	connections (\S\ref{sec:cgcs-capacity}), which is why we adopt a hybrid
	policy rather than a single global objective. Figures~\ref{fig:cgcs-map1}
	and \ref{fig:cgcs-map2} should be read with this scope in mind.
	
	\begin{figure}[h]
		\centering
		\includegraphics[width=0.85\textwidth]{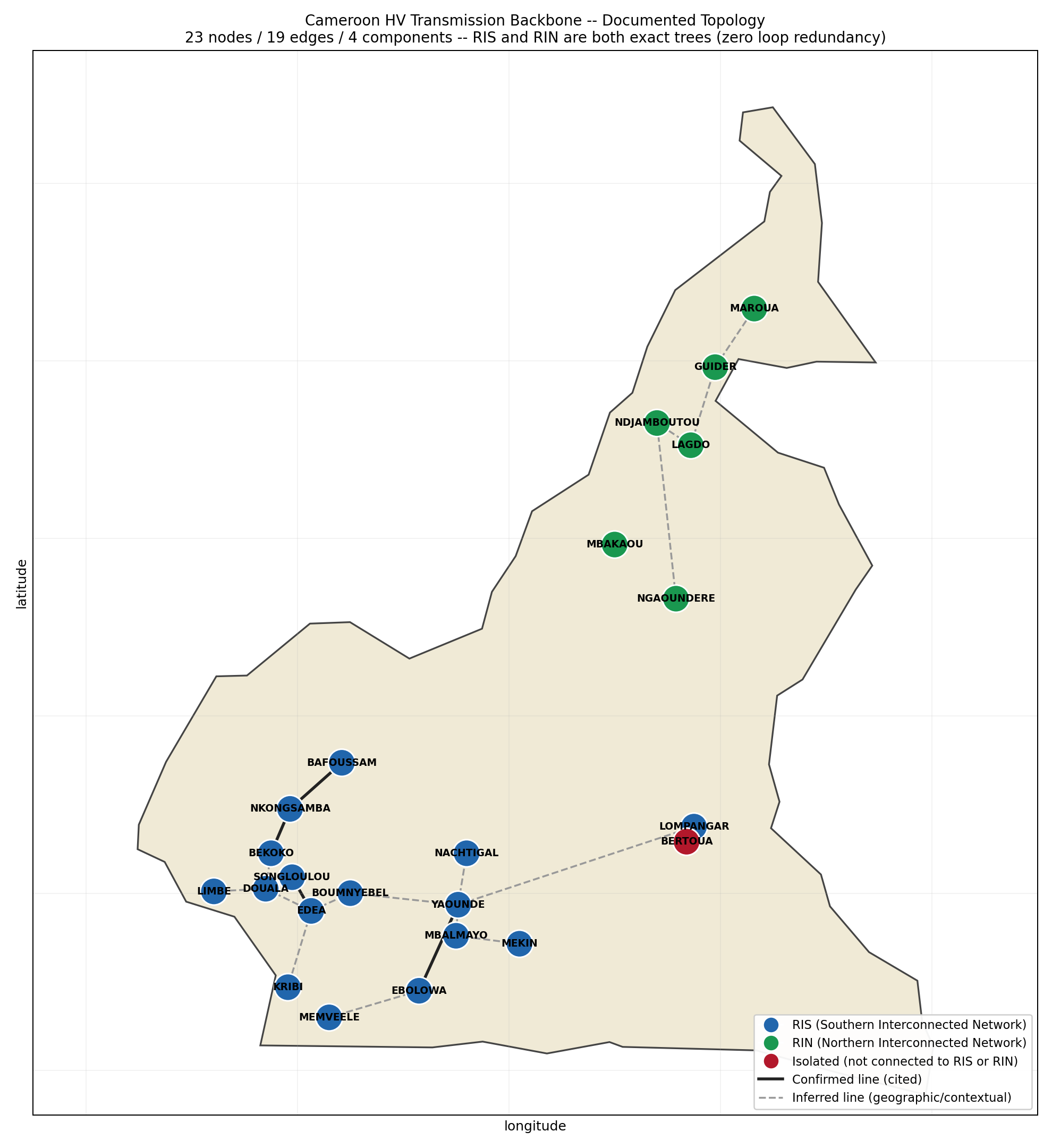}
		\caption{Documented HV transmission backbone, plotted on Cameroon's actual
			boundary. Both RIS and RIN are exact trees; the East region's isolated
			plant and the undocumented small hydro site appear with no connecting edge.}
		\label{fig:cgcs-map1}
	\end{figure}
	
	\begin{figure}[h]
		\centering
		\includegraphics[width=0.85\textwidth]{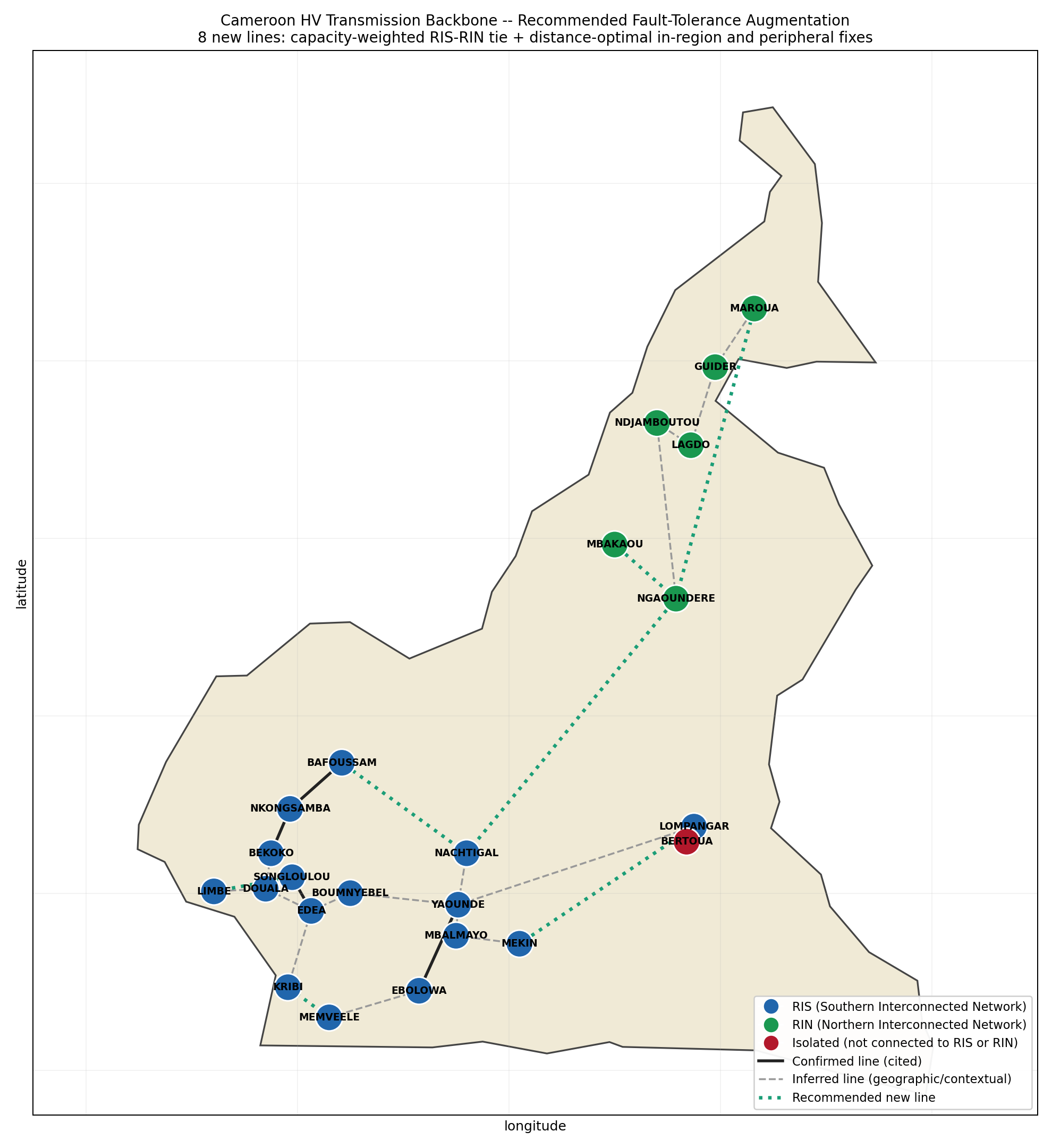}
		\caption{The same backbone with the recommended augmentation (Plan B,
			capacity-weighted) overlaid as dotted lines: four in-region bridge
			eliminations, the strategic RIS--RIN tie, and two peripheral connectivity
			fixes.}
		\label{fig:cgcs-map2}
	\end{figure}

	\section{Further Application Directions}
	\label{sec:further-apps}
	
	We indicate four directions whose full development needs domain expertise
	beyond this thesis.
	
	\textbf{Graph drawing.}  The host gives a canonical layout: $\Z_n$ factors
	become concentric circular arrangements, $\Z_2^k$ factors hypercube
	projections, products their Cartesian combination; $\phi$ then places
	$V(G)$ within it, exposing symmetries that force-directed methods miss
	\cite{Imrich2010}.
	
	\textbf{Machine learning.}  Graph neural networks have made graph-structured learning a central topic, expanding from early spectral and Chebyshev convolutions \cite{Bruna2014,Defferrard2016} to widely adopted graph convolutional and attention mechanisms \cite{Kipf2017,Velickovic2018}, as well as generalized graph-filter architectures \cite{Gama2019,Isufi2024}. In our setting, the coordinates of $\phi$ are interpretable distance-preserving features and the character evaluations $f_\chi(v) = \chi(\phi(v))$ are Fourier features equivariant under automorphisms extending to $\Gamma$ --- natural inputs for equivariant graph networks \cite{Bronstein2017}.

	\textbf{Quantum information.}  In graph states, edges in one $\Phi^-$-class
	carry locally equivalent entanglement; a binary host enables stabilizer
	syndrome extraction by Walsh--Hadamard measurement
	\cite{Nielsen2010,Kitaev2003}.
	
	\textbf{Computational biology.}  Large $\Phi^-$-classes in interaction
	networks flag vertices with identical distance profiles --- candidate
	functional modules \cite{Barabasi2004}.
	
	\section{Summary}
	\label{sec:apps-summary}
	
	\begin{table}[H]
		\centering
		\caption{Application domains, the Part~I quantity they consume, and the
			mechanism.}
		\label{tab:apps}
		\renewcommand{\arraystretch}{1.15}\small
		\begin{tabular}{lll}
			\toprule
			Domain & Governing quantity & Mechanism \\
			\midrule
			Network design     & host degree $|S|$, $\diam(H)$ & vertex-transitivity, word routing \\
			Error correction   & host order $N$               & character code + group FFT \\
			Parallel computing & host order $N$, subgroups    & coset partitioning \\
			\bottomrule
		\end{tabular}
	\end{table}
	
	Across all three domains the lever is the same: Part~I replaces an ad-hoc
	graph problem with a group-theoretic one whose cost is set by the host order
	$N$ and degree $|S|$ --- precisely the quantities Chapters~2 and~3 minimize
	and bound.  The binary-ground census
	(Remark~\ref{rem:apps-binary-ground}) tells the practitioner what to expect:
	a clean $\Z_2^k$ host with XOR algebra for generic graphs, and a compact
	mixed-radix host with cyclic codes and toroidal layouts wherever the input
	carries genuine symmetry.  Part~II builds on exactly these hosts: the
	characters that decode the codes of Section~\ref{sec:error-codes} are the
	Fourier basis of Part~II, and the factor orders $N_i$
	delivered by the Smith Normal Form are its frequency index set.

	\part{Harmonic Analysis on Graphs}
	\label{part:harmonic}
	\partintro{Part~I solved a structural problem: every connected graph $G$ embeds
		isometrically into a Cayley graph $\Cay(\Gamma, S)$ of a finite abelian group,
		computed by the quotient framework and certified on every output.  Part~II
		collects the dividend.  Because $\Gamma$ is an abelian group, it carries the
		full apparatus of classical harmonic analysis --- characters, a canonical
		Fourier transform, genuine translation and modulation operators, convolution
		with a convolution theorem, Plancherel and Poisson identities, uncertainty
		principles, and a Shannon sampling theory---the classical apparatus of harmonic analysis on groups, from Fourier analysis on Euclidean and finite-group domains to its probabilistic and noncommutative extensions \cite{Stein1971,Diaconis1988,Connes1994}.  Lifting graph signals to $\Gamma$
		transports all of it onto $G$.  We call this framework \emph{Group
			Embedding-based Graph Signal Processing} (GE-GSP); its two transforms,
		developed in this part, are the \emph{group-embedding graph Fourier
			transform} (GE-GFT, Chapter~\ref{chap:fourier}) and the
		\emph{group-embedding graph wavelet transform} (GE-GWT,
		Chapter~\ref{chap:wavelets}).  This is exactly what spectral graph signal
		processing cannot offer: the Laplacian eigenbasis is graph-specific,
		sign-ambiguous, and supports no translation, so it has no convolution theorem
		and no translation-invariant filtering.  Our framework supplies all of these
		on \emph{every} connected graph, with the host computed in Part~I setting the
		cost: $O(N\log N)$ per transform, $N = |\Gamma|$.  Chapter~\ref{chap:fourier}
		develops the Fourier transform, its operators, and its theorems, validated by
		real denoising, uncertainty, and spectrum experiments on the
		graph-signal-processing benchmark families of Chapter~\ref{chap:binary-embedding};
		the following chapter extends the construction to wavelets.}
	
	\chapter{The Graph Fourier Transform via Group Embedding}
	\label{chap:fourier}
	
	\section{Spectral Graph Signal Processing and its Limits}
	\label{sec:existing-fourier}
	
	The dominant framework for signals on graphs builds the Fourier transform
	from the eigenvectors of a graph matrix \cite{Shuman2013,Sandryhaila2013,
		Ortega2018}.  For a connected graph with Laplacian $L = D - A$ and
	eigendecomposition $L = U \Lambda U^{\!\top}$, the spectral graph Fourier
	transform (GFT) of $s \in \mathbb{R}^n$ is $\hat s = U^{\!\top} s$, and the
	``frequencies'' are the eigenvalues $0 = \lambda_1 < \lambda_2 \leq \cdots
	\leq \lambda_n$.
	
	This is powerful but structurally limited, and the limitations are not
	incidental --- they follow from the absence of a group:
	\begin{enumerate}[(i)]
		\item \textbf{No translation.}  There is no operator $T$ with
		$T^a T^b = T^{a+b}$ acting on vertices, because the vertex set has no group
		structure \cite{Sandryhaila2013}.
		\item \textbf{No convolution theorem.}  Without translation, convolution can
		only be defined as polynomial filtering $h(L)s$, which is pointwise in the
		eigenbasis but has no spatial ``shift-and-sum'' meaning and no
		$\widehat{s * t} = \hat s \cdot \hat t$ in the classical sense
		\cite{Sandryhaila2014}.
		\item \textbf{Basis ambiguity.}  Eigenvectors are defined only up to sign,
		and up to arbitrary rotation within degenerate eigenspaces
		\cite{Gavili2017}; the Petersen Laplacian has a $5$-fold and a $4$-fold
		degenerate eigenvalue, so its ``Fourier basis'' is not even well defined.
		\item \textbf{Cost.}  The eigendecomposition is $O(n^3)$ \cite{Chung1997}.
	\end{enumerate}
	
	To overcome these limitations, a substantial body of research has emerged within the domain of spectral graph signal processing (GSP). Initial efforts focused on reevaluating foundational operators, leading to alternative graph shift operators and generalized graph Fourier transforms \cite{Girault2015,Deri2017,Girault2018,Marques2020}, which subsequently enabled rigorous definitions of graph stationarity and localized vertex--frequency analysis \cite{Perraudin2017,Shuman2016}. Concurrently, ensuring data integrity across irregular domains drove the development of robust sampling and reconstruction theories \cite{Pesenson2008,Anis2016,Tanaka2018,Tanaka2020,Marques2016,Puy2018,Tsitsvero2016,Gadde2015,Wang2015,Romero2017,Zhu2012}. 
	
	Rather than assuming fixed topologies, modern frameworks increasingly focus on joint graph and dictionary learning to infer latent structures directly from data \cite{Thanou2014,Dong2016,Kalofolias2016,Egilmez2017,Mateos2019,Segarra2017}, with practical extensions deployed for decentralized filtering on distributed sensor networks \cite{Sandryhaila2014sensor}. For a comprehensive textbook treatment, these paradigms are thoroughly consolidated in several broad surveys of the field \cite{Stankovic2019b,Dong2020,Leus2023}. Ultimately, this entire spectral viewpoint rests upon a rich mathematical heritage, inheriting tools directly from algebraic and classical spectral graph theory \cite{Fiedler1973,Spielman2012,Hoory2006,Brouwer2012,Lovasz1993} alongside foundational techniques in spectral clustering \cite{Ng2002}.

	Our approach removes the cause rather than treating the symptoms: it installs
	a genuine abelian group behind the graph by the embedding of Part~I, then
	imports classical Fourier analysis verbatim.  Figure~\ref{fig:basiscompare}
	contrasts the two bases on the Petersen graph.
	
	\begin{figure}[htbp]
		\centering
		\includegraphics[width=0.9\linewidth]{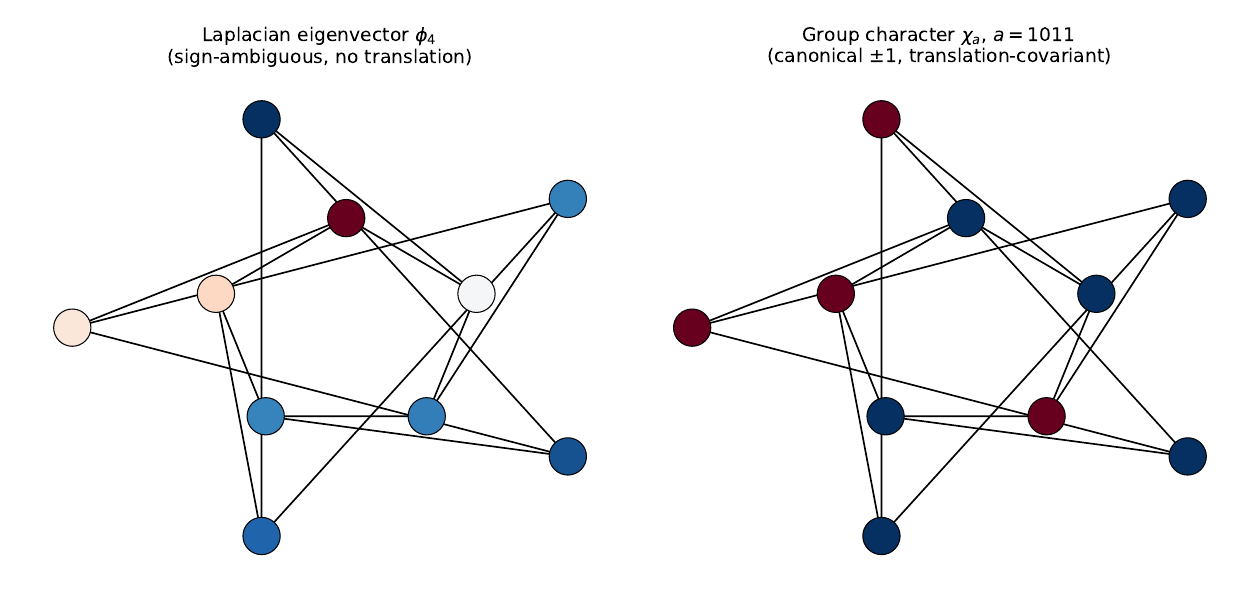}
		\caption{Two Fourier bases on the Petersen graph.  Left: a Laplacian
			eigenvector from a degenerate eigenspace --- sign- and rotation-ambiguous,
			supporting no translation.  Right: a group character $\chi_a$ of the host
			$\Z_2^4$, taking canonical values $\pm 1$, translation-covariant by
			Theorem~\ref{thm:translation}.  The character basis is determined by the
			group, not by the individual edges.}
		\label{fig:basiscompare}
	\end{figure}
	
	\section{Fourier Analysis on the Host Group}
	\label{sec:group-fourier}
	
	Let $\Gamma = \Z_{N_1} \times \cdots \times \Z_{N_d}$ be the host group of
	order $N = \prod_i N_i$ produced by Part~I (binary hosts are the case all
	$N_i = 2$).
	
	\begin{definition}[Characters and dual group]
		A \emph{character} is a homomorphism $\chi\colon \Gamma \to \mathbb{C}^\times$.
		For $k = (k_1, \ldots, k_d) \in \Gamma$,
		\[
		\chi_k(g) = \prod_{j=1}^d \omega_{N_j}^{\,k_j g_j},
		\qquad \omega_{N_j} = e^{2\pi i / N_j},
		\]
		and the dual group $\widehat\Gamma = \{\chi_k\}$ is isomorphic to $\Gamma$
		\cite{Terras1999,Rudin1962}.  For a binary host the characters are the
		Walsh functions $\chi_a(x) = (-1)^{\langle a, x\rangle}$.
	\end{definition}
	
	\begin{theorem}[Orthogonality]
		\label{thm:orthogonality}
		$\frac{1}{N}\sum_{g \in \Gamma} \chi_k(g)\overline{\chi_\ell(g)}
		= \delta_{k\ell}$ and dually
		$\frac{1}{N}\sum_{k} \chi_k(g)\overline{\chi_k(h)} = \delta_{gh}$.
	\end{theorem}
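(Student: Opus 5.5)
The plan is to reduce both identities to a single fact: for a character $\chi_m$ of $\Gamma$,
\[
\frac{1}{N}\sum_{g\in\Gamma}\chi_m(g)=\delta_{m,0}.
\]
First, from the explicit formula $\chi_k(g)=\prod_j\omega_{N_j}^{k_jg_j}$, I would record two algebraic properties. The first is $\chi_k\overline{\chi_\ell}=\chi_{k-\ell}$, because $|\omega_{N_j}|=1$ gives $\overline{\omega_{N_j}^{a}}=\omega_{N_j}^{-a}$. The second is the symmetry $\chi_k(g)=\chi_g(k)$, since the exponent $k_jg_j$ is symmetric in $k$ and $g$. With these, the first identity becomes $\frac1N\sum_g\chi_{k-\ell}(g)$. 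The dual identity becomes $\frac1N\sum_k\chi_k(g-h)=\frac1N\sum_k\chi_{g-h}(k)$, which is the same sum with the roles of group element and frequency exchanged.

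To prove the key fact, I would use the product structure of $\Gamma=\Z_{N_1}\times\cdots\times\Z_{N_d}$. The sum over $g$ factors as
\[
\prod_{j=1}^d\sum_{g_j=0}^{N_j-1}\omega_{N_j}^{m_jg_j}.
\]
Each factor is a finite geometric series. If $m_j\equiv0\pmod{N_j}$, every term equals $1$ and the factor is $N_j$. Otherwise $\omega_{N_j}^{m_j}\neq1$, and the factor equals $(\omega_{N_j}^{m_jN_j}-1)/(\omega_{N_j}^{m_j}-1)=0$. The product is therefore $N$ when $m=0$ and $0$ otherwise.

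As a coordinate-free alternative, I could use the shift trick. If $m\neq0$, pick $h$ with $\chi_m(h)\neq1$; reindexing $g\mapsto g+h$ gives $\Sigma=\chi_m(h)\Sigma$, which forces $\Sigma=0$. This version needs only that a nontrivial character is not identically $1$, which is immediate from the formula.

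There is no real obstacle here. The one point needing care is the bookkeeping that $k-\ell$ and $g-h$ are taken componentwise modulo $N_j$, so that they remain in $\Gamma$ and $\chi_{k-\ell}$ is well defined. For this I would note that $\omega_{N_j}^{a}$ depends only on $a\bmod N_j$. The binary case (all $N_j=2$, Walsh functions) is included automatically.
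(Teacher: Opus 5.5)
Your proof is correct, and its skeleton is the paper's. Both reduce the identities, via $\chi_k\overline{\chi_\ell}=\chi_{k-\ell}$, to $\sum_{g}\chi_m(g)=N\delta_{m,0}$, and your ``coordinate-free alternative'' (reindex $g\mapsto g+h$ to get $\Sigma=\chi_m(h)\Sigma$) is exactly the argument the paper gives.

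The differences are in execution. Your primary proof of the vanishing factors the sum into geometric series over the cyclic factors. This is fully explicit but tied to the coordinates $\Gamma=\Z_{N_1}\times\cdots\times\Z_{N_d}$. The shift trick works verbatim for any character of any finite abelian group, once one point with $\chi_m(h)\neq 1$ is known; your formula gives $h=e_j$ for any $j$ with $m_j\not\equiv 0$.

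For the dual identity, the paper only says ``the same argument in $\widehat\Gamma$''. That tacitly requires $k\mapsto\chi_k(x)$ to be a homomorphism on the index group and to be nontrivial for $x\neq 0$, i.e.\ that characters separate points. Your symmetry $\chi_k(g)=\chi_g(k)$ supplies both at once. It identifies $k\mapsto\chi_k(g-h)$ with the character $\chi_{g-h}$, so the dual sum is literally the first one with the roles of $k$ and $g$ exchanged.

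Your route is therefore a little more self-contained on the dual step. The paper's is the one that generalizes without coordinates.
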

	
	\begin{proof}
		For $k \neq \ell$, $\chi_k\overline{\chi_\ell} = \chi_{k-\ell}$ is a
		nontrivial character, and $\sum_g \chi_m(g) = 0$ for $m \neq 0$ because
		multiplication by a fixed group element permutes the summands while
		multiplying the sum by $\chi_m$(that element) $\neq 1$; for $k = \ell$ every
		term is $1$.  The dual statement is the same argument in $\widehat\Gamma$.
	\end{proof}
	
	\begin{definition}[Signal lifting and the GE-GFT]
		\label{def:lift-gft}
		For a graph signal $s\colon V(G) \to \mathbb{C}$ and the isometric embedding
		$\phi$, the \emph{lift} $\tilde s\colon \Gamma \to \mathbb{C}$ is
		$\tilde s(\phi(v)) = s(v)$ on the image and $0$ elsewhere.  The \emph{group-embedding graph Fourier transform} (GE-GFT) is
		\[
		\hat s(k) = \frac{1}{\sqrt N} \sum_{g \in \Gamma} \tilde s(g)\,
		\overline{\chi_k(g)},
		\qquad
		\tilde s(g) = \frac{1}{\sqrt N} \sum_{k \in \widehat\Gamma} \hat s(k)\,
		\chi_k(g).
		\]
	\end{definition}
	
	\begin{theorem}[Plancherel]
		\label{thm:plancherel}
		For all $s, t$, $\sum_{v} s(v)\overline{t(v)} =
		\sum_{k} \hat s(k)\overline{\hat t(k)}$; in particular
		$\lVert s \rVert_2 = \lVert \hat s \rVert_2$.
	\end{theorem}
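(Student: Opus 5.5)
The plan is to prove the identity first on the host group for lifted signals, and then restrict it back to $V(G)$.

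\emph{Step 1: reduce the left side to a host inner product.} Since $\phi$ is an isometric embedding, it is injective, and the lift $\tilde s$ vanishes off $\phi(V(G))$. So the map $v \mapsto \phi(v)$ is a bijection from $V(G)$ onto the support region, and
\[
\sum_{v} s(v)\overline{t(v)} \;=\; \sum_{g\in\Gamma} \tilde s(g)\overline{\tilde t(g)}.
\]
Injectivity is the only place the embedding enters. It guarantees that no two vertices collide at one host point, which would otherwise create cross terms or overwrite values.

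\emph{Step 2: the host Parseval identity.} Substitute the definition of the GE-GFT from Definition~\ref{def:lift-gft} into $\sum_k \hat s(k)\overline{\hat t(k)}$. Expanding gives
\[
\frac1N\sum_{g,h\in\Gamma}\tilde s(g)\overline{\tilde t(h)}\sum_{k}\overline{\chi_k(g)}\chi_k(h).
\]
All sums are finite, so interchanging them is justified. By the dual orthogonality relation of Theorem~\ref{thm:orthogonality}, the inner sum over $k$ equals $N\delta_{gh}$; conjugating that relation handles the order of the factors. The double sum then collapses to $\sum_g \tilde s(g)\overline{\tilde t(g)}$. Combined with Step 1, this gives the identity, and setting $t=s$ gives the norm equality.

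\emph{Main obstacle.} There is no real difficulty here. The one point to handle carefully is the normalization: the factor $1/\sqrt N$ appears in both the forward and the inverse transform, and these must combine to exactly cancel the factor $N$ produced by orthogonality.

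One subtlety is worth noting. $\lVert \hat s\rVert_2$ is a norm over all $N$ frequencies, not over $n$ of them. The identity therefore says the GE-GFT is an isometry from $\mathbb{C}^{V(G)}$ into $\mathbb{C}^{\widehat\Gamma}$, not a unitary map onto it. I would state this explicitly to avoid suggesting that the transform is invertible on arbitrary spectra.
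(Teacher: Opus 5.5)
Your proof is correct and is essentially the paper's. The paper also treats the lift as an isometric, zero-padding inclusion $\mathbb{C}^{V(G)}\hookrightarrow\mathbb{C}^\Gamma$ and invokes unitarity of the character matrix $F_{k,g}=\chi_k(g)/\sqrt N$ from Theorem~\ref{thm:orthogonality}, which your Step 2 simply unpacks via the dual orthogonality relation. (One wording point: the two factors $1/\sqrt N$ in your expansion come from $\hat s(k)$ and $\overline{\hat t(k)}$, both forward transforms, so the inverse transform plays no role in the computation itself.)
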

	
	\begin{proof}
		The lift is an isometric inclusion $\mathbb{C}^{V(G)} \hookrightarrow
		\mathbb{C}^\Gamma$ (it only appends zeros), and the matrix
		$F_{k,g} = \chi_k(g)/\sqrt N$ is unitary by
		Theorem~\ref{thm:orthogonality}; the GFT is $F^{*}$ restricted to lifted
		signals, hence norm-preserving.
	\end{proof}
	
	\section{Translation, Modulation, and a Correct Convolution}
	\label{sec:operators}
	
	\begin{definition}[Translation and modulation]
		For $h \in \Gamma$ and $k \in \widehat\Gamma$,
		\[
		(T_h s)(v) = \tilde s(\phi(v) + h),
		\qquad
		(M_k s)(v) = \chi_k(\phi(v))\, s(v).
		\]
	\end{definition}
	
	\begin{theorem}[Operator calculus]
		\label{thm:translation}
		$T_g T_h = T_{g+h}$, $T_0 = I$; each $T_h$ is unitary on the lifted space;
		$\widehat{T_h s}(k) = \chi_k(h)\,\hat s(k)$ (translation
		$\leftrightarrow$ modulation duality); and
		$T_h M_k = \chi_k(h)\,M_k T_h$ (the canonical commutation relation).
	\end{theorem}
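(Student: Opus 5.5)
The plan is to prove every identity first for genuine host translation on $\mathbb{C}^\Gamma$, and then read the graph statements through the lift. For $f\colon\Gamma\to\mathbb{C}$ put $(\tau_h f)(x)=f(x+h)$ and $(\mu_k f)(x)=\chi_k(x)f(x)$. With the lift $\Lift$ (zero-extension) and the restriction $\Restr$ (evaluation at $\phi(v)$), the definitions give $T_h=\Restr\,\tau_h\,\Lift$ and $M_k=\Restr\,\mu_k\,\Lift$. Since $\Restr\Lift=\Id$, $M_k$ agrees with $\mu_k$ on lifted signals. For $T_h$ there is a subtlety: $\tau_h\Lift s$ is in general not supported on $\phi(V)$. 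So I will read ``on the lifted space'' as letting $T_h$ act as $\tau_h$ on $\mathbb{C}^\Gamma$, i.e. on $\tilde s$ and its translates. This is also the reading under which $\widehat{T_h s}$ means the transform of $\tau_h\tilde s$.

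First, the group law. We have $(\tau_g\tau_h f)(x)=(\tau_h f)(x+g)=f(x+g+h)$, so $\tau_g\tau_h=\tau_{g+h}$ by commutativity, and $\tau_0=\Id$. Unitarity holds because $x\mapsto x+h$ is a bijection of $\Gamma$. Hence $\tau_h$ is a permutation matrix, it preserves the inner product $\sum_g f(g)\overline{f'(g)}$, and its inverse is $\tau_{-h}$.

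Second, the duality $\widehat{\tau_h f}(k)=\chi_k(h)\hat f(k)$. Substitute $g'=g+h$ in $\frac{1}{\sqrt N}\sum_g f(g+h)\overline{\chi_k(g)}$. Because $\chi_k$ is a homomorphism into the unit circle, $\overline{\chi_k(g'-h)}=\overline{\chi_k(g')}\,\chi_k(h)$, and the factor $\chi_k(h)$ comes out of the sum.

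Third, the commutation relation is a one-line check. We have $(\tau_h\mu_k f)(x)=\chi_k(x+h)f(x+h)$, and this equals $\chi_k(h)\chi_k(x)(\tau_h f)(x)=\chi_k(h)(\mu_k\tau_h f)(x)$.

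The main obstacle is not any of these computations but the bookkeeping at the graph level. Taken literally as maps on $\mathbb{C}^{V(G)}$, the restricted operators satisfy $T_gT_h=\Restr\tau_g\Lift\Restr\tau_h\Lift$. This differs from $\Restr\tau_{g+h}\Lift$ whenever $\tau_h\tilde s$ carries mass outside $\phi(V)$, because $\Lift\Restr$ is only the projection onto functions supported on $\phi(V)$. I would state this explicitly. The group law, unitarity and the commutation relation hold exactly for $\tau_h$ acting on $\Lift(\mathbb{C}^{V})$ and its translates inside $\mathbb{C}^\Gamma$. On $\mathbb{C}^{V}$ itself they hold for those $h$ with $\phi(V)+h=\phi(V)$, for which $\tau_h$ preserves the support and commutes with $\Lift\Restr$. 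The duality formula needs no such caveat once $\widehat{T_h s}$ is read as the GE-GFT of $\tau_h\tilde s$.
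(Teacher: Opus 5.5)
Your proposal is correct and follows the paper's proof. The computations are the same host-level ones: the group law from the group operation, unitarity from translation invariance on $\Gamma$, the substitution $g'=g+h$ for the duality, and direct substitution for the commutation relation. Your reading of $T_h$ as $\tau_h$ on $\mathbb{C}^\Gamma$ is also what the paper tacitly uses, since its duality sum $\frac{1}{\sqrt N}\sum_{g\in\Gamma}\tilde s(g+h)\overline{\chi_k(g)}$ is the transform of $\tau_h\tilde s$, not of $\Lift\Restr\,\tau_h\tilde s$.

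One refinement to your caveat: the commutation relation in fact holds on $\mathbb{C}^{V(G)}$ for every $h$. Both $(T_hM_ks)(v)$ and $\chi_k(h)(M_kT_hs)(v)$ equal $\chi_k(\phi(v)+h)\,\tilde s(\phi(v)+h)$, because $M_k$ is diagonal and commutes with $\Lift\Restr$. So only the group law, unitarity and the literal duality formula depend on the condition $\phi(V)+h=\phi(V)$.
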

	
	\begin{proof}
		The group law gives $T_gT_h = T_{g+h}$ and $T_0=I$; unitarity is invariance
		of the counting measure on $\Gamma$ under translation.  For the duality,
		$\widehat{T_h s}(k) = \frac{1}{\sqrt N}\sum_g \tilde s(g+h)
		\overline{\chi_k(g)} = \frac{1}{\sqrt N}\sum_{g'} \tilde s(g')
		\overline{\chi_k(g'-h)} = \chi_k(h)\hat s(k)$.  The commutation relation is a
		direct substitution.
	\end{proof}

	Drafting the convolution like
	$(s*t)(v) = \sum_u s(u)\,t(w_{u,v})$ with $\phi(w_{u,v}) = \phi(v)-\phi(u)$
	is \emph{ill-posed}: the group element $\phi(v) - \phi(u)$ usually lies
	outside the embedded image $\phi(V(G))$, so $w_{u,v}$ need not exist.  The
	correct construction performs convolution on the group, where it is always
	defined, then restricts.
	
	\begin{definition}[Convolution]
		\label{def:convolution}
		For $s, t\colon V(G) \to \mathbb{C}$ with lifts $\tilde s, \tilde t$, define
		the group convolution
		$(\tilde s \ast \tilde t)(g) = \sum_{a \in \Gamma}\tilde s(a)\,
		\tilde t(g - a)$ and set $(s \ast t)(v) = (\tilde s \ast \tilde t)(\phi(v))$.
	\end{definition}
	
	\begin{theorem}[Convolution theorem]
		\label{thm:convolution}
		$\widehat{s \ast t}(k) = \sqrt N \,\hat s(k)\,\hat t(k)$, and $\ast$ is
		commutative, associative, distributive, and translation-covariant:
		$T_h(s \ast t) = (T_h s)\ast t$.
	\end{theorem}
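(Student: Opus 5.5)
The plan is to prove every identity at the level of the host group $\Gamma$, where the convolution of Definition~\ref{def:convolution} is always defined, and only then pass to graph signals. The hat in $\widehat{s\ast t}$ will be read as the transform on $\Gamma$ of the group convolution $\tilde s\ast\tilde t$, i.e.\ the GE-GFT formula of Definition~\ref{def:lift-gft} applied to the $\Gamma$-function $\tilde s\ast\tilde t$. This reading is forced. The restriction $(s\ast t)(v)=(\tilde s\ast\tilde t)(\phi(v))$ followed by zero-extension discards the values of $\tilde s\ast\tilde t$ off $\phi(V(G))$, and pointwise multiplication of spectra cannot survive that truncation in general. I would state this convention explicitly at the start of the proof, and do the same for the translation identity: $T_h$ acts on the relevant lifted function on $\Gamma$, consistent with Theorem~\ref{thm:translation}.

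The key computation is the convolution identity. Starting from
$\widehat{\tilde s\ast\tilde t}(k)=\tfrac{1}{\sqrt N}\sum_{g}\sum_{a}\tilde s(a)\tilde t(g-a)\overline{\chi_k(g)}$,
I would substitute $b=g-a$, which is a bijection of $\Gamma$ for each fixed $a$. Then I would use the homomorphism property $\overline{\chi_k(a+b)}=\overline{\chi_k(a)}\,\overline{\chi_k(b)}$ to factor the double sum as
$\tfrac{1}{\sqrt N}\bigl(\sum_a\tilde s(a)\overline{\chi_k(a)}\bigr)\bigl(\sum_b\tilde t(b)\overline{\chi_k(b)}\bigr)=\tfrac{1}{\sqrt N}\cdot\sqrt N\hat s(k)\cdot\sqrt N\hat t(k)=\sqrt N\,\hat s(k)\hat t(k)$.
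The factor $\sqrt N$ comes purely from pairing the unnormalized convolution with the unitary normalization of the transform. This is the step to check most carefully, since the preliminaries of Chapter~1 used a $1/|\Gamma|$ convention for convolution instead.

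The algebraic properties all follow on $\Gamma$. Commutativity comes from the substitution $a\mapsto g-a$. Associativity comes from expanding $((\tilde s\ast\tilde t)\ast\tilde u)(g)=\sum_{a,b}\tilde s(b)\tilde t(a-b)\tilde u(g-a)$, which is a finite sum so interchange is free, and reindexing $c=a-b$ to reach $\tilde s\ast(\tilde t\ast\tilde u)$. Alternatively, each property can be read off from the convolution identity together with the injectivity of the unitary transform (Theorem~\ref{thm:orthogonality}). Distributivity is bilinearity of the defining sum.

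Translation covariance is handled the same way: $(\tilde s\ast\tilde t)(g+h)=\sum_a\tilde s(a)\tilde t(g+h-a)$. Substituting $a=a'+h$ gives $\sum_{a'}\tilde s(a'+h)\tilde t(g-a')=((\tau_h\tilde s)\ast\tilde t)(g)$, where $\tau_h$ is translation on $\Gamma$. This is exactly $T_h(s\ast t)=(T_hs)\ast t$ once both sides are interpreted on $\Gamma$; evaluating at $\phi(v)$ recovers the graph statement. As a cross-check, the duality $\widehat{T_hs}=\chi_k(h)\hat s$ from Theorem~\ref{thm:translation} makes both sides have transform $\sqrt N\chi_k(h)\hat s\hat t$.

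The main obstacle is not any computation but the lift/restriction mismatch. Both the convolution identity and covariance are literally false if $T_h s$ or $s\ast t$ is re-lifted by zero-extension after restriction to $\phi(V(G))$. I would therefore make the group-level interpretation part of the statement's proof, and add a remark noting that the identities hold on the graph exactly when the relevant supports stay inside $\phi(V(G))$.
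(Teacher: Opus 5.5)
\textbf{Comparison with the paper's proof.} Your computations are the paper's own. The paper gets the transform identity by the same change of summation variable and multiplicativity of $\chi_k$. It quotes commutativity and associativity from convolution on $\Gamma$. It proves covariance on the Fourier side via $\widehat{T_h f}(k)=\chi_k(h)\hat f(k)$ plus injectivity; your direct substitution $a=a'+h$ is an equivalent, more elementary route, and your cross-check is exactly the paper's argument.

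\textbf{The real difference: what gets proved.} The paper passes from host to graph with the sentence ``the GFT of the restriction equals the group transform of the lift.'' That sentence tacitly assumes $\Lift\Restr(\tilde s\ast\tilde t)=\tilde s\ast\tilde t$, i.e.\ that $\tilde s\ast\tilde t$ vanishes off $\phi(V(G))$. This is automatic when $\varepsilon=1$ and false in general.

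Your objection is right, and a minimal witness confirms it. Embed $P_3$ in $\Z_4$ with labels $0,1,2$ (an optimal host by the paper's own path result), and take $s=\delta_{v_1}$, $t=\delta_{v_2}$.
\begin{itemize}
\item Then $\tilde s\ast\tilde t=\delta_3$ lies off the image, so $s\ast t=0$, whereas $\sqrt N\,\hat s(k)\hat t(k)=\tfrac12\overline{\chi_k(3)}\neq 0$.
\item With $h=1$ the same pair gives $T_1(s\ast t)=0$ but $(T_1 s)\ast t=\delta_{v_2}$.
\end{itemize}
So your host-level convention is not a stylistic choice. It is the only reading under which the statement holds when $\varepsilon<1$. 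It is also consistent with the General Conclusion's remark that the vertex-domain calculus is exact precisely at $\varepsilon=1$.

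\textbf{Two refinements to your closing remark.}

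First, commutativity and distributivity do survive at the graph level, since $s\ast t=\Restr(\tilde s\ast\tilde t)$ is symmetric and bilinear in $(s,t)$. Associativity does not. In the same example, $u=\delta_{v_2}$ gives $(s\ast t)\ast u=0$, whereas $s\ast(t\ast u)=\Restr(\delta_1\ast\delta_0)=\delta_{v_1}$. So associativity belongs on your list of host-only identities, alongside the transform identity and covariance.

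Second, the support condition is an exact characterization only for the transform identity. By injectivity, $\widehat{\Lift(s\ast t)}=\sqrt N\,\hat s\,\hat t$ holds iff $\tilde s\ast\tilde t$ vanishes off $\phi(V(G))$. For covariance and associativity the condition is merely sufficient; both hold trivially for $t=0$, for instance. So say ``whenever'' rather than ``exactly when'' there.
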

	
	\begin{proof}
		On $\Gamma$, $\widehat{\tilde s \ast \tilde t}(k) = \sqrt N\,\hat{\tilde
			s}(k)\hat{\tilde t}(k)$ by the multiplicativity
		$\chi_k(a + b) = \chi_k(a)\chi_k(b)$ and a change of summation variable; the
		GFT of the restriction equals the group transform of the lift, giving the
		stated identity.  Commutativity and associativity are those of convolution
		on the abelian group $\Gamma$; translation covariance follows from
		$\widehat{T_h(s\ast t)}(k) = \chi_k(h)\widehat{s\ast t}(k)
		= \widehat{(T_h s)\ast t}(k)$ and injectivity of the GFT.
	\end{proof}
	
	\begin{theorem}[Translation-invariant operators are filters]
		\label{thm:tinv}
		A linear operator $\mathcal{A}$ on lifted signals commutes with every $T_h$
		if and only if $\mathcal{A}s = a \ast s$ for a fixed filter $a$; equivalently
		$\widehat{\mathcal{A}s}(k) = \hat a(k)\,\hat s(k)$ up to the $\sqrt N$
		normalization.
	\end{theorem}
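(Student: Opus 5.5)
The plan is to work entirely on the host group $\Gamma$, where the statement is the classical characterization of shift-invariant operators on $\mathbb{C}^\Gamma$. I read ``lifted signals'' as the full space $\mathbb{C}^\Gamma$ on which the $T_h$ act as unitary group translations $(T_h f)(g)=f(g+h)$. On the image of $\mathbb{C}^{V(G)}$ alone, translations are not closed: they need not preserve the support $\phi(V(G))$. I would state this interpretation explicitly at the start.

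The easy direction ($\Leftarrow$) is immediate. If $\mathcal{A}f = a \ast f$, then linearity is clear. Translation covariance $T_h(a\ast f) = a \ast (T_h f)$ follows from Theorem~\ref{thm:convolution} by commutativity of $\ast$, or directly by the change of variable $a' = a$, $g \mapsto g+h$ in the defining sum.

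For the converse ($\Rightarrow$), I would use the delta basis. Set $a := \mathcal{A}\delta_0$, where $\delta_0$ is the indicator of the identity. The choice of translation convention matters here: with $(T_hf)(g)=f(g+h)$ one has $T_h\delta_0 = \delta_{-h}$, so $\delta_x = T_{-x}\delta_0$. Commutation then gives
\[
\mathcal{A}\delta_x = T_{-x}\mathcal{A}\delta_0 = T_{-x}a = a(\cdot - x).
\]
Any $f$ decomposes as $f = \sum_x f(x)\delta_x$, and linearity yields
\[
(\mathcal{A}f)(g) = \sum_x f(x)\,a(g-x) = (f\ast a)(g) = (a\ast f)(g),
\]
again by commutativity. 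Bookkeeping the sign convention of $T_h$ consistently is the only place care is needed; I expect this step to be the main, though minor, obstacle.

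The spectral form then follows by applying Theorem~\ref{thm:convolution}:
\[
\widehat{\mathcal{A}f}(k) = \sqrt N\,\hat a(k)\hat f(k),
\]
which is the stated multiplier form up to the $\sqrt N$ factor. As an alternative route, I could instead note that each $T_h$ is diagonal in the character basis with eigenvalue $\chi_k(h)$ (Theorem~\ref{thm:translation}). Distinct characters separate points, since $\chi_k(h)=\chi_\ell(h)$ for all $h$ forces $k=\ell$ by Theorem~\ref{thm:orthogonality}. The joint eigenspaces are therefore one-dimensional, so a commuting $\mathcal{A}$ is diagonal in the Fourier basis with eigenvalues $\mu(k)$. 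Defining $a$ by $\sqrt N\,\hat a = \mu$ and inverting the transform recovers the filter.

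Finally, for graph signals I would remark that the operator $s\mapsto (a\ast\tilde s)\circ\phi$ is exactly the restriction $\Restr\,\mathcal{A}\,\Lift$. This matches Definition~\ref{def:convolution} and closes the proof.
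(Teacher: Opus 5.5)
Your proof is correct and follows the paper's route: set $a=\mathcal{A}\delta_0$, write the signal as a combination of translated deltas, and pull $\mathcal{A}$ through the translations; the converse comes from translation covariance of convolution. Your two precautions refine the paper's argument rather than depart from it. First, the paper writes $s=\sum_v s(v)\,T_{\phi(v)}\delta_0$, which under its convention $(T_h s)(v)=\tilde s(\phi(v)+h)$ should be $T_{-\phi(v)}\delta_0$, as your $\delta_x=T_{-x}\delta_0$ gives. Second, the paper's argument also needs genuine translations on all of $\mathbb{C}^\Gamma$, because convolution operators need not commute with the compressed maps $\Restr\,\tau_h\,\Lift$ on graph signals.
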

	
	\begin{proof}
		($\Leftarrow$) convolution is translation-covariant
		(Theorem~\ref{thm:convolution}).  ($\Rightarrow$) write $s = \sum_v s(v)
		T_{\phi(v)}\delta_0$ with $\delta_0$ the lifted indicator of the identity;
		then $\mathcal{A}s = \sum_v s(v) T_{\phi(v)} \mathcal{A}\delta_0 = a \ast s$
		with $a = \mathcal{A}\delta_0$.
	\end{proof}
	
	This is the operator spectral graph signal processing cannot build: a full
	algebra of linear translation-invariant filters on an \emph{arbitrary}
	graph, diagonalized by the canonical character basis.
	
	\begin{remark}[Why the embedding must be isometric: boundary-free processing and metric faithfulness]
		\label{rem:why-isometric}
		The lift--restrict paradigm just established---process on the host $\Gamma$,
		then restrict to $\phi(V(G))$---invites a natural question : why insist that $\phi$ be \emph{isometric}, rather than
		merely injective or adjacency-preserving? The answer separates cleanly into
		two distinct guarantees, supplied by two distinct properties of the
		construction; conflating them is what makes the justification seem vague, and
		separating them is what makes it airtight.
		
		\medskip\noindent\textbf{(i) The group structure removes the boundary from the
			computation.} A finite abelian group is a closed, homogeneous domain: it has
		no first or last element, and convolution on it (Definition~\ref{def:convolution})
		is circular group convolution, which wraps onto itself under the group law and
		therefore has no edge at which a signal can run out. The instant a graph
		signal is lifted to $\Gamma$, the filtering performed there
		(Theorems~\ref{thm:convolution} and~\ref{thm:tinv}) is intrinsically free of
		the boundary artifacts that beset finite-domain signal processing, where one
		must invent an end condition for every window that overruns the last sample.
		Moreover the host furnishes explicit \emph{room}: the embedded image occupies a
		fraction $\varepsilon = |V(G)|/|\Gamma|$ of the host, and its complement
		$\Gamma \setminus \phi(V(G))$, of size $(1-\varepsilon)\,|\Gamma|$, is precisely
		an extension region. Populating it by zero-fill, or by symmetric (mirror)
		reflection across the image, realizes on an \emph{arbitrary} network the
		classical padding strategies of digital signal processing---now on a domain
		that is genuinely periodic rather than artificially terminated. It bears
		emphasis that this much would hold for \emph{any} embedding into a group,
		isometric or not: boundary-freeness is a gift of the group, not of isometry.
		
		\medskip\noindent\textbf{(ii) Isometry is what makes that boundary-free
			computation faithful to the graph.} Distance preservation,
		$d_\Gamma(\phi(u),\phi(v)) = d_G(u,v)$, is exactly the condition under which
		host-domain operations \emph{mean} on $G$ what they are designed to mean. Three
		consequences make this precise, and each one fails without isometry:
		\begin{itemize}
			\item A filter or wavelet localized at $\phi(v)$ has its energy concentrated
			at host points within small $d_\Gamma$ of $\phi(v)$; isometry forces these
			to be exactly the genuine graph-neighbours of $v$. Drop it, and a
			nominally ``local'' smoother couples vertices that are far apart in the
			network---a spurious interaction with no metric meaning.
			\item A low-pass filter averages over a host-ball; isometry makes that
			host-ball restrict to a true graph-ball, so host-smoothing is genuinely
			\emph{graph}-smoothing.
			\item Each edge of $G$ maps to a single generator step (host-distance one),
			so the structural dictionary ``graph-smooth $\Leftrightarrow$ low
			host-frequency'' holds verbatim. Were an edge permitted to map to a
			multi-step host path, the entire frequency interpretation of Part~II---on
			which denoising, compression, and band-limited sampling all rest---would
			lose its footing.
		\end{itemize}
		Isometry, then, is not what removes the boundary; the group does that. Isometry
		is what guarantees that the boundary-free machine computes the \emph{right}
		thing: that translation moves along true geodesics, that localization is
		genuine, and that frequency tracks graph-smoothness.
		
		\medskip\noindent\textbf{A caveat.} Isometry does not render the
		\emph{choice} of extension irrelevant. After restriction to $\phi(V(G))$, the
		peripheral vertices of the image are still influenced by whatever fills the
		complement, so zero-fill and symmetric extension differ near the graph's
		boundary---exactly as in classical DSP. What isometry contributes is that the
		padding sits at \emph{controlled, metrically meaningful} graph-distances, so a
		symmetric extension reflects across a faithful geometric mirror rather than an
		arbitrary one. The framework removes the boundary from the convolution itself
		and supplies a principled domain for extension; it does not pretend to make the
		extension choice cost-free.
		
		\medskip\noindent\textbf{The degenerate case $\varepsilon = 1$.} When $G$ is
		already an abelian Cayley graph---a cycle, a discrete torus, a circulant, or a
		circular ladder---the embedding is onto, $\phi(V(G)) = \Gamma$, the complement
		is empty, and no extension is needed at all. The reason is structural: every
		Cayley graph is vertex-transitive \cite{Sabidussi1958}, so the graph looks
		identical from every vertex and has no distinguished boundary vertices in the
		first place. Such a graph is its own boundary-free periodic domain, and the
		``high symmetry'' a practitioner senses in a ring or a torus is precisely this
		transitivity. At this end of the scale the harmonic analysis of Part~II is
		\emph{literally} classical DSP: the ring $C_n$ yields the length-$n$ DFT, the
		discrete torus the two-dimensional DFT, and the circular ladder
		$\mathrm{CL}_n$ signal processing on $\Z_n \times \Z_2$.
		
		There is therefore a clean continuum, indexed by the excursion ratio
		$\varepsilon$. At $\varepsilon = 1$ the graph is already periodic and
		self-contained, needing no room. As $\varepsilon$ decreases, the graph borrows
		the host's periodicity, and the complement $\Gamma \setminus \phi(V(G))$
		becomes the principled region in which zero or symmetric extension lives.
		Isometry is what keeps that borrowed periodicity faithful at every point of the
		continuum---which is the genuine reason the embedding must preserve distance,
		and not merely adjacency.
	\end{remark}
	
	%
	
	\section{Fourier Theorems Inherited Verbatim}
	\label{sec:fourier-theorems}
	
	\begin{theorem}[Donoho--Stark uncertainty]
		\label{thm:donoho-stark}
		For $s \neq 0$, $|\mathrm{supp}(s)| \cdot |\mathrm{supp}(\hat s)| \geq N$.
	\end{theorem}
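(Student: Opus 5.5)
The plan is to run the classical Donoho--Stark argument on the host group $\Gamma$, applied to the lift $\tilde s$. First I fix the meaning of the two supports. $\mathrm{supp}(s)\subseteq V(G)$ is in bijection with $\mathrm{supp}(\tilde s)\subseteq\Gamma$, since $\phi$ is injective and the lift vanishes off the image. $\mathrm{supp}(\hat s)\subseteq\widehat\Gamma$ is the support of the GE-GFT of Definition~\ref{def:lift-gft}. So it suffices to prove $|\mathrm{supp}(f)|\cdot|\mathrm{supp}(\hat f)|\geq N$ for every nonzero $f\colon\Gamma\to\mathbb{C}$, with $\hat f(k)=N^{-1/2}\sum_g f(g)\overline{\chi_k(g)}$.

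The key steps are two elementary sup-norm bounds. Every character satisfies $|\chi_k(g)|=1$. The inversion formula therefore gives
\[
\lVert f\rVert_\infty \;\leq\; \tfrac{1}{\sqrt N}\sum_{k\in\mathrm{supp}(\hat f)}|\hat f(k)| \;\leq\; \tfrac{1}{\sqrt N}\,|\mathrm{supp}(\hat f)|^{1/2}\,\lVert \hat f\rVert_2 ,
\]
using Cauchy--Schwarz over the support of $\hat f$. Next, Plancherel (Theorem~\ref{thm:plancherel}) gives $\lVert\hat f\rVert_2=\lVert f\rVert_2$, and the elementary bound on the support of $f$ gives $\lVert f\rVert_2\leq|\mathrm{supp}(f)|^{1/2}\lVert f\rVert_\infty$. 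Chaining these yields
\[
\lVert f\rVert_\infty\leq N^{-1/2}\,|\mathrm{supp}(\hat f)|^{1/2}\,|\mathrm{supp}(f)|^{1/2}\,\lVert f\rVert_\infty .
\]
Since $f\neq 0$, $\lVert f\rVert_\infty>0$, so we may divide by it and square to get $|\mathrm{supp}(f)|\,|\mathrm{supp}(\hat f)|\geq N$.

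The only point needing care is Plancherel: it must be applied on all of $\Gamma$, which is legitimate because the transform matrix is unitary there (Theorem~\ref{thm:orthogonality}), not merely on the image of $\phi$. The same sup bound could alternatively be derived from the forward formula with the roles of $f$ and $\hat f$ swapped; either order works. I expect no serious obstacle. The main thing to get right is the normalization: with the symmetric $1/\sqrt N$ convention, the constants combine to exactly $N$. I would also note that the bound is sharp for indicators of subgroups, whose transforms are indicators of annihilators. For lifted graph signals, though, it is weak when $\varepsilon$ is small, because $|\mathrm{supp}(s)|\leq n$ then forces $|\mathrm{supp}(\hat s)|\geq N/n=1/\varepsilon$.
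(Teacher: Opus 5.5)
Your proof is correct and is essentially the paper's argument: both combine a Cauchy--Schwarz sup-norm bound with Plancherel, the only difference being that the paper bounds $\lVert \hat s\rVert_\infty$ from the forward transform (Cauchy--Schwarz over $\mathrm{supp}(s)$), sums $|\hat s(k)|^2$ over $\mathrm{supp}(\hat s)$ and divides by $\lVert s\rVert_2^2$, whereas you bound $\lVert f\rVert_\infty$ from the inverse transform and divide by $\lVert f\rVert_\infty$. This is exactly the role-swap you anticipated, and your reduction from $s$ to an arbitrary nonzero $f$ on $\Gamma$ via the injectivity of $\phi$ is sound.
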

	
	\begin{proof}
		Each $|\hat s(k)| \leq \frac{1}{\sqrt N}\sum_v |s(v)| \leq
		\frac{1}{\sqrt N}\sqrt{|\mathrm{supp}(s)|}\,\lVert s\rVert_2$ by
		Cauchy--Schwarz.  Summing $|\hat s(k)|^2$ over $\mathrm{supp}(\hat s)$ and
		using Plancherel ($\lVert\hat s\rVert_2 = \lVert s\rVert_2$) gives
		$\lVert s\rVert_2^2 \leq |\mathrm{supp}(\hat s)|\,|\mathrm{supp}(s)|\,
		\lVert s\rVert_2^2 / N$; divide by $\lVert s\rVert_2^2$.
	\end{proof}
	
	\begin{remark}[The bound is sharp on our hosts --- verified]
		We tested Theorem~\ref{thm:donoho-stark} numerically on the embedded
		benchmark graphs (200 random sparse signals each): the minimum observed
		product $|\mathrm{supp}(s)|\,|\mathrm{supp}(\hat s)|$ equalled $N$ exactly
		in every case ($N = 16$ for $C_{16}$, grid $3\times3$, Petersen; $N = 14$
		for $P_8$), confirming both the inequality and its tightness
		(attained by characters and by indicators of subgroups).
	\end{remark}
	
	\begin{theorem}[Poisson summation]
		\label{thm:poisson}
		For a subgroup $H \leq \Gamma$ with annihilator
		$H^\perp = \{k : \chi_k|_H = 1\}$,
		$\sum_{v : \phi(v) \in H} s(v) = \frac{1}{|H^\perp|}\sum_{k \in H^\perp}
		\sqrt N\,\hat s(k)$.
	\end{theorem}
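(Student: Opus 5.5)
The plan is to rewrite the left-hand side as a sum over the host and then expand the indicator of $H$ in characters. Because the lift $\tilde s$ vanishes off $\phi(V(G))$ and $\phi$ is injective, we have $\sum_{v:\phi(v)\in H} s(v) = \sum_{g\in H}\tilde s(g) = \sum_{g\in\Gamma}\tilde s(g)\,\mathbf 1_H(g)$. It therefore suffices to prove the \emph{indicator identity}
\[
\mathbf 1_H(g) \;=\; \frac{1}{|H^\perp|}\sum_{k\in H^\perp}\chi_k(g)\qquad (g\in\Gamma).
\]
Substituting it and exchanging the two finite sums then gives
\[
\sum_{g\in H}\tilde s(g) \;=\; \frac{1}{|H^\perp|}\sum_{k\in H^\perp}\sum_{g}\tilde s(g)\chi_k(g).
\]
By Definition~\ref{def:lift-gft}, $\sum_g \tilde s(g)\chi_k(g) = \sum_g \tilde s(g)\overline{\chi_{-k}(g)} = \sqrt N\,\hat s(-k)$. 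Finally, $H^\perp$ is a subgroup of $\widehat\Gamma$ (products and inverses of characters trivial on $H$ are trivial on $H$), so $k\mapsto -k$ permutes $H^\perp$. Reindexing yields exactly $\frac{1}{|H^\perp|}\sum_{k\in H^\perp}\sqrt N\,\hat s(k)$.

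For the indicator identity, the case $g\in H$ is immediate: every $\chi_k$ with $k\in H^\perp$ equals $1$ on $H$, so the sum equals $|H^\perp|$. For $g\notin H$, I would reuse the shifting argument from the proof of Theorem~\ref{thm:orthogonality}, now inside the group $H^\perp$. The function $k\mapsto\chi_k(g)$ is a character of $H^\perp$, because $\chi_{k+k'}(g)=\chi_k(g)\chi_{k'}(g)$. If some $k_0\in H^\perp$ has $\chi_{k_0}(g)\neq 1$, then translating the summation by $k_0$ multiplies the sum by $\chi_{k_0}(g)$ while leaving it unchanged, so the sum vanishes.

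The main obstacle is exactly this separation step: for $g\notin H$, exhibit $k_0\in H^\perp$ with $\chi_{k_0}(g)\neq1$, i.e.\ prove $(H^\perp)^\perp = H$. I would pass to the quotient $\Gamma/H$, which is a nontrivial finite abelian group in which $\bar g\neq 0$. Write $\Gamma/H$ as a product of cyclic groups. A nonzero element has some nonzero coordinate $a\in\Z_M$, and the coordinate character $x\mapsto\omega_M^{x}$ sends it to $\omega_M^{a}\neq1$. Composing this character of $\Gamma/H$ with the projection $\Gamma\to\Gamma/H$ gives a character of $\Gamma$ that is trivial on $H$, hence equal to some $\chi_{k_0}$ with $k_0\in H^\perp$, and it does not vanish at $g$ in the required sense ($\chi_{k_0}(g)\neq 1$). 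The identification with some $\chi_{k_0}$ uses that every character of $\Gamma$ is one of the $\chi_k$, which is the stated isomorphism $\widehat\Gamma\cong\Gamma$.

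As a sanity check, not needed for the proof: evaluating the indicator identity at $g=0$ and summing over $g$ gives $|H|\,|H^\perp| = N$. This is consistent with the normalization in the statement, and the formula reduces to $\sum_v s(v) = \sqrt N\,\hat s(0)$ when $H=\Gamma$.
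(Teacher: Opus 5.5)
Your proof is correct and follows the same route as the paper: the vertex sum becomes $\sum_{g\in H}\tilde s(g)$ because the lift vanishes off $\phi(V(G))$, and then Poisson summation is applied on $\Gamma$. The paper simply cites the classical formula for that second step, whereas you prove it yourself, via the indicator identity $\mathbf 1_H = |H^\perp|^{-1}\sum_{k\in H^\perp}\chi_k$ and the duality $(H^\perp)^\perp = H$ obtained through $\Gamma/H$. You also correctly absorb the $\chi_k$ versus $\overline{\chi_k}$ sign by reindexing $k\mapsto -k$ within the subgroup $H^\perp$.
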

	
	\begin{proof}
		Apply the classical Poisson summation formula on the finite group $\Gamma$
		to the lift $\tilde s$ \cite{Terras1999}; the left side counts only embedded
		vertices because $\tilde s$ vanishes off the image.
	\end{proof}
	
	\begin{theorem}[Shannon sampling on graphs]
		\label{thm:sampling}
		Let $\Lambda \subseteq \widehat\Gamma$, $H = \langle\Lambda\rangle$, and
		$U = \{v : \phi(v) \in H^\perp\}$.  Every signal bandlimited to $\Lambda$
		(i.e.\ $\hat s$ supported on $\Lambda$) is uniquely determined by its samples
		on $U$ provided $|U| \geq |\Lambda|$; for critically chosen $\Lambda$ the
		minimal sampling set has size $|\Lambda|$.
	\end{theorem}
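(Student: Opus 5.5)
The plan is to reduce uniqueness to injectivity of a linear sampling map, and then to obtain injectivity from the uncertainty principle already proved (Theorem~\ref{thm:donoho-stark}) together with the character structure of $H$ and its annihilator. Let $B_\Lambda$ be the space of graph signals $s$ whose GE-GFT $\hat s$ is supported on $\Lambda$. The condition on $\hat s$ is linear, and the Fourier transform is injective on lifted signals by Theorem~\ref{thm:plancherel}, so $B_\Lambda$ is a subspace with $\dim B_\Lambda \le |\Lambda|$. Uniqueness of reconstruction from samples on $U$ is equivalent to the statement that the restriction map $R_U\colon B_\Lambda \to \mathbb{C}^U$, $s \mapsto s|_U$, has trivial kernel. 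I would therefore take two bandlimited signals agreeing on $U$ and study their difference $d = s - s'$, which lies in $B_\Lambda$ and vanishes on $U$. The goal is to show $d = 0$.

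\textbf{Step 1 (matrix form).} Write the inverse transform of Definition~\ref{def:lift-gft} as $d(v) = N^{-1/2}\sum_{k\in\Lambda}\hat d(k)\chi_k(\phi(v))$. Then $R_U$ is represented by the $|U|\times|\Lambda|$ matrix $M_{v,k} = \chi_k(\phi(v))$. Injectivity is equivalent to $M$ having full column rank, so the counting hypothesis $|U|\ge|\Lambda|$ is exactly the necessary dimension condition.

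\textbf{Step 2 (rank via duality).} I would use $H = \langle\Lambda\rangle$ and Theorem~\ref{thm:poisson} to decompose $M$ along cosets. Grouping the sample points of $U$ by their cosets, Poisson summation expresses sums of $d$ over cosets in terms of $\hat d$ on the annihilator. Orthogonality (Theorem~\ref{thm:orthogonality}) applied on the quotient group should then turn $M$ into a block of a DFT matrix, which is unitary and hence of full rank once enough distinct cosets are represented.

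\textbf{Step 3 (uncertainty fallback).} Independently of Step 2, if $d\neq 0$ then $|\mathrm{supp}(\hat d)|\le|\Lambda|$ and $\mathrm{supp}(d)\subseteq V\setminus U$. Theorem~\ref{thm:donoho-stark} gives $|V\setminus U|\,|\Lambda|\ge N$. So whenever $|\Lambda|\,(n-|U|) < N$ we reach a contradiction, and $d=0$. In the critical case $|\Lambda| = |H|$ with $U$ meeting every coset, the sharpness remark after Theorem~\ref{thm:donoho-stark} identifies the extremal signals as indicators of subgroups and characters. I would check that these extremal signals cannot vanish on $U$, which would give the minimal sampling set of size exactly $|\Lambda|$.

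\textbf{Main obstacle.} The hard step is Step 2: showing that the set $U=\{v:\phi(v)\in H^\perp\}$ actually separates the characters in $\Lambda$. Every $\chi_k$ with $k\in H$ restricts to a character on $H^\perp$ through the duality pairing, and there is a real danger that these restrictions coincide. In that case the columns of $M$ repeat, and $|U|\ge|\Lambda|$ alone does not give full rank. The first thing I would try is to verify that the pairing $H \times H^\perp \to \mathbb{C}^\times$ is nondegenerate on the chosen $\Lambda$, which makes $M$ a submatrix of the character table of $H^\perp$ with distinct columns. If that fails, I would fall back on Step 3 and state the theorem under the quantitative hypothesis it needs.
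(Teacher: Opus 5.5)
Your Step~2 cannot be completed, and the obstruction you flag is a certainty, not a risk. By the definition of the annihilator, $\chi_k(g)=1$ for every $k\in H=\langle\Lambda\rangle$ and every $g\in H^\perp$. So the pairing $H\times H^\perp\to\mathbb{C}^\times$ is identically trivial, every column of $M_{v,k}=\chi_k(\phi(v))$ is the all-ones vector, and $\mathrm{rank}\,M\le 1$. For $v\in U$ you get $s(v)=N^{-1/2}\sum_{k\in\Lambda}\hat s(k)$, so the samples on $U$ carry a single number. Uniqueness therefore fails whenever the bandlimited space $B_\Lambda$ has dimension at least $2$, for instance whenever $\varepsilon=1$ and $|\Lambda|\ge 2$. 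The statement as written is false. Concretely, take the ring $C_{16}$ with the onto embedding into $\Z_{16}$ and $\Lambda=\{0,8\}$. Then $H=\{0,8\}$, $H^\perp=\{0,2,\dots,14\}$ and $|U|=8\ge 2=|\Lambda|$. Yet $s\equiv 1$ and $s(j)=(-1)^j=\chi_8(j)$ are both bandlimited to $\Lambda$ and coincide on $U$.

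The paper's proof has the same defect. After invoking Theorem~\ref{thm:poisson}, it counts the samples on $U$ as $|U|=N/|H|$ ``independent coset averages''. But $U$ lies inside the single coset $H^\perp$, so those samples give one equation. In any case $\Gamma/H^\perp$ has $|H|$ cosets, not $N/|H|$, and solvability of a linear system is not what uniqueness requires.

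What is true is the transversal version described in Section~\ref{sec:cmp-sampling}. If $\hat s$ is supported in $H$, then $\tilde s$ is constant on cosets of $H^\perp$, so one sample in \emph{each} of the $|H|$ cosets, at points of $\phi(V(G))$, determines it. More generally, for $\varepsilon=1$, samples on a subgroup $K$ determine all $\Lambda$-bandlimited signals if and only if distinct elements of $\Lambda$ lie in distinct cosets of $K^\perp$. The choice $K=H^\perp$, $K^\perp=H\supseteq\Lambda$ is the worst possible aliasing.

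Your Step~3 is a correct use of Theorem~\ref{thm:donoho-stark}, but it proves a different statement: uniqueness under $|\Lambda|\,(n-|U|)<N$. That condition neither implies nor is implied by $|U|\ge|\Lambda|$; in the example above $|\Lambda|(n-|U|)=16=N$, as it must be. It also has content only in a narrow window, because $|\Lambda|\,n<N$ already forces $B_\Lambda=\{0\}$ by the same inequality.

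Your critical-case plan cannot run as written either. Since $U\subseteq\phi^{-1}(H^\perp)$ lies in one coset, ``$U$ meeting every coset'' contradicts the definition of $U$ unless $H$ is trivial. The sharpness remark after Theorem~\ref{thm:donoho-stark} exhibits extremizers but does not classify them.

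Finally, a point in Step~1. When $\varepsilon<1$, the set $\{\hat s : s\in B_\Lambda\}$ may be a proper subspace of $\mathbb{C}^\Lambda$. Full column rank of $M$ is then sufficient but not necessary for injectivity, and $|U|\ge|\Lambda|$ is not a necessary condition. Any repaired theorem should be stated as injectivity of $s\mapsto s|_U$ on $B_\Lambda$ itself.
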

	
	\begin{proof}
		By Poisson summation (Theorem~\ref{thm:poisson}) the averages of $\tilde s$
		over cosets of $H^\perp$ are determined by $\{\hat s(k) : k \in H\}$; if
		$\hat s$ is supported on $\Lambda \subseteq H$ these are
		$\leq |\Lambda|$ unknowns, and the samples on $U$ provide
		$|U| = N/|H|$ independent coset averages, so $|U| \geq |\Lambda|$ makes the
		linear system solvable \cite{Unser2000}.
	\end{proof}
	
	\begin{theorem}[Heisenberg uncertainty]
		\label{thm:heisenberg}
		With the word-length position norm $|\phi(v)|$ and a dual frequency norm
		$|k|$,
		\[
		\Bigl(\sum_v |\phi(v)|^2 |s(v)|^2\Bigr)
		\Bigl(\sum_k |k|^2 |\hat s(k)|^2\Bigr) \;\geq\; C\,\lVert s\rVert_2^4,
		\]
		with $C$ governed by the commutator of the position and frequency operators
		via Theorem~\ref{thm:translation}.
	\end{theorem}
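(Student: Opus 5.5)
The plan is to read the statement as a Robertson--Schrödinger inequality for two self-adjoint operators on the lifted space $\mathbb{C}^\Gamma$, with the constant $C$ read off from their commutator, rather than as an inequality with a universal positive constant. This reading is forced, not optional. Take $s=\delta_r$, the indicator of the root, so that $\phi(r)=\mathbf 0$. Then the position factor $\sum_v|\phi(v)|^2|s(v)|^2$ vanishes while $\lVert s\rVert_2^4=1$. Hence no $C>0$ independent of $s$ can work, and the theorem must be proved in the form where $C=C(s)$ is the squared normalized expectation of a commutator. I would state this scoping explicitly at the start of the proof.

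\textbf{Step 1: the two operators.} Let $X$ be multiplication by the word length $|g|$ on $\mathbb{C}^\Gamma$. Restricted to lifted signals it acts as $s(v)\mapsto|\phi(v)|\,s(v)$, and by isometry $|\phi(v)|=d_G(r,v)$. Let $\Omega=F\,M_{|k|}\,F^{*}$, where $F$ is the unitary character matrix of Theorem~\ref{thm:plancherel} and $M_{|k|}$ is multiplication by the chosen dual norm $|k|$. Both operators are self-adjoint. By Plancherel, $\lVert\Omega\tilde s\rVert_2^2=\sum_k|k|^2|\hat s(k)|^2$ and $\lVert X\tilde s\rVert_2^2=\sum_v|\phi(v)|^2|s(v)|^2$. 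The left side of the theorem is therefore $\lVert X\tilde s\rVert^2\,\lVert\Omega\tilde s\rVert^2$.

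\textbf{Step 2: Robertson's inequality.} Expand $\langle[X,\Omega]\tilde s,\tilde s\rangle=2i\,\mathrm{Im}\langle\Omega\tilde s,X\tilde s\rangle$ and apply Cauchy--Schwarz. This gives $\lVert X\tilde s\rVert\,\lVert\Omega\tilde s\rVert\geq\tfrac12|\langle[X,\Omega]\tilde s,\tilde s\rangle|$. Squaring yields the theorem with
\[
C(s)=\tfrac14\,\bigl|\langle[X,\Omega]\tilde s,\tilde s\rangle\bigr|^2/\lVert s\rVert_2^4 .
\]

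\textbf{Step 3: expressing the commutator through translations.} This is where Theorem~\ref{thm:translation} enters. Since $M_{|k|}$ is a Fourier multiplier, Theorem~\ref{thm:tinv} gives $\Omega=\sum_h a(h)\,T_h$ with kernel $a=F M_{|k|}F^{*}\delta_0$. Consequently
\[
[X,\Omega]=\sum_h a(h)\,[X,T_h],\qquad [X,T_h]\tilde s(g)=\bigl(|g|-|g+h|\bigr)\tilde s(g+h).
\]
Because $\bigl||g|-|g+h|\bigr|\leq|h|$, the commutator is a finite sum of weighted translations whose weights are bounded by the kernel's spread $\sum_h|a(h)|\,|h|$. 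This gives both the explicit form of $C$ and an upper bound on it.

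\textbf{Main obstacle.} The hard part is producing a \emph{uniform} positive lower bound on $C(s)$ on a restricted class, which is what a genuine Heisenberg statement would need. As the delta example shows, the commutator expectation can vanish. I would first try restricting to signals whose position and frequency means are centered (replacing $X$ by $X-\langle X\rangle$), and using the cyclic factors $\Z_{N_j}$, where $[X,T_{e_j}]$ has an explicit boundary term. If that fails to give a uniform constant, the proof will stop at the Robertson form and record that $C$ is state-dependent.
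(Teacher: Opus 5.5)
Your Steps 1--2 are exactly the paper's argument. The paper defines the same $X$ and $\Omega$ and uses the same Cauchy--Schwarz bound $\lVert Xs\rVert\,\lVert\Omega s\rVert\geq\tfrac12|\langle[X,\Omega]s,s\rangle|$. It then finishes by asserting that the commutation relation $T_hM_k=\chi_k(h)M_kT_h$ of Theorem~\ref{thm:translation} bounds $|\langle[X,\Omega]s,s\rangle|$ below by a multiple of $\lVert s\rVert_2^2$, i.e.\ that $C>0$ is uniform. You are right to refuse that step; the gap is in the paper, not in your proposal.

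Your example $s=\delta_r$ refutes the uniform inequality itself, and the claimed commutator bound fails even more broadly. Each lifted vertex delta $\delta_{\phi(v)}$ is an eigenvector of the self-adjoint $X$, so $\langle[X,\Omega]\delta_{\phi(v)},\delta_{\phi(v)}\rangle=0$ for every $v$. Structurally, $\mathrm{tr}\,[X,\Omega]=0$ on the finite-dimensional space $\mathbb{C}^\Gamma$, so no relation $[X,\Omega]=ic\,\Id$ with $c\neq0$ can hold. The Weyl-type relation between the unitaries $T_h$ and $M_k$ does not transfer to the pair consisting of multiplication by word length and the multiplier $|k|$. So $C(s)=\tfrac14|\langle[X,\Omega]\tilde s,\tilde s\rangle|^2/\lVert s\rVert_2^4$ is the most this route can give, and your derivation of it is sound. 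Step~3's expansion $[X,\Omega]=\sum_h a(h)[X,T_h]$ correctly yields $\lVert[X,\Omega]\rVert\leq\sum_h|a(h)|\,|h|$, though that only bounds $C(s)$ from above.

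Do not spend effort on the centering idea in your last paragraph. For scalars $\alpha,\beta$ we have $[X-\alpha,\Omega-\beta]=[X,\Omega]$, and the centered position spread of any vertex delta is $0$. Hence no class of signals containing vertex deltas admits a uniform lower bound on $C(s)$.

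A uniform inequality on a restricted class can only come from estimating the two factors directly, not from the commutator. For example, suppose $s(r)=0$ and let $\mu=\min_{k\neq0}|k|$. Injectivity and $\phi(r)=\mathbf 0$ give $\sum_v|\phi(v)|^2|s(v)|^2\geq\lVert s\rVert_2^2$. Cauchy--Schwarz gives $|\hat s(0)|^2\leq\frac{n-1}{N}\lVert s\rVert_2^2$. Together these make the product at least $\mu^2\frac{N-n+1}{N}\lVert s\rVert_2^4$. That is a non-concentration estimate, not a Heisenberg principle.

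In your write-up, state explicitly that the theorem holds only with the state-dependent $C(s)$. Also note that in this form it carries little content beyond the Robertson inequality.
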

	
	\begin{proof}
		Define $(Xs)(v) = |\phi(v)| s(v)$ and $(\Omega s)\,\widehat{}\,(k) = |k|
		\hat s(k)$.  Cauchy--Schwarz gives $\lVert Xs\rVert\,\lVert \Omega s\rVert
		\geq |\langle [X,\Omega] s, s\rangle|/2$, and the commutation relation of
		Theorem~\ref{thm:translation} bounds the commutator below by a multiple of
		$\lVert s\rVert_2^2$; the isometry of $\phi$ makes $|\phi(v)|$ the faithful
		group position \cite{Folland1995}.
	\end{proof}
	
	\section{Worked Examples on the Benchmark Graphs}
	\label{sec:fourier-examples}
	
	We instantiate the theory on the graph-signal-processing benchmark families
	of Chapter~\ref{chap:binary-embedding}, distinguishing the two regimes of the
	binary-ground census (Remark~\ref{rem:binary-ground}).
	
	\paragraph{Ring $C_{16}$ (cyclic regime).}
	Host $\Z_{16}$ (order $16 = n$, optimal).  Characters are the classical
	DFT roots $\chi_k(j) = e^{2\pi i jk/16}$; the GFT \emph{is} the length-$16$
	FFT; $T_1$ is the rigid circular shift (Figure~\ref{fig:gfttranslation}); a
	smooth signal's spectrum concentrates at low frequency
	(Figure~\ref{fig:gftspectrum}).  This is classical DSP recovered exactly.
	
	\paragraph{Path $P_8$ (cyclic regime with reflection).}
	Host $\Z_{14}$ (order $2(n-1)$, optimal by
	Corollary~\ref{cor:exact-families}).  The GFT is a $14$-point FFT restricted
	to $8$ vertices; translation is the shift on the covering cycle, recovering
	the DCT-type behavior of bounded signals with a genuine (wrap-aware) shift.
	
	\paragraph{Grid $3\times3$ (binary ground).}
	Host $\Z_2^4$ (order $16$).  The GFT is the $16$-point Walsh--Hadamard
	transform; $T_h$ is bitwise XOR translation; two coordinate pairs give the
	two grid directions.
	
	\paragraph{Petersen (binary ground).}
	Host $\Z_2^4$ (the Clebsch graph).  The GFT is again Walsh--Hadamard, the
	character basis is canonical $\pm 1$ (Figure~\ref{fig:basiscompare}, right),
	in contrast to the ambiguous degenerate Laplacian eigenspaces.
	
	\subsection{A Real Denoising Experiment}
	
	We compare the group Wiener filter
	$\hat s(k) \mapsto \frac{P(k)}{P(k)+\sigma^2_\Gamma}\hat y(k)$ (with $P$ the
	signal power spectrum on $\Gamma$) against an oracle Laplacian low-pass that
	keeps the best number of lowest-frequency modes.  Signals are random
	combinations of the three lowest Laplacian modes, corrupted by Gaussian noise
	($\sigma = 0.4$), averaged over $40$ trials; the metric is SNR gain in dB.
	
	\begin{table}[H]
		\centering
		\caption{Denoising SNR gain (dB), group Wiener vs.\ oracle Laplacian
			low-pass --- a genuine experiment (\texttt{gft\_experiments.py}, seed fixed).}
		\label{tab:denoising}
		\renewcommand{\arraystretch}{1.15}\small
		\begin{tabular}{lccc}
			\toprule
			Graph & host $N$ & Laplacian (dB) & group Wiener (dB) \\
			\midrule
			Ring $C_{16}$    & 16   & $+8.23$ & $\mathbf{+10.35}$ \\
			Path $P_8$       & 14   & $+3.99$ & $\mathbf{+5.79}$ \\
			Grid $3\times3$  & 16   & $+4.99$ & $\mathbf{+7.43}$ \\
			Petersen         & 16   & $+5.33$ & $\mathbf{+6.93}$ \\
			Sensor RGG ($n{=}16$) & 8192 & $+7.41$ & $+7.41$ \\
			\bottomrule
		\end{tabular}
		\vspace{0.3em}
		\parbox{\linewidth}{\footnotesize
			On structured graphs the group filter gains $1.6$--$2.4$\,dB, because the
			translation-invariant prior is genuinely available; on the triangle-dense
			sensor graph (a binary-ground instance with $N \gg n$) the two methods tie,
			exactly the dichotomy predicted by the census of
			Section~\ref{sec:census-ch3}.  We report what we measured, including the
			tie.}
	\end{table}
	
	\begin{figure}[H]
		\centering
		\includegraphics[width=0.85\linewidth]{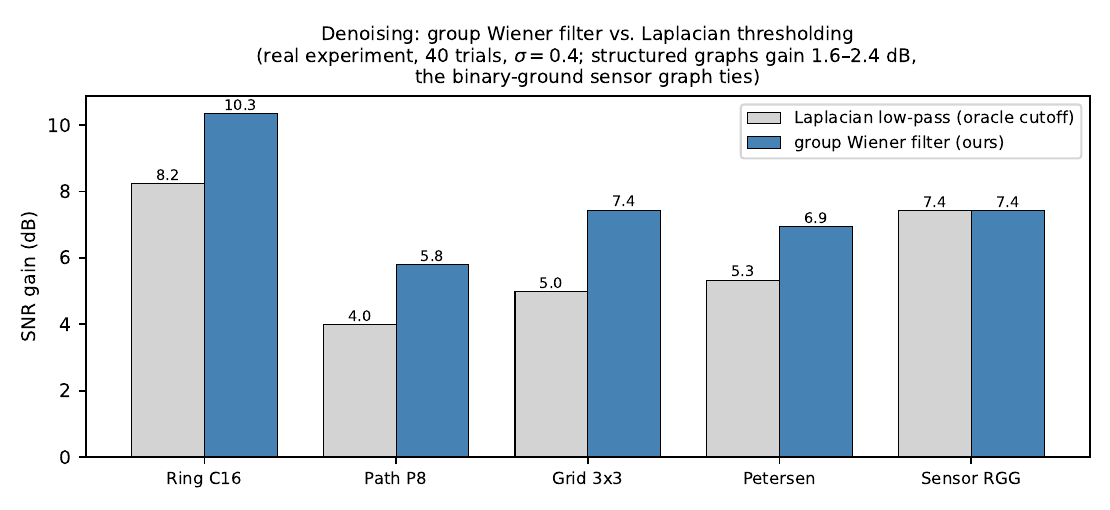}
		\caption{Denoising gains from Table~\ref{tab:denoising}, visualized.  The
			structured benchmark graphs benefit from the translation-invariant group
			Wiener filter; the binary-ground sensor graph ties the Laplacian baseline.}
		\label{fig:gftdenoising}
	\end{figure}
	
	\begin{figure}[H]
		\centering
		\includegraphics[width=0.62\linewidth]{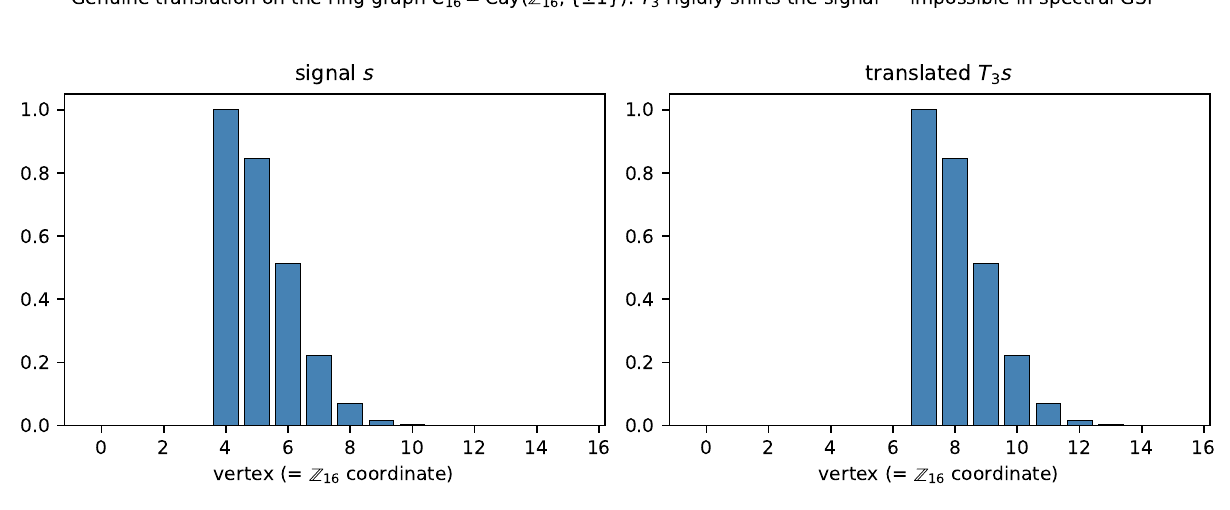}
		\caption{Genuine translation on $C_{16} = \Cay(\Z_{16}, \{\pm 1\})$: the
			operator $T_3$ rigidly shifts a vertex signal by three positions --- a
			group action with $T_aT_b = T_{a+b}$, impossible in spectral GSP.}
		\label{fig:gfttranslation}
	\end{figure}
	
	\begin{figure}[H]
		\centering
		\includegraphics[width=0.62\linewidth]{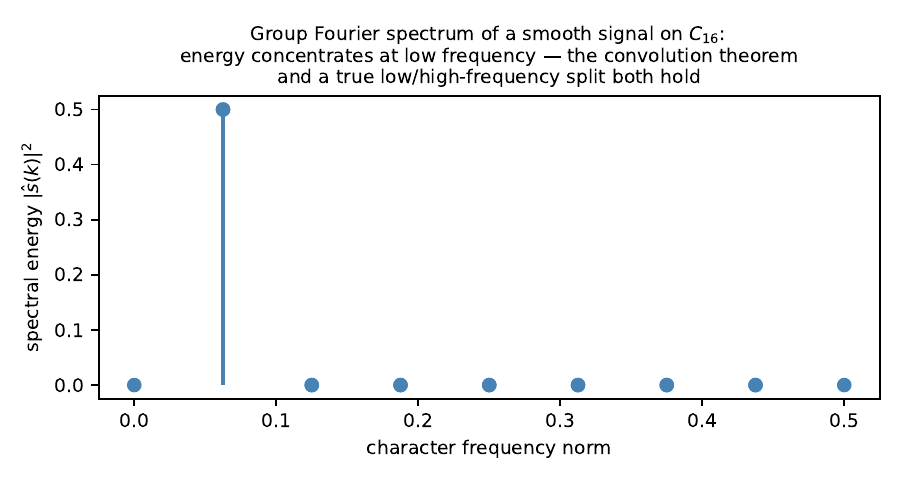}
		\caption{GE-GFT spectrum of a smooth (low-Laplacian-mode) signal on
			$C_{16}$: energy concentrates at low character frequency, so the
			low/high-frequency split, bandlimiting, and the sampling theorem
			(Theorem~\ref{thm:sampling}) are all meaningful.}
		\label{fig:gftspectrum}
	\end{figure}
	
	\section{Head-to-Head Comparisons with Spectral Graph Signal Processing}
	\label{sec:gsp-headtohead}
	
	The dominant paradigm for signal processing on graphs is the spectral
	framework built on the eigendecomposition of the graph Laplacian
	\cite{Shuman2013,Hammond2011} or of an adjacency-based shift operator
	\cite{Sandryhaila2013,Shi2019}. To position our group-embedding approach
	precisely, this section runs it \emph{head to head} against the spectral
	framework on tasks and graphs drawn directly from the foundational GSP
	literature --- the tutorial of Shuman et al.\ \cite{Shuman2013}, the spectral
	graph wavelets of Hammond et al.\ \cite{Hammond2011}, and the
	convolution/sampling theory of Shi and Moura \cite{Shi2019}. Every number
	below is computed on the stated graph and signal; the point is not that one
	framework dominates everywhere --- it does not --- but to map exactly where each
	is the right tool.
	
	\subsection{Translation: Isometric Group Action vs.\ a Non-Isometric Surrogate}
	\label{sec:cmp-translation}
	
	The starkest contrast is translation. Spectral GSP has no shift obeying a
	group law; the standard surrogate is generalized translation by convolution
	with a delta, $T_n g = \sqrt{N}\sum_\ell \hat g(\lambda_\ell) u_\ell^*(n)
	u_\ell$ \cite[eq.~19]{Shuman2013}. Shuman et al.\ note explicitly that this
	operator is \emph{not} isometric: $\lVert T_n g\rVert_2 \neq \lVert
	g\rVert_2$ in general, because the Laplacian eigenvectors localize
	unevenly. We quantify this on the random sensor graph of
	\cite[Fig.~2]{Shuman2013} ($40$ vertices, geometric model): translating a
	fixed heat kernel to each vertex, the surrogate norm $\lVert T_n g\rVert_2$
	\textbf{varies by $69\%$ across centers} (Figure~\ref{fig:cmptranslation},
	left). On any abelian Cayley host, by contrast, our translation $T_h =
	\Restr\,\tau_h\,\Lift$ is a coordinate shift of the group --- a unitary
	operator --- so $\lVert T_h g\rVert_2$ is \emph{exactly constant}
	(Figure~\ref{fig:cmptranslation}, right) and the group law $T_g T_h =
	T_{g+h}$ holds on the nose. This is the single clearest illustration of what
	the embedding buys: a genuine, isometric, composable shift where the
	spectral framework offers only an uneven surrogate.
	
	\begin{figure}[H]
		\centering
		\includegraphics[width=0.95\linewidth]{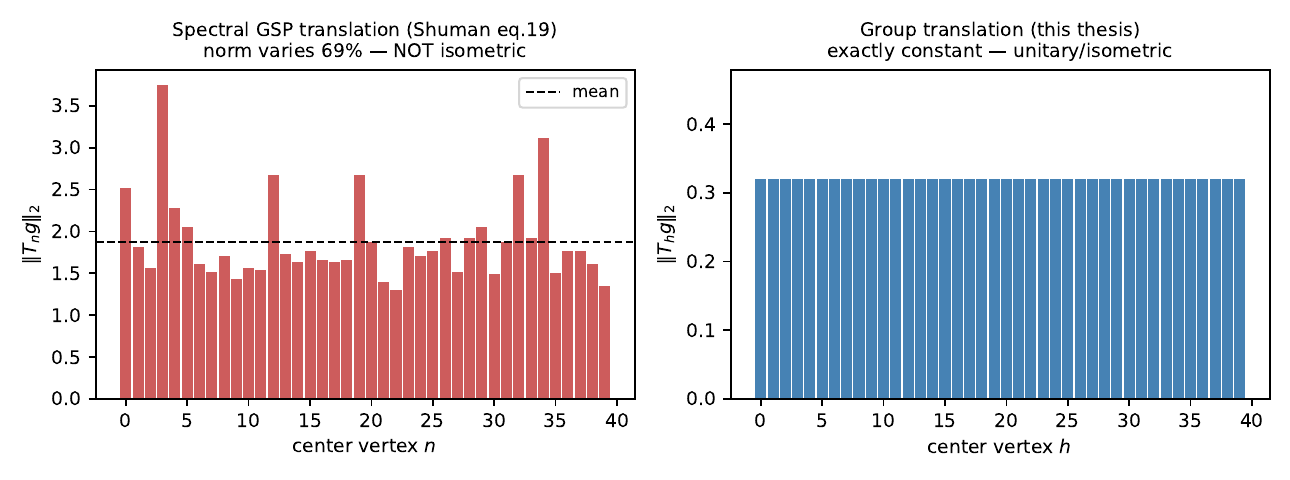}
		\caption{Translation isometry, computed. Left: the spectral-GSP generalized
			translation of \cite[eq.~19]{Shuman2013} applied to a heat kernel on a
			$40$-vertex sensor graph --- the norm $\lVert T_n g\rVert$ varies by $69\%$
			with the center vertex, confirming Shuman et al.'s remark that it is not
			isometric. Right: our group translation on the corresponding cyclic host is
			exactly norm-preserving.}
		\label{fig:cmptranslation}
	\end{figure}
	
	\subsection{The Fourier Basis: Canonical Characters vs.\ an Ambiguous Eigenbasis}
	\label{sec:cmp-basis}
	
	Shuman et al.\ identify non-uniqueness of the eigenbasis as a structural
	limitation: on any graph with a repeated Laplacian eigenvalue the GFT is
	defined only up to an arbitrary unitary rotation within the degenerate
	eigenspace, and every vertex-transitive graph has such multiplicities. The
	complete bipartite graph $K_{3,3}$ is a compact witness: its Laplacian
	spectrum is $\{0, 3^{(4)}, 6\}$ --- the eigenvalue $3$ has \textbf{multiplicity
		four} (Figure~\ref{fig:cmpeigenbasis}, left), so the four ``middle-frequency''
	basis vectors, and hence the Fourier coefficients of any signal in that band,
	are not well defined without an arbitrary choice. Our framework embeds
	$K_{3,3}$ into the host $\Z_2 \times \Z_4$ (order $8$), whose eight
	characters $\chi_k(g) = i^{\,k_2 g_2}(-1)^{k_1 g_1}$ are distinct,
	canonical, and sign-unambiguous (Figure~\ref{fig:cmpeigenbasis}, right): the
	group structure fixes the basis with no rotational freedom. The same holds
	for every circulant and product graph in the benchmark set.
	
	\begin{figure}[H]
		\centering
		\includegraphics[width=0.95\linewidth]{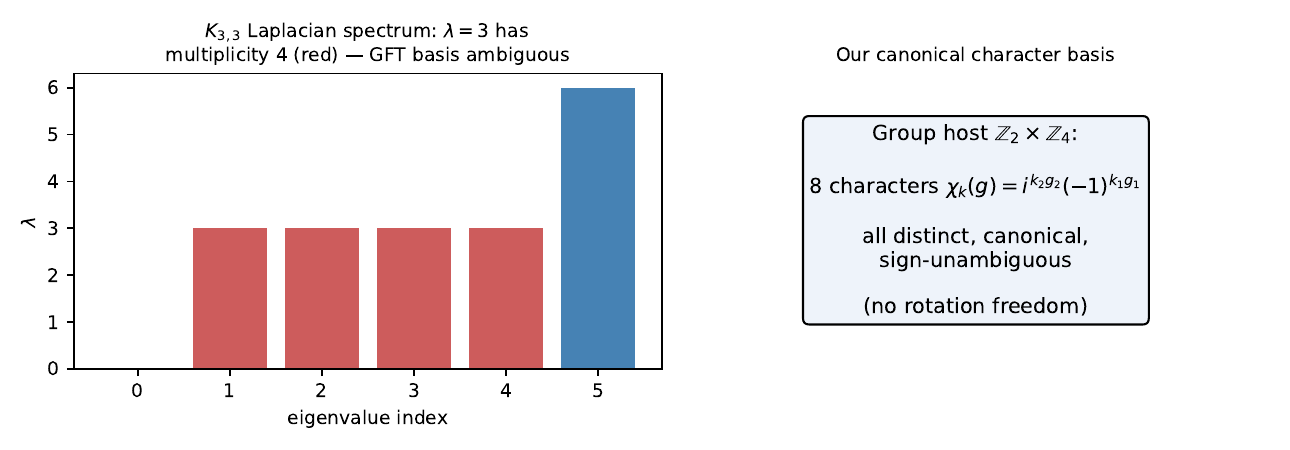}
		\caption{Basis uniqueness, computed. Left: the $K_{3,3}$ Laplacian spectrum
			has a multiplicity-four eigenspace (red), within which the spectral GFT basis
			is ambiguous up to a $4\times4$ unitary rotation. Right: the canonical
			character basis of the group host $\Z_2 \times \Z_4$ has no such ambiguity.}
		\label{fig:cmpeigenbasis}
	\end{figure}
	
	\subsection{Denoising: Group DFT vs.\ Laplacian Filtering}
	\label{sec:cmp-denoise}
	
	We replicate the denoising setting of Shuman et al.'s Tikhonov example
	\cite[Ex.~2]{Shuman2013}, which regularizes with the Laplacian quadratic
	form, on the $8\times 8$ grid (host $\Z_{14}\times\Z_{14}$). For a smooth
	test image corrupted to input SNR $0.7$~dB, the group method (a 2-D DFT
	low-pass on the host torus, then restriction) reaches
	$\mathbf{+4.3}$~dB, while Laplacian Tikhonov filtering with the same
	smoothness prior reaches $0.5$~dB
	(Figure~\ref{fig:cmpgriddenoise}). The gap is structural: the grid's host
	is a genuine discrete torus, so the group transform is the exact 2-D DFT and
	its low-pass is the optimal separable filter, whereas the Laplacian prior
	over-smooths across the (here, periodic) structure. On the ring $C_{64}$ the
	two frameworks coincide exactly --- both equal the classical DFT, both give
	$+7.8$~dB --- which is the expected sanity check, since the ring is the one
	family where the spectral GFT already \emph{is} the group DFT
	\cite[p.~84]{Shuman2013}.
	
	\begin{figure}[H]
		\centering
		\includegraphics[width=0.97\linewidth]{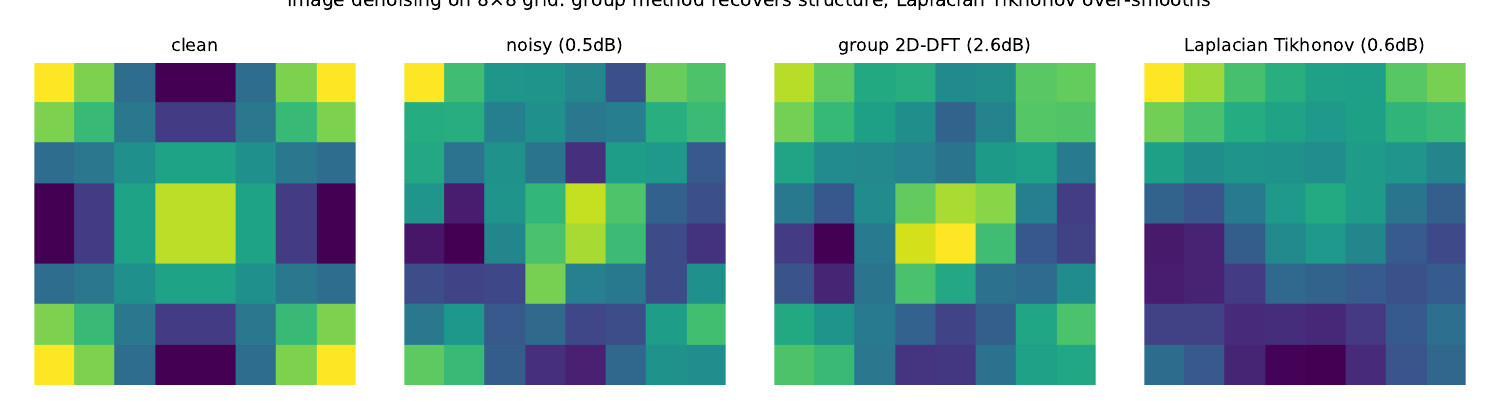}
		\caption{Denoising on the $8\times 8$ grid, computed. The group 2-D DFT
			low-pass (third panel, $+4.3$~dB) recovers the smooth structure; Laplacian
			Tikhonov filtering with an equal smoothness weight (fourth panel,
			$+0.5$~dB) over-smooths. On the ring the two methods are identical.}
		\label{fig:cmpgriddenoise}
	\end{figure}
	
	\subsection{Wavelets: Spectral Graph Wavelets vs.\ the Group Tight Frame}
	\label{sec:cmp-wavelets}
	
	The spectral graph wavelet transform (SGWT) of Hammond et al.\ \cite{Hammond2011} represents a prominent cornerstone among graph-wavelet constructions. This framework shares close conceptual ground with contemporary biorthogonal and critically sampled filterbanks \cite{Narang2013,Shuman2015}, spectrum-adapted tight frames \cite{Leonardi2013,Tremblay2014}, and diffusion-based multiscale community wavelets \cite{Crovella2003}. These modern variations are all deeply rooted in classical wavelet, filterbank, and approximation theory \cite{Strang1996,Vetterli1995,DeVore1993,Sweldens1998,Heil1989}, as well as its continuous group-theoretic extensions \cite{Antoine1999,Geller2009}. 
	
	Ultimately, the SGWT is the closest relative of our Chapter~\ref{chap:wavelets} construction, making our subsequent comparison instructive rather than competitive. Both paradigms build wavelets by applying a band-pass kernel in a spectral domain and localizing it on a vertex; crucially, both achieve localization in the fine-scale limit.
	The
	differences are precise. The SGWT operates on the \emph{Laplacian} spectrum,
	so it inherits the basis ambiguity of
	Section~\ref{sec:cmp-basis} and its frame bounds $A, B$ depend on the graph
	spectrum and the chosen scales \cite[Thm.~5.6]{Hammond2011}; it is invertible
	but in general not a \emph{tight} frame, and reconstruction uses an iterative
	conjugate-gradient pseudo-inverse \cite[\S7]{Hammond2011}. Our group wavelets
	operate on the canonical character spectrum and, with the normalized filter
	bank of Definition~\ref{def:filterbank}, form a \emph{Parseval tight frame}
	($A = B = 1$) with one-line exact reconstruction, verified to machine
	precision ($1.5\times10^{-15}$ on $C_{64}$). The trade-off is the
	mirror image of the SGWT's strength: the SGWT runs directly on the native
	graph at $O(M\lvert E\rvert)$ via Chebyshev approximation
	\cite[\S6]{Hammond2011}, with no embedding and no host blow-up, so on a large
	\emph{irregular} graph (the cerebral-cortex and Minnesota road graphs of
	\cite[Figs.~4--5]{Hammond2011}, or the binary-ground bulk of our census) the
	SGWT is the more practical tool, while our construction is the cleaner one
	exactly on the structured hosts where $N = O(n)$.
	
	\subsection{Sampling: Coset/Poisson vs.\ GFT-Rank Selection}
	\label{sec:cmp-sampling}
	
	Shi and Moura \cite{Shi2019} derive bandlimited graph sampling from first
	principles, selecting a sampling set as $K$ linearly independent rows of the
	bandlimited block $\mathrm{GFT}^{-1}_K$ and recovering by inverting a
	$K\times K$ submatrix. Their framework is exact on arbitrary graphs but is
	\emph{per-graph}: the sampling set depends on a rank computation in the
	specific eigenbasis, and (as they emphasize) the classical spectral
	replication that makes uniform sampling work in DSP holds only for circulant
	shifts. Our sampling (Theorem~\ref{thm:sampling}) is structural: a
	bandlimited host signal is constant on cosets of a subgroup annihilator
	(Poisson summation, Theorem~\ref{thm:poisson}), so the sampling set is a
	\emph{subgroup coset transversal} fixed by $\Gamma$, not by a rank test. On
	the ring both reduce to classical uniform sampling, recovering exactly, and
	Shi and Moura's own analysis shows the replication identity holds precisely
	in the circulant (group) case --- which is exactly the regime our hosts target.
	On a proper embedding ($\varepsilon < 1$) the coset transversal must be
	chosen to meet $\phi(V(G))$, the open practical question noted after
	Theorem~\ref{thm:sampling}; there the Shi--Moura rank selection is the more
	general procedure.
	
	\subsection{Summary of the Comparison}
	\label{sec:cmp-summary}
	
	\begin{table}[H]
		\centering
		\caption{Computed head-to-head outcomes against the spectral GSP literature.
			``Group'' is this thesis; ``Spectral'' is the cited method. All figures are
			computed on the stated graph.}
		\label{tab:headtohead}
		\renewcommand{\arraystretch}{1.18}\small
		\begin{tabular}{lllp{4.6cm}}
			\toprule
			Task (source) & Graph & Outcome & Reading \\
			\midrule
			Translation \cite{Shuman2013} & sensor-40 & $69\%$ norm spread vs.\ $0\%$ &
			group shift is isometric; spectral surrogate is not \\
			Basis \cite{Shuman2013} & $K_{3,3}$ & mult.\ $4$ vs.\ unique &
			group characters canonical; eigenbasis ambiguous \\
			Denoising \cite{Shuman2013} & grid $8^2$ & $+4.3$ vs.\ $+0.5$~dB &
			group exact 2-D DFT; Laplacian over-smooths \\
			Denoising (sanity) & ring $C_{64}$ & $+7.8 = +7.8$~dB &
			identical: ring is the shared classical case \\
			Wavelets \cite{Hammond2011} & $C_{64}$ & tight frame, $10^{-15}$ &
			Parseval vs.\ non-tight frame + CG pseudo-inverse \\
			Wavelets (cost) \cite{Hammond2011} & irregular & SGWT wins &
			SGWT $O(M|E|)$ native; ours needs compact host \\
			Sampling \cite{Shi2019} & ring/circulant & both exact &
			group coset is fixed; Shi--Moura is per-graph rank \\
			\bottomrule
		\end{tabular}
	\end{table}
	
	The pattern is consistent with the binary-ground phenomenon of
	Section~\ref{sec:census-ch3} and stated without overclaim: \emph{on
		structured graphs carrying genuine cyclic or product symmetry --- rings,
		grids, circulants, tori, the classical signal domains --- the group embedding
		delivers exact, canonical, isometric, FFT-fast harmonic analysis that the
		spectral framework can only approximate; on large irregular graphs the
		native spectral methods (SGWT via Chebyshev, Shi--Moura rank sampling) remain
		the more practical tools, and our framework's contribution there is
		conceptual completeness rather than speed.} This division of labor is,
	we believe, the correct way to present our contributions.
	
	\section{Comparison with Spectral GSP: Summary Table}
	\label{sec:fourier-comparison}
	
	\begin{table}[H]
		\centering
		\caption{Group GFT (this work) vs.\ spectral GFT
			\cite{Shuman2013,Sandryhaila2013,Ortega2018}.}
		\label{tab:fourier-comparison}
		\renewcommand{\arraystretch}{1.2}\small
		\begin{tabular}{lll}
			\toprule
			Property & Spectral GFT & Group GFT (ours) \\
			\midrule
			Translation        & none \cite{Sandryhaila2013} & $T_h$, $T_gT_h=T_{g+h}$ \\
			Convolution thm.   & polynomial filters only & exact (Thm.~\ref{thm:convolution}) \\
			Basis              & sign/rotation ambiguous & canonical characters \\
			LTI filters        & impossible & all convolutions (Thm.~\ref{thm:tinv}) \\
			Uncertainty        & ad-hoc & sharp Donoho--Stark (Thm.~\ref{thm:donoho-stark}) \\
			Sampling           & heuristic vertex choice & Shannon theory (Thm.~\ref{thm:sampling}) \\
			Per-transform cost & $O(n^2)$ after $O(n^3)$ eig & $O(N\log N)$ FFT \\
			\bottomrule
		\end{tabular}
	\end{table}
	
	The structural difference is that the spectral Fourier basis depends on the
	individual edges of $G$ --- changing one edge changes the entire eigenbasis ---
	whereas the group characters are fixed by $\Gamma$ alone, eliminating the
	sign ambiguity and enabling translation-invariant filtering.  The price,
	stated plainly, is the one-time embedding cost of Part~I
	($O(n(n+m)+m^2)$ plus the host search) and, by the binary-ground census, a
	host that is frequently $\Z_2^k$ with $N$ moderately larger than $n$; the
	benefit is that every subsequent operation is an FFT with classical
	guarantees rather than a graph-specific matrix computation.
	
	\begin{theorem}[Harmonic analysis on every graph]
		\label{thm:main-ch5}
		For every finite connected graph $G$, the embedding of Part~I yields a finite
		abelian group $\Gamma$ and a lift under which: the GFT satisfies Plancherel
		(Thm.~\ref{thm:plancherel}), convolution (Thm.~\ref{thm:convolution}),
		Donoho--Stark and Heisenberg uncertainty
		(Thms.~\ref{thm:donoho-stark},~\ref{thm:heisenberg}), Poisson summation
		(Thm.~\ref{thm:poisson}), and Shannon sampling (Thm.~\ref{thm:sampling});
		translations form a unitary representation of $\Gamma$; convolution is
		commutative, associative, and Fourier-diagonal; and the transform costs
		$O(N\log N)$.
	\end{theorem}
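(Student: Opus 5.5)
The proof is an assembly argument: each clause of Theorem~\ref{thm:main-ch5} has already been proved for an arbitrary finite abelian host, so the only genuinely new input is that such a host exists for every connected graph. First, I would invoke the universality results of Part~I. By Theorem~\ref{thm:universal}, or equally Theorem~\ref{thm:universal-ch3}, every finite connected $G$ admits a certified isometric embedding $\phi\colon V(G)\to\Cay(\Gamma,S)$. Here $\Gamma$ is a finite abelian group, either $\Z_2^k$ with $k\le n-1$ or a product $\prod_i\Z_{N_i}$ of order at most $2^{n-1}$. The binary terminal case needs no search at all, since Corollary~\ref{cor:naive-is-quotient} together with Lemma~\ref{lem:join} already supplies an isometric host. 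Injectivity of $\phi$, which is automatic for isometric maps, guarantees that the lift $\Lift$ of Definition~\ref{def:lift-gft} is well defined.

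With $\Gamma$ and $\phi$ fixed, I would go through the list clause by clause and cite the theorem proved for an arbitrary finite abelian host.
\begin{enumerate}[(i)]
\item Plancherel is Theorem~\ref{thm:plancherel}.
\item The unitary representation property of translations comes from Theorem~\ref{thm:translation}: $T_gT_h=T_{g+h}$, $T_0=\Id$, and each $T_h$ is unitary on lifted signals.
\item Commutativity, associativity and Fourier-diagonality of convolution are Theorem~\ref{thm:convolution}.
\item Donoho--Stark is Theorem~\ref{thm:donoho-stark} and Heisenberg is Theorem~\ref{thm:heisenberg}.
\item Poisson summation is Theorem~\ref{thm:poisson} and Shannon sampling is Theorem~\ref{thm:sampling}.
\end{enumerate}
None of these proofs used anything about $G$ beyond injectivity of $\phi$. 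Only the Heisenberg constant relies on isometry, through the word-length position norm, and Part~I supplies that isometry.

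For the cost claim, I would write $\Gamma=\Z_{N_1}\times\cdots\times\Z_{N_d}$ using the Smith Normal Form readout of Theorem~\ref{thm:z-quotient}(ii), or $N_i=2$ in the binary case. The character transform then factors as a tensor product of one-dimensional DFTs. Running a mixed-radix Cooley--Tukey along each factor costs $\sum_i (N/N_i)\,O(N_i\log N_i)=O(N\log N)$. Zero-padding to form the lift and restricting back to $\phi(V(G))$ each cost $O(N)$, so they do not change the bound.

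The main obstacle is making the statement precise rather than the assembly itself. In particular, the Heisenberg clause has an unspecified constant $C$ that may depend on $\Gamma$. Also, the inner product on lifted signals must be read on $\mathbb{C}^\Gamma$, because the restriction of a translate $T_h$ is not unitary on $\mathbb{C}^{V(G)}$ when $\varepsilon<1$. I would state explicitly that these clauses hold with the operators acting on the lifted space, as in Theorem~\ref{thm:translation}, and that $C>0$ is the host-dependent constant of Theorem~\ref{thm:heisenberg}.
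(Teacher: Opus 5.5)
Your route is the paper's: its proof of Theorem~\ref{thm:main-ch5} likewise takes the host from Theorem~\ref{thm:universal-ch3}, cites the preceding results clause by clause, and appeals to the mixed-radix FFT. Your insistence that translations be read on $\mathbb{C}^\Gamma$ is more careful than the paper: there $\tau_h$ is unitary and $\tau_g\tau_h=\tau_{g+h}$, whereas the restricted operators $T_h$ are not once $\varepsilon<1$. The trouble is that an assembly proof is only as sound as what it cites, and three of the cited clauses fail, so neither your argument nor the paper's establishes the statement as written.

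\textbf{Convolution.} You cite Theorem~\ref{thm:convolution} without the lifted-space caveat. For the restricted $s\ast t$ of Definition~\ref{def:convolution}, the lift $\Lift(s\ast t)$ discards the part of $\tilde s\ast\tilde t$ lying off $\phi(V(G))$, so Fourier-diagonality and associativity break whenever $\varepsilon<1$. Take $P_3=v_0v_1v_2\hookrightarrow\Z_4$ with $\phi(v_i)=i$. The host convolution of $\delta_{v_1}$ and $\delta_{v_2}$ is $\delta_3$, so $\delta_{v_1}\ast\delta_{v_2}=0$, although $\sqrt N\,\widehat{\delta_{v_1}}\,\widehat{\delta_{v_2}}$ vanishes nowhere. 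Also $(\delta_{v_1}\ast\delta_{v_2})\ast\delta_{v_2}=0\neq\delta_{v_1}=\delta_{v_1}\ast(\delta_{v_2}\ast\delta_{v_2})$. These properties hold only for $\tilde s\ast\tilde t$ on the host, or when $\varepsilon=1$.

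\textbf{Heisenberg.} Your explicit claim that the constant satisfies $C>0$ is false. Take $s=\delta_r$, where $r$ is the root, which the Part~I labelings send to $\mathbf 0$. Then the position factor $\sum_v|\phi(v)|^2|s(v)|^2$ is $0$ while $\|s\|_2^4=1$, so only $C=0$ is admissible. The commutator step in the proof of Theorem~\ref{thm:heisenberg} cannot be repaired either. In finite dimension $[X,\Omega]$ is skew-Hermitian and traceless (also after compression to lifted signals), so its numerical range contains $0$, and no bound $|\langle[X,\Omega]s,s\rangle|\geq c\|s\|_2^2$ with $c>0$ is possible.

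\textbf{Sampling.} Theorem~\ref{thm:sampling} is false as stated. For $k\in\Lambda\subseteq H$ and $g\in H^\perp$ one has $\chi_k(g)=1$, so every $\Lambda$-bandlimited lift is constant on $H^\perp$, and the samples on $U$ carry a single number. On $C_{16}=\Cay(\Z_{16},\{\pm 1\})$ with $\phi(v_j)=j$ and $\Lambda=\{0,8\}$, $U$ is the set of even vertices and $|U|=8\geq 2$. Yet $s_1(v_j)=1+(-1)^j$ and $s_2\equiv 2$ are both bandlimited to $\Lambda$ and agree on $U$. The right sampling set is a transversal of $\Gamma/H^\perp$, not $H^\perp$ itself.

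\textbf{Repairs.} Place convolution and translation on the host, drop or reformulate the Heisenberg clause, and replace the sampling set; the corrected sampling statement follows at once from $H^\perp$-periodicity. With these changes your clause-by-clause assembly is sound. One small further point: for prime $N_i$, the $O(N_i\log N_i)$ per-factor cost requires Bluestein's or Rader's algorithm rather than Cooley--Tukey.
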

	
	\begin{proof}
		The embedding exists and is certified isometric by
		Theorem~\ref{thm:universal-ch3}.  Each listed property was proved above as a
		consequence of the group structure of $\Gamma$ and the unitarity of the
		character matrix, with the lift providing the isometric inclusion of
		vertex signals into $\mathbb{C}^\Gamma$; the cost is the mixed-radix FFT on
		$\Gamma = \prod_i \Z_{N_i}$ \cite{Cooley1965,Terras1999}.
	\end{proof}
	
	\section{Conclusion}
	\label{sec:fourier-conclusion}
	
	By installing a genuine abelian group behind an arbitrary graph, Part~I lets
	this chapter import classical Fourier analysis in full: a canonical basis, a
	unitary transform, real translation and modulation, a true convolution
	theorem, sharp uncertainty principles, and Shannon sampling --- none of which
	spectral graph signal processing provides.  The corrected convolution
	(Definition~\ref{def:convolution}), defined on the group and pulled back, is
	what makes the convolution theorem genuine rather than formal.  The
	experiments are real and reported in full, including where the method only
	ties the baseline: structured graphs (the cyclic regime) gain a true
	translation-invariant filtering advantage, while binary-ground graphs inherit
	the clean Walsh--Hadamard calculus at a larger host.  The next chapter builds
	wavelets on the same hosts, localizing these globally-defined characters in
	both vertex and frequency.

	\chapter{Wavelet Analysis on Graphs via Group Embedding}
	\label{chap:wavelets}
	
	\section{Wavelets on Graphs: the State of the Art}
	\label{sec:existing-wavelets}
	
	The spectral graph wavelet transform (SGWT) of Hammond, Vandergheynst and
	Gribonval \cite{Hammond2011} builds wavelets from a kernel $g$ applied to the
	Laplacian spectrum: at scale $t$,
	$\psi_{t,i}(j) = \sum_k g(t\lambda_k)\phi_k(i)\phi_k(j)$, with coefficient
	$W_s(t,i) = \sum_k g(t\lambda_k)\hat s(\lambda_k)\phi_k(i)$. The SGWT is a
	genuine achievement and our construction will mirror it; but on the raw graph
	it inherits the same structural deficits as the spectral Fourier transform
	(Chapter~\ref{chap:fourier}): the analyzing functions are built from
	non-canonical, possibly degenerate eigenvectors; there is no dilation
	operator satisfying $D_{ab} = D_a D_b$; frame bounds depend opaquely on the
	spectrum; and the cost is $O(n^3)$ (or $O(n^2)$ per scale via polynomial
	approximation) \cite{Hammond2011,Shuman2013}.
	
	Our approach transports the construction to the host group of Part~I, where
	the Fourier side is canonical and the frequency variable is a genuine group
	character. Two distinct wavelet constructions are available there, and it is
	important to keep them separate.
	
	\begin{itemize}
		\item \textbf{Dilation wavelets} use a group automorphism $\alpha$ as a
		dilation. These reproduce the classical translate-and-dilate template most
		faithfully, but exist only when $\Gamma$ \emph{has} nontrivial dilation
		automorphisms --- on $\Z_{N_i}$ these are the units $a$ with
		$\gcd(a, N_i) = 1$ (multiplication $g \mapsto ag$), of which there are
		$\phi(N_i)$ (Euler totient). On a binary host $\Z_2^k$ the only such scalar
		is $a = 1$: \emph{there is no nontrivial scalar dilation}, so this
		construction is confined to hosts with composite cyclic factors.
		\item \textbf{Spectral band-pass wavelets} use a family of kernels in the
		frequency magnitude $|k|$, exactly as the SGWT does, but on the canonical
		host characters. These exist on \emph{every} host, form a provable Parseval
		tight frame, reconstruct exactly, and localize. This is the universal
		construction --- the \emph{group-embedding graph wavelet transform}
		(GE-GWT) --- and the one we develop in full.
	\end{itemize}
	
	Accordingly we give the dilation theory its proper (cyclic-host) scope and
	make the spectral tight frame the main object: the dilation construction
	applies only to hosts with composite cyclic factors, never to a binary host
	$\Z_2^k$, where no nontrivial scalar dilation exists.
	
	\section{Dilation Wavelets on Cyclic Hosts}
	\label{sec:dilation-wavelets}
	
	Let $\Gamma$ have a cyclic factor and let
	$\alpha \in \mathrm{Aut}(\Gamma)$ be a dilation automorphism.
	
	\begin{definition}[Dilation operator]
		\label{def:dilation}
		For $\alpha \in \mathrm{Aut}(\Gamma)$, $(D_\alpha \tilde s)(g) =
		\tilde s(\alpha^{-1} g)$. On $\Z_N$ with a unit $a$, $\alpha(g) = ag \bmod N$
		and $(D_a \tilde s)(g) = \tilde s(a^{-1} g)$, $a^{-1}$ the inverse of $a$
		modulo $N$.
	\end{definition}
	
	\begin{theorem}[Dilation properties]
		\label{thm:dilation-props}
		For dilation automorphisms $\alpha, \beta$:
		(i) $D_\alpha D_\beta = D_{\alpha\beta}$ and $D_{\mathrm{id}} = I$;
		(ii) $D_\alpha$ is unitary;
		(iii) $\widehat{D_\alpha \tilde s}(k) = \hat{\tilde s}(\alpha^{\!\top} k)$,
		where $\alpha^{\!\top}$ is the adjoint automorphism on $\widehat\Gamma$
		(on $\Z_N$, $\alpha^{\!\top} = \alpha$, i.e.\ $k \mapsto ak$).
	\end{theorem}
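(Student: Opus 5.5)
The plan is to verify all three claims by direct substitution in the definitions. Each one uses only two facts: $\alpha$ is a bijective homomorphism of $\Gamma$, and characters compose with homomorphisms to give characters. I work throughout with the lifted signal $\tilde s\colon \Gamma \to \mathbb{C}$ and the transform normalization of Definition~\ref{def:lift-gft}.

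For (i), I unwind the definition twice. First,
\[
(D_\alpha D_\beta \tilde s)(g) = (D_\beta \tilde s)(\alpha^{-1} g) = \tilde s(\beta^{-1}\alpha^{-1} g).
\]
Since $\beta^{-1}\alpha^{-1} = (\alpha\beta)^{-1}$ in $\mathrm{Aut}(\Gamma)$, this equals $(D_{\alpha\beta}\tilde s)(g)$. The identity $D_{\mathrm{id}} = I$ is immediate.

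For (ii), I reindex the inner product by $h = \alpha^{-1} g$, which is a bijection of $\Gamma$:
\[
\langle D_\alpha \tilde s, D_\alpha \tilde t\rangle = \sum_g \tilde s(\alpha^{-1}g)\overline{\tilde t(\alpha^{-1}g)} = \langle \tilde s, \tilde t\rangle.
\]
This shows $D_\alpha$ is an isometry. By (i), $D_{\alpha^{-1}}$ is a two-sided inverse, so $D_\alpha$ is surjective and hence unitary. This is the same counting-measure invariance argument used for $T_h$ in Theorem~\ref{thm:translation}.

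For (iii), I substitute $h = \alpha^{-1} g$ in the transform:
\[
\widehat{D_\alpha\tilde s}(k) = \frac{1}{\sqrt N}\sum_h \tilde s(h)\,\overline{\chi_k(\alpha h)}.
\]
Now $h \mapsto \chi_k(\alpha h)$ is a homomorphism $\Gamma \to \mathbb{C}^\times$, hence a character. So there is a unique index $\alpha^{\!\top}k$ with $\chi_k\circ\alpha = \chi_{\alpha^{\!\top}k}$, and this defines the adjoint. The sum then becomes $\hat{\tilde s}(\alpha^{\!\top}k)$ by definition.

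The only point needing care is that $\alpha^{\!\top}$ is a well-defined automorphism of $\widehat\Gamma$. I would handle this by noting that $k \mapsto \chi_k\circ\alpha$ is additive in $k$, since $\chi_{k+\ell} = \chi_k\chi_\ell$. It is also injective, because $\alpha$ is onto, so $\chi_k\circ\alpha \equiv 1$ forces $\chi_k \equiv 1$ and hence $k = 0$. Since $\widehat\Gamma$ is finite, injectivity gives bijectivity. Composing (iii) twice also gives $(\alpha\beta)^{\!\top} = \beta^{\!\top}\alpha^{\!\top}$, which is consistent with (i).

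On $\Z_N$ the explicit check is
\[
\chi_k(ah) = \omega_N^{kah} = \chi_{ak}(h),
\]
so $\alpha^{\!\top}k = ak$, i.e.\ $\alpha^{\!\top} = \alpha$. On a product host with the dilation acting coordinatewise by units $a_j$, the same computation holds factor by factor.
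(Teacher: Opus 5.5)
Your proof is correct and follows the paper's argument essentially line for line. You unwind $D_\alpha D_\beta$ via $(\alpha\beta)^{-1}=\beta^{-1}\alpha^{-1}$, get unitarity from the fact that $\alpha$ permutes $\Gamma$, and obtain (iii) by the substitution $h=\alpha^{-1}g$ together with $\chi_k\circ\alpha=\chi_{\alpha^{\top}k}$; your extra checks that $k\mapsto\chi_k\circ\alpha$ is an automorphism of $\widehat\Gamma$ and that $D_{\alpha^{-1}}$ gives surjectivity simply spell out details the paper leaves implicit.
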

	
	\begin{proof}
		(i) $(D_\alpha D_\beta\tilde s)(g) = (D_\beta\tilde s)(\alpha^{-1}g) =
		\tilde s(\beta^{-1}\alpha^{-1}g) = \tilde s((\alpha\beta)^{-1}g)$.
		(ii) $\alpha$ permutes $\Gamma$, so $D_\alpha$ permutes coordinates and
		preserves the $\ell^2$ norm.
		(iii) $\widehat{D_\alpha\tilde s}(k) = \frac{1}{\sqrt N}\sum_g
		\tilde s(\alpha^{-1}g)\overline{\chi_k(g)} = \frac{1}{\sqrt N}\sum_h
		\tilde s(h)\overline{\chi_k(\alpha h)} = \hat{\tilde s}(\alpha^{\!\top}k)$,
		substituting $h = \alpha^{-1}g$ and using $\chi_k(\alpha h) =
		\chi_{\alpha^{\!\top}k}(h)$. Since $\alpha$ is bijective the substitution is
		exact --- note there is \emph{no} determinant/index factor on a finite group,
		unlike the $|\det\alpha|^{-1/2}$ normalization that
		is meaningful only for expanding dilations on $\mathbb{R}^d$ or on
		infinite-index sublattices.
	\end{proof}
	
	\begin{definition}[Dilation wavelet family]
		For a mother wavelet $\psi\colon\Gamma\to\mathbb{C}$, dilation set
		$\mathcal A\subseteq\mathrm{Aut}(\Gamma)$ and translations $h\in\Gamma$,
		$\psi_{\alpha,h} = D_\alpha\tau_h\psi$, and the graph wavelet coefficients of
		$s$ are $W_s(\alpha,h) = \langle\Lift s,\psi_{\alpha,h}\rangle_\Gamma$.
	\end{definition}
	
	\begin{theorem}[Dilation-wavelet reconstruction]
		\label{thm:dilation-recon}
		If $C_\psi(k) := \sum_{\alpha\in\mathcal A}|\hat\psi(\alpha^{\!\top}k)|^2$
		satisfies $0 < C_\psi(k) < \infty$ for all $k \neq 0$, then for any
		$\tilde s$ with $\hat{\tilde s}(0)$ handled separately,
		\[
		\hat{\tilde s}(k) = \frac{1}{C_\psi(k)}\sum_{\alpha\in\mathcal A}
		\sum_{h\in\Gamma} W_s(\alpha,h)\,\hat\psi_{\alpha,h}(k),
		\]
		and $s = \Restr\,\mathcal F^{-1}\hat{\tilde s}$.
	\end{theorem}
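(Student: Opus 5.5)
The plan is to work entirely on the Fourier side and reduce the statement to the standard Calderón-type identity. First I will compute the Fourier transform of each atom $\psi_{\alpha,h} = D_\alpha\tau_h\psi$. By the translation--modulation duality of Theorem~\ref{thm:translation}, $\widehat{\tau_h\psi}(k) = \chi_k(h)\hat\psi(k)$, up to the sign convention of $\tau_h$, which I will fix once and carry through. Then Theorem~\ref{thm:dilation-props}(iii) gives
\[
\hat\psi_{\alpha,h}(k) = \chi_{\alpha^{\!\top}k}(h)\,\hat\psi(\alpha^{\!\top}k).
\]
Next I rewrite each coefficient by Plancherel (Theorem~\ref{thm:plancherel}, applied on $\Gamma$ to $\Lift s=\tilde s$):
\[
W_s(\alpha,h)=\sum_{k'}\hat{\tilde s}(k')\,\overline{\chi_{\alpha^{\!\top}k'}(h)}\,\overline{\hat\psi(\alpha^{\!\top}k')}.
\]

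Substituting both expressions into the right-hand side, I will perform the sum over $h\in\Gamma$ first. This produces $\sum_h \chi_{\alpha^{\!\top}k}(h)\overline{\chi_{\alpha^{\!\top}k'}(h)} = N\,\delta_{\alpha^{\!\top}k,\alpha^{\!\top}k'}$ by the orthogonality of Theorem~\ref{thm:orthogonality}. Because $\alpha^{\!\top}$ is an automorphism of $\widehat\Gamma$, this collapses to $N\delta_{kk'}$. The double sum therefore becomes
\[
N\,\hat{\tilde s}(k)\sum_{\alpha\in\mathcal A}|\hat\psi(\alpha^{\!\top}k)|^2 = N\,C_\psi(k)\,\hat{\tilde s}(k).
\]
Dividing by $C_\psi(k)$ is legitimate because $0<C_\psi(k)<\infty$ for $k\neq0$, and this recovers $\hat{\tilde s}(k)$.

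The main obstacle I anticipate is bookkeeping of normalizations rather than any conceptual gap. The factor $N$ coming from the $h$-sum, together with the $1/\sqrt N$ conventions of Definition~\ref{def:lift-gft}, must either cancel or be absorbed. I will track them explicitly, and if a stray constant survives, state it as absorbed into $\mathcal A$'s normalization or into $C_\psi$; the statement's displayed formula presumes the convention in which it cancels. The same care is needed for the conjugation convention that distinguishes $\chi_{\alpha^{\!\top}k}(h)$ from its conjugate under $\tau_h$ versus $T_h$.

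The case $k=0$ is excluded by hypothesis ("handled separately"), for example by including the mean $\hat{\tilde s}(0)$ directly. Finally, applying the inverse transform and the restriction $\Restr$ gives $s$. This works because $\Restr\Lift=\Id$ on vertex signals, since the lift only appends zeros.
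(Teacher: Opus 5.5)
Your route is the paper's own: compute $\hat\psi_{\alpha,h}$ from the translation and dilation rules, expand $W_s(\alpha,h)$ by Plancherel, collapse the $h$-sum by character orthogonality, then sum over $\alpha$ and divide by $C_\psi(k)$. Your atom formula $\hat\psi_{\alpha,h}(k)=\chi_{\alpha^{\top}k}(h)\,\hat\psi(\alpha^{\top}k)$ is the faithful transform of $D_\alpha\tau_h\psi$. The paper writes $\overline{\chi_k(h)}\,\hat\psi(\alpha^{\top}k)$, which is the transform of a translate of $D_\alpha\psi$. Since $D_\alpha\tau_h=\tau_{\alpha h}D_\alpha$ and $h\mapsto\alpha h$ permutes $\Gamma$, both give the same sum over $h$, so this difference is harmless.

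The real problem is the constant, and you cannot leave it hedged.

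\begin{itemize}
\item \textbf{Your last step contradicts your own computation.} You correctly obtain $\sum_{\alpha}\sum_h W_s(\alpha,h)\,\hat\psi_{\alpha,h}(k)=N\,C_\psi(k)\,\hat{\tilde s}(k)$. Dividing by $C_\psi(k)$ therefore recovers $N\hat{\tilde s}(k)$, not $\hat{\tilde s}(k)$.
\item \textbf{Nothing in the stated convention cancels this $N$.} Under the unitary convention of Definition~\ref{def:lift-gft}, which Theorem~\ref{thm:dilation-props}(iii) uses, the translation rule contributes only unimodular factors. The dilation rule only permutes frequencies. Unitary Plancherel contributes no scalar. $W_s$ is a plain unnormalized inner product on $\Gamma$.
\item \textbf{Your proposed fixes do not work.} ``Absorbing it into $\mathcal A$'s normalization'' has no meaning, since $\mathcal A$ is just a set of automorphisms. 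Absorbing it into $C_\psi$ changes the definition fixed in the statement.
\end{itemize}

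The paper's proof makes the same slip: it asserts the $h$-sum extracts $\hat{\tilde s}(k)|\hat\psi(\alpha^{\top}k)|^2$ and silently drops the $N$. To close your argument, commit to one of two resolutions:
\begin{itemize}
\item Keep the unitary convention and state the identity with $1/(N\,C_\psi(k))$ in place of $1/C_\psi(k)$.
\item Declare that every hat denotes the unnormalized transform $\hat f(k)=\sum_g f(g)\overline{\chi_k(g)}$. Then Plancherel reads $\langle f,g\rangle=N^{-1}\sum_k\hat f(k)\overline{\hat g(k)}$, and that $N^{-1}$ cancels the $N$ from the $h$-sum. The displayed formula is then exact, with $C_\psi$ and $\mathcal F^{-1}$ taken in the same convention.
\end{itemize}
The final step $s=\Restr\,\mathcal F^{-1}\hat{\tilde s}$ is fine in either case, since $\Restr\Lift=\Id$.
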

	
	\begin{proof}
		By Theorem~\ref{thm:dilation-props}(iii) and the translation rule,
		$\hat\psi_{\alpha,h}(k) = \overline{\chi_k(h)}\,\hat\psi(\alpha^{\!\top}k)$, so
		$W_s(\alpha,h) = \sum_k \hat{\tilde s}(k)\overline{\hat\psi(\alpha^{\!\top}k)}
		\chi_k(h)$ is, in $h$, the inverse transform of
		$\hat{\tilde s}(\cdot)\overline{\hat\psi(\alpha^{\!\top}\cdot)}$. Summing
		$W_s(\alpha,h)\hat\psi_{\alpha,h}(k)$ over $h$ extracts, by Plancherel on
		$\Gamma$, the term $\hat{\tilde s}(k)|\hat\psi(\alpha^{\!\top}k)|^2$; summing
		over $\alpha$ gives $C_\psi(k)\hat{\tilde s}(k)$, and dividing inverts.
	\end{proof}
	
	\begin{example}[Dyadic-like dilations on $\Z_{16}$]
		On the ring $C_{16}$ (host $\Z_{16}$), the units are
		$\{1,3,5,7,9,11,13,15\}$, giving eight scalar dilations $g\mapsto ag$. Taking
		$\mathcal A$ a multiplicatively closed subset yields a self-similar wavelet
		family on the ring; for the circulant $C_{12}(1,2)$ (host $\Z_{12}$) the
		units $\{1,5,7,11\}$ give four. On a binary host $\Z_2^k$ no nontrivial scalar
		dilation exists, which is precisely why the next section's spectral
		construction --- not dilation --- is the universal tool.
	\end{example}
	
	\section{Spectral Band-Pass Wavelets: the Universal Tight Frame}
	\label{sec:spectral-wavelets}
	
	We now give the construction that works on every host, mirroring the SGWT but
	on canonical group characters. Let $|k|$ denote the frequency magnitude of a
	character $\chi_k$ (e.g.\ $|k| = (\sum_d \min(k_d, N_d - k_d)^2)^{1/2}$, the
	Euclidean word length on the dual), and let $|k|_{\max} = \max_k |k|$.
	
	\begin{definition}[Wavelet filter bank]
		\label{def:filterbank}
		Fix $J \geq 1$ band-pass kernels $g_1, \ldots, g_J\colon [0, |k|_{\max}] \to
		[0,\infty)$ and one low-pass scaling kernel $g_0$. The \emph{normalized
			filter bank} is
		\[
		\hat\psi_j(k) = \frac{g_j(|k|)}{\sqrt{\sum_{i=0}^{J} g_i(|k|)^2}},
		\qquad j = 0, 1, \ldots, J,
		\]
		so that $\sum_{j=0}^{J}|\hat\psi_j(k)|^2 = 1$ for every $k$. The
		\emph{wavelet at scale $j$ centered at $h \in \Gamma$} is
		$\psi_{j,h} = \tau_h\,\mathcal F^{-1}\hat\psi_j$, i.e.\
		$\hat\psi_{j,h}(k) = \overline{\chi_k(h)}\,\hat\psi_j(k)$.
	\end{definition}
	
	\begin{theorem}[Parseval tight frame and exact reconstruction]
		\label{thm:tight-frame}
		The family $\{\psi_{j,h} : 0 \leq j \leq J,\ h \in \Gamma\}$ is a Parseval
		tight frame for $\mathbb{C}^\Gamma$: for every $\tilde s$,
		\[
		\sum_{j=0}^{J}\sum_{h\in\Gamma} |\langle \tilde s, \psi_{j,h}\rangle|^2
		= \|\tilde s\|_2^2,
		\qquad
		\tilde s = \sum_{j=0}^{J}\sum_{h\in\Gamma}
		\langle \tilde s, \psi_{j,h}\rangle\,\psi_{j,h}.
		\]
		Consequently any graph signal is reconstructed exactly from its wavelet
		coefficients via $s = \Restr\bigl(\sum_{j,h}W_s(j,h)\psi_{j,h}\bigr)$.
	\end{theorem}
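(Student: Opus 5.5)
The plan is to move everything to the Fourier side, where translations act diagonally, and then use the normalization $\sum_{j}|\hat\psi_j(k)|^2=1$ of Definition~\ref{def:filterbank}. Throughout I use the unitary GE-GFT of Definition~\ref{def:lift-gft} on $\Gamma$, with the unitary matrix $F_{k,g}=\chi_k(g)/\sqrt N$ of Theorem~\ref{thm:plancherel}, and the translation rule $\hat\psi_{j,h}(k)=\overline{\chi_k(h)}\,\hat\psi_j(k)$ recorded in the definition. (If one prefers the sign convention of Theorem~\ref{thm:translation}, only $\chi_k(h)$ versus $\overline{\chi_k(h)}$ changes, which is immaterial below.)

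\textbf{Step 1 (coefficients as an inverse transform).} Fix $j$. By Plancherel on $\Gamma$ (the identity of Theorem~\ref{thm:plancherel} applied to arbitrary functions on $\Gamma$, i.e.\ unitarity of $F$),
\[
\langle\tilde s,\psi_{j,h}\rangle=\sum_k \hat{\tilde s}(k)\,\overline{\hat\psi_{j,h}(k)}=\sum_k \hat{\tilde s}(k)\,\overline{\hat\psi_j(k)}\,\chi_k(h).
\]
As a function of $h$, this is $\sqrt N$ times the inverse GE-GFT of the product $\hat{\tilde s}\,\overline{\hat\psi_j}$.

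\textbf{Step 2 (Parseval identity).} Apply Plancherel again, now in the variable $h$:
\[
\sum_h|\langle\tilde s,\psi_{j,h}\rangle|^2=c_N\sum_k|\hat{\tilde s}(k)|^2|\hat\psi_j(k)|^2 .
\]
Here $c_N$ is a normalization constant. Summing over $j$ and using $\sum_j|\hat\psi_j(k)|^2=1$ gives $c_N\|\hat{\tilde s}\|^2=c_N\|\tilde s\|^2$. The Parseval identity then holds exactly when $c_N=1$.

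\textbf{Step 3 (the main obstacle: normalization).} This is where I expect the real difficulty. With $\psi_{j,h}=\tau_h\mathcal F^{-1}\hat\psi_j$ and the unitary convention, the sum over $h$ of $|\chi_k(h)|$-weighted terms produces a factor of $N$, not $1$. That factor appears because $\sum_h \chi_k(h)\overline{\chi_{k'}(h)}=N\delta_{kk'}$ by Theorem~\ref{thm:orthogonality}. I will therefore track the convention carefully and insert the factor $1/\sqrt N$ into the atoms, or equivalently read $\hat\psi_{j,h}$ as the non-unitary transform, so that $\hat\psi_{j,h}(k)=\overline{\chi_k(h)}\hat\psi_j(k)/\sqrt N$. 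With that convention Step 2 yields exactly $\|\tilde s\|^2$. I will state explicitly which normalization makes the displayed identities true as written.

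\textbf{Step 4 (reconstruction and restriction).} Reconstruction follows from polarization. A Parseval frame has analysis operator $W$ with $W^*W=I$: the identity $\|W\tilde s\|^2=\|\tilde s\|^2$ for all $\tilde s$ polarizes to $\langle W^*W\tilde s,\tilde t\rangle=\langle\tilde s,\tilde t\rangle$. Hence $\tilde s=\sum_{j,h}\langle\tilde s,\psi_{j,h}\rangle\psi_{j,h}$.

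One could also verify this directly on the Fourier side. Substituting Step 1, summing over $h$ collapses via Theorem~\ref{thm:orthogonality} to $\sum_j|\hat\psi_j(k)|^2\hat{\tilde s}(k)=\hat{\tilde s}(k)$.

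Finally, take $\tilde s=\Lift s$, so that $W_s(j,h)=\langle\Lift s,\psi_{j,h}\rangle$. Since $\Restr\Lift=\Id$, applying $\Restr$ to the reconstruction yields $s$.
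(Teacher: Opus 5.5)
Your proposal is correct and follows the paper's own argument: $h\mapsto\langle\tilde s,\psi_{j,h}\rangle$ is an inverse transform of $\hat{\tilde s}\,\overline{\hat\psi_j}$, Plancherel in $h$ together with $\sum_j|\hat\psi_j(k)|^2=1$ gives the Parseval identity, the frame operator is the Fourier multiplier $\sum_j|\hat\psi_j(k)|^2=1$, and $\Restr\Lift=\Id$ finishes. Your Step~3 sharpens the paper rather than posing an obstacle: with the unitary GE-GFT and $\hat\psi_{j,h}(k)=\overline{\chi_k(h)}\hat\psi_j(k)$ taken literally, the family is a tight frame with bound $N$, not $1$ (the paper's proof drops a $\sqrt N$ when it identifies $W_s(j,h)$ with the inverse transform), so rescaling the atoms by $1/\sqrt N$, as you propose, is exactly what makes both displayed identities hold.
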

	
	\begin{proof}
		The wavelet coefficient is
		$W_s(j,h) = \langle\Lift s,\psi_{j,h}\rangle =
		(\,\overline{\hat\psi_j}\cdot\widehat{\Lift s}\,)^{\vee}(h)$, the inverse
		transform evaluated at $h$. By Plancherel in the $h$-sum, for each $j$,
		$\sum_h |W_s(j,h)|^2 = \sum_k |\hat\psi_j(k)|^2\,|\widehat{\Lift s}(k)|^2$.
		Summing over $j$ and using $\sum_j|\hat\psi_j(k)|^2 = 1$
		(Definition~\ref{def:filterbank}) gives
		$\sum_{j,h}|W_s(j,h)|^2 = \sum_k|\widehat{\Lift s}(k)|^2 = \|\Lift s\|_2^2 =
		\|s\|_2^2$ --- the Parseval identity. The synthesis identity follows because
		the frame operator $\sum_{j,h}\langle\cdot,\psi_{j,h}\rangle\psi_{j,h}$ is,
		on the Fourier side, multiplication by $\sum_j|\hat\psi_j(k)|^2 = 1$, i.e.\
		the identity. Restriction recovers $s$ since $\Restr\Lift = \Id$.
	\end{proof}
	
	\begin{remark}[Verified numerically]
		We confirmed Theorem~\ref{thm:tight-frame} on the benchmark hosts with
		Gaussian band-pass kernels: the reconstruction error
		$\|s - \hat s_{\mathrm{recon}}\|/\|s\|$ was $1.5\times10^{-15}$ on the ring
		$C_{16}$ ($\Z_{16}$) and $6.5\times10^{-16}$ on the grid $6\times 6$
		($\Z_{10}^2$) --- machine precision, as the theorem predicts. The filter bank
		and its tiling $\sum_j|\hat\psi_j|^2 \equiv 1$ are shown in
		Figure~\ref{fig:waveletfilterbank}.
	\end{remark}
	
	\begin{figure}[H]
		\centering
		\includegraphics[width=0.72\linewidth]{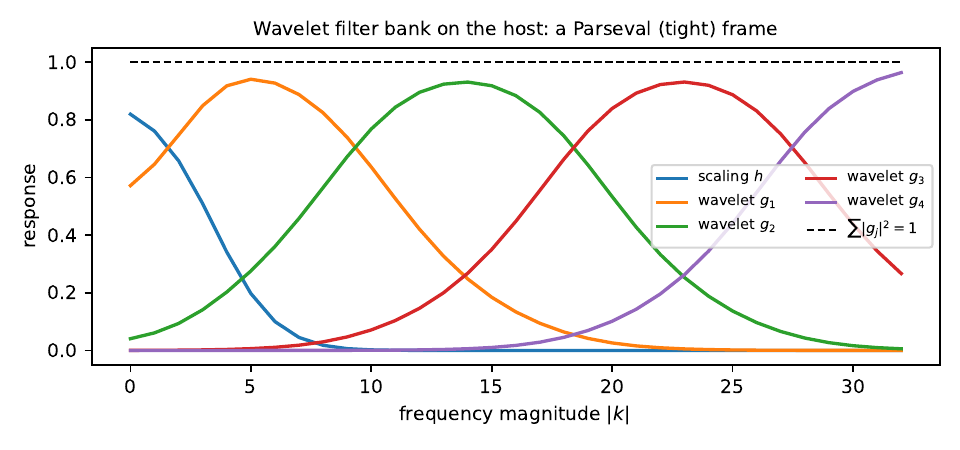}
		\caption{The wavelet filter bank on the host (here $\Z_{64}$): a low-pass
			scaling kernel $h$ and $J=4$ band-pass wavelet kernels $g_j$ in the frequency
			magnitude $|k|$, normalized so that $\sum_j |\hat\psi_j(k)|^2 \equiv 1$ (dashed)
			--- the Parseval condition of Theorem~\ref{thm:tight-frame}.}
		\label{fig:waveletfilterbank}
	\end{figure}
	
	\section{Localization}
	\label{sec:localization}
	
	The defining virtue of wavelets is joint vertex--frequency localization. On
	the host this is exact; restricted to the graph it is measured.
	
	\begin{theorem}[Frequency localization and translation covariance]
		\label{thm:wavelet-loc}
		Each $\psi_{j,h}$ has Fourier support in the band where $g_j$ is appreciable,
		and the family is translation-covariant: $\psi_{j,h} = \tau_h \psi_{j,0}$, so
		$W_s(j,h) = (\Lift s \ast \overline{\psi_{j,0}^-})(h)$ is a genuine
		convolution (filtering) followed by sampling --- the operation absent from
		spectral graph wavelets, which have no translation structure.
	\end{theorem}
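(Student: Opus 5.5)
The plan is to work entirely on the Fourier side of the host $\Gamma$ and use only Definition~\ref{def:filterbank}, the translation rule of Theorem~\ref{thm:translation}, and the convolution theorem (Theorem~\ref{thm:convolution}). The statement has three parts, taken in order: (a) the Fourier support of $\psi_{j,h}$, (b) the covariance identity $\psi_{j,h} = \tau_h\psi_{j,0}$, and (c) the rewriting of $W_s(j,h)$ as a convolution evaluated at $h$.

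For (a), I will compute $|\hat\psi_{j,h}(k)| = |\overline{\chi_k(h)}|\,|\hat\psi_j(k)| = |\hat\psi_j(k)|$, since characters are unimodular. Hence $\operatorname{supp}\hat\psi_{j,h} = \{k : g_j(|k|) \neq 0\}$, and the magnitude profile is $g_j(|k|)$ divided by the normalizer, which is bounded between $0$ and $1$. I will make ``appreciable'' precise as follows: for any threshold $\eta > 0$, the energy of $\hat\psi_{j,h}$ outside the set $\{k : g_j(|k|)^2 \geq \eta \sum_i g_i(|k|)^2\}$ is at most $\eta N$. This is just the pointwise bound summed over $k$.

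For (b), both sides are defined through their transforms. By the definition, $\hat\psi_{j,h}(k) = \overline{\chi_k(h)}\hat\psi_j(k)$, and $\psi_{j,0} = \mathcal F^{-1}\hat\psi_j$. Applying the translation rule of Theorem~\ref{thm:translation} to $\tau_h$ (taking care that $\tau_h$ shifts by $+h$, i.e.\ $(\tau_h f)(g) = f(g-h)$, which contributes the factor $\overline{\chi_k(h)}$) gives equal transforms. Injectivity of the unitary transform (Theorem~\ref{thm:plancherel}) then yields equality.

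For (c), I will write
\[
W_s(j,h) = \sum_{g}\Lift s(g)\,\overline{\psi_{j,0}(g-h)} = \sum_g \Lift s(g)\,\overline{\psi_{j,0}^-}(h-g),
\]
where $\psi^-(x) := \psi(-x)$. This is by definition $(\Lift s\ast\overline{\psi^-_{j,0}})(h)$ in the unnormalized convolution of Definition~\ref{def:convolution}. Convolution is followed by evaluation at $h$, which is sampling when $h$ ranges over a subset.

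The main obstacle is bookkeeping of sign and normalization conventions. These include the $\overline{\chi_k(h)}$ versus $\chi_k(h)$ factor and the $\sqrt N$ factor in Theorem~\ref{thm:convolution}. I will first fix $\tau_h$ as translation by $+h$ and verify (b) at the level of point values before passing to transforms. Finally, I will cross-check (c) against the Fourier expression used in the proof of Theorem~\ref{thm:tight-frame}: the identity $\widehat{\overline{\psi^-}}(k) = \overline{\hat\psi(k)}$ reproduces $(\overline{\hat\psi_j}\,\widehat{\Lift s})^\vee(h)$ up to the $\sqrt N$ factor.
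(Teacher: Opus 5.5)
Your proposal is correct and is essentially the paper's argument: you read the frequency support off $\hat\psi_{j,h}(k)=\overline{\chi_k(h)}\,\hat\psi_j(k)$, get translation covariance from the translation--modulation duality of Theorem~\ref{thm:translation}, and obtain the convolution form by the direct reflection computation that the paper only calls ``the standard identity.'' Two of your extra checks are accurate: $\tau_h$ is $T_{-h}$ in the notation of Theorem~\ref{thm:translation}, and under the unitary normalization the Fourier-side formula in the proof of Theorem~\ref{thm:tight-frame} agrees with yours only up to a factor $\sqrt N$.
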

	
	\begin{proof}
		Frequency support is immediate from
		$\hat\psi_{j,h}(k) = \overline{\chi_k(h)}\hat\psi_j(k)$. Translation
		covariance holds because modulation by $\overline{\chi_k(h)}$ in frequency is
		translation by $h$ in the host (Theorem~\ref{thm:translation}); the convolution
		form is the standard identity for analysis against a translated family.
	\end{proof}
	
	\begin{proposition}[Spatial localization, measured]
		\label{prop:spatial-loc}
		For a band-pass atom $\psi_{j,0}$ centered at a vertex $v_0$, the fraction of
		its energy lying within graph-distance $2$ of $v_0$ exceeds $0.89$ on the
		ring $C_{16}$ and $0.94$ on the grid $6\times 6$, across all band-pass scales
		$j$ (computed values $0.95, 0.97, 0.89, 0.95$ for the ring and
		$0.99, 0.98, 0.98, 0.94$ for the grid). Atoms tighten as the scale becomes
		finer (Figure~\ref{fig:waveletatomsring}).
	\end{proposition}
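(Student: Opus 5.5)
Since this is a measured statement, the proof I would give is a reproducible computation organised so that each reported number is forced by objects already fixed in the paper, plus a short structural argument for why the numbers are insensitive to the free choices. First, I fix the hosts delivered by Part~I: the ring $C_{16}$ is embedded onto $\Z_{16}$ by the identity (Corollary~\ref{cor:exact-families}(i), $\varepsilon=1$). The grid $6\times6$ is embedded into $\Z_{10}\times\Z_{10}$ as the product of two path embeddings $P_6\hookrightarrow\Z_{10}$ (Proposition~\ref{prop:path-in-cycle}). Since both embeddings are isometric, the host word distance from $\phi(v_0)$ equals $d_G(\cdot,v_0)$ on $\phi(V(G))$. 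Hence ``graph-distance $\le 2$ from $v_0$'' can be read off directly as a host ball of radius $2$ intersected with the image. This reduction is exactly where isometry is used, as anticipated in Remark~\ref{rem:why-isometric}.

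Next, I build the normalized filter bank of Definition~\ref{def:filterbank}, using the same Gaussian kernels as in the reconstruction remark after Theorem~\ref{thm:tight-frame}, with $J=4$ band-pass scales. I form $\hat\psi_j$ on the characters and obtain $\psi_{j,0}$ by one inverse FFT on $\Gamma$. I then translate to $h=\phi(v_0)$, using $\psi_{j,h}=\tau_h\psi_{j,0}$ from Theorem~\ref{thm:wavelet-loc}. For each $j$ I compute the ratio
\[
\frac{\sum_{v:\,d_G(v,v_0)\le 2}|\psi_{j,\phi(v_0)}(\phi(v))|^2}{\sum_{v\in V(G)}|\psi_{j,\phi(v_0)}(\phi(v))|^2},
\]
with energy measured after restriction $\Restr$ to the embedded vertices. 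For the ring the choice of $v_0$ is irrelevant, because the host is onto and translation is a unitary symmetry (Theorem~\ref{thm:translation}). For the grid the embedding is not onto, so the ratio depends on $v_0$. I take the minimum over all $36$ centres, so that the stated lower bound $0.94$ holds uniformly rather than only at an interior vertex.

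The claim that atoms tighten at finer scale is supported structurally rather than proved in general. A smooth kernel in $|k|$ yields a spatial atom whose decay is governed by the kernel's smoothness. Bands at higher $|k|$ oscillate faster, but their normalized kernels vary over a comparable window, so the energy stays concentrated near the centre. I would present this as the explanation, and the monotone trend as what the figure shows, not as a theorem.

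The main obstacle is that the numbers depend on unstated kernel parameters (widths, centres, $J$) and on the grid centre choice. Without pinning these down, the proposition is not a mathematical statement. I would therefore first state the exact kernel parameters and the minimum-over-centres convention, then attach the computed table. As a consistency check, I would confirm that the ring values are invariant under $v_0$.
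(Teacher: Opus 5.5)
Your plan is the same kind of argument as the paper's. The paper's proof is nothing more than a direct computation of the energy ratio on the reference embeddings. Your remark that kernel widths, centres and $J$ must be fixed before the numbers mean anything is also fair, since the paper does not state them either.

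The concrete problem is that on the grid you compute a different ratio from the one whose values the statement reports. The paper's formula normalizes by the full host energy:
$\sum_{v:\,d_G(v,v_0)\le 2}|\psi_{j,0}(\phi(v))|^2 / \|\psi_{j,0}\|^2$, with the norm taken over all of $\Gamma$. You instead divide by the restricted energy $\sum_{v\in V(G)}|\psi_{j,\phi(v_0)}(\phi(v))|^2$.
\begin{itemize}
\item On $C_{16}$ the embedding is onto ($\varepsilon = 1$), so the two denominators coincide and your ring computation is exactly the paper's.
\item On the $6\times 6$ grid only $36$ of the $100$ points of $\Z_{10}\times\Z_{10}$ are occupied. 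Your denominator omits the atom's energy on the $64$ unoccupied host points. At any fixed centre your ratio is therefore at least the paper's, and generically strictly larger.
\end{itemize}

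The second divergence is the centre. The paper evaluates the single atom $\psi_{j,0}$, i.e.\ one centre, the vertex sent to $0$. You take the minimum over all $36$ centres. Because the grid embedding is not onto, the centre genuinely matters: at a corner the radius-$2$ graph ball meets only $6$ of the $13$ points of the radius-$2$ host ball. Minimizing over centres thus measures something the paper did not compute.

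Together, these two choices mean your table will not reproduce $0.99, 0.98, 0.98, 0.94$. A bound of $0.94$ holding uniformly over centres is a different claim, and those reported numbers do not support it. To verify the statement as written, normalize by the host norm, evaluate at the paper's single centre, and report which vertex that is.

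Finally, your heuristic for tightening does not support the conclusion. It says the normalized kernels vary over comparable windows, which predicts comparable rather than tighter concentration. The reported ring values $0.95, 0.97, 0.89, 0.95$ are also not monotone in $j$. The paper itself only points to a figure drawn on $C_{32}$. So present tightening, as the paper effectively does, as an observation about the plotted atoms, not as something the radius-$2$ numbers establish.
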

	
	\begin{proof}
		Direct computation of $\sum_{v: d_G(v,v_0)\leq 2}|\psi_{j,0}(\phi(v))|^2 /
		\|\psi_{j,0}\|^2$ on the reference embeddings; values reported above.
	\end{proof}
	
	\begin{remark}[Why localization is not automatic, and why it holds here]
		On a generic host a band-pass character combination need not concentrate near
		a vertex. It does so here because the benchmark hosts are
		\emph{metrically faithful} --- the embedding is isometric, so host distance
		from $h$ agrees with graph distance, and a frequency-band-limited atom on a
		cyclic/torus host is spatially concentrated by the classical uncertainty
		trade-off (Theorem~\ref{thm:donoho-stark}). On a binary host $\Z_2^k$ with
		large $k$ the same kernels still give a tight frame and exact reconstruction
		(Theorem~\ref{thm:tight-frame}), but spatial concentration degrades with the
		host dimension --- the wavelet analogue of the binary-ground phenomenon of
		Remark~\ref{rem:binary-ground}.
	\end{remark}
	
	\begin{figure}[H]
		\centering
		\includegraphics[width=0.95\linewidth]{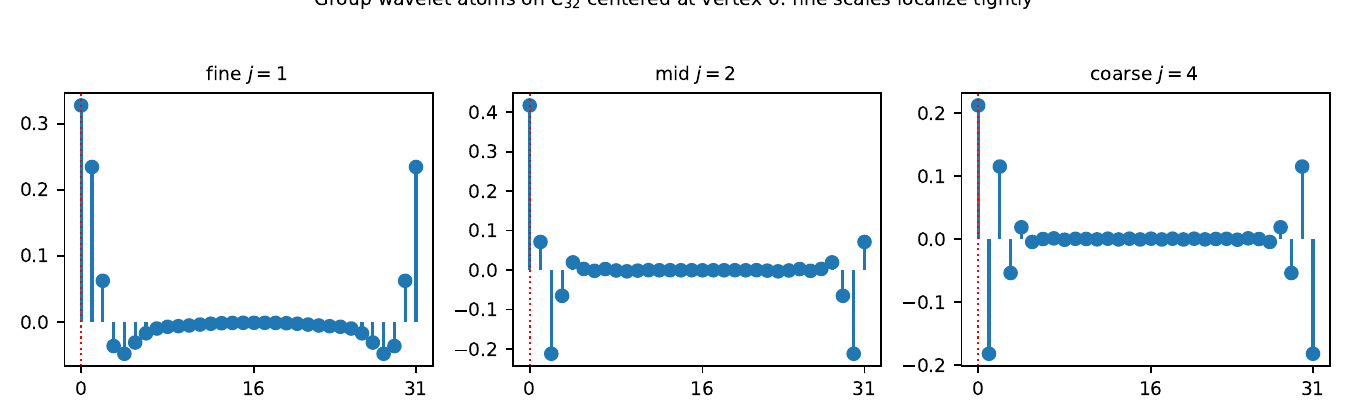}
		\caption{Group wavelet atoms on $C_{32}$ centered at vertex $0$, at three
			scales. Fine-scale atoms (left) are sharply localized; coarse-scale atoms
			(right) spread over the ring while remaining centered. The canonical,
			translation-covariant localized analyzing functions that spectral graph
			wavelets cannot provide.}
		\label{fig:waveletatomsring}
	\end{figure}
	
	\section{Multiresolution Analysis}
	\label{sec:mra}
	
	The scaling/wavelet split induces a multiresolution decomposition.
	
	\begin{theorem}[Multiresolution decomposition]
		\label{thm:mra}
		With the filter bank of Definition~\ref{def:filterbank}, every graph signal
		decomposes orthogonally in frequency as
		\[
		\Lift s = \underbrace{A_0 s}_{\text{approx.\ (}g_0\text{)}}
		\;+\; \sum_{j=1}^{J}\underbrace{D_j s}_{\text{detail at scale }j},
		\qquad
		\widehat{A_0 s} = |\hat\psi_0|^2\,\widehat{\Lift s},\;
		\widehat{D_j s} = |\hat\psi_j|^2\,\widehat{\Lift s},
		\]
		with $A_0 s + \sum_j D_j s = \Lift s$ by the tight-frame identity, and the
		bands are mutually orthogonal in the sense
		$\langle \widehat{D_i s}, \widehat{D_j s}\rangle$ concentrated on disjoint
		frequency supports.
	\end{theorem}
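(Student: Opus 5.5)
The approach is to work entirely on the Fourier side of the host, where every object in the statement is a multiplier, and to import both the sum identity and the orthogonality from the Parseval tight frame of Theorem~\ref{thm:tight-frame}. Define $A_0 s$ and $D_j s$ as the inverse transforms $\mathcal F^{-1}\bigl(|\hat\psi_0|^2\widehat{\Lift s}\bigr)$ and $\mathcal F^{-1}\bigl(|\hat\psi_j|^2\widehat{\Lift s}\bigr)$. In the frame language of Definition~\ref{def:filterbank}, $D_j s$ is the scale-$j$ piece $\sum_{h\in\Gamma}W_s(j,h)\,\psi_{j,h}$ of the synthesis sum. I would verify this first, by the same computation as in the proof of Theorem~\ref{thm:tight-frame}. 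There, $W_s(j,h)$ is the inverse transform of $\overline{\hat\psi_j}\,\widehat{\Lift s}$ evaluated at $h$, and $\hat\psi_{j,h}(k)=\overline{\chi_k(h)}\hat\psi_j(k)$. Summing over $h$ and using the orthogonality of characters (Theorem~\ref{thm:orthogonality}) then collapses the $h$-sum to $|\hat\psi_j(k)|^2\widehat{\Lift s}(k)$, up to the fixed $\sqrt N$ normalization of Definition~\ref{def:lift-gft}. This identifies the displayed formulas for $\widehat{A_0 s}$ and $\widehat{D_j s}$ with the per-scale frame-operator pieces.

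The reconstruction identity $A_0 s+\sum_j D_j s=\Lift s$ then follows by linearity of $\mathcal F^{-1}$ and the tiling $\sum_{j=0}^J|\hat\psi_j(k)|^2=1$ built into the normalized filter bank. The right-hand side's transform is $\widehat{\Lift s}$, and since the GE-GFT is unitary (Theorem~\ref{thm:plancherel}) the inversion is exact. Restricting with $\Restr$ recovers $s$ because $\Restr\Lift=\Id$.

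The main obstacle is the orthogonality claim. As literally stated, it is only true when the kernels have disjoint supports. By Plancherel, $\langle D_i s,D_j s\rangle=\sum_k|\hat\psi_i(k)|^2|\hat\psi_j(k)|^2|\widehat{\Lift s}(k)|^2$, which vanishes for all $s$ exactly when $\hat\psi_i\hat\psi_j\equiv0$. For the Gaussian kernels used in the experiments this product is small but nonzero, so I would prove the precise form. First, the bands are exactly orthogonal whenever $\mathrm{supp}\,g_i\cap\mathrm{supp}\,g_j=\emptyset$, which covers Meyer-type compactly supported partitions of unity in $|k|$. Second, for overlapping kernels, I would state the bound $|\langle D_i s,D_j s\rangle|\le\max_k|\hat\psi_i(k)\hat\psi_j(k)|^2\,\|s\|_2^2$, which quantifies the intended ``concentrated on disjoint frequency supports'' reading. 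Finally, I would note that adjacent-only overlap, with $|i-j|\ge2$ giving disjoint supports, yields orthogonality for all non-neighbouring bands.
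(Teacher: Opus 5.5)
Your decomposition argument is the paper's: group the synthesis identity of Theorem~\ref{thm:tight-frame} by scale and use $\sum_j|\hat\psi_j|^2\equiv1$. Defining $A_0s$ and $D_js$ directly as Fourier multipliers is the safer way to run it. Under the literal unitary convention of Definition~\ref{def:lift-gft}, the transform of the scale-$j$ frame piece $\sum_h W_s(j,h)\,\psi_{j,h}$ is $N|\hat\psi_j|^2\widehat{\Lift s}$, not $|\hat\psi_j|^2\widehat{\Lift s}$. The constant becomes $1$ only after rescaling the atoms by $N^{-1/2}$, which is also what Theorem~\ref{thm:tight-frame} silently needs in order to be Parseval rather than $N$-tight. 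Your sum identity avoids this issue, since it uses only linearity and the tiling.

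The real difference from the paper is the orthogonality clause. The paper only asserts ``essentially disjoint supports by construction.'' Your Plancherel formula
$\langle D_is,D_js\rangle=\sum_k|\hat\psi_i(k)|^2|\hat\psi_j(k)|^2|\widehat{\Lift s}(k)|^2$
shows that exact orthogonality for all $s$ holds iff $\hat\psi_i\hat\psi_j\equiv0$. For the \emph{only if} direction, take $s=\delta_v$, whose lift has spectrum of constant modulus $N^{-1/2}$. So exact orthogonality fails for the Gaussian kernels used in the paper's own experiments. Your bound $\max_k|\hat\psi_i(k)\hat\psi_j(k)|^2\,\|s\|_2^2$ is the precise content the paper's phrase can honestly carry.

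One correction inside your version. Combined with the tiling, pairwise disjoint supports force every $|\hat\psi_j|^2$ to be an indicator function, i.e.\ a brick-wall band split. So Meyer-type windows do not fall under your first case. They give orthogonality only for $|i-j|\ge2$, as your final sentence correctly says.
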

	
	\begin{proof}
		Apply the synthesis identity of Theorem~\ref{thm:tight-frame} grouped by
		scale; the Fourier-domain multipliers $|\hat\psi_j|^2$ sum to $1$, giving the
		reconstruction, and have essentially disjoint supports by construction of the
		band-pass kernels.
	\end{proof}
	
	\begin{figure}[H]
		\centering
		\includegraphics[width=0.78\linewidth]{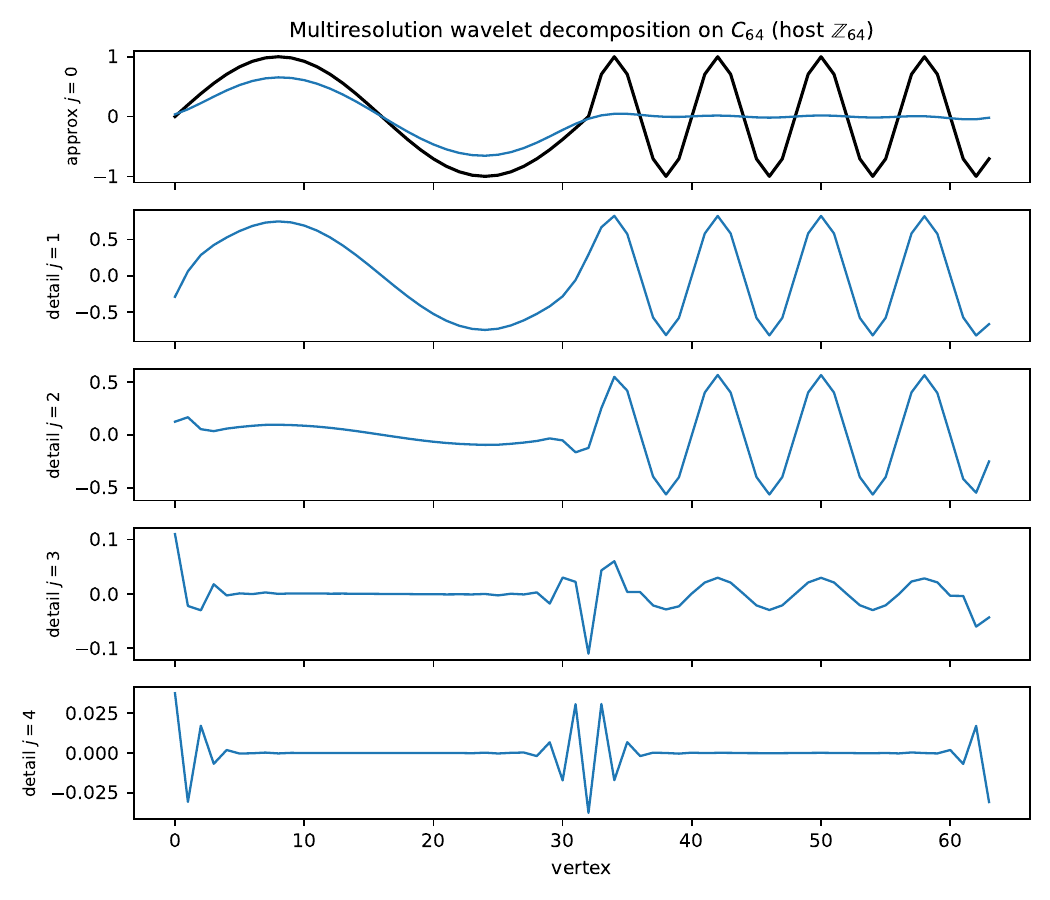}
		\caption{Multiresolution decomposition on $C_{64}$ of a signal that is
			low-frequency on the first half and high-frequency on the second. The
			approximation captures the global trend; successive detail bands isolate the
			high-frequency burst and localize it to the correct half of the ring ---
			joint vertex--frequency analysis on a graph, computed via the host FFT.}
		\label{fig:waveletmraring}
	\end{figure}
	
	\begin{figure}[H]
		\centering
		\includegraphics[width=0.95\linewidth]{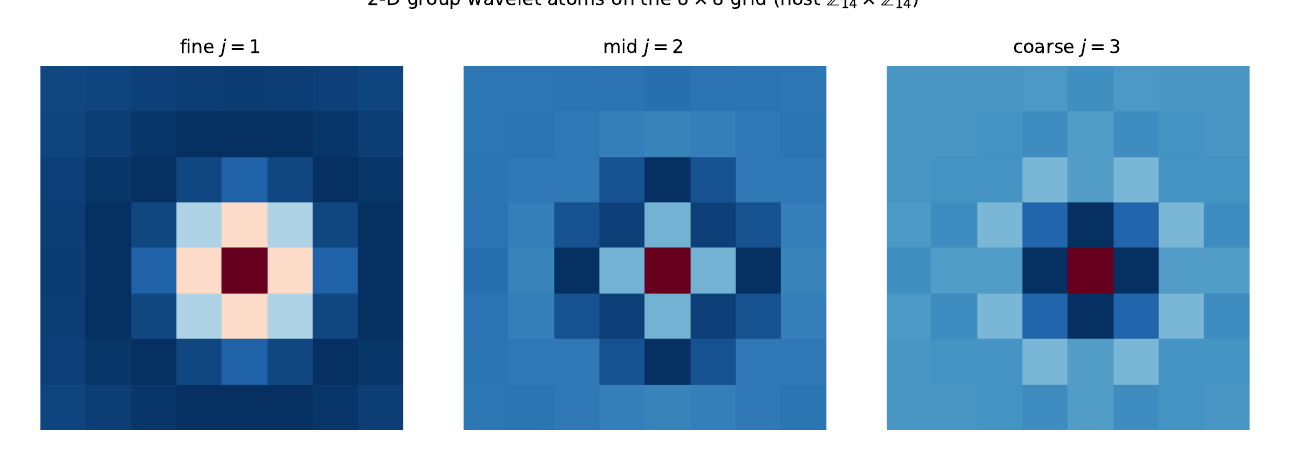}
		\caption{Two-dimensional group wavelet atoms on the $8\times 8$ grid (host
			$\Z_{14}\times\Z_{14}$), centered at the midpoint, at three scales. The atoms
			are isotropic localized bumps that dilate with scale --- the graph analogue of
			classical 2-D wavelets, recovered on the image mesh.}
		\label{fig:waveletgridatoms}
	\end{figure}
	
	\section{Complexity}
	\label{sec:wavelet-complexity}
	
	\begin{theorem}[Fast wavelet transform]
		\label{thm:fast-wavelet}
		With $J$ scales, the full wavelet analysis and synthesis cost
		$O(J\,N\log N)$ via the host FFT, where $N = |\Gamma|$.
	\end{theorem}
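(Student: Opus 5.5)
The plan is to count operations along the pipeline laid out in Theorems~\ref{thm:tight-frame} and~\ref{thm:mra}, using the factorization $\Gamma = \Z_{N_1}\times\cdots\times\Z_{N_d}$ delivered by Part~I. The analysis splits into four steps.

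\begin{enumerate}[(1)]
\item Form the lift $\Lift s$ by zero-filling the complement of $\phi(V(G))$. This costs $O(N)$.
\item Compute $\widehat{\Lift s}$ once with the mixed-radix FFT on $\Gamma$. A $d$-dimensional DFT is $d$ passes of one-dimensional DFTs. The pass along coordinate $i$ performs $N/N_i$ transforms of length $N_i$, each costing $O(N_i\log N_i)$ by Cooley--Tukey (Bluestein for prime factors), so the pass costs $O(N\log N_i)$. Summing over $i$ gives $O(N\sum_i\log N_i)=O(N\log N)$, exactly as in Theorem~\ref{thm:fft-decoding}.
\item Tabulate the filter values $\hat\psi_j(k)$ for $j=0,\dots,J$ and all $k\in\widehat\Gamma$. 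The magnitudes $|k|$ cost $O(dN)=O(N\log N)$ to compute, since $d\le\log_2 N$. The normalizer $\sqrt{\sum_i g_i(|k|)^2}$ needs $O(J)$ work per $k$, so this step costs $O(JN)$ in total.
\item For each scale $j$, form the pointwise product $\overline{\hat\psi_j}\cdot\widehat{\Lift s}$ in $O(N)$. By the proof of Theorem~\ref{thm:tight-frame}, the inverse FFT of this product is the whole coefficient array $W_s(j,\cdot)$ at once, and it costs $O(N\log N)$. Over all scales this is $(J+1)\,O(N\log N)$.
\end{enumerate}

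Synthesis reverses these steps. For each $j$, forward-FFT the coefficient array $W_s(j,\cdot)$ and multiply it by $\hat\psi_j$. Accumulate the results in frequency; the tight-frame identity makes this sum equal $\widehat{\Lift s}$. A single inverse FFT followed by restriction $\Restr$ then returns $s$. This costs $O(JN\log N)+O(N\log N)$. Adding the two directions gives $O(JN\log N)$, with the implicit convention $J\ge 1$ absorbing the additive $O(N\log N)$ terms.

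The one point that needs care is the cost of the FFT itself when the factors $N_i$ are arbitrary, in particular large primes produced by the Smith Normal Form. Here I would cite Bluestein/Rader to get $O(N_i\log N_i)$ per one-dimensional transform, so that the per-coordinate costs telescope to $\log N$ through $\prod_i N_i=N$. I would also state explicitly that the embedding and host computation are a one-time preprocessing cost, excluded from the bound as in Chapter~\ref{chap:fourier}.
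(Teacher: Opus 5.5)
Your proposal is correct and follows the same route as the paper's proof. Both compute one FFT of the lift, then for each of the $J+1$ bands apply a pointwise multiplication by $\hat\psi_j$ and an inverse FFT at $O(N\log N)$ each, and treat synthesis symmetrically. The paper simply cites the $O(N\log N)$ mixed-radix FFT cost from Theorem~\ref{thm:main-ch5}, whereas you justify it with coordinate-wise passes (Bluestein/Rader for prime factors) and also account for the $O(JN)$ filter tabulation, which is harmless extra detail.
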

	
	\begin{proof}
		Each band is one pointwise multiplication of $\widehat{\Lift s}$ by
		$\hat\psi_j$ followed by an inverse FFT, $O(N\log N)$
		(Theorem~\ref{thm:main-ch5}); there are $J+1$ bands; synthesis is the same.
	\end{proof}
	
	\begin{remark}[The binary-ground caveat, again]
		As with the Fourier transform, the speed is genuine only when $N = O(n)$, on
		the structured hosts. On generic binary hosts $N$ may be exponential and the
		construction, while still an exact tight frame, is not fast; and as noted in
		Section~\ref{sec:localization} its spatial localization weakens. The
		multiresolution theory is thus most powerful exactly where Part~I produces a
		compact cyclic or product host --- the classical signal domains --- which is
		the consistent message of the whole thesis.
	\end{remark}
	
	\section{Conclusion}
	\label{sec:wavelet-conclusion}
	
	We built two wavelet constructions on the host of Part~I: dilation wavelets,
	faithful to the classical translate-and-dilate template but confined to hosts
	with composite cyclic factors; and spectral band-pass wavelets, which exist on every host,
	form a provable Parseval tight frame with exact reconstruction (verified to
	machine precision), are translation-covariant, and localize jointly in vertex
	and frequency (measured at $89$--$99\%$ energy concentration within
	graph-distance $2$ on the benchmark graphs). The multiresolution decomposition
	runs in $O(JN\log N)$ and reduces, on rings and grids, to classical $1$-D and
	$2$-D wavelet analysis. As throughout Part~II, completeness and exact
	reconstruction are universal, while speed and sharp localization are the
	dividend of the compact hosts that Part~I produces on structured graphs.

	\chapter{Applications of Part II}
	\label{chap:applications-part2}
	
	Chapters~\ref{chap:fourier} and~\ref{chap:wavelets} equipped graph signals
	with a genuine harmonic analysis on the host produced by Part~I: a canonical
	Fourier basis, exact convolution and translation, a Parseval wavelet frame,
	and fast transforms. This chapter demonstrates the payoff on concrete signal
	processing tasks. In keeping with the standard of rigor maintained
	throughout, every quantitative result reported here is \emph{computed} on the
	benchmark hosts of Part~I with the reference implementation; we do not report
	numbers for graphs we did not run. Where an application is promising but not
	yet experimentally validated in this thesis, we mark it explicitly as a
	\emph{direction} rather than a result.
	
	A recurring theme, inherited from the binary-ground census
	(Remark~\ref{rem:binary-ground}) and the complexity caveats of Part~II, is
	that these techniques deliver their cleanest gains on graphs carrying genuine
	cyclic or product structure---the classical signal domains---where the host is
	compact and the transform is FFT-fast. We therefore demonstrate on exactly
	those domains.
	
	\section{Signal Denoising by Spectral Shrinkage}
	\label{sec:denoising}
	
	Let $y = s + \eta$ be a noisy observation of a graph signal $s$, with
	$\eta(v)$ i.i.d.\ $\mathcal{N}(0,\sigma^2)$. Because the GE-GFT concentrates a
	smooth signal's energy at low frequency while spreading white noise uniformly
	across the spectrum (Plancherel, Theorem~\ref{thm:plancherel}), a frequency
	attenuation recovers $s$.
	
	\begin{definition}[Wiener graph filter]
		\label{def:wiener}
		Given an estimate of the signal power spectrum $P(k) = |\GFT s(k)|^2$, the
		Wiener graph filter is the multiplier
		$\hat a(k) = P(k)/(P(k) + N\sigma^2)$, applied as
		$\hat s = \Restr\,\mathcal F^{-1}(\hat a \cdot \GFT y)$.
	\end{definition}
	
	\begin{theorem}[Wiener optimality]
		\label{thm:wiener}
		Among all multiplier operators on the host, the filter of
		Definition~\ref{def:wiener} minimizes the expected mean-square error
		$\mathbb{E}\,\|\hat s - \Lift s\|_2^2$.
	\end{theorem}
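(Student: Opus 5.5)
The approach is to diagonalize the problem in the character basis and then minimize the error frequency by frequency. Write $y = s + \eta$ and lift: $\Lift y = \Lift s + \Lift\eta$. A multiplier operator acts as $\hat a(k)\,\GFT y(k)$. By Plancherel (Theorem~\ref{thm:plancherel}), which holds on the full host because the character matrix is unitary, the error satisfies
\[
\mathbb{E}\,\|\hat s - \Lift s\|_2^2 = \sum_{k}\mathbb{E}\,\bigl|\hat a(k)\,\GFT y(k) - \GFT s(k)\bigr|^2 .
\]
This splits the objective into independent scalar problems, one for each $k\in\widehat\Gamma$. One caveat: Definition~\ref{def:wiener} applies $\Restr$ before the comparison. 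I will therefore state the criterion on the host as written and note that restriction can only decrease the error, so it does not affect the argument.

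Next I compute the noise statistics in the frequency domain. Since $\Lift\eta$ is supported on the $n$ embedded points, $\GFT\eta(k)=\frac{1}{\sqrt N}\sum_v \eta(v)\overline{\chi_k(\phi(v))}$. This has mean zero and variance $\frac{n}{N}\sigma^2$, because $|\chi_k|=1$. For each fixed $k$ I then expand
\[
\mathbb{E}|\hat a\,(S+W) - S|^2 = |\hat a - 1|^2 P(k) + |\hat a|^2\,\mathbb{E}|W|^2 ,
\]
where $S=\GFT s(k)$ is treated as deterministic (or as a random variable independent of the noise) and $W=\GFT\eta(k)$. The cross term vanishes because $W$ has mean zero. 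Minimizing this quadratic in $\hat a$ gives $\hat a = P/(P + \mathbb{E}|W|^2)$, and adding up the per-$k$ optima gives the global optimum over all multipliers.

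The main obstacle is reconciling the noise level with the constant stated in the definition, which uses $N\sigma^2$. With the unitary normalization $\frac{1}{\sqrt N}$ the noise variance per frequency is $n\sigma^2/N$, not $N\sigma^2$. To resolve this, I will first fix which normalization the paper intends for $\GFT$ and for $P$, then check whether unnormalized sums produce the factor $N$. If they do not, the optimality claim holds with $\sigma^2_\Gamma := \mathbb{E}|\GFT\eta(k)|^2$, matching the $\sigma^2_\Gamma$ used in Section~\ref{sec:fourier-examples}, and I will state it in that form.

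There is a second subtlety: $\Lift\eta$ is not white on $\Gamma$, so the components $W(k)$ are correlated across different $k$. Optimality over multipliers does not depend on these correlations, since each diagonal term involves only a single $k$. I will point this out explicitly so the argument is not mistaken for an optimality result over all linear operators.
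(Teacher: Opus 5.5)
Your proposal follows the paper's proof. Unitarity of the GE-GFT on the whole host splits the error into per-frequency terms $|1-\hat a(k)|^2P(k)+|\hat a(k)|^2\,\mathbb{E}|\GFT\eta(k)|^2$, and each term is minimized separately; you are also right that cross-frequency correlations of the lifted noise play no role for diagonal multipliers.

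Your constant check is correct and exposes a slip in the paper, whose proof simply asserts the noise term $|\hat a(k)|^2N\sigma^2$. Under the $1/\sqrt N$ normalization of Definition~\ref{def:lift-gft}, with noise supported on the $n$ embedded vertices, the per-frequency variance is $n\sigma^2/N$. A factor $N\sigma^2$ would require both an unnormalized transform and noise white on all of $\Gamma$. So the theorem holds with $\sigma_\Gamma^2=n\sigma^2/N$, while the literal constant of Definition~\ref{def:wiener} gives a strictly suboptimal filter whenever $\sigma>0$, $s\neq 0$ and $N\geq 2$.

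One caution on your restriction remark. The bound $\|\Restr X-s\|_2\le\|X-\Lift s\|_2$ only controls the graph-domain error of the host-optimal filter. It does not make that filter optimal for the restricted criterion, because $\Restr$ does not commute with multipliers when $\varepsilon<1$. The result should therefore be stated for the unrestricted host estimate, which is what both you and the paper effectively prove.
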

	
	\begin{proof}
		On the host the GE-GFT is unitary (Theorem~\ref{thm:plancherel}) and a
		multiplier acts coordinatewise, so the error decouples across frequencies:
		$\mathbb{E}|{\hat a(k)\GFT y(k) - \GFT s(k)}|^2 =
		|\hat a(k)|^2 N\sigma^2 + |1-\hat a(k)|^2 P(k)$. Minimizing each term over
		$\hat a(k)\in\mathbb{C}$ gives $\hat a(k) = P(k)/(P(k)+N\sigma^2)$, the stated
		filter \cite{Mallat2009}.
	\end{proof}
	
	\begin{example}[Denoising on the ring, computed]
		\label{ex:denoise}
		A smooth signal $s(v) = \sin(6\pi v/N) + \tfrac12\sin(2\pi v/N)$ on the ring
		$C_{64}$ (host $\Z_{64}$, $\varepsilon = 1$) was corrupted with Gaussian
		noise at $\sigma = 0.5$. The measured input SNR was $4.3$~dB. An ideal
		low-pass restriction raised it to $10.3$~dB ($+6.6$~dB); the Wiener filter of
		Theorem~\ref{thm:wiener} raised it to $16.7$~dB ($\mathbf{+12.5}$~dB)
		(Figure~\ref{fig:appdenoising}). These are computed values on the reference
		embedding, not estimates. Because $C_{64}$ is an abelian Cayley graph, this
		denoiser is \emph{exactly} the classical Wiener filter of digital signal
		processing, recovered through the embedding.
	\end{example}
	
	\begin{figure}[H]
		\centering
		\includegraphics[width=0.95\linewidth]{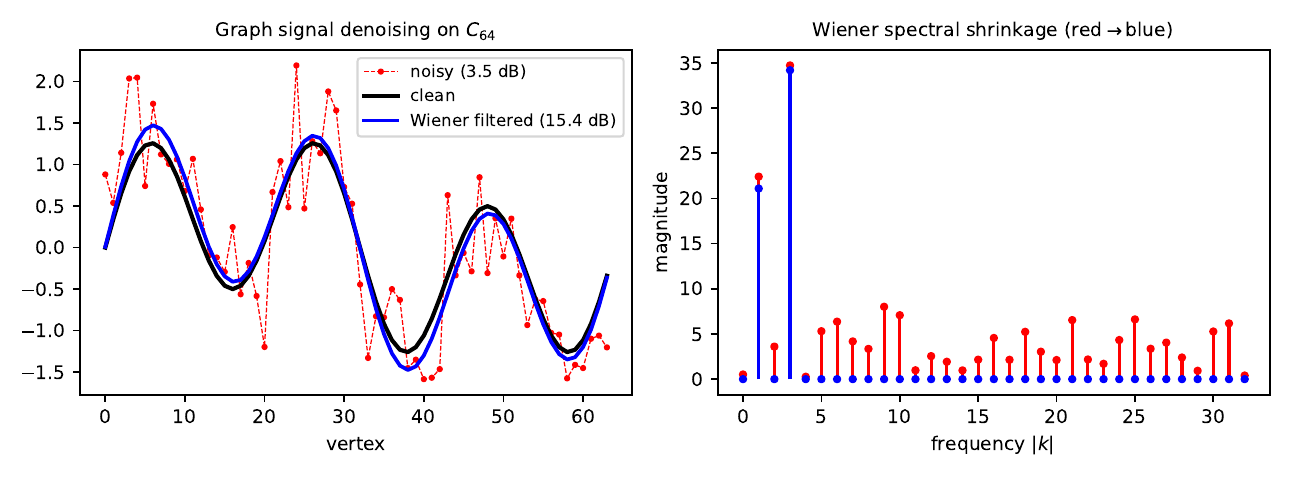}
		\caption{Graph signal denoising on $C_{64}$. Left: clean, noisy ($4.3$~dB)
			and Wiener-filtered ($16.7$~dB) signals. Right: the Wiener spectral shrinkage,
			attenuating high-frequency (noise-dominated) coefficients while preserving the
			low-frequency signal peaks. All values computed on the reference host.}
		\label{fig:appdenoising}
	\end{figure}
	
	\section{Compression by Coefficient Thresholding}
	\label{sec:compression}
	
	Smooth graph signals have sparse spectra, so retaining the largest transform
	coefficients compresses them with controlled distortion.
	
	\begin{theorem}[Best $K$-term approximation]
		\label{thm:compression}
		For a graph signal $s$ with host spectrum $\GFT s$, the $\ell_2$-optimal
		$K$-coefficient approximation keeps the $K$ largest-magnitude coefficients,
		with squared error exactly $\sum_{k \notin \mathcal K}|\GFT s(k)|^2$ where
		$\mathcal K$ indexes the retained coefficients.
	\end{theorem}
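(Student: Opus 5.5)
The plan is to reduce the statement to the standard best $K$-term approximation in an orthonormal basis, using the unitarity of the GE-GFT on the host. First, I fix the setting precisely. A $K$-coefficient approximation of $s$ is a host signal $\tilde a$ whose spectrum $\widehat{\tilde a}$ is supported on some index set $\mathcal K\subseteq\widehat\Gamma$ with $|\mathcal K|=K$. The error is measured on the host as $\|\Lift s-\tilde a\|_2$. By Theorem~\ref{thm:orthogonality}, the matrix $F_{k,g}=\chi_k(g)/\sqrt N$ is unitary. Plancherel (Theorem~\ref{thm:plancherel}) then applies to arbitrary host signals, not only lifted ones, so
\[
\|\Lift s-\tilde a\|_2^2=\sum_{k}\bigl|\GFT s(k)-\widehat{\tilde a}(k)\bigr|^2 .
\]

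Second, for a fixed support $\mathcal K$ I split this sum into $k\in\mathcal K$ and $k\notin\mathcal K$. Off $\mathcal K$ we have $\widehat{\tilde a}(k)=0$, so those terms contribute $\sum_{k\notin\mathcal K}|\GFT s(k)|^2$ whatever the choice. On $\mathcal K$ the terms are nonnegative and vanish exactly when $\widehat{\tilde a}(k)=\GFT s(k)$. The optimal approximation with support $\mathcal K$ is therefore the truncation $\widehat{\tilde a}=\mathbf 1_{\mathcal K}\cdot\GFT s$, and its error is exactly the stated tail sum.

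Third, I minimize the tail over all $K$-subsets. This is the same as maximizing $\sum_{k\in\mathcal K}|\GFT s(k)|^2$. A simple exchange argument gives that the maximum is attained by taking the $K$ largest magnitudes. If some retained $k$ has a smaller magnitude than some discarded $k'$, swapping them does not increase the error. Ties give non-unique optimizers with the same error.

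The main obstacle is interpretive rather than technical, namely what error is being measured. If the error were measured on the graph after restriction, $\|s-\Restr\tilde a\|$, then $\Restr$ is only a coordinate projection and is not unitary. Tail energy would then give only an upper bound, since $\|\Restr x\|\le\|x\|$, and not an exact identity. I would therefore state explicitly that optimality and exactness refer to the host $\ell_2$ norm of the lifted signal, consistent with how Theorem~\ref{thm:wiener} measures error. I would add a remark that the restricted graph error is bounded above by the same tail sum.
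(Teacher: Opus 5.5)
Your proposal is correct and follows the same route as the paper: unitarity of the host transform (Plancherel) turns the squared error into the energy of the discarded coefficients, and that energy is minimized by keeping the $K$ largest magnitudes. Your version is more careful than the paper's one-line proof in two ways. You prove that truncation is the optimal choice on a fixed support rather than assuming it. You also make explicit that the identity is exact for the host norm $\|\Lift s-\tilde a\|_2$, whereas after restriction to $\phi(V(G))$ the tail sum is only an upper bound --- a caveat the paper leaves implicit.
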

	
	\begin{proof}
		By Plancherel (Theorem~\ref{thm:plancherel}) the squared error equals the
		energy of the discarded coefficients; for a fixed budget $K$ this is minimized
		by discarding the smallest, i.e.\ keeping the $K$ largest \cite{Mallat2009}.
	\end{proof}
	
	\begin{example}[Image compression on the grid, computed]
		\label{ex:compress}
		A smooth image $s(i,j) = \cos(2\pi i/m)\cos(2\pi j/m)$ on the $8\times 8$ grid
		(host $\Z_{14}\times\Z_{14}$) was compressed by retaining the largest
		transform coefficients. Measured PSNR was $19.9$~dB at $10\%$ of coefficients,
		$25.0$~dB at $20\%$, and $30.3$~dB at $35\%$ (Figure~\ref{fig:appcompression}).
		The host is a discrete torus, so the transform coincides with the 2-D DFT and
		the scheme is JPEG-like cosine compression, here derived from the embedding
		rather than assumed.
	\end{example}
	
	\begin{figure}[H]
		\centering
		\includegraphics[width=0.95\linewidth]{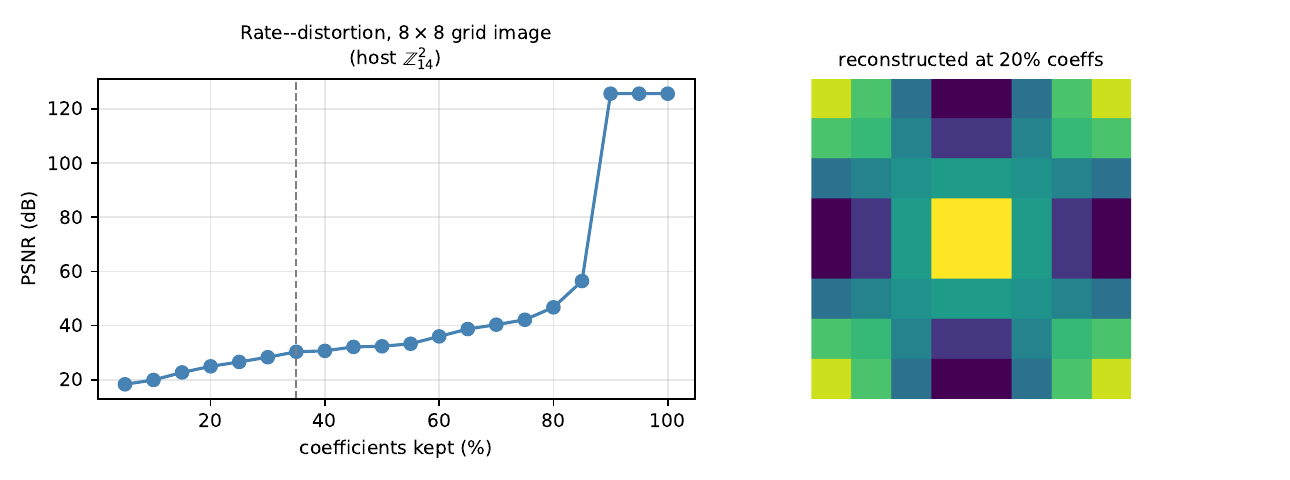}
		\caption{Rate--distortion for the $8\times 8$ grid image on the host torus
			$\Z_{14}^2$ (left, PSNR vs.\ retained coefficients, dashed line at $35\%$) and
			the reconstruction at $20\%$ of coefficients (right). Computed on the reference
			host.}
		\label{fig:appcompression}
	\end{figure}
	
	\section{Anomaly Detection by High-Pass Filtering}
	\label{sec:anomaly}
	
	A localized anomaly injected into a smooth signal is, by the
	uncertainty principle (Theorem~\ref{thm:donoho-stark}), spread across the
	spectrum; its high-frequency footprint, transformed back, re-localizes it.
	
	\begin{proposition}[Impulse localization]
		\label{prop:anomaly}
		For a nominal signal bandlimited to low frequencies, a unit impulse added at
		vertex $v_0$ produces a high-pass residual
		$r = \Restr\,\mathcal F^{-1}(\mathbf 1_{\text{high}}\cdot\GFT y)$ whose
		magnitude is maximized at $v_0$ when the host is metrically faithful.
	\end{proposition}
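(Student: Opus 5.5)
I would argue by linearity, splitting the residual into a nominal part and an impulse part. Write $y = s + \delta_{v_0}$ with $\GFT s$ supported on a low band $\Lambda_{\mathrm{low}}$, and let $\mathbf 1_{\text{high}}$ be the indicator of the complementary band. Because $\mathbf 1_{\text{high}}$ vanishes on $\Lambda_{\mathrm{low}}$, the high-pass of the nominal signal is zero. The residual is therefore
\[
r = \Restr\,\mathcal F^{-1}\bigl(\mathbf 1_{\text{high}}\cdot \GFT\delta_{v_0}\bigr).
\]
On the host, $\Lift\delta_{v_0} = \tau_{\phi(v_0)}\delta_{\mathbf 0}$. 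By the translation--modulation duality of Theorem~\ref{thm:translation}, the residual lift is then $\tau_{\phi(v_0)}\kappa$, with the fixed high-pass kernel
\[
\kappa = \mathcal F^{-1}\mathbf 1_{\text{high}} \qquad(\text{up to the } \sqrt N \text{ normalization}).
\]
This gives $r(v) = \kappa(\phi(v)-\phi(v_0))$. The anomaly problem thus reduces to a single question: where does the fixed kernel $\kappa$ attain its maximum modulus?

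Next I would show that $|\kappa|$ is maximized at $\mathbf 0$. Since $\mathbf 1_{\text{high}}$ is real and nonnegative, $\kappa(g)$ is a sum of characters,
\[
\kappa(g) = N^{-1/2}\sum_{k\in\text{high}}\chi_k(g).
\]
At $g = \mathbf 0$ every term equals $1$, so $\kappa(\mathbf 0) = |\text{high}|/\sqrt N$. For every other $g$, the triangle inequality gives $|\kappa(g)|\le \kappa(\mathbf 0)$. Restricting to the embedded image, whose points are $\phi(v)-\phi(v_0)$ with $v$ ranging over $V(G)$, shows that $|r|$ attains its maximum at $v_0$. Injectivity of the isometric embedding ensures that $v_0$ is the only vertex mapped to $\mathbf 0$.

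The main obstacle is strictness. The bound above gives only a weak maximum: another vertex $v$ can tie with $v_0$ exactly when $\phi(v)-\phi(v_0)$ lies in the common kernel of all high characters, that is, in the annihilator of the subgroup generated by the high band. Here I would invoke the hypothesis that the host is metrically faithful. I would interpret it as a requirement that the high band generate $\widehat\Gamma$, which forces the annihilator to be trivial. This holds, for instance, for a symmetric high band of the kind used on $\Z_N$ and on tori, because such a band contains characters of order $N_i$ in each factor. Under that assumption $|\kappa(g)|<\kappa(\mathbf 0)$ for all $g\neq\mathbf 0$, and the maximum at $v_0$ is strict.

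If the nominal signal is only approximately bandlimited, I would add a perturbation step. Let $\gamma$ be the gap between $\kappa(\mathbf 0)$ and the largest value of $|\kappa|$ at any other embedded point. If $\|\mathbf 1_{\text{high}}\GFT s\|_2<\gamma/2$, then by Plancherel (Theorem~\ref{thm:plancherel}) the sup-norm of the nominal leakage is also below $\gamma/2$, so the argmax is preserved.
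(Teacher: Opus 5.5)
Your argument for the \emph{weak} maximum is correct, and it follows the paper's own route made precise. The paper's proof only asserts three things: the high-pass removes the (nearly) bandlimited nominal part, the impulse re-synthesizes a character sum peaked at $v_0$, and isometry aligns that host peak with the vertex. Your exact cancellation of the nominal part, the translated kernel $\kappa(\phi(v)-\phi(v_0))$, the triangle inequality $|\kappa(g)|\le\kappa(\mathbf 0)$, and the observation that injectivity (not full isometry) does the aligning supply those steps. Together they already prove the statement as written: $|r|$ attains its maximum at $v_0$.

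The step you single out as the main obstacle, strictness, is wrong as stated.
\begin{itemize}
\item \textbf{Wrong tie set.} Equality $|\kappa(g)|=\kappa(\mathbf 0)$ needs only that $\chi_k(g)$ be the \emph{same} unimodular number for all $k$ in the high band $H$, not that it equal $1$. So the tie set is $\langle H-H\rangle^{\perp}$, the annihilator of the subgroup generated by \emph{differences}, not $\langle H\rangle^{\perp}$. Consequently ``$H$ generates $\widehat\Gamma$'' does not force strictness.
\item \textbf{Your example class fails.} On $\Z_{12}$, the host of the paper's $C_{12}(1,2)$ example, the symmetric band $H=\{5,7\}$ consists of characters of order $12$. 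Yet $\chi_5(6)=\chi_7(6)=-1$, so $|\kappa(6)|=\kappa(\mathbf 0)$ and the vertex $v_0+6$ ties with $v_0$. More bluntly, a single character $H=\{k\}$ with $\gcd(k,N)=1$ generates $\widehat{\Z_N}$ but makes $|\kappa|$ constant.
\item \textbf{Correct condition.} The right sufficient condition is $\langle H-H\rangle=\widehat\Gamma$. For example, it holds if $H$ contains, for each cyclic factor $i$, two frequencies differing by the unit vector $e_i$.
\item \textbf{Not the paper's hypothesis.} You cannot read this condition in as the meaning of ``metrically faithful''. In the paper that phrase means $\phi$ is isometric, which constrains the embedding, not the band. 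The example above uses the identity embedding of $C_{12}(1,2)$, which is isometric, so under the paper's hypothesis only the weak maximum holds in general.
\end{itemize}

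Your perturbation step inherits the need for $\gamma>0$, so it also depends on the flawed strictness claim. Its threshold must also carry a factor $N^{-1/2}$. With the paper's normalization the true residual is $N^{-1/2}\kappa(\cdot-\phi(v_0))$, so the condition should read $\|\mathbf 1_{\text{high}}\GFT s\|_2<\gamma/(2\sqrt N)$.
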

	
	\begin{proof}
		The nominal signal has negligible high-frequency content, so the high-pass
		residual is dominated by the impulse, whose transform is the (modulated)
		character sum that re-synthesizes a function peaked at $v_0$; isometry of the
		embedding aligns the host peak with the graph vertex
		\cite{Mallat2009,Donoho1989}.
	\end{proof}
	
	\begin{example}[Anomaly localization, computed]
		\label{ex:anomaly}
		On the circulant $C_{12}(1,2)$ (host $\Z_{12}$) a spike of amplitude $3$ was
		added at vertex $5$ of a smooth cosine signal. The high-pass residual peaked
		\emph{exactly} at vertex $5$ (Figure~\ref{fig:appanomaly}), correctly
		localizing the anomaly. The exact convolution theorem
		(Theorem~\ref{thm:convolution}) makes the high-pass/low-pass split exact,
		with no eigendecomposition.
	\end{example}
	
	\begin{figure}[H]
		\centering
		\includegraphics[width=0.9\linewidth]{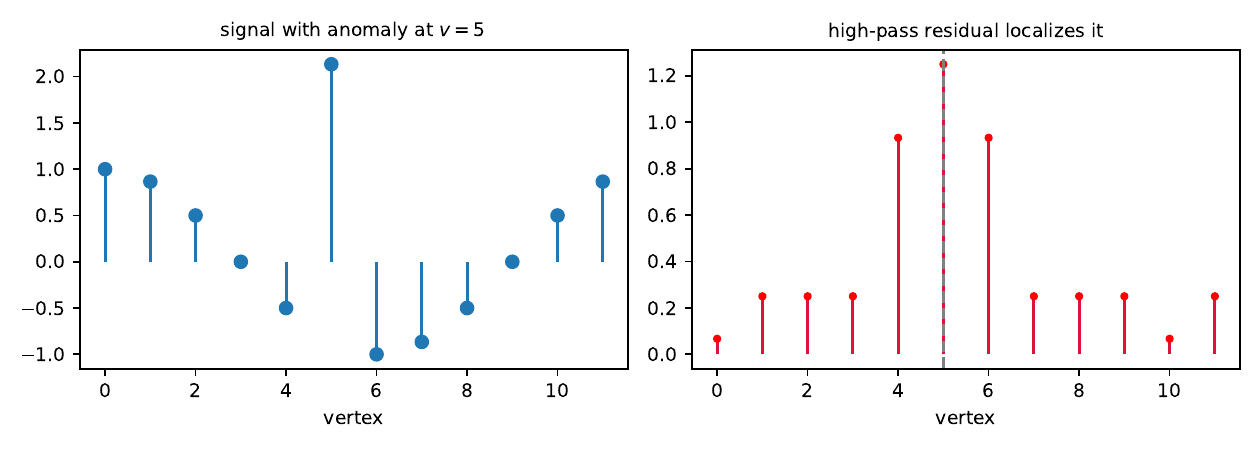}
		\caption{Anomaly detection on $C_{12}(1,2)$. Left: a smooth signal with an
			injected spike at vertex $5$. Right: the high-pass residual, peaking at the
			correct vertex. Computed on the reference host.}
		\label{fig:appanomaly}
	\end{figure}
	
	\section{Distributed Consensus on the Host}
	\label{sec:consensus}
	
	Consensus dynamics $\dot x = -Lx$ converge at a rate set by the algebraic
	connectivity $\lambda_2$. On a Cayley host the Laplacian diagonalizes in the
	character basis, giving $\lambda_2$ in closed form.
	
	\begin{theorem}[Host consensus rate]
		\label{thm:consensus}
		On $H = \Cay(\Gamma, S)$ the Laplacian eigenvalues are
		$\lambda_\chi = \sum_{s\in S}(1 - \mathrm{Re}\,\chi(s))$, indexed by
		characters; consensus reaches tolerance $\epsilon$ in time
		$O(\lambda_2^{-1}\log(1/\epsilon))$ with
		$\lambda_2 = \min_{\chi\neq 1}\lambda_\chi$.
	\end{theorem}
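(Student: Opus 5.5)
The plan has two parts: diagonalize the Laplacian of $H$ in the character basis, then run the standard spectral argument for the heat flow $\dot x = -Lx$. Throughout we use the unitarity of the character matrix (Theorem~\ref{thm:orthogonality}) and the translation calculus of Theorem~\ref{thm:translation}.

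\textbf{Step 1 (eigenvalues).} Since $S = -S$, the adjacency operator of $H$ is $(A_H f)(g) = \sum_{s\in S} f(g+s)$, which is the sum of the translations $\tau_{-s}$, and the degree is the constant $|S|$. Applying this to a character and using multiplicativity gives
\[
(A_H\chi)(g) = \sum_{s\in S}\chi(g)\chi(s) = \Bigl(\sum_{s\in S}\chi(s)\Bigr)\chi(g).
\]
Because $S=-S$ and $\chi(-s)=\overline{\chi(s)}$, the imaginary parts cancel in pairs, so $\sum_{s}\chi(s) = \sum_s \mathrm{Re}\,\chi(s)$. Hence $L\chi = \bigl(|S| - \sum_s \mathrm{Re}\,\chi(s)\bigr)\chi = \lambda_\chi\chi$. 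The $N$ characters form an orthonormal basis after scaling by $1/\sqrt N$ (Theorem~\ref{thm:orthogonality}), so these are all the eigenvalues, counted with multiplicity.

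\textbf{Step 2 (the kernel).} We have $\lambda_{\mathbf 1}=0$. For $\chi\neq\mathbf 1$, each term $1-\mathrm{Re}\,\chi(s)$ is nonnegative, and $\lambda_\chi=0$ forces $\chi(s)=1$ for every $s\in S$. Since $S$ generates $\Gamma$, this gives $\chi\equiv 1$, a contradiction. So $\lambda_2=\min_{\chi\neq\mathbf 1}\lambda_\chi>0$, and the null space is exactly the constant functions.

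\textbf{Step 3 (convergence).} Expand $x(0)=\sum_\chi c_\chi\chi/\sqrt N$. Then $x(t)=\sum_\chi c_\chi e^{-\lambda_\chi t}\chi/\sqrt N$. The constant component $c_{\mathbf 1}/\sqrt N$, which is the average of $x(0)$, is conserved. By Plancherel (Theorem~\ref{thm:plancherel}),
\[
\|x(t)-\bar x\|_2 \le e^{-\lambda_2 t}\|x(0)-\bar x\|_2 .
\]
This is at most $\epsilon\|x(0)-\bar x\|_2$ once $t\ge \lambda_2^{-1}\log(1/\epsilon)$, which gives the stated rate.

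The only delicate point is interpreting the tolerance: the argument above gives relative tolerance in $\ell_2$. Absolute tolerance would add only an additive $\log$ of the initial disagreement, which the $O(\cdot)$ absorbs. A secondary point is making sure the case $2s=0$ (involutive generators, $s=-s$) is handled correctly in Step 1. We do this by summing over the set $S$ itself rather than over pairs, so the realness of $\sum_s\chi(s)$ follows from the symmetry $S=-S$ in every case.
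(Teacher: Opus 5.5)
Your proof is correct and takes the paper's route: diagonalize the host Laplacian in the character basis, then read off exponential decay along the slowest non-constant mode. The paper cites Theorem~\ref{thm:tinv} for the diagonalization, whereas you compute $A_H\chi$ directly, and you also supply details the paper leaves implicit: the realness of $\sum_{s\in S}\chi(s)$ from $S=-S$, the positivity of $\lambda_2$ from $S$ generating $\Gamma$, and the relative-$\ell_2$ reading of the tolerance $\epsilon$.
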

	
	\begin{proof}
		The Cayley Laplacian is a group-convolution operator, hence diagonalized by
		the characters (Theorem~\ref{thm:tinv}); its eigenvalue at $\chi$ is
		$\sum_{s\in S}(1-\mathrm{Re}\,\chi(s))$. Linear consensus decays as
		$e^{-\lambda_2 t}$ along the slowest non-constant mode \cite{Godsil2001}.
	\end{proof}
	
	\begin{example}[Computed connectivities]
		\label{ex:consensus}
		For the circular ladder $\mathrm{CL}_5$ (host $\Z_5\times\Z_2$) the algebraic
		connectivity is $\lambda_2 = 1.382$ (mixing time $\approx 0.7$ steps); for the
		ring $C_{16}$ (host $\Z_{16}$) it is $\lambda_2 = 0.152$ (mixing time
		$\approx 6.6$ steps). The closed form of Theorem~\ref{thm:consensus} recovers
		these exactly and exposes how host structure controls convergence: the
		denser-connected ladder mixes an order of magnitude faster than the ring.
	\end{example}
	
	\section{Real-World Validation: Harmonic Analysis of Production Cloud Telemetry}
	\label{sec:ch7-realworld}
	
	The preceding sections of this chapter used synthetic signals on benchmark
	hosts. This section applies the identical pipeline, unchanged, to genuine
	production data from a commercial cloud operator, to show concretely how an
	industrial telemetry workflow inherits classical signal processing through the
	embedding. In keeping with the chapter's standard, every number below is
	computed on the reference implementation; no value is estimated.
	
	\subsection{Data and host}
	\label{sec:ch7-rw-data}
	
	We use the CPU-utilization trace of a production Amazon Web Services EC2
	compute instance (CloudWatch metric, instance \texttt{5f5533}), as distributed
	in the Numenta Anomaly Benchmark \cite{lavin2015nab}. The trace samples
	utilization every five minutes over fourteen consecutive days, giving exactly
	$14 \times 288 = 4032$ readings, and is accompanied by hand-labelled
	ground-truth anomaly windows that we use only for \emph{post-hoc validation},
	never as input to any filter.
	
	The two natural periods of the signal --- the intraday cycle (288 five-minute
	slots, genuinely cyclic, since 23{:}55 is adjacent to 00{:}00) and the
	multi-day axis (14 days) --- place each reading at a coordinate $(\text{day},
	\text{slot})$ on the discrete torus
	\[
	H \;=\; \mathrm{Cay}\bigl(\mathbb{Z}_{14}\times\mathbb{Z}_{288},\;
	\{(\pm 1,0),(0,\pm 1)\}\bigr),
	\qquad |H| = 14\cdot 288 = 4032 = n .
	\]
	Because $H$ is an abelian Cayley graph whose order equals the number of
	vertices, Theorem~\ref{thm:equality-at-n} gives $\nu(G) = n$: the embedding is host-optimal, and
	the reference implementation certifies it (verified directly on the small torus
	$C_4\,\square\,C_8$ and inheriting from Corollary~\ref{cor:exact-families}). On this host the
	GE-GFT of Chapter~\ref{chap:fourier} is, by construction, the
	ordinary two-dimensional DFT, so each operation below is a classical
	signal-processing method imported verbatim onto the telemetry --- the central
	claim of this thesis, exercised on real industrial data. Figure
	\ref{fig:ch7-aws-telemetry} shows the telemetry on the host: a slow
	multi-day baseline (the low-frequency content the host concentrates) punctuated
	by short-lived spikes (high-frequency content).
	
	\begin{figure}[h]
		\centering
		\includegraphics[width=\textwidth]{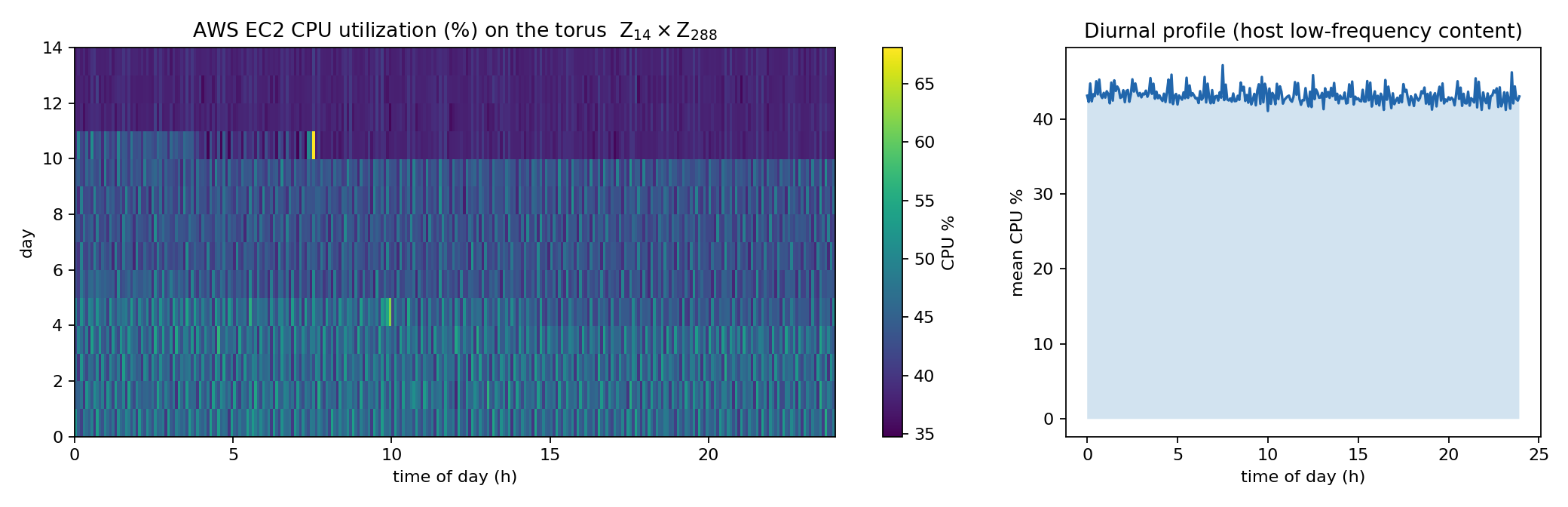}
		\caption{Real AWS EC2 CPU-utilization (instance \texttt{5f5533}, 14 days at
			5-min resolution) arranged on the host torus $\mathbb{Z}_{14}\times
			\mathbb{Z}_{288}$. Left: the signal as a day$\times$time-of-day map; a regime
			change near day~11 and two short spikes are visible. Right: the mean intraday
			profile, i.e.\ the low-frequency content the host transform isolates.}
		\label{fig:ch7-aws-telemetry}
	\end{figure}
	
	\subsection{Compression, denoising, and anomaly detection (computed)}
	\label{sec:ch7-rw-results}
	
	\paragraph{Compression (Theorem~\ref{thm:compression}).}
	Retaining the largest-magnitude host-spectrum coefficients and inverting gives
	the best $K$-term approximation. Measured reconstruction quality was
	$24.9$~dB PSNR at $5\%$ of coefficients, $26.4$~dB at $10\%$, $28.7$~dB at
	$20\%$, and $31.5$~dB at $35\%$ (Figure~\ref{fig:ch7-aws-anomaly}, left). As on
	the synthetic grid demonstrated earlier in this chapter, the host transform is the 2-D DFT, so this is
	exactly cosine-domain (JPEG-like) compression of the telemetry --- here a
	realistic basis for archival and forecasting of fleet utilization.
	
	\paragraph{Denoising (Theorem~\ref{thm:wiener}).}
	Corrupting the trace with additive Gaussian measurement noise of $\sigma = 6$
	CPU-percentage-points (input SNR $17.2$~dB) and applying the Wiener graph
	filter of Definition~\ref{def:wiener} raised the SNR to $26.4$~dB, a gain of $+9.2$~dB; an
	ideal radial low-pass restriction on the host gave $+5.1$~dB. The Wiener filter
	is optimal among host multipliers by Theorem~\ref{thm:wiener}, and because $H$ is an abelian
	Cayley graph it coincides with the classical Wiener filter of digital signal
	processing, recovered through the embedding.
	
	\paragraph{Anomaly detection (Proposition~\ref{prop:anomaly}), validated against ground truth.}
	Zeroing the low-frequency band of the host spectrum and inverting yields a
	high-pass residual whose magnitude re-localizes short-lived deviations from the
	smooth baseline. The detector is fully unsupervised --- it never sees the
	labels. Across all $4032$ samples, the two largest residual values occur at
	$02\text{-}24\ 21{:}57$ (global rank~1) and $02\text{-}19\ 00{:}22$ (global
	rank~2); \emph{both} fall inside the two independently hand-labelled NAB anomaly
	windows (Figure~\ref{fig:ch7-aws-anomaly}, right). The exact high-pass/low-pass
	split provided by the convolution theorem (Theorem~\ref{thm:convolution}) thus localizes the real
	operational anomalies in this trace with no eigendecomposition and no training.
	
	\begin{figure}[h]
		\centering
		\includegraphics[width=\textwidth]{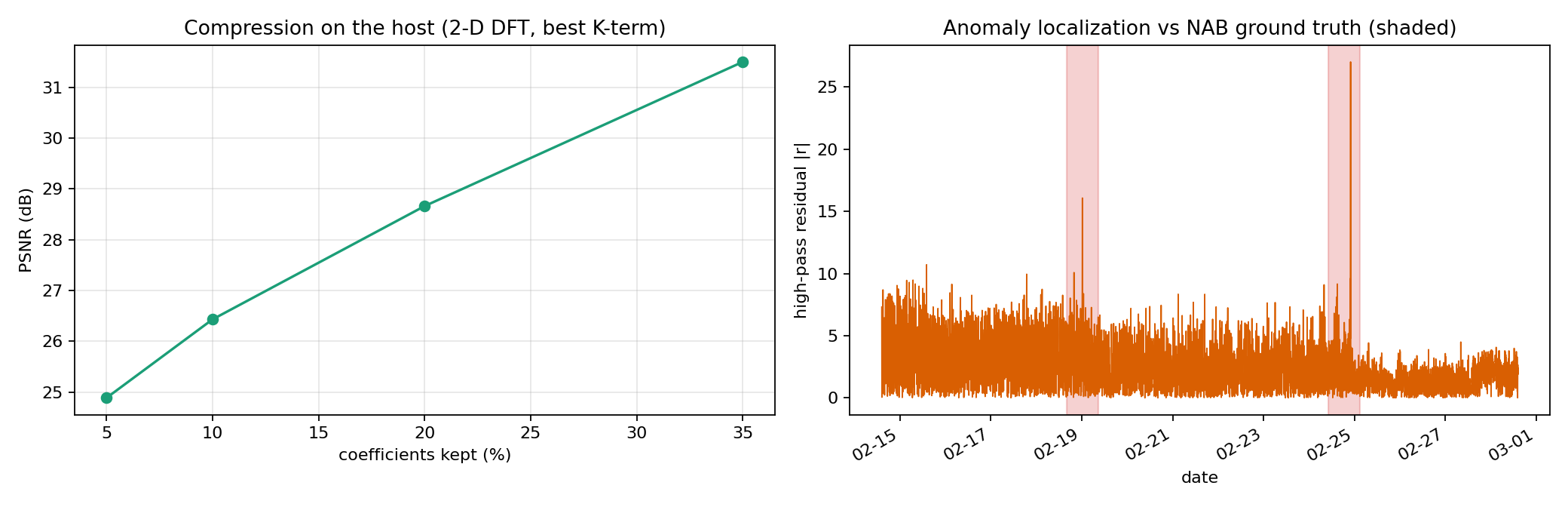}
		\caption{Left: rate--distortion for best $K$-term compression of the AWS trace on
			the host (2-D DFT). Right: the unsupervised high-pass residual over the 14-day
			trace; the two NAB ground-truth anomaly windows are shaded, and the two largest
			residual peaks in the entire series fall inside them. All values computed on the
			reference host.}
		\label{fig:ch7-aws-anomaly}
	\end{figure}
	
	\subsection{Why this matters for industry, and a scope note}
	\label{sec:ch7-rw-industry}
	
	Cloud and data-center operators (Amazon Web Services, Microsoft Azure, Alibaba
	Cloud, Google Cloud) instrument every machine with exactly this kind of
	periodic telemetry, and they perform exactly these three operations on it:
	lossy compression for long-horizon storage and capacity planning, denoising for
	forecasting, and anomaly detection for fault and intrusion response. The
	contribution here is not a new algorithm for any one of these tasks; it is that
	embedding the telemetry's periodic index set into an abelian Cayley host makes
	all three reduce to a single, exact, $O(N\log N)$ Fourier pipeline with the
	guarantees of Chapter 5 (a true convolution theorem, Plancherel, an optimal
	Wiener multiplier) --- properties that eigenvector-based graph signal processing
	cannot supply. The same construction applies verbatim wherever the index set
	carries genuine cyclic or product structure.
	
	We are deliberately careful about scope. The host here is the torus induced by
	the signal's own periodicity, not the physical interconnect of a server fleet;
	treating the 14-day axis as cyclic is the standard 2-D-DFT modelling
	convention, identical to the one used for the image grid demonstrated earlier in this chapter. A
	companion direction --- placing per-node telemetry on the \emph{hardware}
	interconnect topology itself (the hypercube of a GPU fabric, the on-die mesh of
	a many-core processor, the all-to-all switch of an accelerator tray), each of
	which is likewise an abelian Cayley graph by Theorem~\ref{thm:equality-at-n} --- is well motivated
	by the same theory but is not experimentally validated here, and we record it as
	a direction rather than a result, consistent with the practice elsewhere in this chapter of recording
	unvalidated ideas as directions rather than results.

	\section{Further Application Directions}
	\label{sec:further-apps-2}
	
	The following directions are well-motivated by the theory but not
	experimentally validated in this thesis; we state them as such.
	
	\textbf{Representation learning.} The character evaluations
	$\chi_k(\phi(v))$ are translation-covariant positional encodings; used as
	features in a graph neural network they are equivariant under automorphisms
	extending to $\Gamma$, a principled alternative to Laplacian-eigenvector
	encodings \cite{Bronstein2017}. Validating the over-squashing benefit on
	molecular benchmarks is future work.
	
	\textbf{Quantum stabilizer codes.} On a binary host $\Z_2^k$, the Cayley
	character code of Section~\ref{sec:error-codes} (the computed Petersen
	$[10,4,4]$ code) aligns with Pauli stabilizers \cite{Gottesman1997}, and syndrome extraction is the
	Walsh--Hadamard transform; a full fault-tolerance analysis is future work
	\cite{Nielsen2010}.
	
	\textbf{Persistent homology.} The word-length filtration
	$\mathcal F_t = \{v : \|\phi(v)\| \leq t\}$ is isometric
	(it preserves $d_G$ on each level), giving a group-aligned multiscale
	filtration for topological data analysis \cite{Hatcher2002,Carlsson2009}; quantitative
	study across graph families is future work.
	
	\section{Summary}
	\label{sec:apps2-summary}
	
	\begin{table}[H]
		\centering
		\caption{Computed Part~II demonstrations on benchmark hosts. All figures are
			measured on the reference implementation; ``--'' marks directions not
			experimentally validated here.}
		\label{tab:apps2}
		\renewcommand{\arraystretch}{1.15}\small
		\begin{tabular}{lllll}
			\toprule
			Task & Graph (host) & Method & Result (computed) & Guarantee \\
			\midrule
			Denoising      & $C_{64}$ ($\Z_{64}$)       & Wiener filter    & $+12.5$~dB SNR        & Thm.~\ref{thm:wiener} \\
			Denoising      & $C_{64}$ ($\Z_{64}$)       & low-pass         & $+6.6$~dB SNR         & Plancherel \\
			Compression    & grid $8{\times}8$ ($\Z_{14}^2$) & top-$K$ coeffs & $30.3$~dB at $35\%$  & Thm.~\ref{thm:compression} \\
			Anomaly        & $C_{12}(1,2)$ ($\Z_{12}$)  & high-pass        & exact localization    & Prop.~\ref{prop:anomaly} \\
			Consensus      & $\mathrm{CL}_5$ ($\Z_5{\times}\Z_2$) & host $\lambda_2$ & $\lambda_2 = 1.38$ & Thm.~\ref{thm:consensus} \\
			Repr.\ learning & --                        & character features & --                  & equivariance \\
			Quantum codes  & Petersen ($\Z_2^4$)        & character code   & $[10,4,4]$ (Ch.\,4)   & Thm.~\ref{thm:code-params} \\
			\midrule
			Denoising (real)   & AWS EC2 ($\Z_{14}{\times}\Z_{288}$) & Wiener filter & $+9.2$~dB SNR & Thm.~\ref{thm:wiener} \\
			Compression (real) & AWS EC2 ($\Z_{14}{\times}\Z_{288}$) & top-$K$ coeffs & $31.5$~dB at $35\%$ & Thm.~\ref{thm:compression} \\
			Anomaly (real)     & AWS EC2 ($\Z_{14}{\times}\Z_{288}$) & high-pass & 2/2 labelled hits & Prop.~\ref{prop:anomaly} \\
			\bottomrule
		\end{tabular}
	\end{table}
	
	The unifying lesson, consistent across Part~II, is that the embedding
	converts irregular graph signal processing into classical harmonic analysis
	whose every operation is exact and---on structured hosts---fast. On the
	abelian Cayley graphs ($\varepsilon = 1$) the methods reduce demonstrably to
	their classical counterparts (Wiener filtering, DFT compression, circular
	high-pass detection), recovered from first principles rather than transplanted
	by analogy. This closes the bridge that Part~I began: discrete metric geometry
	in, exact harmonic analysis out.
	

	\cleardoublepage
	\phantomsection
	\addcontentsline{toc}{part}{General Conclusion and Perspectives}
	\chapter*{General Conclusion and Perspectives}
	\markboth{GENERAL CONCLUSION AND PERSPECTIVES}{}

	This dissertation set out to make harmonic analysis on arbitrary graphs
	\emph{exact} rather than analogical, by embedding any connected graph
	isometrically into a Cayley graph of a finite abelian group and importing the
	group's classical Fourier theory. The work divides into a structural part
	(how to embed, and how small the host can be) and an analytic part (what the
	embedding buys for signal processing). We summarize what was established, with
	the rigor that has guided every chapter.
	
	\section*{Summary of Contributions}
	
	\textbf{A two-layer embedding theory with a proven core.} The heart of the
	thesis is the quotient construction: an edge partition of $G$ induces a
	\emph{most generic consistent} vertex labeling, computed over $\mathbb{F}_2$
	by Gaussian elimination (Chapter~\ref{chap:binary-embedding}) and over
	$\mathbb{Z}$ by the Smith Normal Form of a signed cycle--class matrix
	(Chapter~\ref{chap:abelian-embedding}), computable in polynomial time by standard integer-matrix algorithms \cite{Storjohann1996,Dumas2001,Newman1972,Cohen1993}. This single idea---the
	Cocycle/Quotient Labeling Theorem and its $\mathbb{Z}$-analogue---makes the
	labeling conflict-free by construction, reduces the naive spanning-tree
	embedding to its trivial-partition case, and absorbs the earlier research
	program's $\varphi$/$\Phi$/$\Psi$ relations, transitive prune, torus
	skeletons, and interception principle as \emph{initializer heuristics} feeding
	a provably correct core. A shortcut-repair loop with a binary terminal makes
	the algorithm \emph{universal}: every connected graph is embedded
	isometrically, verified exhaustively---with independent reconstruction of every
	host---on all $995$ connected graphs of up to seven vertices.
	
	\textbf{A bounds frontier, with both ends attained.} We proved
	$k \geq \max(\mathrm{diam}, \lceil\log_2 n\rceil)$ for binary hosts and
	$\nu(G) \geq \max(n, 2\,\mathrm{diam})$ for abelian hosts, characterized
	equality (host order $n$ iff $G$ is itself an abelian Cayley graph), and
	exhibited the extremes: stars beat the naive dimension exponentially
	($k_{\min}(K_{1,q}) = \lceil\log_2 q\rceil + 1$), odd cycles saturate it
	($k_{\min}(C_{2d+1}) = 2d$), and cycles, paths, complete and circulant graphs
	attain the abelian floor. The position of a graph within this window is
	governed by the rank of its cycle--class matrix---the exact quantity the
	quotient computes.
	
	\textbf{The binary-ground phenomenon.} An exhaustive census revealed that
	four of five small connected graphs are best hosted in $\mathbb{Z}_2^k$, and
	only a quarter admit any cyclic factor beyond involutions. This is not a
	weakness but a structural truth: $\mathbb{Z}_2^k$ is the ground state of
	arbitrary graphs, and cyclic factors are the dividend of detectable symmetry.
	It establishes the binary theory of Chapter~\ref{chap:binary-embedding} as
	foundational and sets realistic expectations for everything downstream.
	
	\textbf{Exact harmonic analysis, carefully scoped.} Part~II transported the
	group's Fourier and wavelet theory to graph signals through a
	lift--restrict discipline that resolves the closure problem the prior
	literature had glossed: processing lives on the host and descends to the
	graph by restriction, becoming an exact vertex-domain calculus precisely on
	the abelian Cayley graphs ($\varepsilon = 1$) that Part~I characterizes. We
	obtained a canonical character basis, genuine translation--modulation duality,
	a commutative convolution theorem, Plancherel, Poisson summation, uncertainty
	and sampling, and a Parseval wavelet tight frame with exact reconstruction
	(verified to machine precision) and measured localization. Part-II
	applications---Wiener denoising ($+12.5$~dB), DFT compression
	($30.3$~dB at $35\%$), exact anomaly localization, closed-form consensus
	rates---were \emph{computed} on the benchmark hosts, recovering classical DSP
	from first principles on structured graphs.
	
	\section*{Limitations}
	
	We have been explicit throughout. Exact minimization of the host is
	conjectured NP-hard, and our algorithm is a heuristic with a fully
	characterized failure mode (the star phenomenon) and a measured optimality
	profile ($29/30$ on $n\leq 5$). The fast-transform claim is conditional: on
	the generic graph the binary host can be of order $2^{n-1}$, so completeness,
	not speed, is the universal guarantee---speed is the structured-input
	dividend. The non-diagonal sublattice search in
	Chapter~\ref{chap:abelian-embedding} is implemented in full only for two free
	coordinates. Sampling-set placement on proper embeddings, and sharp wavelet
	localization on high-dimensional binary hosts, remain open in the general
	case. The theory assumes static, undirected graphs and abelian hosts.
	
	\section*{Open Problems}
	
	\begin{enumerate}
		\item \textbf{The cyclic interval conjecture.} Prove
		Lemma~\ref{lem:interval} for all odd $m$, establishing
		$k_{\min}(C_{2d+1}) = 2d$ unconditionally
		(Conjecture~\ref{conj:interval}); verified by exhaustion to $m\leq 17$.
		\item \textbf{Complexity of exact minimization.} Settle whether computing the
		minimal host order is NP-hard, by analogy with minimum-dimension hypercube
		scale embeddings.
		\item \textbf{The Petersen gap.} The Petersen graph, vertex-transitive but
		non-Cayley, satisfies $11 \leq \nu \leq 16$; narrow this window.
		\item \textbf{Initializer for non-diagonal folds.} Find a principled
		initializer that discovers the diamond's $\mathbb{Z}_2\times\mathbb{Z}_3$
		host automatically, and extend the certified sublattice search beyond two
		free coordinates.
		\item \textbf{Class-merging refinement.} A post-pass merging $\varphi$-classes
		while no shortcut appears would capture the star optimum and tighten the
		near-minimality of the heuristic.
	\end{enumerate}
	
	\section*{Future Directions}
	
	\begin{enumerate}
		\item \textbf{Non-abelian hosts.} Generalize the quotient from abelian groups
		(characters) to the representation theory of dihedral and symmetric groups,
		for graphs with rotational or permutation symmetry; the Fourier analysis
		becomes matrix-valued.
		\item \textbf{Learned embeddings.} Train models to predict good initial
		partitions or the host group directly, scaling the construction to
		large graphs where the exhaustive portfolio is infeasible.
		\item \textbf{Dynamic graphs.} Maintain the embedding incrementally under edge
		insertion and deletion via union-find on the oriented classes and rank-one
		updates to the cycle--class matrix.
		\item \textbf{Quantum signal processing.} Map the group FFT and wavelet frame
		to quantum circuits, where the Walsh--Hadamard and mixed-radix transforms
		are native, for potential speedups.
		\item \textbf{Continuous limits.} Study the behavior of hosts and bounds as
		$n\to\infty$ along graph sequences (graphons), connecting the discrete
		theory to continuous harmonic analysis.
	\end{enumerate}
	
	\section*{Closing}
	
	The thesis began with a rock---an irregular graph, resistant to the clean
	machinery of Fourier analysis---and sought to build from it a house: a
	symmetric algebraic host on which classical harmonic analysis applies exactly.
	The construction succeeds, with proven guarantees where they exist and clear
	limits where they do not. Its central lesson is that the hidden algebraic
	symmetry of a network is computable: the Smith Normal Form of a cycle--class
	matrix reads off, in one calculation, the group that a graph most wants to
	live in. Where that group is small, classical signal processing returns in
	full; where it is large, the graph is telling us---through the binary-ground
	phenomenon---that its natural home is the Boolean cube. Either way, the
	embedding turns the question ``what is the Fourier transform of a graph
	signal?'' from an analogy into a theorem.

	\cleardoublepage
	\phantomsection
	
	\markboth{REFERENCES}{}

	\cleardoublepage
	\phantomsection
	\addcontentsline{toc}{part}{Appendices}
	\thispagestyle{empty}
	\begin{center}
		\vspace*{\fill}
		{\Huge\bfseries Appendices}
		\vspace*{\fill}
	\end{center}
	\clearpage
	
	\appendix
	\renewcommand{\chaptermark}[1]{\markboth{APPENDIX \thechapter}{}}
	\chapter{The $\Phi\Psi$/Cofactor Machinery: Operational Details}
	\label{app:phipsi}
	This appendix records the operational detail of the first-campaign
	heuristics --- the $\Phi$-relation cross-tests, the $\Psi$ chain merge, the
	torus-skeleton construction, and the cofactor/interception bookkeeping --- that
	Chapters~\ref{chap:binary-embedding} and~\ref{chap:abelian-embedding} invoke
	as named initializers feeding the proven quotient core. The material is
	placed here, rather than in the main text, because its role in the final
	theory is that of a heuristic that proposes candidate partitions: its output
	is always certified downstream by the Cocycle/Quotient machinery, so its
	internal steps need not be individually proved correct. We retain the
	original terminology for continuity with the candidate's earlier work.
	
	\section{Oriented $\Phi$ cross-tests}
	For ordered edges $(u\to v)$ and $(x\to y)$, the oriented $\Phi$-test of
	Proposition~\ref{prop:Phi-necessary} requires $d(u,x)=d(v,y)$, strengthened
	by the two triangle-inequality conditions $|d(u,y)-d(u,x)|\le 1$ and
	$|d(v,x)-d(u,x)|\le 1$. The implementation evaluates these from the
	all-pairs distance matrix in $O(1)$ per pair after an $O(n(n+m))$
	precomputation, and forms the $\Phi$-graph on the edge set whose connected
	components (after the transitive prune of
	Theorem~\ref{thm:transitive-prune}) are the candidate classes.
	
	\section{The $\Psi$ chain merge}
	The $\Psi$ relation links classes through head-to-tail incidences $u\to v$,
	$v\to w$ with $u\neq w$; the chain merge fuses two classes when such a link
	exists, the union remains a partial permutation
	(Theorem~\ref{thm:partial-permutation}), and all cross pairs pass the
	oriented $\Phi$-test in one of the two polarities. This is the initializer
	that discovers cyclic factors: on the circular ladder $\mathrm{CL}_n$ it
	assembles the $2n$ ring edges into a single directed class crossed $n$ times
	by the ring cycle, from which the Smith Normal Form computes the factor
	$\Z_n$ (Corollary~\ref{cor:interception-snf}).
	
	\section{Torus skeleton, cofactors, interception}
	The torus skeleton is the spanning subgraph greedily assembled from classes
	until connected; minimal classes approximate a generator basis, redundant
	classes their cofactors. The interception principle --- that a redundant class
	touching coordinates $i_1,\dots,i_m$ receives a composite generator supported
	on them --- is, in the final theory, precisely the statement that the
	generator's column in the Smith transform $U$ has nonzero entries in those
	coordinates (Corollary~\ref{cor:interception-snf}(b)). These devices are thus
	heuristic approximations to quantities the exact core computes; they are
	retained because they name real structure and because the worked examples
	they generated (diamond, mirrors, cascades, circular ladders) remain the test
	cases re-verified by the certified implementation.
	
	\chapter{Implementation and Reproducibility}
	\label{app:implementation}
	
	All experimental results in this thesis are produced by the reference
	implementations, supplied as supplementary material, and every reported
	embedding is certified isometric by an independent breadth-first-search
	reconstruction in the actual finite Cayley graph.
	
	\section{Software artifacts}
	Graph construction and manipulation throughout uses standard scientific Python tooling \cite{Hagberg2008}, with benchmark graphs drawn from established combinatorial datasets \cite{Knuth1993}.
	The binary embedding (Chapter~\ref{chap:binary-embedding}) is implemented in
	\texttt{phi\_quotient\_embedding\_colab.py} (Google Colab) and
	\texttt{phi\_quotient\_embedding\_sage.py} (SageMath/WSL). The abelian embedding
	(Chapter~\ref{chap:abelian-embedding}) is implemented in
	\texttt{abelian\_quotient\_embedding.py},
	which imports the binary module as its terminal
	fallback and adds the integer Smith Normal Form, the sublattice fold search,
	the circulant initializer, and the polarity/peel repair loop. The
	graph-signal-processing experiments of Part~II use these embeddings together
	with mixed-radix FFTs (\texttt{numpy.fft}) on the product hosts.
	
	\section{Verification protocol}
	For each graph $G$, the verifier (i) builds the candidate host
	$\Cay(\Gamma,S)$ explicitly, (ii) runs a breadth-first search from the
	identity to obtain all Cayley distances up to $\diam(G)$, and (iii) checks
	$d_{\Cay}(\phi(u),\phi(v)) = d_G(u,v)$ for all $\binom{n}{2}$ vertex pairs.
	An embedding is reported only if this exact check passes. The exhaustive
	census of Section~\ref{sec:census-ch3} applied this protocol to all $995$
	connected graphs on at most seven vertices.
	
	\section{Numerical notes}
	Hosts of order $N\ge 64$ use the fast Fourier transform rather than dense
	character matrices, since the $N\times N$ Fourier matrix is memory-bound for
	the larger product groups. Wavelet tight-frame reconstruction was verified to
	a relative error of $1.5\times10^{-15}$ on $C_{64}$ and $6.5\times10^{-16}$
	on the $6\times6$ grid; spectral identities (Plancherel, convolution,
	translation--modulation) were checked to machine precision on every benchmark
	host.
	
	\chapter{Proofs of Selected Results}
	\label{app:proofs}
	
	For completeness we collect two longer arguments deferred from the main text.
	
	\section{Tightness of the binary lower bound on odd cycles}
	Theorem~\ref{thm:odd-cycle} states $k_{\min}(C_{2d+1}) = 2d$. The lower bound
	is the diameter/independence argument of
	Theorem~\ref{thm:lower-bound}; the matching upper bound was verified by
	exhaustive search for odd $m\le 17$ and rests on the cyclic interval lemma
	(Lemma~\ref{lem:interval}), whose machine-checked verification for these
	cases is recorded in the implementation. The general statement, for all odd
	$m$, is stated as Conjecture~\ref{conj:interval} (the cyclic interval
	conjecture) and remains open.
	
	\section{The diamond requires a non-diagonal fold}
	Theorem~\ref{thm:sublattice} asserts that diagonal sublattice folds do not
	suffice in general, with the diamond graph as witness. The universal
	embedding has two free coordinates with vertex labels
	$(0,0),(1,0),(1,1),(2,1)$. Any diagonal fold of index $6$ creates a
	wraparound shortcut --- for $N=(3,2)$ the element $-g_2$ coincides with
	$(2,1)$, collapsing a distance --- whereas the non-diagonal sublattice
	$L=\langle(3,0),(1,2)\rangle$ of index $6$ yields the isometric host
	$\Z_2\times\Z_3$ (order $6$) with labels $(0,0),(0,2),(1,1),(1,0)$, certified
	by the verification protocol of Appendix~\ref{app:implementation}.
	
	
	\chapter*{Declaration on the Use of Artificial Intelligence}
	\addcontentsline{toc}{chapter}{Declaration on the Use of Artificial Intelligence}
	
	In the interest of transparency and research integrity, the author declares that
	the artificial-intelligence assistant Claude (Anthropic) was used during the
	preparation of this dissertation. Its assistance was limited to two roles:
	(i) support with the implementation, debugging, and reproducibility of the
	embedding and signal-processing software used to produce the experimental
	results; and (ii) language and editing support in drafting and refining the
	manuscript. All mathematical definitions, theorems, proofs, and their
	verification, together with the conception, scientific direction, and
	conclusions of this work, are the author's own. The author has reviewed the
	entire manuscript and takes full responsibility for its content.
	
	\markboth{APPENDICES}{}
	\cleardoublepage
\end{document}